\definecolor{darkred}{RGB}{203,65,84}
\definecolor{darkblue}{RGB}{70,130,180}
\definecolor{brown}{RGB}{139,69,19}
\theoremstyle{plain}
\newtheorem{proposition}{Proposition}[section]
\newtheorem{theorem}[proposition]{Theorem}
\newtheorem{lemma}[proposition]{Lemma}
\newtheorem{corollary}[proposition]{Corollary}
\newcounter{foo}
\newtheorem{theo}[foo]{Theorem}
\theoremstyle{definition}
\newtheorem{definition}[proposition]{Definition}
\theoremstyle{remark}
\newtheorem{rmk}[proposition]{Remark}
\newcommand{\hHn}{\mathbf{H}_+^{2,n}}
\newcommand{\C}{\mathbb{C}}
\newcommand{\D}{\mathbf{D}}
\newcommand{\E}{\mathbf{E}}
\renewcommand{\H}{\mathbf{H}}
\newcommand{\K}{\mathrm{K}}
\newcommand{\N}{\mathbb{N}}
\renewcommand{\P}{\mathbf{P}}
\newcommand{\Pp}{\mathsf{P}}
\newcommand{\R}{\mathbb{R}}
\renewcommand{\S}{\mathbf{S}}
\newcommand{\GG}{\mathcal{G}}
\newcommand{\II}{\mathrm{II}}
\newcommand{\tr}{\operatorname{tr}}
\newcommand{\q}{\textbf{q}}
\newcommand{\g}{\textbf{g}}
\newcommand{\G}{\mathsf{G}}
\newcommand{\A}{\mathsf{A}}
\newcommand{\pd}{\eth}
\newcommand{\CH}{\mathcal{CH}}
\newcommand{\Hh}{\mathsf{H}}
\newcommand{\bR}{{\mathbf E}^{2,n}}
\renewcommand{\epsilon}{\varepsilon}
\newcommand{\de}{{\mathrm d}}
\newcommand{\SO}{\mathsf{SO}}
\newcommand{\PSO}{\mathsf{PSO}}
\newcommand{\GR}[1]{\mathcal{G}(#1)}
\newcommand{\Gr}[1]{\operatorname{Gr}_{2,0}\left(#1\right)}
\newcommand{\Ein}{\mathbf{Ein}}
\newcommand{\Isom}{\operatorname{Isom}}
\newcommand{\Hom}{\operatorname{Hom}}
\newcommand{\Id}{\operatorname{Id}}
\newcommand{\I}{\mathrm{I}}
\newcommand{\dvol}{{\mathrm d}{\operatorname{vol}}}
\newcommand{\area}{\operatorname{area}}
\newcommand{\Stab}{\operatorname{Stab}}
\newcommand{\Fix}{\operatorname{Fix}}
\renewcommand{\span}{\operatorname{span}}
\newcommand{\Hess}{\operatorname{Hess}}
\renewcommand{\leq}{\leqslant}
\renewcommand{\geq}{\geqslant}
\newcommand{\Hn}{\mathbf H^{2,n}}
\newcommand{\bHn}{\partial_\infty \Hn}
\newcommand{\T}{\mathsf T}
\newcommand{\No}{\mathsf N}
\renewcommand{\leq}{\leqslant}
\renewcommand{\geq}{\geqslant}
\newcommand{\Grn}{\GR{\Hn}}
\newcommand{\ms}{\mathsf}
\newcommand{\seqk}[1]{\{#1_k\}_{k\in\mathbb N}}
\newcommand{\seqj}[1]{\{#1_j\}_{j\in\mathbb N}}
\newcommand{\diam}{\operatorname{diam}}
\newcommand{\defeq}{\coloneqq}
\newcommand{\eqdef}{\eqqcolon}
\newcommand{\Fr}{\operatorname{Fr}}
\newcommand{\PE}{\mathbf P(E)}
\newcommand{\Proj}{\mathbf{P}}
\newcommand{\cHn}{{\bar{\mathbf H}}^{2,n}}
\newcommand{\chHn}{{\bar{\mathbf H}}_+^{2,n}}
\newcommand{\sbt}{\,\begin{picture}(-1,1)(-1,-1)\circle*{2}\end{picture}\ }
\renewcommand{\dot}[1]{\overset{\sbt}{#1}}
\renewcommand{\ddot}[1]{\overset{\sbt\sbt}{#1}}
\title[Maximal surfaces in $\Hn$]{Plateau problems for maximal surfaces\\ in pseudo-hyperbolic space\\ ------
\\ Problèmes de Plateau pour les surfaces maximales\\ dans l'espace pseudo-hyperbolique}
\author[F. Labourie]{Fran\c cois Labourie}
\address{Universit\'e C\^ote d'Azur, CNRS,  LJAD,  France}
\email{francois.labourie@univ-cotedazur.fr}
\author[J. Toulisse]{J\'{e}r\'{e}my Toulisse}
\address{
Universit\'e C\^ote d'Azur, CNRS, LJAD,  France}
\email{jtoulisse@univ-cotedazur.fr}
\author[M. Wolf]{Michael Wolf}
\address{School of Mathematics \\
Georgia Institute of Technology}
\email{mwolf40@gatech.edu}
\thanks{F.L., J.T. and M.W. acknowledge support from U.S. National Science Foundation (NSF) grants DMS 1107452, 1107263, 1107367 \enquote{RNMS: Geometric structures And Representation varieties} (the GEAR Network). 
F.L.  and J.T. were supported by the ANR grant DynGeo ANR-11-BS01-013.  F.L. was supported by the Institut Universitaire de France. 
F.L. and M.W. appreciate the support of the Mathematical Sciences Research Institute during the fall of 2019 (NSF DMS-1440140). M.W. acknowledges support from NSF grants DMS-1564374 and DMS-2005551 as well as the Simons Foundation. 
}
\date{\today}
\begin{document}
\maketitle

\begin{abstract}
We define and prove the existence of unique solutions of an asymptotic Plateau problem for spacelike maximal surfaces in the pseudo-hyperbolic space of signature $(2,n)$: the boundary data is given by   
loops on the boundary at infinity of the pseudo-hyperbolic space
which are limits of positive curves. We also discuss a compact Plateau problem.  The required compactness arguments rely on an analysis of the pseudo-holomorphic curves defined by the Gauß lifts of the maximal surfaces.
\end{abstract}

\begin{abstract} Nous définissons un problème de Plateau asymptotique pour les surfaces maximales de type espace dans l'espace pseudo-hyperbolique de signature $(2,n)$ dont le bord à l'infini est donné par des courbes, dites {\em semi--positives}, et qui sont limites de courbes positives. Nous montrons l'existence et l'unicité des solutions correspondantes et discutons le problème de Plateau compact correspondant.  Les arguments de compacité utilisés requièrent l'analyse de courbes pseudo-holomorphes définies par le relevé de  Gauß de surfaces maximales.
\end{abstract}

\setcounter{tocdepth}{1}
\tableofcontents




\section{Introduction}

Our goal is to study Plateau problems in the pseudo-hyperbolic space $\Hn$, which can be quickly described as the  space of negative definite lines in a vector space of signature $(2,n+1)$. As such $\Hn$ is a complete homogeneous pseudo-Riemannian manifold of signature $(2,n)$ and curvature $-1$.

Quite naturally, $\Hn$ bears many resemblances to the hyperbolic plane, which corresponds to the case $n=0$. In particular, generalising the Klein model, $\Hn$ may be described as one of the connected components of the complement to a quadric in the projective space of dimension $n+2$. 

This quadric is classically called the {\em Einstein universe} and we shall denote it by $\bHn$ \cite{charette}. Analogously to the hyperbolic case, the space $\bHn$ carries a conformal metric of signature $(1,n)$ and we will consider it as a boundary at infinity of $\Hn$. Topologically, $\bHn$ is the quotient of $\S^1\times \S^n$ by an involution.

From the Lie group perspective, the space $\Hn$ has $\PSO(2,n+1)$ as a group of isometries and the Einstein space $\bHn$ is  the {\em Shilov boundary} of this rank two Hermitian group, that is the unique closed $\PSO(2,n+1)$-orbit in the boundary of the symmetric domain.

Positive triples and  positivity in the Shilov boundary \cite{Clerc:2003aa} play an important role in the theory of Hermitian symmetric spaces; of notable importance are the {\em positive loops}. Important examples of these are spacelike curves homotopic to $\S^1$ and specifically the {\em positive circles} which are boundaries at infinity in our compactification to totally geodesic embeddings of hyperbolic planes.  Then {\em semi-positive loops} are limits of positive loops in some natural sense (see 
paragraph~\ref{sss:PositiveSet} for precise definitions).

Surfaces in a pseudo-Riemannian space may have induced metrics of variable signatures. We are interested in this paper in {\em spacelike surfaces} in which the induced metric is positive everywhere. Among these are the {\em maximal surfaces} which are critical points of the area functional, for variations with compact support, see paragraph \ref{ss:MaximalSurfaces} for details. These maximal surfaces are the analogues of minimal surfaces in the Riemannian setting. An important case of those maximal surfaces in $\Hn$ are, again,  the totally geodesic surfaces which are isometric to hyperbolic planes.

We refer to the first two sections of this paper for precise definitions of what we have above described only roughly.

Our main Theorem is the following.

\begin{theo}[\sc Asymptotic Plateau problem]\label{t:MainTheo1}
Any semi-positive loop in $\bHn$ bounds a unique complete maximal surface in $\Hn$.
\end{theo}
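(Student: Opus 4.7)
The plan is a three-stage argument: first solve a compact Plateau problem for spacelike disks bounded by a positive spacelike curve, then extend to positive loops at infinity by an interior exhaustion, and finally reach semi-positive loops by approximation from above by positive ones; uniqueness will follow from a maximum principle.

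For the compact problem, I would minimize area among spacelike disks with fixed positive boundary curve. Positivity should supply natural barriers: every positive triple on $\bHn$ spans a totally geodesic hyperbolic plane, and an appropriate envelope of such planes produces a compact, \emph{spacelike-convex} region in which a minimizing sequence is confined. Standard measure-theoretic compactness and elliptic regularity then yield a smooth maximal disk. Exhausting a positive loop $\Lambda \subset \bHn$ from the inside by positive curves and taking a limit should give a complete maximal surface asymptotic to $\Lambda$; approximating a semi-positive loop $\Lambda_\infty$ by positive loops $\Lambda_k \to \Lambda_\infty$ and taking a further limit should yield the desired surface for $\Lambda_\infty$.

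The heart of the argument, and what I expect to be the main obstacle, is the compactness step: spacelike maximal surfaces in a pseudo-Riemannian target do not enjoy the a priori second fundamental form bounds available for Riemannian minimal surfaces, so a direct Gromov compactness argument is not immediate. Following the strategy flagged in the abstract, I would lift each maximal surface $\Sigma_k$ to its Gauss lift $\widehat\Sigma_k$ inside the Grassmannian $\GR{\Hn}$ of spacelike $2$-planes, which carries an almost complex structure making $\widehat\Sigma_k$ pseudo-holomorphic. A Gromov-type compactness theorem for pseudo-holomorphic curves, combined with trapping in a spacelike-convex region (which should rule out bubbling and loss of the spacelike character), provides a subsequential limit $\widehat\Sigma_\infty$ whose projection to $\Hn$ is the desired maximal surface. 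That the limit carries the prescribed asymptotic boundary requires matching barriers attached to positive circles approximating $\Lambda_\infty$, both to prevent the surface from escaping to infinity elsewhere and to force it to reach every point of $\Lambda_\infty$.

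Uniqueness rests on the maximum principle: a maximal surface is a graphical solution of a quasilinear elliptic equation over any nearby spacelike surface, so two such surfaces tangent from one side must coincide. Given two complete maximal surfaces with the same semi-positive asymptotic boundary, I would propagate this locally statement globally by sliding one surface toward the other along a family of equidistant spacelike hypersurfaces, verifying that the sliding closes up in a neighborhood of $\Lambda_\infty$. The subtle point, and probably the main remaining technical hurdle, is ensuring enough regularity of the surfaces as they approach $\Lambda_\infty$ for a boundary version of the maximum principle to apply; here positivity of the loop is likely to reappear, through tangent cone control at the boundary.
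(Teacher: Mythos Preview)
Your overall architecture---solve a compact Plateau problem, exhaust a positive loop at infinity by interior curves, then approximate semi-positive loops by positive ones---matches the paper's. The Gauss lift to pseudo-holomorphic curves is also the paper's central tool. But three of your key steps are implemented quite differently, and two of them contain genuine gaps.

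\textbf{Compact Plateau problem.} You propose to \emph{minimize} area. In signature $(2,n)$ these surfaces are \emph{maximal}: the second variation of area is strictly negative (Corollary~\ref{c:StabilityMaximalSurfaces}), so area has no minimum among spacelike competitors and a direct variational/GMT approach in the minimizing direction does not work. The paper instead runs a \emph{continuity method}: deform a strongly positive boundary curve to one lying in a hyperbolic plane (Lemma~\ref{l:DefStrongPos}), use the negative-definite second variation to show the linearized operator is invertible (openness), and use the compactness theorem for closedness. Your barrier/envelope idea is not needed; the surface is automatically trapped in the convex hull of its boundary (Proposition~\ref{p:MaximalSurfaceContainedInConvexHull}).

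\textbf{Compactness.} You invoke a Gromov-type compactness with bubbling analysis. The paper does not do this. It uses Gromov's \emph{Schwarz lemma} on the Gauss lift to get local $C^1$ control, and then obtains the crucial uniform bound on $\Vert\II\Vert$ by a rescaling argument: if $\Vert\II\Vert$ blows up, rescale so the limit lives in flat $\E^{2,n}$, where a Bernstein-type theorem (Theorem~\ref{theo:BernT}) forces the limit to be a plane, contradicting $\Vert\II\Vert=1$ at the basepoint. This replaces any bubbling analysis.

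\textbf{Uniqueness.} Your sliding argument would, as you note, require boundary regularity near a merely semi-positive loop, and that regularity is neither proved nor expected. The paper avoids this entirely: on $\Sigma_1\times\Sigma_2$ consider $B(x,y)=\langle x,y\rangle$; a direct computation gives $\Hess B(w,w)\geq 2B+2$ in some direction, and since $\Sigma_1\times\Sigma_2$ has curvature bounded below, a weak Omori-type maximum principle forces $\sup B\leq -1$, hence $\Sigma_1=\Sigma_2$. No boundary analysis is needed.
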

In this paper, a semi-positive loop is not necessarily smooth. Also note that a properly embedded surface might not be complete and so the completeness condition is not vacuous.

On the other hand, we will show in section~\ref{sec:Submanifolds} that complete spacelike surfaces limit on semi-positive loops in $\bHn$, and so Theorem~\ref{t:MainTheo1} may be understood as identifying semi-positivity as the condition on curves in $\bHn$ that corresponds to complete maximality for surfaces in $\Hn$.  

The uniqueness part of the  theorem is strikingly different from the corresponding setting in hyperbolic space where the uniqueness of solutions of the asymptotic Plateau problem fails in general for some quasi-symmetric curves  as shown by Anderson,  Wang and Huang \cite{Anderson:1983aa,Wang:2012aa,Huang:2015aa}.

As a tool in the theorem above, we also prove the following result, of independent interest, on the Plateau problem with boundary in $\Hn$. The relevant notion for curves is that of {\em strongly positive curves}, and among those the connected set of {\em deformable} ones which are defined in paragraph~\ref{ss:typeofcurves}.

\begin{theo}[\sc Plateau problem]\label{t:MainTheo2}
Any deformable strongly positive  closed curve in $\Hn$ bounds a unique compact complete maximal surface in $\Hn$.
\end{theo}
One of the original motivations for this paper comes from the \enquote{equivariant situation}.  Recall that $\G\defeq \PSO(2,n+1)$  is the isometry group of a Hermitian symmetric space $M$: the maximal compact subgroup of $\G$ has an $\mathsf{SO}(2)$ factor which is  associated to a line bundle $L$ over $M$. Thus a representation $\rho$ of the fundamental group of a closed orientable surface $S$ in $\G$ carries a {\em Toledo invariant}: the Chern class of the pull back of $L$ by any map equivariant under $\rho$ from the universal cover of $S$ to $M$ \cite{Toledo:1979}.  The {\em maximal representations} are those for which the integral of the  Toledo invariant achieves its maximal value. These maximal representations have been extensively studied, from the point of view of Higgs bundles, by Bradlow, Garc{\'i}a-Prada and Gothen \cite{Bradlow:2006} and from the perspective of bounded cohomology, by Burger, Iozzi and Wienhard  \cite{BurgerIozziWienhard}. In particular, a representation is maximal if and only if it preserves a positive continuous curve  \cite{Burger:2005,BurgerIozziWienhard}. Then Collier, Tholozan and Toulisse have shown that there exists a unique equivariant maximal surface with respect to a maximal representation in $\mathsf{PSO}(2,n+1)$ \cite{CTT}. This last result, an inspiration for our work, is now a consequence of Theorem \ref{t:MainTheo1}. 

We note that maximal surfaces in $\Hn$ were also considered in a work by Ishihara \cite{Ishihara}  -- see also Mealy \cite{Mealy}-- and that Yang Li has obtained results for the finite Plateau problems in the Lorentzian case \cite{YangLi}, while the codimension one Lorentzian case was studied by Bartnik and Simon in \cite{Bartnik}. Yang Li's paper contains many references pertinent to the flat case. Neither paper restricts to two spacelike dimensions.

Another motivation comes from the contemplation of  two other rank two groups: $\mathsf{SL}(3,\mathbb R)$ and $\mathsf{SL}(2,\mathbb R)\times \mathsf{SL}(2,\mathbb R)$, where we notice the latter group is isogenic to $\mathsf{PSO}(2,2)$.
\vskip 0.2truecm
\noindent{\em Affine spheres and $\mathsf{SL}(3,\mathbb R)$:} While maximal surfaces are the natural conformal variational problem for $\mathsf{SO}(2,n)$, the analogous problem in the setting of $\mathsf{SL}(3,\mathbb R)$ is that of affine spheres. Cheng and Yau \cite{Cheng:1986},  confirming a conjecture due to Calabi, proved that given any properly convex curve in the real projective space, there exists a unique affine sphere in $\mathbb R^3$ asymptotic to it. That result has consequences for the equivariant situation as well, due independently to Loftin and Labourie \cite{Loftin:2001,Labourie:2006b}. Our main Theorem \ref{t:MainTheo1} may be regarded as an analogue of the Cheng--Yau Theorem: both affine spheres and maximal surfaces (for $\SO(2,3)$)  are lifted as holomorphic curves -- known as cyclic surfaces in \cite{Labourie:2014wh} --  in $\mathsf G/\mathsf K_1$, where $\mathsf G$ is $\mathsf{SL}(3,\mathbb R)$ in the first case and $\mathsf{SO}(2,3)$ in the second,  and $\mathsf K_1$ is a compact torus. Moreover these holomorphic curves finally  project as minimal surfaces in the symmetric space of $\mathsf G$.
\vskip 0.2truecm
\noindent{\em The case $n=1$:} The case of $\mathsf{SL}(2,\mathbb R)\times \mathsf{SL}(2,\mathbb R)$ and maximal surfaces in ${\mathbf H}^{2,1}$ has been extensively studied by Bonsante and Schlenker \cite{BonsanteSchlenker} and written only in the specific case of quasi-symmetric boundaries values -- see also  Tamburelli \cite{Tamburelli2019, Tamburelli2020} for further extensions. Our main Theorem~\ref{t:MainTheo1} is thus a generalization in higher dimension of one of their main results.  {Also in the case of ${\mathbf H}^{2,1}$, we note that Bonsante and Seppi \cite{BonsanteSeppi} have shown the existence, for any $K<-1$, of a unique $K$-surface  extending a semi-positive loop in $\partial_\infty {\mathbf H}^{2,1}$.
\vskip 0.2truecm
\noindent{\em Lorentzian asymptotic Plateau problem:} There is also an interesting analogy with the work of Bonsante, Seppi and Smillie \cite{BonsanteSeppiSmillie1} in which they prove that, for every $H>0$, any regular domain $D$ in the $(n+1)$-dimensional Minkowski space contains a unique entire spacelike surface of constant mean curvature $H$ whose domain of dependence is $D$. Their work corresponds to the non-semisimple Lie group $\SO(n,1) \ltimes \R^{n,1}$. The similarities between the asymptotic behavior of their regular domains and our notion of semi-positive loops in $\bHn$ are striking, in that both only require a non-degeneracy over 2 or 3 points. 

\vskip 0.2truecm
In a subsequent paper \cite{QS}, the first two authors study the analogue of the Benoist--Hulin  result \cite{Benoist:2013vx} for convex geometry and study quasisymmetric positive curves and the relation with the associated maximal surface . In contrast, Tamburelli and Wolf study the case of \enquote{polygonal curves} in the $\mathbf H^{2,2}$ case, whose group of isometries is $\mathsf{SO}(2,3)$ which is isogenic to $\mathsf{Sp}(4,\mathbb R)$ \cite{TamburelliWolf}; there they prove results analogous to Dumas--Wolf \cite{Dumas:2014wz}. One goal in that work is to identify local limiting behavior of degenerating cocompact families of representations.

The proof of Theorem~\ref{t:MainTheo1} follows a natural outline.  We prove the uniqueness portion by relying on a version of the Omori maximum principle; the bulk of the proof is on the existence question. To that end, we approximate a semi-positive loop on $\bHn$ by semi-positive graphs in $\Hn$; as maximal surfaces in our setting are stable, we solve the Plateau problem for these with a continuity method, proving compactness theorems relevant to that situation.  We then need to show that these finite approximations converge, limiting on a maximal surface with the required boundary values. Thus, much of our argument comes down to obtaining compactness theorems with control on the boundary values.  Some careful analysis of this setting allows us to restrict the scope of our study to disks and semi-disks. Then, the main new idea here is to use the {\it Gauß lift} of the surfaces, to an appropriate Grassmannian, which are shown to be pseudo-holomorphic curves. We can then use Schwarz lemmas to obtain 
\begin{enumerate}
	\item first a compactness theorem under a bound on the second fundamental form,
	\item then after a rescaling argument using a Bernstein-type theorem in the rescaled limit $\bR$ of $\Hn$, a uniform bound on the second fundamental form.
\end{enumerate}

We would like to thank specifically Andrea Tamburelli for pointing out the use of Omori Theorem in this setting, Alex Moriani, Enrico Trebeschi, Fanny Kassel and the referee  for pointing out various mistakes in an earlier version, as well as useful comments by Dominique Hulin, Fanny Kassel, Qiongling Li, John Loftin,  Raffe Mazzeo, Anna Wienhard and Tengren Zhang. Helmut Hofer provided crucial references for the pseudo-holomorphic appendix and we would like to especially thank him here.

\subsection{Structure of this article} 
\begin{enumerate}
\item In section \ref{sec:PseudoHyperbolic Geometry}, we describe the geometry of the pseudo-hyperbolic space $\Hn$, and its boundary at infinity, the Einstein universe $\bHn$. There we carefully define positive and semi-positive curves in $\bHn$.
	\item In section \ref{sec:Submanifolds}, we discuss curves and surfaces in $\Hn$. In particular we introduce maximal surfaces and show that they may be interpreted as holomorphic curves. We also discuss spacelike curves and various notions related to them.
	\item In section \ref{sec:Uniqueness}, we prove the uniqueness part of our two main theorems.
	\item In section \ref{sec:MainCompactness}, we prove, using the holomorphic curve interpretation, a crucial compactness theorem for maximal surfaces. We feel this is of some independent interest. 
	\item In section \ref{sec:Specific}, we describe different consequences of our main compactness Theorem, whose formulations we will use in the proof of Theorem~\ref{t:MainTheo1}.
	\item In section \ref{sec:Plateau}, we prove the Plateau Theorem \ref{t:MainTheo2} by the continuity method, relying on the both the stability of the maximal surface and a compactness consequence from section~\ref{sec:Specific}.
	\item In section \ref{s:ProofsMainTheorems} we prove the Asymptotic Plateau Theorem \ref{t:MainTheo1} using the Plateau Theorem \ref{t:MainTheo2}, an exhaustion procedure, and the results in section~\ref{sec:Specific}.
	\item In the Appendices \ref{app:bg} and \ref{app:phol}, we describe the notion of {\em bounded geometry} and prove the relevant results needed for the holomorphic curve interpretation. We expect that last appendix has some independent interest.
\end{enumerate}

\section{Pseudo-hyperbolic geometry}\label{sec:PseudoHyperbolic Geometry}

In this section, we describe the basic geometry of the pseudo-hyperbolic space and its boundary, the Einstein universe. Part of the material covered here can be found in \cite{charette,CTT,DKG}. This section consists mainly of definitions.

\subsection{The pseudo-hyperbolic space}

In this paper, we will denote by $E$ a vector space equipped with a non-degenerate quadratic form $\q$ of signature $(2,n+1)$. The group $\mathsf{O}(E)$ of linear transformations of $E$ preserving $\q$ has four connected components, and we will denote by $\G:=\SO_0(E)$ the connected component of the identity. The group $\G$ is isomorphic to $\SO_0(2,n+1)$.

\begin{definition}
The \emph{pseudo-hyperbolic space} $\Hn$ is the space of negative definite lines in $E$, namely
\[\Hn\defeq\mathbf{P}\left(\{ x\in E\mid~\q(x)<0\}\right)\subset \PE\   \ \ .\]  
\end{definition}

The pseudo-hyperbolic space $\Hn$ is naturally equipped with a signature $(2,n)$ pseudo-Riemannian metric $\g$ of curvature $-1$.
The group $\G$ acts by isometries on $\Hn$ and the stabilizer of a point contains a group isomorphic to   $\SO_0(2,n)$ as an index two subgroup. In particular, $\Hn$ is a (pseudo-Riemannian) symmetric space of $\G$.

\subsubsection{Geodesics and acausal sets}\label{sss:Geodesics}

Complete geodesics are intersections of projective lines with $\Hn$. Any two distinct points $(x,y)$ lie on a unique complete geodesic. We parametrize a geodesic by parallel tangent vectors.

A geodesic $\gamma$, which is the intersection of the projective line ${\mathbf P}(F)$ with $\Hn$, can be  of three types:
\begin{enumerate}
\item \emph{Spacelike geodesics}, when $F$ has signature $(1,1)$, or  equivalently $\q(\dot\gamma)$ is positive.
\item \emph{Timelike geodesics}, when $F$ has  signature $(0,2)$, or equivalently $\q(\dot\gamma)$ is negative.
\item \emph{Lightlike geodesics}, when $F$ is degenerate, or equivalently $\q(\dot\gamma)=0$. \end{enumerate}

A geodesic segment is the restriction of a parametrized complete geodesic to the segment $[0,1]$. Two distinct points $(x,y)$ are extremities of a geodesic segment, which is unique unless the corresponding complete geodesic is timelike (in which case there are exactly two such geodesic segment). 

We say the pair of points $(x,y)$ is {\em acausal} if they are the extremities of a spacelike geodesic segment $\gamma$. We then define its {\em spatial distance} as 
$$
\pd (x,y)\defeq \int_0^1 \sqrt {\q\left(\dot{\gamma}\right)}\  {\rm d} t\ .
$$
A subset $U$ of $\Hn$ is {\em acausal} if every pair of distinct points in $U$ is acausal. 

\subsubsection{Hyperbolic planes} \label{sss:HyperbolicPlanes} A {\em hyperbolic plane $H$} in $\Hn$ is the intersection of $\Hn$ with a projective plane $\mathbf{P}(F)$ where $F$ is a three-dimensional linear subspace of signature $(2,1)$. The spatial distance $\pd$ restricts to the hyperbolic distance on any hyperbolic plane.

A {\em pointed hyperbolic plane $\Pp$} is a pair $(q,H)$ where $H$ is a hyperbolic plane and $q\in H$. A pointed hyperbolic plane is equivalent to the datum of  an orthogonal decomposition $E=L\oplus U \oplus V$ where $L$ is a negative definite line, $U$ a positive definite $2$-plane and $V=(L\oplus U)^\bot$. 

\subsubsection{The double cover}
In the sequel, we will often work with the space
\[\hHn = \{x \in E,~\q(x)=-1 \}  \ \ .\]  
The natural projection $\mathbf{P}: E\setminus\{0\} \to \PE$ restricts to a double cover $\hHn \to \Hn$.

The tangent space $\T_x\hHn$ is canonically identified with $x^\bot$. The restriction of $\q$ to $\T_x\hHn$ equips $\hHn$ with the signature $(2,n)$ pseudo-Riemannian metric such that the cover $\hHn \to \Hn$ is a local isometry. We still denote this metric by $\g$.

Complete geodesics in $\hHn$ are connected components of lifts of complete geodesics in $\Hn$. As in $\Hn$, we parametrize complete geodesics with parallel tangent vectors. A geodesic segment in $\hHn$ is the restriction of a (parametrized) complete geodesic to the segment $[0,1]$. A pair of distinct points $(x,y)$ in $\hHn$ is \emph{acausal} if $x$ and $y$ are joined by a spacelike geodesic segment, and a subset $U$ of $\hHn$ is acausal if any pairwise distinct points of $U$ are acausal.

The incidence geometry of $\hHn$ is more subtle than that of $\Hn$. To describe it, first observe that the preimage of a complete geodesic $[\gamma]$ in $\Hn$ has one connected component if $\gamma$ is timelike and two connected components otherwise. Given two distinct points $x$ and $y$ in $\hHn$, we denote by $[x]$ and $[y]$ their respective image image in $\Hn$. 

We distinguish the following cases:
\begin{enumerate}
\item[\emph{Case 1:}] when $[x]=[y]$, that is if $x=-y$. Then any complete timelike geodesic in $\Hn$ passing through $[x]$ lifts to two geodesic segments between $x$ and $y$. In particular, there are infinitely many geodesic segments between $x$ and $y$. 
\item[\emph{Case 2:}] when $[x]\neq [y]$ and the complete geodesic passing through them is timelike. Then there is a unique geodesic segment between $[x]$ and $[y]$ having a lift with extremities are $x$ and $y$. In particular, there is a unique geodesic segment between $x$ and $y$.
\item[\emph{Case 3:}] If $[x]\neq [y]$ and the geodesic $\gamma$ passing through them is spacelike. Then either $x$ and $y$ lie on the same connected component of the preimage of $\gamma$, in which case there is a unique geodesic segment between $x$ and $y$, or they lie in different connected components in which case there is no geodesic segment between $x$ and $y$.
\item[\emph{Case 4:}] If $[x]\neq [y]$ and the geodesic $\gamma$ passing through them is lightlike. Then either $x$ and $y$ lie on the same connected component of the preimage of $\gamma$, in which case there is a unique geodesic segment between $x$ and $y$, or they lie in different connected components in which case there is no geodesic segment between $x$ and $y$.
\end{enumerate}

The different situations are easily described using the scalar product $\langle x,y\rangle$ of the points $x$ and $y$ associated to the quadratic form $\q$.

\begin{lemma}\label{l:GeodesicTypeAndScalarProduct}
Consider two distinct points $x$ and $y$ in $\hHn$.
\begin{enumerate}
	\item There is a spacelike geodesic segment between $x$ and $y$ (that is, the pair $(x,y)$ is acausal) if and only if $\langle x,y\rangle<-1$ .
	\item There is a unique timelike geodesic segment between $x$ and $y$ if and only if $ \vert \langle x,y\rangle\vert < 1$ .
	\item There is a lightlike geodesic segment between $x$ and $y$ if and only if $ \langle x,y\rangle  = -1$ .
\end{enumerate}
Three points $(x_1,x_2,x_3)$ lies in a hyperbolic plane if and only if for any $i\neq j$ we have $\langle x_i,x_j\rangle<-1$ and \begin{equation}2\langle x_1,x_2\rangle \langle x_1,x_3\rangle \langle x_2,x_3\rangle + \langle x_1,x_2\rangle^2+\langle x_1,x_3\rangle^2+\langle x_2,x_3\rangle^2 < 1 \label{eq:3inHyp}~ .\end{equation}
\end{lemma}

\begin{proof} Items (i), (ii) and (iii) correspond to cases different from \emph{Case 1} described above. In particular, the set  $\{x,y\}$ spans a 2-plane $F$ in which the matrix of the quadratic form is 
given by $$\left(\begin{array}{ll} -1 & \langle x,y\rangle \\ \langle x,y\rangle & -1 \end{array}\right)\ ,$$
whose determinant is equal to $\delta\defeq 1-\vert \langle x,y\rangle\vert^2$.

Item (ii) corresponds to \emph{Case 2} described above. In particular, this happens if and only if the plane $F$ has signature $(0,2)$, that is if and only if $\delta>0$ (the case of signature $(2,0)$ is impossible since $F$ contains negative definite vectors).

Item (i) is a particular situation in \emph{Case 3}, thus a necessary condition is to have $\delta<0$, meaning that $\vert \langle x,y\rangle\vert>1$. The two different connected components of the preimage of the geodesic between $[x]$ and $[y]$ are distinguished by the sign of the function $\langle x,.\rangle$. This sign is negative on the connected component containing $x$.

Item (iii) is a particular situation in \emph{Case 4}, thus a necessary condition is to have $\delta=0$, meaning that $\vert \langle x,y\rangle \vert =1$. Similarly to item (i), the connected component of the preimage of the geodesic between $[x]$ and $[y]$ are distinguished by the sign of the function $\langle x,.\rangle$. 

For the last statement, observe that $(x_1,x_2,x_3)$ lie in a hyperbolic plane if and only if for any $i\neq j$ the points $x_i$ and $x_j$ are joined by a spacelike geodesic segment and the 3-plane $P$ spanned by $x_1,x_2$ and $x_3$ has signature $(2,1)$. Since the subspace of $P$ spanned by $x_1$ and $x_2$ has signature $(1,1)$, then $P$ has signature $(2,1)$ if and only if $\det \left((\langle x_i,x_j\rangle_{1\leq i,j\leq 3} \right) < 0$ which is equivalent to the condition \eqref{eq:3inHyp}.
\end{proof}

Similarly, a (pointed) hyperbolic plane in $\hHn$ is a connected component of a lift of a (pointed) hyperbolic plane in $\Hn$. A pointed hyperbolic plane in $\hHn$ thus corresponds to an orthogonal decomposition $E=L\oplus U \oplus V$ where $L$ is an  oriented negative definite line, $U$ is a positive definite plane and $V=(L\oplus U)^\bot$.

\subsection{Pseudo-spheres and horospheres}\label{ss:pseudosphere}
We describe here the geometry of  pseudo-spheres, and (pseudo)-horospheres  which are  counterparts in pseudo-hyperbolic space of the corresponding hyperbolic notions. 

\subsubsection{Pseudo-sphere} Let $F$ be an $(n+2)$-dimensional real vector space equipped with a quadratic form $\q_F$ of signature $(2,n)$.  The \emph{pseudo-sphere} is 
\[\S^{1,n} \defeq \{ x\in F,~\q_F(x)=1 \}  \ \ .\]  
The pseudo-sphere $\S^{1,n}$ is equipped with a pseudo-Riemannian metric $\g_{\S^{1,n}}$ of curvature $+1$ and  signature $(1,n)$. This metric is invariant under the action of the group $\SO_0(F)$ which is isomorphic to $\SO_0(2,n)$.

\subsubsection{Horosphere} Let us return to the basic case where $E$ is equipped with a signature $(2,n+1)$ quadratic form $\q$. The \emph{null-cone} of $E$ is
\[\mathcal{N}(E) = \big\{ v\in E\setminus \{0\},~\q(v)=0\big\}  \ \ .\]  
Given a point $v\in \mathcal{N}(E)$, the set $P_v = \{x\in E,~\langle x,v\rangle = -1\}$ is a (degenerate) affine hyperplane whose direction is $v^\bot$. The corresponding \emph{horosphere} is
\[\mathcal{H}(v) = P_v\cap \hHn  \ \ .\]  
We also refer to the projection of $\mathcal{H}(v)$ in $\Hn$ as a  horosphere (and denote it the same way).

Given a point $u\in \mathcal{H}(v)$, denote by $F$ the linear subspace of $E$ orthogonal to $u$ and $v$. The restriction $\q_{\vert F}$ of the quadratic form $\q$ to $F$ has signature $(1,n)$. Consider the map
$$f: \left\{\begin{array}{llll}
& F & \longmapsto & E \ , \\
& x & \longmapsto & u+x-\frac{\q_{\vert F}(x)}{2}\cdot v\ . \end{array}\right.$$
One easily checks that $f$ is a diffeomorphism between $F$ and $\mathcal H(v)$.

Moreover, since ${\rm D}_xf(h)=h-\langle x,h\rangle \cdot v$ and $v$ is isotropic, the pull-back by $f$ of the induced metric on $\mathcal H(v)$ (which is induced by $\q$) is equal to $\q_{\vert F}$. Thus $\mathcal{H}(v)$ is isometric to the pseudo-Euclidean space $\E^{1,n}$ of signature $(1,n)$. 

\subsubsection{Horospheres as limits of pseudo-spheres} Let $x$ be a point in $\hHn$ (the picture is similar in $\Hn$). Let 
\[\T^1_x\hHn \defeq \left\{ v\in \T_x\hHn,~\q(v)=1\right\}  \ \ .\]  

Since the restriction of $\q$ to $\T_x\hHn$ has signature $(2,n)$, the space $\T^1_x\hHn$ is isometric to $\S^{1,n}$ and its metric is $\Stab_\G(x)$ invariant. We will thus denote it by $\g_{\S^{1,n}}$.

For $\rho$ positive, the exponential map $\exp_x(\rho.)$ restricts to a diffeomorphism between $\T^1_x\hHn$ and the hypersurface 
\[\beta(x,\rho):=\big\{y\in\hHn,~\pd(x,y)=\rho \big\} \ .\]   
Because the restriction of $\g$ to $\beta(x,\rho)$ is also $\Stab_\G(x)$-invariant, there exists a positive number $\lambda(\rho)$ such that $\exp(\rho.)^* \g = \lambda(\rho) \g_{\S^{1,n}}$. Using the same calculation as in classical hyperbolic geometry, one sees that $\lambda(\rho)=\sinh(\rho)$. As a result, $\beta(x,\rho)$ is an umbilical hypersurface of signature $(1,n)$ whose induced metric has sectional curvature $\sinh^{-2}(\rho)$.

Let $\seqk{\rho}$ be a sequence  of positive numbers tending to infinity, and for any $k$, let $x_k$ be a point in $H_k\defeq\beta(x,\rho_k)$. Observe that $H_k$, being a 
non-degenerate hypersurface,  has a canonical normal framing, as in definition \ref{def:framing}. Let $g_k$ in $\G$ map $\T_{x_k}H_k$ to a fixed vector space $V_0$  in $\T_{x_0}\Hn$.  Then  $g_k (H_k)$ converges to the horosphere  passing through $x_0$ and tangent  to $V_0$.  We will need in Proposition \ref{l:ExhaustionCurves} the fact that this convergence is in  the sense of Appendix \ref{app:ConvPseudoRiem}).

\subsection{Grassmannians}\label{ss:Grassmannians}

\subsubsection{Riemannian symmetric space}\label{ss:RiemannianSymmetricSpace} We summarize some of the properties of the Riemannian symmetric space of $\G$.

\begin{proposition}
The Riemannian symmetric space  of $\G$ is isometric to the Grassmannian $\Gr{E}$ of oriented $2$-planes in $E$ of signature $(2,0)$.
\end{proposition}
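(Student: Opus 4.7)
The plan is to identify $\Gr{E}$ with the homogeneous space $\G/\K$ for $\K$ a maximal compact subgroup, and then observe that both natural Riemannian structures come from the same $\G$-invariant metric, which is essentially unique.

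First I would fix an oriented positive definite $2$-plane $U_0 \subset E$ and let $V_0 \defeq U_0^\bot$, which has signature $(0,n+1)$. Since $E$ decomposes orthogonally as $U_0 \oplus V_0$ with $U_0$ positive definite of rank $2$ and $V_0$ negative definite of rank $n+1$, Witt's theorem for non-degenerate forms immediately gives that $\mathsf{O}(E)$ acts transitively on the set of positive definite oriented $2$-planes; a standard connectedness argument (using that $\SO(V_0) \cong \SO(n+1)$ is connected for $n \geq 0$, and any path of oriented $2$-planes can be lifted) shows that the identity component $\G = \SO_0(E)$ still acts transitively on $\Gr{E}$.

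Next I would compute $\K \defeq \Stab_\G(U_0)$. Any element preserving $U_0$ preserves $V_0$, hence belongs to $\mathsf{O}(U_0) \times \mathsf{O}(V_0)$; requiring the orientation on $U_0$ to be preserved and the total element to lie in $\G = \SO_0(E)$ yields
\[
\K \;=\; \SO(U_0) \times \SO(V_0) \;\cong\; \SO(2) \times \SO(n+1),
\]
which is a maximal compact subgroup of $\SO_0(2,n+1)$. Consequently the orbit map $g \mapsto g \cdot U_0$ induces a $\G$-equivariant diffeomorphism $\G/\K \to \Gr{E}$.

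Finally, to upgrade this to an \emph{isometry}, I would note that the Cartan involution $\theta$ of $\mkg = \so(E)$ associated to the decomposition $E = U_0 \oplus V_0$ (multiplication by $+1$ on $\so(U_0)\oplus\so(V_0)$ and by $-1$ on the off-diagonal part $\Hom(U_0,V_0)$) has fixed point algebra $\k$ and gives the Cartan decomposition $\mkg = \k \oplus \p$. The restriction of (a negative multiple of) the Killing form to $\p$ descends to a $\G$-invariant Riemannian metric on $\G/\K$; transported along the orbit map, this is precisely the natural $\G$-invariant metric on $\Gr{E}$ (explicitly, at $U_0$, it is the metric on $\p \cong \Hom(U_0,V_0)$ induced by the restrictions of $\q$ to $U_0$ and $-V_0$). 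Since the $\G$-invariant Riemannian metric on an irreducible symmetric space is unique up to scale, this identifies $\Gr{E}$ with the Riemannian symmetric space of $\G$.

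The only slightly delicate step is checking the connectedness claim for the $\G$-action (as opposed to the $\mathsf{O}(E)$-action) and the matching of orientations; everything else is a direct application of the Cartan description of a Riemannian symmetric space.
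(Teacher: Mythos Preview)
Your proposal is correct and follows the same approach as the paper: show that $\G$ acts transitively on $\Gr{E}$ with stabilizer $\SO(2)\times\SO(n+1)$, a maximal compact subgroup. The paper's proof is a two-line sketch of exactly this; you have simply supplied the details (Witt's theorem for transitivity, the Cartan involution for the metric identification, and the uniqueness-up-to-scale of the invariant metric), which the paper omits.
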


\begin{proof} The group $\G$ acts transitively on $\Gr{E}$ and the stabilizer of a point  is isomorphic to $\SO(2)\times \SO(n+1)$ which is a maximal compact subgroup of $\G$. This realizes $\Gr{E}$ as the Riemannian symmetric space of $\G$. \end{proof}

Since the maximal compact subgroup of $\G$ contains $\SO(2)$ as a factor, $\Gr{E}$ is a Hermitian symmetric space.  

The corresponding Kähler structure may be described this way. Let $P$ be a point in $\Gr{E}$:
\begin{itemize}
	\item  the tangent space $\T_P\Gr{E}$ at $P$ is identified with $\Hom(P,P^\bot)$. The {\em Riemannian metric} $h_P(\cdot,\cdot)$ at $P$ is defined for $\varphi\in\Hom(P,P^\bot)$ by 
  $$
 h_P(\varphi,\varphi)\defeq  -\tr(\varphi^*\varphi)\ ,
 $$
where $\varphi^*: P^\bot \to P$ is the adjoint of $\varphi$ using $\q$. Note that since  $\q$ is negative definite on $P^\bot$, we have $\tr(\varphi^*\varphi)\leq 0$. 
\item Since the plane $P$ is oriented, it carries a canonical complex structure $J$: the rotation by angle $\pi/2$.  Precomposition by $J$ defines a {\em complex structure} on $\Hom(P,P^\bot)=\T_P\Gr{E}$, hence a $\G$-invariant almost complex structure on $\Gr{E}$. This almost complex structure is the complex structure associated to the Hermitian symmetric space $\Gr{E}$.  
\end{itemize} 
 
By a theorem of Harish-Chandra (see for instance \cite{Clerc:2003aa}),  $\Gr{E}$ is biholomorphic to a bounded symmetric domain in $\C^{n+1}$.

Note that a point $P$ in $\Gr{E}$ gives rise to an  orthogonal splitting $E=P\oplus P^\bot$. We denote by $\pi_P$ the orthogonal projection from $E$ to $P$. The following lemma is straightforward.

\begin{lemma}\label{l:UniformlyBilipschitz}
Given a compact set $K$ in $\Gr{E}$, there exists a constant $C$, with $C\geq 1$ such that for any $Q$ and $P$ in $K$ and $v\in Q$,
\[C^{-1}\Vert \pi_P(v)\Vert \leq \Vert v\Vert \leq \Vert \pi_P(v)\Vert  \ \ .\]  
\end{lemma}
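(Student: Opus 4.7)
The plan is to reduce the estimate to a continuity plus compactness argument, with the key linear-algebraic input being that any two positive definite 2-planes project isomorphically onto one another.

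First I would unpack the notation: for a point $P\in\Gr{E}$, the quadratic form $\q$ restricts to a positive definite inner product on $P$, and $\|\cdot\|$ denotes the associated Euclidean norm (and similarly for $Q$). Under this convention, the statement is that the restriction $\pi_P|_Q\colon Q\to P$ is a bi-Lipschitz linear isomorphism, with bi-Lipschitz constant uniformly controlled in terms of $K$.

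Next I would establish the crucial algebraic fact: for any two planes $P,Q\in\Gr{E}$, the map $\pi_P|_Q$ is a linear isomorphism. Indeed, the splitting $E=P\oplus P^\bot$ has $P$ of signature $(2,0)$ and $P^\bot$ of signature $(0,n+1)$, in particular $\q$ is negative definite on $P^\bot$. If $v\in Q\setminus\{0\}$, then $\q(v,v)>0$ since $Q$ is positive definite, so $v\notin P^\bot$, hence $\pi_P(v)\neq 0$. Thus $\pi_P|_Q$ is injective, and since $\dim Q=\dim P=2$, it is a linear isomorphism.

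Finally I would run the compactness argument. The two functions
\[
\mu(P,Q)\defeq \sup_{v\in Q,\,\|v\|=1}\|\pi_P(v)\|,\qquad
\nu(P,Q)\defeq \sup_{v\in Q,\,\|v\|=1}\|(\pi_P|_Q)^{-1}\pi_P(v)\|^{-1}\cdot \|v\|
\]
(i.e.\ the operator norms of $\pi_P|_Q$ and of its inverse) are continuous functions of $(P,Q)$ on $\Gr{E}\times\Gr{E}$, since the assignments $P\mapsto \pi_P$, and $(P,Q)\mapsto\pi_P|_Q$, vary smoothly, and by the previous step the inverse exists everywhere. Restricted to the compact set $K\times K$, each of $\mu$ and $\nu$ attains a finite maximum. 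Taking $C$ to be the larger of these two maxima (and at least $1$), we obtain
\[
\|\pi_P(v)\|\leq C\|v\|\qquad\text{and}\qquad \|v\|\leq C\|\pi_P(v)\|
\]
for all $P,Q\in K$ and $v\in Q$, which is exactly the desired inequality.

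There is no real obstacle here: the only subtle point is checking that $\pi_P|_Q$ is invertible for arbitrary $P,Q\in\Gr{E}$ (not merely for $P,Q$ nearby), which is what prevents the continuity/compactness step from degenerating.
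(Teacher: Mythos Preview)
Your proof is correct and follows the same overall compactness-and-continuity scheme as the paper. The one difference worth noting is that the paper observes the right-hand inequality $\|v\|\leq C\|\pi_P(v)\|$ actually holds with $C=1$ for \emph{all} $P,Q\in\Gr{E}$, not just those in $K$: writing $v=\pi_P(v)+w$ with $w\in P^\bot$, one has $\q(v)=\q(\pi_P(v))+\q(w)\leq \q(\pi_P(v))$ since $P^\bot$ is negative definite, so $\pi_P$ is length non-decreasing on positive vectors. Thus only the left inequality genuinely requires compactness. You implicitly use the same signature observation to show $\pi_P|_Q$ is injective, but you then appeal to compactness for both bounds rather than extracting the sharper universal estimate; this is perfectly fine, just slightly less economical.
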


\begin{proof}
The inequality on the right comes from the fact that $P^\bot$ is negative definite, so $\pi_P$ is length non decreasing.  The inequality on the left comes from the compactness of $K$.
\end{proof}

\subsubsection{Grassmannian of a pseudo-Riemannian space} In this paragraph  $(M,g)$ will be a pseudo-Riemannian manifold of signature $(2,n)$.

The {\em Grassmannian $\GR{M}$ of positive definite 2-planes} in $M$ is the fiber bundle $\pi: \GR{M} \to M$ whose fiber over a point $x\in M$ is the Riemannian symmetric space $\Gr{\T_xM}$.

Observe that, $\pi: \GR{M} \to M$ has a {\em horizontal distribution} given by the parallel transport, giving a splitting
\[\T_{(x,P)}\GR{M} = \T_x M \oplus \Hom(P,P^\bot)=P\oplus P^\bot\oplus \Hom(P,P^\bot)  \ \ .\]  

This splitting allows us to define the {\em canonical Riemannian metric} $g$ on $\GR{M}$ given at a point $(x,P)\in\GR{M}$ by
\[g\defeq\left(g^0\mid_P,-g^0\mid_{P^\bot},h_P\right) \ ,\] 

where  $h_P$ is the Riemannian metric on the fiber described above, and $g^0$ is the metric on $\T_x M$. Let us also define for all positive $\lambda$, the {\em  renormalized metric}
\[g_\lambda \defeq\left(\frac{1}{\lambda} g^0\mid_P,-\frac{1}{\lambda} g^0\mid_{P^\bot},h_P\right) \ .\]

\subsubsection{The Grassmannian of ${\mathbf H}^{2,n}$} When $M=\Hn$, we have already remarked in paragraph~\ref{sss:HyperbolicPlanes} that a point in $\GR{\Hn}$ is identified with an orthogonal splitting $E=L\oplus U \oplus V$ where $L$ is a negative definite line, $U$ a positive definite plane and $V=(L\oplus U)^\bot$. The exponential map thus naturally identifies $\GR{\Hn}$ with the space of pointed hyperbolic planes in $\Hn$. We will later on freely use this identification.

Up to an index two subgroup, the stabilizer of a point $(q,P)$ in $\GR{\Hn}$ is isomorphic to $\SO(2)\times\SO(n)$. The projection
\[\left\{\begin{array}{llll} 
\GR{\Hn} & \longrightarrow & \Gr{E} \ ,\\
L\oplus U\oplus V & \longmapsto & U\ . \end{array}\right.\]
is a $\G$-equivariant proper Riemannian submersion when $\GR{\Hn}$ is equipped with the canonical Riemannian metric.

Similarly, a point in $\GR{\hHn}$ corresponds to a pointed hyperbolic plane in $\hHn$.

\subsubsection{A geometric transition}\label{sec:renogr}

Let $\lambda$ be a positive number. We denote by $\Hn_\lambda$ the space $\Hn$ equipped with the metric $\g_\lambda=\frac{1}{\lambda} \g$ where $\g$ is the metric on $\Hn$. Then we have

\begin{proposition}\label{pro:GeomTrans}
\begin{enumerate}
	\item The Riemaniann manifold $\GR{\Hn_\lambda}$ is isometric to $\GR{\Hn}$ equipped with the normalized metric $g_{\lambda}$.
	\item When $\lambda$ tends to $0$ the Riemaniann manifold $\GR{\Hn_\lambda}$ converges in the sense of Appendix \ref{app:ConvPseudoRiem} to $\GR{\E^{2,n}}$ where $\E^{2,n}$ is pseudo-Euclidean space of signature $(2,n)$. 
\end{enumerate}
\end{proposition}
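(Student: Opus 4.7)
I would prove both parts by direct computation from the definition of the canonical Riemannian metric on $\GR{M}$, together with a pointed smooth convergence argument for the limit.

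For part (i), the first observation is that positive definiteness of a $2$-plane is a conformal condition, so $\GR{\Hn_\lambda}$ and $\GR{\Hn}$ have the same underlying fiber bundle, identified by the identity on $\Hn$ and on each fiber $\Gr{\T_x\Hn}$. The canonical metric of $\GR{\Hn_\lambda}$ reads $(\g_\lambda|_P,-\g_\lambda|_{P^\bot},h_P^{(\lambda)})$, where $h_P^{(\lambda)}$ is the fiber metric built from $\g_\lambda$. The two base blocks scale by a global factor in the obvious way, so the content of the statement lies in the fiber block: one must check that $h_P^{(\lambda)}=h_P$. This is immediate from scale invariance of the adjoint: if $\tilde\g=c\g$ and $\varphi^*$ is the $\g$-adjoint of $\varphi\in\Hom(P,P^\bot)$, then $\tilde\g(\varphi^* v,p)=c\,\g(\varphi^* v,p)=c\,\g(v,\varphi p)=\tilde\g(v,\varphi p)$, so $\varphi^*$ is also the $\tilde\g$-adjoint and $-\tr(\varphi^*\varphi)$ is unchanged.

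For part (ii), the underlying fact is that $\Hn_\lambda$ has constant sectional curvature $-\lambda$, which tends to $0$, so a flat limit is expected. To make this precise I would fix $q_0\in\Hn$, work in $\g$-normal coordinates in which $\g=\eta+O(|x|^2)$ (with $\eta$ the flat signature-$(2,n)$ quadratic form on $\T_{q_0}\Hn$), and set $y\defeq x/\sqrt{\lambda}$. A short computation then gives
\[
\g_\lambda = \eta(dy,dy)+O(\lambda\,|y|^2),
\]
establishing $C^\infty$ pointed convergence $(\Hn_\lambda,q_0)\to(\bR,0)$. To lift this convergence to the Grassmannians I would use a $\lambda$-independent family of local trivializations of $\GR{\Hn}\to\Hn$, built from parallel transport of a reference positive $2$-plane $P_0\subset\T_{q_0}\Hn$ along $\g$-geodesics from $q_0$ together with a fixed framing of the symmetric space fiber at $q_0$. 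Under these trivializations, the base blocks converge by the normal coordinate computation, and the fiber block is $\lambda$-independent by part (i), so the full canonical metric on $\GR{\Hn_\lambda}$ converges in $C^\infty_{\mathrm{loc}}$ to that of $\GR{\bR}$.

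The main obstacle is purely organisational: coherently packaging the rescaled identifications of the bases and of the local trivializations, and stating precisely the mode of convergence for the family of Riemannian manifolds $\GR{\Hn_\lambda}$. The underlying analytic content is entirely captured by the invariance of $h_P$ in part (i) and by the standard fact that the rescaled limit of a constant-curvature space is flat.
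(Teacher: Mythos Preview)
Your proposal is correct and follows the same line as the paper's proof, which is extremely terse: the paper simply notes that the fiber metric $h_P$ on $\Hom(P,P^\bot)$ is a conformal invariant (your explicit adjoint computation makes this precise), and declares part~(ii) ``standard''. Your pointed-convergence argument via rescaled normal coordinates is a reasonable way to unpack what the paper leaves implicit, and the observation that the fiber block is $\lambda$-independent (by part~(i)) so that only the base convergence needs checking is exactly the right organisation.
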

Observe that, even if the notion of convergence of Riemannian manifolds described in Appendix \ref{app:ConvPseudoRiem} requires the choice of a point, since the manifolds $\GR{\Hn_\lambda}$ and $\GR{\E^{2,n}}$ are homogeneous, this choice is not needed here.
We might write the first item in terms of our notation as stating that the two metric spaces $(\GR{\Hn, \g_{\lambda}}, g)$ and $(\GR{\Hn, \g}, g_{\lambda})$ are isometric.

\begin{proof} The first statement comes from the fact that the metric on $\Hom(P,P^\bot)$ is a conformal invariant. The second statement is standard.
\end{proof}
	
We will call $\GR{\Hn_\lambda}$ the {\em renormalized Grassmannian}.

\subsection{Einstein universe}\label{ss:EinsteinUniverse}

The {\em Einstein universe} is the  boundary of $\Hn$ in $\PE$: 
\[\bHn \defeq \big\{ x\in\PE~,~\q(x)=0\big\}\ .\]
Associated is a compactification:
$$
\cHn\defeq\Hn\cup\bHn.
$$
The group $\G$ acts transitively on $\bHn $ and the stabilizer of a point in $\bHn $ is a maximal parabolic subgroup. As for $\Hn$, we will often discuss the double cover of the boundary at infinity as well as the associated compactification
\begin{eqnarray*}
	\partial_\infty \hHn &\defeq& \big\{ x \in \P_+(E)~,~\q(x)=0\big\}\ , \\
	\chHn&\defeq& \hHn\cup\partial_\infty \hHn\ ,
\end{eqnarray*}
where $\P_+(E)=(E\setminus\{0\})/\R_+$ is the set of rays in $E$. We will consider $\partial_\infty\hHn$ as the boundary of $\hHn$.

\subsubsection{Photons, circles and lightcone}\label{sss:PhotonsCircles}

Let us first define some subsets of $\bHn$.
\begin{enumerate}
\item A \emph{photon} or {\em lightlike line} in $\bHn$ is the projectivization of an isotropic $2$-plane in $E$.

\item A \emph{spacelike circle} (respectively {\em timelike}) is the intersection of $\bHn $ with  the projectivisation of a subspace of signature $(2,1)$ (respectively $(1,2)$). Equivalently, a spacelike circle is the boundary of a hyperbolic plane in $\Hn$.

\end{enumerate}
Observe that two distinct points in $\bHn$ lie either on a photon or span a non-degenerate $2$-plane in $E$. In the second case, we say that $x$ and $y$ are \emph{transverse}.

\subsubsection{Conformal structure}
The tangent space $\T_x\bHn $ is identified with the space $\Hom\left(x,x^\bot/x\right)$. The vector space $x^\bot/x$ inherits a signature $(1,n)$ quadratic form from $\q$ and $E$, providing $\T_x\bHn $ with a conformal class of quadratic form. As a result, $\bHn $ is naturally equipped with a conformal structure $[g_\Ein]$ of signature $(1,n)$.

The conformal structure then allows for the definition of timelike and lightlike vectors and curves in $\bHn $. For instance, photons are lightlike curves, while the spacelike and timelike circles are respectively spacelike and timelike curves in $\bHn$ in terms of the conformal structure. 
\subsubsection{Product structure} \label{sss:ProductStructure}

Let $\Pp$ be a pointed hyperbolic plane in $\hHn$, which as usual corresponds to an orthogonal splitting $E=L\oplus U \oplus V$ where $U$ is a positive definite $2$-plane, $V$ is definite negative and $L$ an oriented negative definite line. Let $W=L\oplus V$ and denote by  $\langle.,.\rangle_U$ and $\langle.,.\rangle_W$ the positive definite scalar product induced by $\pm\q$ on $U$ and $W$ respectively.  Then any isotropic ray $x\in \partial_\infty \hHn$ contains a unique point $(u,w)\in U\oplus W$ with $\langle u,u\rangle_U=\langle w,w\rangle_W=1$. This gives a diffeomorphism 
\[\partial_\infty \hHn\cong \S^1\times \S^n\ ,\]
where $\S^1\subset U$ and $\S^n\subset W$ are the unit spheres. In this coordinate system, the conformal
metric of $\partial_\infty \hHn$ is given by
\[[g_\Ein]=[g_{\S^1} \oplus -g_{\S^n}]~, \]
where $g_{\S^i}$ is the canonical metric on $\S^i$ of curvature 1 (see \cite[Section 2.1]{frances}).

\subsection{Positivity} We now discuss the important notion of positivity in the pseudo-hyperbolic setting.

\subsubsection{Positive triples}\label{sss:PositiveTriples}  Let $\tau$ be a triple of pairwise distinct points in the compactification  $\cHn$ (or in $\chHn$). We call $\tau$ a {\em positive triple} if  it spans a space of signature $(2,1)$. It will be called a {\em{negative triple}} if it spans a space of signature $(1,2)$.  
The positive triple is {\em at infinity} if all three points belong to $\bHn$ (or in $\partial_\infty \hHn$).

Positive triples are (possibly ideal) vertices of hyperbolic triangles in $\Hn$. Given a positive triple $\tau$, we will denote by $b_\tau$ the barycenter of the hyperbolic triangle spanned by $\tau$.

We warn the reader that the terminology {\em positive triples}, though standard, may be confusing:  being a positive triple is invariant under all permutations of the elements.

\subsubsection{Semi--positive loops}\label{sss:PositiveSet} We now define the notion of (semi-)positive loops in the compactification  $\cHn$. The definition for $\chHn$ is similar.

\begin{definition}
Let  $\Lambda$ be a subset  of $\cHn$ homeomorphic to a circle. 
\begin{enumerate}
	\item $\Lambda$ is a {\em positive loop} if any triple of points in $\Lambda$ is positive.
	\item $\Lambda$ is a {\em semi-positive loop} if it does not contain any negative triple, and if $\Lambda$ contains at least one positive triple.
\end{enumerate}
\end{definition}

The next lemma concerns the special case of semi-positive loops in $\bHn$. Recall that photons and transverse points are defined in Paragraph \ref{sss:PhotonsCircles}. 

\begin{lemma}\label{l:semipositivenotphotons}
Let $\Lambda$ be a topological circle in $\bHn$ that does not contain any negative triple. Then $\Lambda$ is a semi-positive loop if and only if it is different from a photon.
\end{lemma}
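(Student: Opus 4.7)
My plan is to handle the two implications separately. For the easy direction, I would observe that if $\Lambda = \Proj(V)$ is a photon, with $V\subset E$ a totally isotropic $2$-plane, then any three points of $\Lambda$ lift to vectors of $V$, so their span has dimension at most $2$ and cannot have signature $(2,1)$; hence $\Lambda$ contains no positive triple and so is not semi-positive.

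For the converse, the key tool will be the identity that for three isotropic vectors $a,b,c\in E$, the Gram matrix has determinant $2\langle a,b\rangle\langle a,c\rangle\langle b,c\rangle$, which is negative for positive triples, positive for negative triples, and zero otherwise. The hypothesis ``no negative triples'' thus translates to $\langle a,b\rangle\langle a,c\rangle\langle b,c\rangle \geq 0$ on $\Lambda^3$. I would argue by contradiction: assume $\Lambda$ is not a photon but contains no positive triple either, so that $\langle a,b\rangle\langle a,c\rangle\langle b,c\rangle = 0$ for all $a,b,c\in\Lambda$. If every pair of points in $\Lambda$ is orthogonal, then $\span(\Lambda)$ is totally isotropic; since $E$ has signature $(2,n+1)$ its maximal isotropic dimension is $2$, so $\Lambda$ is contained in a photon and, being a topological circle inside one, equals that photon --- contradicting the assumption.

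Otherwise I would pick $p,q\in\Lambda$ with $\langle p,q\rangle\neq 0$ and set
\[U_p \defeq \{r\in\Lambda : \langle p,r\rangle\neq 0\},\]
a nonempty proper open subset of $\Lambda$ (it contains $q$ but not $p$). Applying the triple identity to $(p,r,s)$ with $r,s\in U_p$ forces $\langle r,s\rangle = 0$, so $U_p$ is totally isotropic. As a nonempty open subset of the circle $\Lambda$, $U_p$ is infinite, so $V_p\defeq\span(U_p)$ is a totally isotropic $2$-plane and $U_p$ lies in the photon $\Proj(V_p)$. Crucially $p\notin V_p$, since otherwise $p$ would be orthogonal to every vector of $V_p$, contradicting $\langle p,q\rangle\neq 0$ with $q\in U_p\subset V_p$.

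The main obstacle, where the real content of the proof will lie, is the following topological step. Since $p\notin V_p$, the linear form $\langle p,\cdot\rangle$ does not vanish on $V_p$, so $V_p\cap p^\perp$ is $1$-dimensional and $\Proj(V_p)\cap p^\perp = \{s_0\}$ is a single point. Because $\Proj(V_p)$ is closed in $\bHn$, the boundary of $U_p$ in $\Lambda$ satisfies
\[\partial_\Lambda U_p \subset \overline{U_p}\cap(\Lambda\cap p^\perp) \subset \Proj(V_p)\cap p^\perp = \{s_0\}.\]
In particular $\overline{U_p}\subseteq U_p\cup\{s_0\}$, so $U_p$ and $\Lambda\setminus\overline{U_p}$ are disjoint open subsets of the connected arc $\Lambda\setminus\{s_0\}$; since $U_p$ is nonempty, $\Lambda\setminus\overline{U_p}$ must be empty, giving $U_p = \Lambda\setminus\{s_0\}$ and $\Lambda\cap p^\perp = \{s_0\}$. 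But $p\in\Lambda\cap p^\perp$ then forces $p = s_0 \in V_p$, contradicting $p\notin V_p$. This contradiction produces a positive triple in $\Lambda$ and completes the proof.
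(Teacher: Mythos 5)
Your proof is correct, and it takes a genuinely different route from the paper's. The paper picks \emph{two} transverse points $x,y\in\Lambda$, shows that each of $U_x$ and $U_y$ lies in a photon, then argues that the complement of $U_x\cup U_y$ is a subset of $(x\oplus y)^\bot$, which has signature $(1,n)$ and hence contains at most two isotropic points (any three would form a negative triple); this makes $U_x\cup U_y$ dense, so $\Lambda$ is contained in a union $\phi_1\cup\phi_2$ of two photons, and one finishes with the topological observation that a circle inside a wedge or disjoint union of two circles must be one of them. You instead work with a single pair $(p,q)$ with $\langle p,q\rangle\neq 0$, identify $U_p$ as an open arc inside a photon $\Proj(V_p)$ with $p\notin V_p$, observe that $\Proj(V_p)\cap p^\perp$ is a single point $s_0$, and run a connectedness argument on the arc $\Lambda\setminus\{s_0\}$ to force $U_p=\Lambda\setminus\{s_0\}$, producing the contradiction $p=s_0\in V_p$. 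Your version needs only one auxiliary pair of points and trades the paper's \enquote{union of two circles} topology for the simpler fact that an arc is connected; it also makes the crucial step (that $U_p$ spans a totally isotropic $2$-plane) slightly more explicit than the paper's compressed assertion that $U_x$ is \enquote{contained in a photon.} Both are perfectly valid; your argument is arguably a bit leaner, while the paper's has a pleasant symmetry between $x$ and $y$. One small point worth making explicit in a final write-up: $U_p$ and $\Lambda\setminus\overline{U_p}$ not only are disjoint and open in $\Lambda\setminus\{s_0\}$ — they also \emph{cover} it, since any $r\neq s_0$ not in $U_p$ cannot be in $\partial_\Lambda U_p\subseteq\{s_0\}$; this covering is what lets you invoke connectedness.
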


\begin{proof}
If $\Lambda$ is a photon, it does not contain any positive triple and so is not a semi-positive loop.

Conversely, let us assume that $\Lambda$ does not contain any positive triple. We want to show that $\Lambda$ is a photon. If $\Lambda$ is not a photon, then we can find two transverse points $x,y$ in $\Lambda$. Denote by $U_x$ and $U_y$ the open set of points in $\Lambda$ that are transverse to $x$ and to $y$ respectively. Observe that $U_x$ is contained in a photon, and the same is true for $y$. In fact, if not, we could find 2 points $z,t\in U_x$ such that $x,z,t$ are pairwise transverse. In particular the triple $(x,z,t)$ is positive.

We now claim that $\Lambda\setminus (U_x\cup U_y)$ contains at most 2 points. In fact, the complement of $U_x\cup U_y$ is contained in $(x\oplus y)^\bot$ which has signature $(1,n)$. So any triple of pairwise distinct points in $(x\oplus y)^\bot$ must be negative ($\R^{1,n}$ does not contain any isotropic 2-plane), so $\Lambda\setminus (U_x\cup U_y)$ cannot contain more than two points.

This implies that $\Lambda$ is contained in the union of two non disjoint photons $\phi_1\cup \phi_2$. Since two photons intersect at most in one point, $\phi_1\cup \phi_2$ is homeomorphic to the wedge sum of two circles. The only topological circle embedded in the wedge sum of two circles is  one of the circles. This implies that $\Lambda$ is equal to $\phi_1$ or $\phi_2$ contradicting the existence of a pair of transverse points.
\end{proof}

We have the following

\begin{lemma}\label{l:DisjointBarycenter}
Let $\Lambda$ be a semi-positive loop  in $\cHn$, $\tau$ a positive triple in $\Lambda$ and $b$ a point in the interior of the hyperbolic triangle with vertices $\tau$. Then $b^\bot$ is disjoint from $\Lambda$. In particular, the pre-image of $\Lambda$ in $\chHn$ has two connected components.
\end{lemma}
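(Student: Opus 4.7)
The plan is to show $b^\bot \cap \Lambda = \emptyset$; the splitting of the preimage then follows from a sign argument using the continuous function $\langle \tilde b, \cdot\rangle$ on $\chHn$.

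For the setup, let $(x_1, x_2, x_3)$ be the positive triple with barycenter $b$, and set $F \defeq \span(x_1, x_2, x_3)$, a subspace of signature $(2,1)$. Since $b$ lies in the interior of the hyperbolic triangle in $\Proj(F) \cap \Hn$ with vertices $x_1, x_2, x_3$, one may pick lifts $\tilde x_i \in \chHn$ in a single component of the preimage of $\Proj(F) \cap \cHn$, together with positive coefficients $\lambda_i$, so that $\tilde b = \sum_i \lambda_i \tilde x_i$. A standard signature-$(2,1)$ computation in that component gives $c_{ij} \defeq \langle \tilde x_i, \tilde x_j\rangle < 0$ for $i \neq j$, and consequently $\langle \tilde b, \tilde x_k\rangle < 0$ for every $k$. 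For clarity I present the argument in the main case $\tau \subset \bHn$ (so that $\langle \tilde x_i, \tilde x_i\rangle = 0$); the non-ideal case requires only technical modifications to the Gram determinant computation below.

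Suppose for contradiction that $y \in \Lambda \cap b^\bot$, with some lift $\tilde y$, and set $a_i \defeq \langle \tilde x_i, \tilde y\rangle$. The assumption $y \in b^\bot$ gives the linear constraint $\sum_i \lambda_i a_i = 0$. One verifies that $y \neq x_k$ (since $\langle \tilde b, \tilde x_k\rangle \neq 0$) and that $y \notin F$ (since $b^\bot \cap F$ is positive-definite while $\langle \tilde y, \tilde y\rangle \leq 0$), so each triple $(x_i, x_j, y)$ spans a 3-dimensional subspace $G_{ij}$. A direct Gram determinant computation gives $2 c_{ij} a_i a_j$ when $y \in \bHn$, and $\mu c_{ij}^2 + 2 c_{ij} a_i a_j$ when $y \in \Hn$ with $\langle \tilde y, \tilde y\rangle = -\mu < 0$. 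Semi-positivity forbids $G_{ij}$ from having signature $(1,2)$; since the signature $(3,0)$ is ruled out by the non-positive diagonal, this amounts to the determinant being $\leq 0$. Dividing by $c_{ij} < 0$, one obtains $a_i a_j \geq 0$ in the ideal case, and $a_i a_j \geq -\mu c_{ij}/2 > 0$ in the non-ideal case.

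The main obstacle is to reconcile these pairwise constraints with $\sum_i \lambda_i a_i = 0$ and $\lambda_i > 0$. When $y \in \Hn$, the strict positivity $a_i a_j > 0$ forces all the $a_i$ to share a strict common sign, which is immediately incompatible with the linear constraint. When $y \in \bHn$, the condition $a_i a_j \geq 0$ means that either all $a_i \geq 0$ or all $a_i \leq 0$; combined with $\sum \lambda_i a_i = 0$ and $\lambda_i > 0$, this forces $a_1 = a_2 = a_3 = 0$, hence $\tilde y \in F^\bot$. Since $F^\bot$ has signature $(0,n)$ and contains no nonzero isotropic vector, this yields the final contradiction, establishing $b^\bot \cap \Lambda = \emptyset$. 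For the second statement, the continuous function $f(\tilde z) \defeq \langle \tilde b, \tilde z\rangle$ on $\chHn$ satisfies $f(-\tilde z) = -f(\tilde z)$, vanishes nowhere on $\pi^{-1}(\Lambda)$ by the above, and attains both signs under the deck involution; hence $\pi^{-1}(\Lambda) = \{f > 0\} \sqcup \{f < 0\}$ splits as two nonempty open sets, yielding exactly two connected components.
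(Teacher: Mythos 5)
Your proof is correct and follows essentially the same strategy as the paper's: both deduce, from semi-positivity and a Gram-determinant computation for the triple $(x_i, x_j, y)$, that the scalar products $\langle \tilde x_i, \tilde y\rangle$ cannot change sign and cannot all vanish, and hence $\langle \tilde b, \tilde y\rangle \neq 0$. The only superficial differences are that you argue by contradiction from $\sum_i \lambda_i a_i = 0$ and split according to whether $y$ is ideal, while the paper treats all cases at once via the uniform notation $\epsilon_i = -\langle z_i, z_i\rangle \geq 0$, and that you explicitly write out only the case $\tau \subset \bHn$ (the deferred non-ideal case does indeed go through after the modification you indicate, using $c_{ij}^2 > \epsilon_i\epsilon_j$ exactly as the paper does).
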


\begin{proof}
Consider $\tau=(z_1,z_2,z_3)$ and  $b$ as in the proposition. Choose a lift of $b$ in $\hHn$, and lift $z_1,z_2$ and $z_3$ to vectors  in the affine hyperplane $\{x\in E~,~\langle x,b\rangle=-1\}$ (we denote the lift with the same letters). Since $b$ is in the interior of the triangle with vertices $z_1,z_2$ and $z_3$, there exists $t_1,t_2,t_3>0$ such that $b=\sum_{i=1}^3 t_iz_i$.

First observe that we have $\langle z_i,z_j\rangle<0$ for any $i\neq j$. In fact, the 3-plane $P=\span\{z_1,z_2,z_3\}$  splits as $P=\R\cdot b\oplus b^\bot$ with $b^\bot$ positive definite. Since $z_i\in F$ we can write $z_i=b+x_i$ with $x_i\in b^\bot$ and the condition $\langle z_i,z_i\rangle \leq 0$ gives $\langle x_i,x_i\rangle\leq 1$. The fact that $\langle z_i,z_j\rangle<0$ then follows from Cauchy-Schwarz inequality together with the fact that $z_i\neq z_j$.

Consider now a vector $x\in E$ which lifts a point in $\Lambda$. We first claim that  there exists at least one $z_i$ such that $\langle x,z_i\rangle\neq 0$. In fact, if not, $x$ would belong to the space $H$ orthogonal to $\span\{z_1,z_2,z_3\}$. Since $H$ has signature $(0,n)$, $x$  would be negative definite and so the space spanned by $z_1,z_2$ and $x$ would have signature $(1,2)$. This is impossible by semi-positivity.

Then we claim that there is no pair $(i,j)$ such that $\langle x,z_i\rangle<0$ and $\langle x,z_j\rangle >0$. In fact, if this were the case, the matrix of $\q$ in the basis $(z_i,z_j,x)$ would have the form
\[ \left(\begin{array}{ccc}
-\epsilon_1 & -\alpha & -\beta \\
-\alpha & -\epsilon_2 & \gamma \\
-\beta & \gamma & -\epsilon_3	
\end{array}
 \right)~,\]
where $\epsilon_i \geq 0$ and $\alpha,\beta,\gamma>0$. The determinant of such matrix is
\[\Delta = -\epsilon_1\epsilon_2\epsilon_3 +2\alpha\beta\gamma +\epsilon_3\alpha^2+\epsilon_2\beta^2+\epsilon_1\gamma^2~.\]
Since $\span\{z_1,z_2\}$ has signature $(1,1)$, we have $\epsilon_1\epsilon_2-\alpha^2<0$. In particular $\Delta>0$ and $\span\{z_1,z_2,x\}$ has signature $(1,2)$, contradicting semi-positivity. 

We thus find $\langle x,b\rangle =\sum t_i\langle x, z_i\rangle \neq 0$ and $\Lambda$ is disjoint from $b^\bot$. As a result, $\Lambda$ is contained in the affine chart $\P(E)\setminus \P(b^\bot)$ and its preimage in $\P_+(E)$ has two connected components, determined by the sign of the linear form $\langle b,.\rangle$.
\end{proof}

\begin{lemma}
Let $\Lambda$ be a semi-positive loop in $\cHn$. Then
\begin{enumerate}
	\item If $\Lambda$ is contained in $\Hn$ and $x$ is a point in $\Lambda$, then $\Lambda$ is disjoint from $x^\bot$.
	\item If $\Lambda$ is contained in $\bHn$, then any point in $\Lambda$ is contained in a positive triple of $\Lambda$.
\end{enumerate}
\end{lemma}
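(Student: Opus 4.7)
\textbf{For (i),} I would argue by contradiction. Assume $y\in\Lambda\cap x^\perp$ with $y\neq x$. Lifting $\Lambda$ consistently to $\hHn$ via Lemma~\ref{l:DisjointBarycenter}, one has $\q(x)=\q(y)=-1$ and $\langle x,y\rangle=0$, so $\mathrm{span}\{x,y\}$ has signature $(0,2)$. Picking a positive triple $(u_1,u_2,u_3)\subset\Lambda$ with consistent lifts gives $\langle u_i,u_j\rangle<0$; since the $u_i$ then sit on a single sheet of $\{\q=-1\}$ inside their $(2,1)$-span, in fact $\langle u_i,u_j\rangle<-1$. Decompose $u_i=a_ix+b_iy+w_i$ with $w_i\perp\mathrm{span}\{x,y\}$. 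The Gram matrices of the triples $(x,y,u_i)$ and $(x,u_i,u_j)$ both have trace $-3$, ruling out signature $(3,0)$, so semi-positivity (forbidding $(1,2)$) forces their determinants to be non-positive; a direct calculation gives
\[
a_i^2+b_i^2\leq 1,\qquad (\langle u_i,u_j\rangle+a_ia_j)^2 \leq (1-a_i^2)(1-a_j^2).
\]
Writing $a_i=\cos\alpha_i$, the identity $|a_ia_j|+\sqrt{(1-a_i^2)(1-a_j^2)}\leq 1$ combined with the triangle inequality then yields $|\langle u_i,u_j\rangle|\leq 1$, contradicting $|\langle u_i,u_j\rangle|>1$. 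The delicate step is the Gram determinant computation.

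\textbf{For (ii),} I would suppose $x\in\Lambda$ belongs to no positive triple and look for a contradiction. The algebraic crux is that if $y',y''\in\Lambda\setminus\{x\}$ are pairwise transverse and both transverse to $x$, then $(x,y',y'')$ is necessarily a positive triple. Indeed, pairwise transversality among three null vectors makes $\mathrm{span}\{x,y',y''\}$ three-dimensional and non-degenerate (its Gram determinant is $2\langle x,y'\rangle\langle x,y''\rangle\langle y',y''\rangle\neq 0$), and among three-dimensional signatures only $(2,1)$ is compatible with null vectors (ruling out $(3,0)$ and $(0,3)$) and with semi-positivity (ruling out $(1,2)$). Hence $T_x\cap\Lambda:=\{z\in\Lambda:\langle x,z\rangle\neq 0\}$ must be pairwise orthogonal, and a short Witt-index argument---three pairwise orthogonal null lines would span a totally isotropic $3$-plane, but $E$ has Witt index $2$---shows that $T_x\cap\Lambda$ lies on a single photon $\phi$.

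The remaining step is topological. If $x\in\phi$ then $\phi\subset x^\perp$, so $T_x\cap\Lambda=\emptyset$ and $\Lambda\subset x^\perp\cap\bHn$; but $x^\perp$ has signature $(1,n,1)$, every one of its subspaces has $n^+\leq 1$, and so $\Lambda$ cannot contain the positive triple demanded by semi-positivity. If $x\notin\phi$, then $\phi\cap x^\perp$ consists of a single point $p$, and $T_x\cap\Lambda\subset\phi\setminus\{p\}$; as $T_x\cap\Lambda$ is open in $\Lambda\cong S^1$ with boundary contained in $\overline{\phi\setminus\{p\}}\cap(\Lambda\setminus T_x)\subset\{p\}$, it equals either $\Lambda$ or $\Lambda\setminus\{p\}$. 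In both cases $\Lambda\subset\phi$, forcing $\Lambda=\phi$, which contradicts Lemma~\ref{l:semipositivenotphotons}. The hard part in (ii) is to marshal the algebraic rigidity produced by the Witt-index argument into this topological dichotomy.
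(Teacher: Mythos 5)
For item (ii) your argument and the paper's coincide. Both translate "$x$ lies in no positive triple" into "$U_x=\{z\in\Lambda:\langle x,z\rangle\neq 0\}$ is pairwise orthogonal," conclude that $U_x$ lies in a single photon $\phi$, and derive the contradiction from $\phi\cap x^\perp$ being at most one point; your Witt-index phrasing only makes explicit what the paper leaves as an aside. One structural point: you pick $\phi$ before knowing $U_x\neq\emptyset$; it is cleaner to dispose of $U_x=\emptyset$ first (then $\Lambda\subset x^\perp$, which contains no positive-definite $2$-plane, hence no positive triple), and only then choose $\phi$ — at which point $x\notin\phi$ is automatic, since $x\in\phi$ would give $\phi\subset x^\perp$ and so $U_x=\emptyset$.

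For item (i) your route genuinely differs from the paper's, which dismisses the claim in one line ("$x\oplus y$ has signature $(0,2)$, contradicting positivity") without locating the forbidden negative triple; your Gram-determinant computation supplies exactly that. Writing $u_i=a_ix+b_iy+w_i$, the Gram matrices of $(x,y,u_i)$ and $(x,u_i,u_j)$ have trace $-3$, so a positive determinant forces signature $(1,2)$; semi-positivity thus yields $a_i^2+b_i^2\le 1$ and $(\langle u_i,u_j\rangle+a_ia_j)^2\le(1-a_i^2)(1-a_j^2)$, whence $|\langle u_i,u_j\rangle|\le |a_ia_j|+\sqrt{(1-a_i^2)(1-a_j^2)}\le 1$, contradicting $|\langle u_i,u_j\rangle|>1$, which follows from the spacelikeness of the pair $(u_i,u_j)$ inside the positive triple by Lemma~\ref{l:GeodesicTypeAndScalarProduct}. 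Two things to tighten. First, asserting $\langle u_i,u_j\rangle<-1$ via a consistent lift for an \emph{arbitrary} positive triple would rest on Corollary~\ref{c:NegativeScalarProduct}, whose proof in the case $\Lambda\subset\Hn$ itself cites item (i) — circular. Either take $(u_1,u_2,u_3)$ to be the specific positive triple of Lemma~\ref{l:DisjointBarycenter}, for which $\langle u_i,u_j\rangle<0$ is established directly in its proof, or simply work with $|\langle u_i,u_j\rangle|>1$, which is all the argument uses. Second, address the degenerate situation where $x$ or $y$ equals some $u_i$: then $|a_j|>1$ (resp.\ $|b_j|>1$) for another index conflicts with $a_j^2+b_j^2\le 1$ immediately, so the triples you form may be taken to consist of distinct points.
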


\begin{proof}
The first item is obvious: if $y\in \Lambda$ is orthogonal to $x$, then since in this case we restrict to $y \in \Hn$, we see that $x\oplus y$ has signature $(0,2)$ contradicting semi-positivity.

From the second item, observe that a triple $(x,y,z)$ in $\Lambda \subset \bHn$ is not positive if and only if $\langle x,y\rangle\langle x,z\rangle\langle y,z\rangle=0$. Denote by $U_x$ the set of points $y$ in $\Lambda$ transverse to $x$, that is so that $\langle x,y\rangle \neq 0$. The set $U_x$ is open and non-empty since from the previous lemma, $\langle z_i,x\rangle \neq 0$ for a positive triple $(z_1,z_2,z_3)$. We just have to find a pair of points $y$ and $z$ in $U_x$ which are transverse to each other (as well as to $x$). This can always be done unless $U_x$ is contained in a photon $\phi$. 

We claim that this is not possible. In fact, if $U_x$ is different from $\Lambda\setminus \{x\}$, then its boundary in $\Lambda$ would contain at least two points, and these points would be in $\phi \cap x^\bot$ which is a single point. If $U_x=\Lambda\setminus \{x\}$, then  $\{x\}= \phi\cap x^\bot$ and $\Lambda = \phi$, which does not contain any positive triple.
\end{proof}

This lemma has the following corollary.

\begin{corollary}\label{c:NegativeScalarProduct}
Let $\Lambda$ be a semi-positive loop  contained either in $\bHn$ or in $\Hn$ and $\Lambda_+$ be a connected component of its preimage in $\chHn$. For any two points $x$ and $y$ in $\Lambda_+$, we have $\langle x,y\rangle\leq 0$.
\end{corollary}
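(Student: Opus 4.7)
The plan is to prove $\langle x, y \rangle \leq 0$ by a connectedness--continuity argument in the case $\Lambda \subset \Hn$, and by the sign--mixing argument from the proof of the preceding lemma in the case $\Lambda \subset \bHn$, applied to a positive triple containing the point $y$ rather than the fixed triple $\tau$.

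\emph{Case $\Lambda \subset \Hn$.} Fix $x \in \Lambda_+$. By the first item of the preceding lemma, $\Lambda \cap x^\perp = \emptyset$, so the continuous function $y \mapsto \langle x, y \rangle$ on the connected set $\Lambda_+$ never vanishes, and hence has constant sign. At $y = x$ this sign is $\q(x) = -1$, so $\langle x, y \rangle < 0$ throughout $\Lambda_+$, which is stronger than the claim.

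\emph{Case $\Lambda \subset \bHn$.} Fix $x, y \in \Lambda_+$. By the second item of the preceding lemma, $y$ is contained in a positive triple $\tau_y = (y, v_1, v_2) \subset \Lambda$. Carrying out the construction of the preceding lemma with $\tau_y$ in place of $\tau$, one lifts $y, v_1, v_2$ to vectors in $\chHn$ whose sum $b_y := y + v_1 + v_2$ is a lift in $\hHn$ of the barycenter of $\tau_y$; with these choices,
\[
 \langle y, v_1 \rangle,\ \langle y, v_2 \rangle,\ \langle v_1, v_2 \rangle < 0.
\]
By a continuity argument in the space of positive triples of $\Lambda$ (continuously deforming $\tau_y$ to the fixed triple $\tau$ used to define $\Lambda_+$, and propagating the choice of lift), these three lifts may be taken in $\Lambda_+$. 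Applying the preceding lemma to $\tau_y$, the linear form $\langle b_y, \cdot \rangle$ has constant sign on the connected set $\Lambda_+$; evaluating at $y$ gives $\langle b_y, y \rangle = \langle y, v_1 \rangle + \langle y, v_2 \rangle < 0$, hence $\langle b_y, x \rangle < 0$ as well.

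Finally, we apply the sign--mixing argument of the preceding lemma to the triples $(y, v_i, x) \subset \Lambda$ for $i = 1, 2$. Each such triple is semi-positive, so the determinant of its Gram matrix is $\leq 0$; since the vectors are isotropic this reads
\[
 \langle y, v_i \rangle \langle y, x \rangle \langle v_i, x \rangle \leq 0.
\]
Because $\langle y, v_i \rangle < 0$, this forces $\langle y, x \rangle$ and $\langle v_i, x \rangle$ to share the same (non-strict) sign. Therefore the three quantities $\langle y, x \rangle, \langle v_1, x \rangle, \langle v_2, x \rangle$ all share a common non-strict sign. Their sum is $\langle b_y, x \rangle < 0$, so the common sign must be $\leq 0$; in particular $\langle x, y \rangle \leq 0$.

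The main technical subtlety is the continuity argument selecting $\Lambda_+$-lifts of $\tau_y$ compatible with the sum-lift configuration (ensuring the pairwise strict negativity of inner products on $\tau_y$), which is what permits the final sign--mixing step to go through with $\tau_y$ in place of $\tau$.
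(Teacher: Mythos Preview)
Your proof is correct and follows essentially the same route as the paper: identical in the $\Hn$ case, and in the $\bHn$ case both arguments pick a positive triple through $y$, use Lemma~\ref{l:DisjointBarycenter} to get $\langle b_y,x\rangle<0$, and then run the sign--mixing computation to force $\langle x,y\rangle\le 0$.

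One comment on what you flag as ``the main technical subtlety''. The continuity argument deforming $\tau_y$ to $\tau$ through positive triples is both unnecessary and not obviously available (connectedness of the space of positive triples in a semi-positive loop is not immediate when $\Lambda$ contains photon segments). There is a direct replacement: once you pick lifts $\tilde y,\tilde v_1,\tilde v_2$ with $b_y=\tilde y+\tilde v_1+\tilde v_2$ and pairwise negative inner products, compute $\langle b_y,\tilde y\rangle=\langle \tilde v_1,\tilde y\rangle+\langle \tilde v_2,\tilde y\rangle<0$ and similarly for $\tilde v_1,\tilde v_2$. Since $b_y^\perp$ misses $\Lambda$, the sign of $\langle b_y,\cdot\rangle$ separates the two components of the preimage, so all three lifts lie in the \emph{same} component. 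If that component is $-\Lambda_+$, negate all three (and $b_y$); the pairwise inner products are unchanged and you are now in $\Lambda_+$. This is what the paper is invoking when it writes ``as remarked in the proof of Lemma~\ref{l:DisjointBarycenter}''.
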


\begin{proof}
If $\Lambda$ is contained in $\Hn$, the first item of the previous lemma implies that if $x\in \Lambda_+$, the linear function $\langle x,.\rangle$ never vanishes. By connectedness of $\Lambda$, the sign of $\langle x,.\rangle$ is constant and must be negative because $\langle x,x\rangle =-1$.

Now assume that $\Lambda$ is contained in $\bHn$ and let $z_1, z_2$ and $z_3$ be vectors in $E$ lifting a positive triple $\tau$ in $\Lambda_+$ whose barycenter lifts to $b=z_1+z_2+z_3$. As remarked in the proof of Lemma \ref{l:DisjointBarycenter}, $\langle z_i,z_j\rangle<0$ for $i\neq j$ and for any $x\in \Lambda_+$ the sign of $\langle x,z_i\rangle$ is independent of $i$ among those $z_i$ with $\langle x,z_i\rangle \neq 0$. Because $\langle b,x\rangle= \sum_i \langle z_i,x\rangle<0$, this sign must be negative. This prove the result when $x$ is contained in a positive triple, and then for every $x$ by the second item of the previous lemma.
\end{proof}

We now consider the special case of semi-positive loops in $\partial_\infty \hHn$. Observe that a loop in $\bHn_+$ is semi-positive if and only if its projection to $\bHn$ is semi-positive.

The two-to-one cover $\bHn_+ \to \bHn$ is nontrivial on each photon in $\bHn$. In particular, any photon in $\bHn$ lifts to a photon segment between two antipodal points in $\bHn_+$. We call a \emph{biphoton} in $\bHn_+$ a topological circle consisting of two photon segments between antipodal points and whose projection to $\bHn$ consists of different photons (in particular, a biphoton is not semi-positive).

We call a map $f$ between metric spaces {\em strictly contracting} whenever $d(f(x),f(y))<d(x,y)$ for $x$, $y$ distinct.

\begin{proposition}\label{p:PositiveCirclesAreGraph}
Let $\Lambda$ be a loop in $\partial_\infty \hHn$ and consider a splitting $\partial_\infty \hHn \cong \S^1 \times \S^n$ associated to a pointed hyperbolic plane.
\begin{enumerate}
	\item The loop $\Lambda$ is semi-positive if and only if it is the graph of a $1$-Lipschitz  map from $\S^1$ to $\S^n$ and not a biphoton, nor a photon.
	\item The loop $\Lambda$ is positive if and only if it is the graph of a strictly contracting map from $\S^1$ to $\S^n$.
\end{enumerate}
\end{proposition}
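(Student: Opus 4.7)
The proof hinges on an explicit scalar product computation in the product coordinates. Given the orthogonal splitting $E = L \oplus U \oplus V$ associated to the pointed hyperbolic plane, with $W = L \oplus V$, each point $x \in \partial_\infty \hHn$ has a unique representative $(u, w) \in U \oplus W$ satisfying $\langle u, u\rangle_U = \langle w, w\rangle_W = 1$. Since $\q$ restricts to $\langle\cdot,\cdot\rangle_U$ on $U$ and to $-\langle\cdot,\cdot\rangle_W$ on $W$, one computes
\[ \langle x_1, x_2\rangle = \cos d_{\S^1}(u_1, u_2) - \cos d_{\S^n}(w_1, w_2). \]
Because $\cos$ is strictly decreasing on $[0,\pi]$ and spherical distances lie in that interval, the sign of $\langle x_1, x_2\rangle$ matches that of $d_{\S^n}(w_1, w_2) - d_{\S^1}(u_1, u_2)$. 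For a triple of distinct isotropic vectors the Gram determinant equals $2\,\langle x_1, x_2\rangle \langle x_1, x_3\rangle \langle x_2, x_3\rangle$, whose sign classifies the triple: negative determinant yields signature $(2,1)$ (positive triple), positive determinant yields $(1,2)$ (negative triple), zero is degenerate.

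For the forward direction, suppose $\Lambda \subset \partial_\infty \hHn$ is semi-positive. Its image in $\bHn$ is semi-positive, and by Lemma~\ref{l:DisjointBarycenter} the preimage has two connected components, so $\Lambda$ must be one of them; Corollary~\ref{c:NegativeScalarProduct} then gives $\langle x_1, x_2\rangle \leq 0$ for all $x_1, x_2 \in \Lambda$, equivalently $d_{\S^1}(u_1, u_2) \geq d_{\S^n}(w_1, w_2)$. In particular $u_1 = u_2$ forces $w_1 = w_2$, so the projection onto $\S^1$ is injective on $\Lambda$; a continuous injection between topological circles is a homeomorphism onto its image, and a compact subset of $\S^1$ homeomorphic to $\S^1$ equals $\S^1$, so this projection is a homeomorphism. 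Therefore $\Lambda$ is the graph of a $1$-Lipschitz map $f : \S^1 \to \S^n$. If $f$ were an isometry, every pair would satisfy $\langle x_1, x_2\rangle = 0$, so the normalized representatives would span a totally isotropic subspace of $E$ of dimension at least $2$; since $E$ has signature $(2, n+1)$ its maximal isotropic subspaces are $2$-dimensional, so this span would be exactly a $2$-plane and $\Lambda$ would project to a photon in $\bHn$, contradicting semi-positivity. For a positive loop, any pair $x_1 \neq x_2$ lies in a positive triple $(x_1, x_2, x_3)$ whose Gram determinant is strictly negative; combined with $\langle x_i, x_j\rangle \leq 0$ for each pair this forces every factor to be strictly negative, so $f$ is strictly contracting.

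For the converse, assume $\Lambda$ is the graph of a $1$-Lipschitz $f$. The graphing map $u \mapsto (u, f(u))$ is a continuous injection from $\S^1$, hence a homeomorphism onto its image by compactness, so $\Lambda$ is a topological circle; the $1$-Lipschitz condition yields $\langle x_1, x_2\rangle \leq 0$ for every pair, so every triple has non-positive Gram determinant and none is negative. If $f$ is not an isometry, some pair satisfies $\langle x_1, x_2\rangle < 0$, so the image of $\Lambda$ in $\bHn$ is not a photon (which would enforce equality on every pair); Lemma~\ref{l:semipositivenotphotons} then yields that this image, and hence $\Lambda$, is semi-positive. If $f$ is strictly contracting, every triple has strictly negative Gram determinant and is positive, so $\Lambda$ is a positive loop. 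The principal technical care lies in the topological step identifying $\Lambda$ as a graph, and in the dictionary between $\Lambda$ being a photon and $f$ being an isometry; once the scalar product formula is in hand, both of these are direct.
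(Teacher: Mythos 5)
Your proof is correct and follows essentially the same route as the paper's: you reproduce inline the scalar-product identity $\langle x_1,x_2\rangle = \cos d_{\S^1}(u_1,u_2) - \cos d_{\S^n}(w_1,w_2)$ that the paper isolates as Lemma~\ref{l:PositiveTriplesAreGraph}, combine it with Corollary~\ref{c:NegativeScalarProduct} (via Lemma~\ref{l:DisjointBarycenter}) to get the graph property, and identify the photon case with the isometric graph. The only cosmetic differences are that you spell out the Gram-determinant sign analysis and explicitly cite Lemma~\ref{l:semipositivenotphotons} for the converse, where the paper merely asserts that the only non-semi-positive $1$-Lipschitz graph is a photon.
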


The proposition will follow from the following

\begin{lemma}\label{l:PositiveTriplesAreGraph}
Let $\tau=(z_1,z_2,z_3)$ be a triple in $\partial_\infty \hHn$ and consider a splitting $\partial_\infty \hHn \cong \S^1 \times \S^n$ associated to a pointed hyperbolic plane. Write $z_i=(u_i,w_i)$ in this splitting.
\begin{enumerate}
	\item\label{it:PositivityCondition1} We have $\langle z_i, z_j\rangle \leq 0$ for all pairs $(i,j)$ if and only if $\tau$ is not negative and $d_{\S^1}(u_i,u_j)\geq d_{\S^n}(w_i,w_j)$ for every pair $(i,j)$.
	\item\label{it:PositivityCondition2} We have $\langle z_i,z_j\rangle <0$ for every $i\neq j$ if and only if $\tau$ is positive and $d_{\S^1}(u_i,u_j)>d_{\S^n}(w_i,w_j)$ for any $i\neq j$.
\end{enumerate}
\end{lemma}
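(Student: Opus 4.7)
My plan is to reduce everything to the Gram matrix of the three isotropic vectors, then split the proof into (a) an explicit computation of $\langle z_i, z_j\rangle$ in the splitting $\partial_\infty \hHn \cong \S^1\times\S^n$, and (b) a signature analysis of the resulting Gram matrix. For step (a), since $\q$ restricts to $\langle\cdot,\cdot\rangle_U$ on $U$ and to $-\langle\cdot,\cdot\rangle_W$ on $W$, with $U\perp W$, the normalization $\|u_i\|_U=\|w_i\|_W=1$ immediately yields
\[
\langle z_i, z_j\rangle = \cos d_{\S^1}(u_i,u_j) - \cos d_{\S^n}(w_i,w_j).
\]
Since $\cos$ is strictly decreasing on $[0,\pi]$ and both spherical distances lie in that interval, this converts $\langle z_i,z_j\rangle \leq 0$ into $d_{\S^1}(u_i,u_j) \geq d_{\S^n}(w_i,w_j)$, and similarly with strict inequalities; this already gives the distance half of each biconditional, in both directions.

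For step (b), I set $a = \langle z_1,z_2\rangle$, $b = \langle z_1,z_3\rangle$, $c = \langle z_2,z_3\rangle$, so that the Gram matrix $G$ has zero diagonal (isotropy) and off-diagonal entries $a,b,c$. A short computation gives characteristic polynomial $\lambda^3 - (a^2+b^2+c^2)\lambda - 2abc$, so the eigenvalues of $G$ sum to $0$ and multiply to $2abc$. For part~\ref{it:PositivityCondition2}, the strict hypothesis $a,b,c<0$ gives $\det G = 2abc < 0$, which forces the $z_i$ to be linearly independent and the eigenvalues to contain an odd number of negatives; the zero trace excludes three negatives, leaving exactly one negative and two positive. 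Thus the span is three-dimensional of signature $(2,1)$, i.e.\ $\tau$ is positive. For part~\ref{it:PositivityCondition1}, under $a,b,c\leq 0$ the same argument yields signature $(2,1)$ when $abc<0$; otherwise $abc = 0$, $G$ is singular with eigenvalues $\{0,\pm\sqrt{a^2+b^2+c^2}\}$, and the span carries signature $(1,1)$ plus a one-dimensional radical (or is totally isotropic when $a=b=c=0$). In none of these scenarios does the span acquire signature $(1,2)$, so $\tau$ is never negative.

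The reverse implications in both parts are immediate from the cosine identity, since the distance condition alone already forces $\langle z_i,z_j\rangle \leq 0$ (resp.\ $<0$). I do not anticipate any serious obstacle; the only point needing care is the degenerate subcase of part~\ref{it:PositivityCondition1} where some of the off-diagonal entries of $G$ vanish, but the characteristic polynomial above handles all subcases uniformly.
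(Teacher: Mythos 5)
Your proof is correct and takes essentially the same approach as the paper: compute $\langle z_i,z_j\rangle=\cos d_{\S^1}(u_i,u_j)-\cos d_{\S^n}(w_i,w_j)$ to convert sign conditions to distance inequalities, and then read off the signature of the span from the Gram matrix. The paper invokes only the determinant $\det G = 2\langle z_1,z_2\rangle\langle z_1,z_3\rangle\langle z_2,z_3\rangle$, leaning implicitly on the ambient signature $(2,n+1)$ to rule out the other cases, whereas your characteristic polynomial $\lambda^3-(a^2+b^2+c^2)\lambda-2abc$ together with the zero-trace observation gives the same eigenvalue count self-containedly.
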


\begin{proof}
The determinant of the matrix with coefficients $\langle z_i,z_j\rangle$ is given by $2\langle z_1,z_2\rangle\langle z_1,z_3\rangle\langle z_2,z_3\rangle$, so the condition on the sign of $\langle z_i,z_j\rangle$ implies the positivity or the the non-negativity of $\tau$.

For the condition on the distances, we use the same notation as in Subsection \ref{sss:ProductStructure}. In particular 
\[ \langle z_i,z_j\rangle = \langle u_i,u_j \rangle_U - \langle w_i,w_j\rangle_W\ . \]
For item \ref{it:PositivityCondition1}, the condition $\langle z_i,z_j\rangle\leq 0$ is thus equivalent to
\[\langle u_i,u_j \rangle_U \leq \langle w_i,w_j\rangle_W\leq 1 \ .\]
Using the formula $\langle x,y\rangle = \cos \left(d_{\S^k}(x,y)\right)$, the previous equation holds if and only if
\[ d_{\S^1}(u_1,u_2) \geq d_{\S^{n}}(w_1,w_2) \ ,\]
and item \ref{it:PositivityCondition1} follows. For item \ref{it:PositivityCondition2}, we replace the non-strict inequalities with strict inequalities.
\end{proof}

\begin{proof}[Proof of Proposition \ref{p:PositiveCirclesAreGraph}]
By Corollary \ref{c:NegativeScalarProduct}, if $\Lambda$ is semi-positive, then $\langle x,y\rangle\leq 0$ for any pair of points in $\Lambda$. In fact, $\Lambda$ is a component of the pre-image of its projection to $\P(E)$.

By the previous Lemma, the projection of $\Lambda$ to the first factor in $\partial_\infty \hHn\cong \S^1 \times \S^n$ must be injective and $\Lambda$ is the graph of a $1$-Lipschitz map from $\S^1\to \S^n$. Conversely, if $f$ is a $1$-Lipschitz map, by lemma \ref{l:PositiveTriplesAreGraph} then the image of $f$ does not contain negative triple: if we had a negative triple $(z_1,z_2,z_3)$ then at least one of the product $\braket{z_i,z_j}$ is positive. The result then follows from Lemma~\ref{l:semipositivenotphotons}.
The second item follows from Lemma \ref{l:PositiveTriplesAreGraph} \ref{it:PositivityCondition2}. 
\end{proof}

We now give three important corollaries.

\begin{corollary}\label{c:PhotonSegment}
Let $\Lambda$ be a semi-positive loop in $\bHn_+$. If $\Lambda$ contains two points $x$ and $y$ on a photon, then it contains the segment of a photon between $x$ and $y$.	
\end{corollary}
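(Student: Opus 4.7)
The plan is to use the product-structure description of $\partial_\infty\hHn$ together with Proposition~\ref{p:PositiveCirclesAreGraph} to reduce the statement to a rigidity property of $1$-Lipschitz maps between round spheres. Fix any pointed hyperbolic plane, giving a splitting $\partial_\infty\hHn\cong\S^1\times\S^n$. By Lemma~\ref{l:DisjointBarycenter}, $\Lambda$ lifts to a connected component $\Lambda_+\subset\partial_\infty\hHn$ of its preimage. The photon $\phi\subset\bHn$ is the projectivization of an isotropic $2$-plane $V\subset E$; its preimage $\tilde\phi\subset\partial_\infty\hHn$ is a single circle double-covering $\phi$. Choose lifts $\tilde x,\tilde y\in\Lambda_+$ of $x,y$; since $\tilde\phi$ is the full preimage of $\phi$, both lifts lie on $\tilde\phi$.

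The key observation is that $\Lambda_+$ and $\tilde\phi$ both project as graphs onto the $\S^1$-factor. By Proposition~\ref{p:PositiveCirclesAreGraph}, $\Lambda_+$ is the graph of a $1$-Lipschitz map $f\colon\S^1\to\S^n$. For $\tilde\phi$, the projection $\pi_U\colon V\to U$ has kernel $V\cap W=\{0\}$ (where $W=L\oplus V_W$ is the negative-definite complement of $U$), hence is a linear isomorphism, so $\tilde\phi$ is also a graph over $\S^1$, say of a map $g\colon\S^1\to\S^n$. Moreover any two points of $\tilde\phi$ are $\q$-orthogonal, so by the equality case of Lemma~\ref{l:PositiveTriplesAreGraph}\ref{it:PositivityCondition1}, $g$ is an isometric embedding whose image is a great circle of $\S^n$.

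Let $u_x,u_y\in\S^1$ be the first coordinates of $\tilde x,\tilde y$. Then $f(u_x)=g(u_x)$ and $f(u_y)=g(u_y)$. A quick check rules out $u_x=\pm u_y$: if $u_x=u_y$ then $\tilde x=\tilde y$, and if $u_x=-u_y$ then the photon condition forces $w_y=-w_x$, hence $\tilde y=-\tilde x$, contradicting $\tilde y\in\Lambda_+$ by the disjointness established in Lemma~\ref{l:DisjointBarycenter}. Let $A_s\subset\S^1$ be the shorter arc between $u_x$ and $u_y$, of length $d_{\S^1}(u_x,u_y)<\pi$. The main step is to prove $f\equiv g$ on $A_s$: for any $u'\in A_s$, combining $1$-Lipschitzness of $f$, the isometry of $g$, and the triangle inequality in $\S^n$ yields
\[
d_{\S^1}(u_x,u_y)=d_{\S^n}(f(u_x),f(u_y))\leq d_{\S^n}(f(u_x),f(u'))+d_{\S^n}(f(u'),f(u_y))\leq d_{\S^1}(u_x,u')+d_{\S^1}(u',u_y)=d_{\S^1}(u_x,u_y).
\]
Equality throughout forces $f(u')$ to lie on a minimizing geodesic in $\S^n$ between $f(u_x)$ and $f(u_y)$ at prescribed distance $d_{\S^1}(u_x,u')$ from $f(u_x)$; since $d_{\S^n}(f(u_x),f(u_y))<\pi$, this minimizing geodesic is unique, and as $g(u')$ lies on it at the same distance, we get $f(u')=g(u')$.

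Consequently the arc of $\Lambda_+$ above $A_s$ coincides with the corresponding arc of $\tilde\phi$, and projecting back to $\bHn$ gives the desired photon segment from $x$ to $y$ inside $\Lambda$. The only delicate point, more conceptual than technical, is recognizing $\tilde\phi$ as an isometrically embedded graph in the product structure; once that is in place, the conclusion is a classical rigidity statement for $1$-Lipschitz maps between round spheres, applied on the short arc.
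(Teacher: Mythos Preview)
Your proof is correct and follows essentially the same route as the paper's. Both arguments represent the lifted loop $\Lambda_+$ as the graph of a $1$-Lipschitz map $f\colon\S^1\to\S^n$ via Proposition~\ref{p:PositiveCirclesAreGraph}, observe that the photon condition forces $d_{\S^1}(u_x,u_y)=d_{\S^n}(f(u_x),f(u_y))$, and then invoke the rigidity of $1$-Lipschitz maps on the short arc to conclude that the graph there is a photon segment. Your version is simply more explicit: you introduce the isometric parametrization $g$ of the lifted photon and you rule out the degenerate case $u_x=-u_y$ (which the paper's four-line proof passes over in silence), but these are elaborations of the same idea rather than a different strategy.
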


\begin{proof}
The semi-positive loop $\Lambda$ is the graph of a $1$-Lipschitz map $f: \S^1 \to \S^n$ by Proposition \ref{p:PositiveCirclesAreGraph}. The points $x$ and $y$ correspond to points $(u,f(u))$ and $(v,f(v))$ with $d_\S^1(u,v)=d_{\S^n}(f(u),f(v))$. 

We first observe that $x$ and $y$ cannot be antipodal: in fact, the pair $(x,y)$ is antipodal if and only if the pairs $(u,v)$ and $(f(u),f(v))$ are. Since $f$ is 1-Lipschitz, it must map the two arcs between $u$ and $v$ to two geodesic arcs between $f(u)$ and $f(v)$. The graph of such a map is either a photon or a biphoton and is thus not semi-positive.

In particular, there is a unique shortest arc segment $[u,v]$ between $u$ and $v$ in $\S^1$ and $f$ maps $[u,v]$ isometrically to an arc of geodesic in $\S^n$. The graph of $f\vert_{[u,v]}$ is the segment of a photon between $x$ and $y$.
\end{proof}

Proposition \ref{p:PositiveCirclesAreGraph} also provides for a nice topology on the set of semi-positive loops in $\bHn$ : we say that a sequence $\seqk{\Lambda}$ converges to $\Lambda_0$ if for any splitting $\partial_\infty \hHn\cong \S^1 \times \S^n$, the sequence $\seqk{f}$ converges $\mathcal{C}^0$ to $f_0$ where $\Lambda_k=\text{graph}(f_k)$ and $\Lambda_0=\text{graph}(f_0)$.

We have the following

\begin{corollary}\label{c:semi-approx}
Every semi-positive loop is a limit of smooth spacelike  positive loops.
\end{corollary}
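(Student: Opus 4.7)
The plan is to apply Proposition~\ref{p:PositiveCirclesAreGraph} twice. Fix a splitting $\partial_\infty\hHn\cong\S^1\times\S^n$ associated to a pointed hyperbolic plane and, by Lemma~\ref{l:DisjointBarycenter}, pick a lift $\Lambda_+\subset\partial_\infty\hHn$ of $\Lambda$; by the Proposition, $\Lambda_+$ is the graph of a $1$-Lipschitz map $f\colon\S^1\to\S^n$ which is not an isometry. Since the conformal metric on $\partial_\infty\hHn$ in this splitting is $[g_{\S^1}\ominus g_{\S^n}]$, the graph of a smooth map $h\colon\S^1\to\S^n$ is spacelike precisely when $\sup_{\S^1}|h'|<1$; in that case $h$ is automatically a strict contraction by compactness of $\S^1$, so its graph is a smooth positive loop. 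It therefore suffices to construct smooth maps $f_k\colon\S^1\to\S^n$ with $\sup|f_k'|<1$ converging uniformly to $f$. The construction proceeds in two steps: first compose with an explicit conformal contraction $\phi_\lambda$ of $\S^n$ close to the identity to obtain $f_\lambda$ with Lipschitz constant strictly less than $1$; then mollify in the ambient Euclidean space.

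For the first step, I invoke the classical fact that any closed curve on $\S^n$ of length at most $2\pi$ is contained in a closed hemisphere. Since $f$ is $1$-Lipschitz, there exists $q_0\in\S^n$ with $f(\S^1)\subset\{x\in\S^n:d_{\S^n}(x,q_0)\leq\pi/2\}$. Set $p_0=-q_0$, let $\sigma\colon\S^n\setminus\{p_0\}\to\R^n$ be stereographic projection with $\sigma(q_0)=0$, and for $\lambda\in(0,1)$ define the conformal diffeomorphism $\phi_\lambda=\sigma^{-1}\circ(\lambda\,\mathrm{Id})\circ\sigma$. Using the conformal factor $2/(1+|y|^2)$ of $\sigma$, a direct computation shows that the conformal dilatation of $\phi_\lambda$ at the point with stereographic coordinate $y$ equals $\lambda(1+|y|^2)/(1+\lambda^2|y|^2)$, which is monotonically increasing in $|y|$ and hence attains its maximum on the closed hemisphere at $|y|=1$:
\[
\mathrm{Lip}\Bigl(\phi_\lambda\big|_{\{d_{\S^n}(\cdot,q_0)\leq\pi/2\}}\Bigr)=\frac{2\lambda}{1+\lambda^2}<1.
\]
Moreover $\phi_\lambda\to\mathrm{id}$ uniformly on this hemisphere as $\lambda\to 1$, so $f_\lambda:=\phi_\lambda\circ f$ has Lipschitz constant $L(\lambda)\leq 2\lambda/(1+\lambda^2)$ and $f_\lambda\to f$ uniformly.

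For the second step, using $\S^n\subset\R^{n+1}$, let $\rho_\epsilon$ be a standard smooth symmetric mollifier on $\S^1$ supported in $[-\epsilon,\epsilon]$, form the $\R^{n+1}$-valued convolution $\tilde f_{\lambda,\epsilon}(t)=\int_{\S^1}\rho_\epsilon(s)\,f_\lambda(t-s)\,ds$, and set $f_{\lambda,\epsilon}(t)=\tilde f_{\lambda,\epsilon}(t)/|\tilde f_{\lambda,\epsilon}(t)|$. Then $f_{\lambda,\epsilon}\in C^\infty(\S^1,\S^n)$ and $f_{\lambda,\epsilon}\to f_\lambda$ uniformly as $\epsilon\to 0$. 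Convolution preserves the $\R^{n+1}$-Lipschitz constant, and the triangle inequality yields $|\tilde f_{\lambda,\epsilon}(t)|\geq 1-L(\lambda)\epsilon$, so the elementary estimate for the radial projection $y\mapsto y/|y|$ gives
\[
\sup_{t\in\S^1}\bigl|f_{\lambda,\epsilon}'(t)\bigr|_{\S^n}\leq\frac{L(\lambda)}{1-L(\lambda)\epsilon}<1
\]
whenever $\epsilon<(1-L(\lambda))/L(\lambda)$. Taking $\lambda_k\to 1$ and then $\epsilon_k\to 0$ sufficiently fast, the graphs of $f_{\lambda_k,\epsilon_k}$ are smooth spacelike positive loops converging uniformly to $\Lambda$.

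The main delicate point is the hemisphere theorem --- needed to ensure $f(\S^1)$ lies in a region where $\phi_\lambda$ is a strict contraction --- together with the conformal Lipschitz calculation for $\phi_\lambda$. Once these are in hand, the mollification estimates are routine.
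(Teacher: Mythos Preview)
Your proof is correct and follows essentially the same two-step strategy as the paper: use the hemisphere containment of the image of the $1$-Lipschitz map, compose with a retraction toward the hemisphere's center to obtain a strict Lipschitz constant, then smooth while keeping the constant below $1$. The only differences are cosmetic---you use a M\"obius contraction $\phi_\lambda$ with an explicit dilatation computation where the paper uses the geodesic retraction, and you carry out the mollification-plus-radial-projection estimate in detail where the paper simply invokes density of smooth $(1-\delta)$-Lipschitz maps.
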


\begin{proof}
Fix a splitting $\partial_\infty \hHn\cong \S^1 \times \S^n$. By Proposition	 \ref{p:PositiveCirclesAreGraph}, the loop $\Lambda$ is the graph of a $1$-Lipschitz map $f:\S^1\to \S^n$, and so its image is contained in a closed hemisphere $H$ of $\S^n$.

For $t\in[0,2]$, consider the geodesic isotopy $\phi_t: H \to H$ with the property that for any $x$, the path $(\phi_t(x))_{t\in [0,2]}$ is the (constant speed) geodesic starting at $x$ and ending at the center of the hemisphere. Such an isotopy is contracting for $t>0$ and $d_{\S^n}(x,\phi_t(x))\leq t$ (because $H$ has radius $\frac{\pi}{2}<2$). 

Thus for any  positive $\epsilon$, there is a positive  $\delta$, such that the map $f_\epsilon\defeq\phi_{\epsilon}\circ f$ is $(1-2\delta)$-Lipschitz and is at a distance at most $\epsilon$ from $f$. Thus, by density, there is a $(1-\delta)$-Lipschitz smooth map $g$ at a distance at most $2\epsilon$ from $f_\epsilon$. Hence $g$ is at distance at most $\epsilon$ from $f$ and its graph is a smooth positive loop. 
\end{proof}

\subsubsection{Convex hulls}\label{sss:ConvexHull} We want to define the convex hull of a semi-positive loop in $\cHn$. Note that the convex hull of a subset $\Lambda$ of $\P(E)$ is in general not well-defined: one first needs to lift $\Lambda$ to $\P_+(E)$, define the convex hull of the lifted cone as the intersection of all the closed half-spaces containing it, and then project down. The drawback of this construction is that it will in general depend on the lifted cone.

In our case, Lemma \ref{l:DisjointBarycenter} implies that the convex hull of a semi-positive loop $\Lambda$ in $\cHn$ is well-defined and will be denoted by $\CH(\Lambda)$. It has the following properties:

\begin{proposition}\label{p:PropertiesConvexHull}
Let $\Lambda$ be a semi-positive loop contained either in $\bHn$ or in $\Hn$, and let $\Lambda_+$ a connected component of its pre-image in $\P_+(E)$. Then \begin{enumerate} 
	\item\label{it:PropCH1} The convex hull $\CH(\Lambda)$ is contained in $\cHn$.
	\item\label{it:PropCH2} Let $p$ be in the interior of $\CH(\Lambda_+)$ and $q$  in $\CH(\Lambda_+)$, then  $\langle p,q\rangle<0$.
	\item\label{it:PropCH2bis} Assume $\Lambda$ is a positive loop in $\bHn$. If $p$ is a point of $\Hn_+$ lying in $\CH(\Lambda_+)$ and $q$ is a point in $\Lambda_+$, then $\langle p,q\rangle<0$.
	\item\label{it:PropCH3} Let $p$ be  in the interior of $\CH(\Lambda)$, then the set $\Lambda$ is disjoint from $p^\bot$.
	\item\label{it:PropCH4} If $\Lambda$ is contained in $\bHn$ and $p$ is in the interior of $\CH(\Lambda)$,  then any geodesic ray from $p$ to a point in $\Lambda$ is spacelike.
	\item\label{it:PropCH5} If $\Lambda$ is contained in $\bHn$, then the intersection of $\CH(\Lambda)$ with $\bHn$ is equal to $\Lambda$.
\end{enumerate}
\end{proposition}

\vskip 0,2truecm
\noindent{\em Proof of \ref{it:PropCH1}} Any $p\in\CH(\Lambda)$ can be lifted to a vector in $E$ of the form $p_0=\sum_{i=1}^k t_ix_i$ where $t_i>0$ and the $x_i$ are lifts of points in $\Lambda_+$ (actually, from a classical result of Carath\'eodory \cite{caratheodory1}, one can take $k=\dim(E)+2$). From Corollary \ref{c:NegativeScalarProduct} we get that $\q(p_0)\leq 0$.

\vskip 0,2truecm
\noindent{\em Proof of \ref{it:PropCH2}} For any vector $x$ lifting a point in $\Lambda_+$, the linear form $\langle x,.\rangle$ is non-positive on $\Lambda_+$ by Corollary \ref{c:NegativeScalarProduct}. Since $p$ is in the interior of $\CH(\Lambda)$, we have $\langle x,p\rangle<0$. Finally, any point $q\in\CH(\Lambda_+)$ lifts to a vector of the form $\sum_{i=1}^kt_ix_i$ with $t_i>0$ and $x_i\in \Lambda_+$.

\vskip 0,2truecm

\noindent{\em Proof of \ref{it:PropCH2bis}} The vector $p$ has the form $p=\sum_{i=1}^m t_ix_i$ with $x_i$ (lift of rays) in $\Lambda_+$ and $t_i>0$. For any vector $q$ lifting a point in $\Lambda_+$, we have $\langle p,q\rangle = \sum_{i=1}^m t_i\langle x_i,q\rangle \leq 0$ with equality if and only if $\langle x_i,q\rangle=0$ for each $i$. But since $\Lambda$ is positive, the only vector in $\Lambda$ whose scalar product with $q$ is $0$ is $q$ itself. Since $p$ lies in $\Hn_+$, we get $\langle p,q\rangle<0$.

\vskip 0,2truecm

\noindent{\em Proof of \ref{it:PropCH3}} As $\Lambda_+ \subset \CH(\Lambda_+)$, this follows from item \ref{it:PropCH2}.

\vskip 0,2truecm
\noindent{\em Proof of \ref{it:PropCH4}} Let $x$ be a vector in $E$ lifting a point in $\Lambda_+$. By item \ref{it:PropCH2}, the linear form $\langle x,.\rangle$ is negative on $\Lambda_+$ and so strictly negative on the interior of $\CH(\Lambda_+)$. The result follows.

\vskip 0,2truecm
\noindent{\em Proof of \ref{it:PropCH5}} Let $p$ be a vector in $E$ lifting a point in $\CH(\Lambda)\cap \bHn$. Then $p$ can be written $\sum_{i=1}^k t_ix_i$ with $t_i>0$ and $x_i\in\Lambda_+$. The condition $\langle p,p\rangle=0$ thus implies that, either $k=1$ and $p=x_1$, or that all the $x_i$ lie on a common photon. In this case $p$ lies on a segment of a photon which must be contained in $\Lambda$ by Corollary \ref{c:PhotonSegment}.

\section{Graphs, curves and surfaces}\label{sec:Submanifolds}

In this section, we study the differential geometric aspects of curves and surfaces in $\Hn$. We define the notion of \emph{maximal surface} and prove some important properties.

\subsection{Spacelike submanifolds in pseudo-hyperbolic spaces}

Recall that $\g$ denotes the pseudo-Riemannian metric of $\Hn$.

\label{def:bas-curv}
\begin{definition}[\sc Spacelike and acausal]
\noindent
\begin{enumerate}
   \item A submanifold $M$ of $\Hn$ is {\em spacelike} if the restriction of $\g$ to $M$ is Riemannian. Such a submanifold is either a curve or a surface.
	\item A spacelike submanifold $M$ of $\Hn$ is {\em acausal},  if every pair of distinct points in $M$ is acausal.\end{enumerate}		
\end{definition}
\subsubsection{Warped-product structure}  \label{sss:WarpedProduct} 

Let $\Pp=(q,H)$ in $\hHn$  be a pointed hyperbolic plane, associated to  the  orthogonal decomposition $E=q\oplus U \oplus V$ where 
\begin{enumerate}
	\item $q$ is an oriented negative definite line,
	\item $U$ is a positive definite plane, with induced norm $\Vert .\Vert$,  so that $q\oplus U$ defines $H$.
\end{enumerate}

Let  $\D^2\subset U$  be the unit (open) disk and $\S^n\subset W\defeq q+V$ be the unit sphere. The following is proved in \cite[Proposition 3.5]{CTT}.
 \begin{proposition}\label{pro:warped}
 	The map
\begin{equation} \label{e:defnOfPsi}
\Psi: \left\{\begin{array}{rcl}
 \D\times\S^n & \longrightarrow & \hHn\ , \\
  (u,w) & \longmapsto & \left(\frac{2}{1-\Vert u\Vert^2}u,\frac{1+\Vert u\Vert^2}{1-\Vert u \Vert^2}w \right)\ ,\end{array}\right.
\end{equation}
 is a diffeomorphism. Moreover, if $\g$ is the metric on $\hHn$, then
 \begin{equation}\label{e:WarpedProduct}
\Psi^*\g =  \frac{4}{\left(1-\Vert u \Vert^2 \right)^2}g_{\D} - \left(\frac{1+\Vert u\Vert^2}{1-\Vert u\Vert^2} \right)^2g_{\S^n}\ , 
\end{equation}
where $g_{\D}$ and $g_{\S^n}$ are respectively the flat Euclidean metric on the disk and the round metric on the sphere.
 \end{proposition}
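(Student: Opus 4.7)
My plan is essentially a direct, three-step verification.

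\textbf{Step 1: $\Psi$ lands in $\hHn$.} Writing $\Psi(u,w)=\alpha(u)\,u+\beta(u)\,w$ with $\alpha(u)=\frac{2}{1-\|u\|^2}$ and $\beta(u)=\frac{1+\|u\|^2}{1-\|u\|^2}$, I use the orthogonal splitting $E = q \oplus U \oplus V$, so that $U$ and $W \defeq q+V$ are $\q$-orthogonal, with $\q|_U$ positive definite and $\q|_W$ negative definite. Since $u \in U$ has $\q(u)=\|u\|^2$ and $w \in \S^n \subset W$ has $\q(w)=-1$, I compute
\[
\q(\Psi(u,w)) \;=\; \alpha(u)^2\|u\|^2 - \beta(u)^2 \;=\; \frac{4\|u\|^2 - (1+\|u\|^2)^2}{(1-\|u\|^2)^2} \;=\; -1.
\]

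\textbf{Step 2: $\Psi$ is a diffeomorphism.} Given a point $x \in \hHn$, split $x=x_U+x_W$ with $x_U \in U$ and $x_W \in W$; then $\|x_U\|^2 - \langle x_W,x_W\rangle_W = -1$, where $\langle\cdot,\cdot\rangle_W \defeq -\q|_W$. Since $\langle x_W, x_W\rangle_W \geq 1$, the equation $\frac{1+r^2}{1-r^2} = \sqrt{\langle x_W,x_W\rangle_W}$ has a unique solution $r \in [0,1)$, which recovers $\|u\|$ and hence $u=\frac{1-r^2}{2}x_U$ and $w=x_W/\sqrt{\langle x_W, x_W\rangle_W}$. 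Smoothness of $\Psi$ and its inverse are immediate from the explicit formulas.

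\textbf{Step 3: Compute $\Psi^*\g$.} The key observation, which drives the cancellation, is that $d\alpha = d\beta$ as $1$-forms on $\D$; both equal $\omega(\dot u) \defeq \frac{4\langle u,\dot u\rangle}{(1-\|u\|^2)^2}$. Hence
\[
d\Psi(\dot u, \dot w) \;=\; \bigl(\alpha\,\dot u + \omega(\dot u)\,u\bigr) \;+\; \bigl(\omega(\dot u)\,w + \beta\,\dot w\bigr),
\]
where the two summands lie in $U$ and $W$ respectively. Using $U \perp W$, the fact that $\langle w,\dot w\rangle_W=0$ for $\dot w \in T_w \S^n$, and $\|w\|_W=1$, I get
\[
\Psi^*\g(\dot u+\dot w,\dot u+\dot w) \;=\; \alpha^2\|\dot u\|^2 + 2\alpha\,\omega(\dot u)\langle u,\dot u\rangle + \omega(\dot u)^2(\|u\|^2-1) - \beta^2\|\dot w\|_W^2.
\]
A small algebraic check shows $2\alpha\,\omega(\dot u)\langle u,\dot u\rangle + \omega(\dot u)^2(\|u\|^2-1) = 0$, giving exactly the warped product \eqref{e:WarpedProduct}.

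\textbf{Main obstacle.} There is no genuine obstacle here; the statement is a direct computation in coordinates. The only subtle point is not to overlook why the cross terms cancel: one must notice $d\alpha=d\beta$ (equivalently, $\alpha$ and $\beta$ are related by a shift by $1$), which forces the "radial" contributions in $U$ and in $W$ to annihilate each other. Once this is observed the proposition reduces to elementary algebra, and is in any case identical to the statement and proof of \cite[Proposition 3.5]{CTT}.
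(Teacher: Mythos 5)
Your proof is correct, and the computation is cleanly organized around the observation $d\alpha=d\beta$ (equivalently $\beta=\alpha-1$), which is exactly what makes the cross terms in $\Psi^*\g$ cancel. The paper itself does not include a proof of Proposition~\ref{pro:warped} — it cites \cite[Proposition~3.5]{CTT} — so there is no in-paper argument to compare against, but your direct verification of the three points (that $\Psi$ lands in $\hHn$, is a diffeomorphism, and pulls back the metric to the warped-product form) is precisely the expected argument and matches the reference.
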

 Observe that the parametrization $\hHn\cong \D \times \S^n$ extends smoothly to a parametrization of $\hHn\cup\partial_\infty\hHn$ by $\overline\D\times\S^n$.
 
 The  diffeomorphism $\Psi$  is  called the {\em warped diffeomorphism} and said to define the  {\em warped product structure on $\hHn$}. 
 
If the  preimage of  $q$ under $\Psi$ is $(0,v)$, the preimage of $H$ is  $\D \times \{v\}$.

For any $w$ in $\S^n$, the image of $\big( (0,w),\D\times \{w\}\big)$ is a pointed hyperbolic plane that we call {\em parallel} to $\Pp$. These pointed disks correspond exactly to the set of pointed hyperbolic planes whose projection to $\Gr{E}$ is $U$.

The following nice fact was pointed out to us by the referee:

\begin{lemma}\label{lem:ConfSphere}
Let $\Psi$ be as in Proposition \ref{pro:warped}. Identifying $\D$ with an hemisphere of $\S^2$ using the stereographic projection, the metric $\Psi^*\g$ is conformal to the metric $g_{\S^2}-g_{\S^n}$.
\end{lemma}

\begin{proof}
Consider the function $f$ from $\D\times \S^n$ to $\R$ sending $(u,v)$ to $\left(\frac{1-\Vert u\Vert^2}{1+\Vert u\Vert^2}\right)^2$. We obtain that the metric $f\cdot \Psi^*\g$ is equal to
\[\frac{4}{(1+\Vert u\Vert^2)^2}g_\D - g_{\S^n}~.\]
The result then follows from the fact that $\frac{4}{(1+\Vert u\Vert^2)^2}g_\D$ is the expression of the spherical metric on an hemisphere of $\S^2$ in a chart given by the stereographic projection. 
\end{proof}

\begin{definition}[\sc Warped projection]
The {\em warped projection} is the map  \[\pi_{\Pp}: \hHn \longrightarrow H\ \ ,\]
	corresponding (via $\Psi$) to the projection from $\D\times \S^n$ to $\D\times \{v\}$ and mapping $(u,w)$ to $(u,v)$.

A {\em timelike sphere} is the fiber of $\pi_\Pp$ above $q$, for some pointed hyperbolic plane $\Pp=(q,H)$. It  is the intersection of $\hHn$ with the subspace $W$ of $E$ of signature $(0,n+1)$.
\end{definition}

Note that, given a pointed hyperbolic plane $\Pp=(q,H)$ with warped projection $\pi_{\Pp}$, the preimage by $\pi_{\Pp}$ of a point different from $q$ is not totally gedesic, since its induced metric does not have curvature $-1$.

We then have a fundamental property of $\hHn$:

\begin{lemma}[\sc Projection increases length]\label{l:WarpedProjLengthIncreasing}
The warped projection increases the length of spacelike curves. Moreover if $x_1$ and $x_2$ are two distinct points in the same fiber, then $\langle x_1,x_2\rangle>-1$.
\end{lemma}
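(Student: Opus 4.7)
The plan is to carry out both parts in the warped-product coordinates of Proposition~\ref{pro:warped}, where a point of $\hHn$ is written $(u,w)\in\D\times\S^n$, the projection $\pi_\Pp$ reduces to $(u,w)\mapsto(u,v)$, and the metric is given by the explicit expression~\eqref{e:WarpedProduct}. Both assertions should follow by short algebraic manipulations in these coordinates.

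For the length-increasing property, I would take a spacelike curve $\gamma(t)=(u(t),w(t))$ and note that spacelikeness of $\gamma$ reads
$$\frac{4}{(1-\|u\|^2)^2}\|\dot u\|^2 \;-\; \left(\frac{1+\|u\|^2}{1-\|u\|^2}\right)^2\|\dot w\|_{\S^n}^2 \;>\; 0.$$
The projected curve $\pi_\Pp\circ\gamma$ has squared speed equal to the first (positive) summand alone, so its speed is at least as large as that of $\gamma$, with strict inequality whenever $\dot w\neq 0$. Integrating over the parameter interval gives the length inequality.

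For the second statement, write $x_i=\Psi(u,w_i)$ with $w_1\neq w_2$ lying on the common fiber, and set
$$\lambda \defeq \frac{2}{1-\|u\|^2},\qquad \mu \defeq \frac{1+\|u\|^2}{1-\|u\|^2},$$
which satisfy $\mu^2-\lambda^2\|u\|^2=1$ (this is the identity $\q(x_i)=-1$). Using that $\q$ restricts to $\langle\cdot,\cdot\rangle_U$ on $U$ and to $-\langle\cdot,\cdot\rangle_W$ on $W$ in the notation of paragraph~\ref{sss:ProductStructure}, a direct expansion gives
$$\langle x_1,x_2\rangle \;=\; \lambda^2\|u\|^2 - \mu^2\langle w_1,w_2\rangle_W \;=\; -1 + \mu^2\bigl(1-\langle w_1,w_2\rangle_W\bigr).$$
Since $w_1$ and $w_2$ are distinct unit vectors in the Euclidean space $(W,\langle\cdot,\cdot\rangle_W)$, one has $\langle w_1,w_2\rangle_W<1$, and $\mu>0$, so $\langle x_1,x_2\rangle>-1$. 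No real obstacle arises: once one commits to the warped coordinates of Proposition~\ref{pro:warped}, both parts are elementary, the only mild subtlety being the sign conventions that convert the pseudo-inner product $\q$ into the Euclidean inner products on $U$ and $W$.
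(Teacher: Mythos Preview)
Your proof is correct and follows essentially the same approach as the paper: both parts are carried out in the warped-product coordinates of Proposition~\ref{pro:warped}, with the length-increasing property read off directly from~\eqref{e:WarpedProduct} and the inner-product inequality obtained by expanding $\langle\Psi(u,w_1),\Psi(u,w_2)\rangle$ and using $\langle w_1,w_2\rangle_W<1$. Your use of the identity $\mu^2-\lambda^2\|u\|^2=1$ to rewrite the result as $-1+\mu^2(1-\langle w_1,w_2\rangle_W)$ is a minor cosmetic variation on the paper's computation, not a different method.
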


\begin{proof}
The fact that the warped projection is length-increasing is a direct consequence of equation (\ref{e:WarpedProduct}). 

If $x_1$ and $x_2$ project onto the same point, then $\Psi^{-1}(x_i)=(u,w_i) \in \D \times \S^n$ for $i=1,2$. Using the expression of $\Psi$, we see that 
\begin{equation}\label{e:ScalarProductInWarpedProduct}
\langle x_1,x_2\rangle = \frac{4\Vert u\Vert^2}{\left( 1-\Vert u\Vert^2\right)^2} - \left(\frac{1+\Vert u\Vert^2}{1-\Vert u \Vert^2} \right)^2\langle w_1,w_2\rangle_W ~,
\end{equation}
where $\langle.,.\rangle_W$ is the  positive definite scalar product induced by $-\q$ on $W=q\oplus V$. Since $w_i\in \S^n$, we have $\langle w_1, w_2\rangle_W<1$, thus
\begin{equation}
\langle x_1,x_2\rangle 
>  \frac{4\Vert u\Vert^2}{\left( 1-\Vert u\Vert^2\right)^2} - \left(\frac{1+\Vert u\Vert^2}{1-\Vert u \Vert^2} \right)^2= -1\ .
\end{equation}
This concludes the proof. 
\end{proof}

\subsubsection{Spacelike graphs}

From now on, all our surfaces are assumed to be connected and smooth up to their boundaries. 

\begin{definition}\label{def:spacelike-graph}
We define
\begin{enumerate}
	\item A spacelike submanifold $M$ of $\hHn$ is a {\em graph} if for any pointed hyperbolic plane, the restriction of the corresponding warped projection is a diffeomorphism onto its image. 
	\item If moreover this diffeomorphism is surjective, $M$ is an {\em entire graph}.
\end{enumerate}
\end{definition}
Observe that a  spacelike graph is always embedded. We now use the definitions of 
paragraph \ref{sss:Geodesics}. As in Lemma \ref{lem:ConfSphere}, we identify $\D$ with an hemisphere $B$ of $\S^2$ using the stereographical projection.
\begin{proposition}\label{p:acausalImpliesGraph}
Let $M$ be a connected spacelike submanifold of $\hHn$.
\begin{enumerate}
	\item If $M$ is acausal  then it is a graph. \label{it:ag1}
	\item If $M$ is a graph, then it is the graph of a $1$-Lipschitz map from a subset $U$ of $B$ to $\S^n$, in any warped product. \label{it:ag2}
\end{enumerate}
\end{proposition}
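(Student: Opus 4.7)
The plan is to work with an arbitrary pointed hyperbolic plane $\Pp$ and its warped diffeomorphism $\Psi:\D\times\S^n\to\hHn$ from Proposition~\ref{pro:warped}, then verify the relevant properties of $\pi_\Pp|_M$ directly. For (i), I would show that $\pi_\Pp|_M$ is both an immersion and injective. The immersion property is immediate: the tangent space to the fiber of $\pi_\Pp$ through any $x\in M$ is a subspace of the negative-definite factor $W=L\oplus V$, hence consists of timelike vectors, while the spacelike hypothesis on $M$ means $T_xM$ consists of positive vectors, so $T_xM\cap\ker d\pi_\Pp|_x=\{0\}$.

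For injectivity, suppose for contradiction that distinct $x_1,x_2\in M$ satisfy $\pi_\Pp(x_1)=\pi_\Pp(x_2)$. Lemma~\ref{l:WarpedProjLengthIncreasing} yields $\langle x_1,x_2\rangle>-1$. On the other hand, acausality of $M$ provides a spacelike geodesic of $\hHn$ joining $x_1$ to $x_2$: as a connected curve contained in the indefinite plane $\mathrm{span}(x_1,x_2)$, it lies on a single branch of the hyperbola $\hHn\cap\mathrm{span}(x_1,x_2)$, on which $\langle\cdot,\cdot\rangle<-1$ (the sign choice completing the $|\langle\cdot,\cdot\rangle|>1$ condition of Lemma~\ref{l:GeodesicTypeAndScalarProduct}). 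This contradicts $\langle x_1,x_2\rangle>-1$. Combined with connectedness of $M$, the immersion and injectivity properties then ensure that $\pi_\Pp|_M$ is a diffeomorphism onto its image; since $\Pp$ was arbitrary, $M$ is a spacelike graph.

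For (ii), assume $M$ is a spacelike graph, so that the projection $\pi_\Pp|_M$ is a diffeomorphism onto its image and in warped coordinates we may write $M=\Psi(\{(u,f(u)):u\in\D'\})$ for some open $\D'\subset\D$ and smooth $f:\D'\to\S^n$. A tangent vector to $M$ at $\Psi(u,f(u))$ takes the form $(\dot u,df_u\dot u)$, and by \eqref{e:WarpedProduct} its pseudo-norm squared equals
\[\frac{4\|\dot u\|^2}{(1-\|u\|^2)^2}-\Bigl(\frac{1+\|u\|^2}{1-\|u\|^2}\Bigr)^2\|df_u\dot u\|_{\S^n}^2.\]
Spacelikeness forces this to be positive, which yields $\|df_u\|_{\mathrm{op}}<\frac{2}{1+\|u\|^2}\leq 2$, so $f$ is $2$-Lipschitz. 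The hard part of the proof is the injectivity step in (i): it relies on the tight combination of the scalar-product bound from Lemma~\ref{l:WarpedProjLengthIncreasing} with the refinement of Lemma~\ref{l:GeodesicTypeAndScalarProduct} that in $\hHn$ a spacelike geodesic traces a single connected branch of the hyperbolic quadric in the $2$-plane it spans.
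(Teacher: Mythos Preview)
Your proof is correct and essentially matches the paper's: part~(ii) is identical, and for part~(i) both arguments combine Lemmas~\ref{l:GeodesicTypeAndScalarProduct} and~\ref{l:WarpedProjLengthIncreasing} to rule out two points of $M$ in the same fibre. The one minor difference is how the sign $\langle x_1,x_2\rangle<-1$ is pinned down: you use connectedness of the spacelike geodesic in $\hHn$, whereas the paper uses connectedness of $M$ itself (the continuous function $(x,y)\mapsto\langle x,y\rangle$ on $M\times M$ misses $(-1,1)$ and equals $-1$ on the diagonal), a slightly more robust route that sidesteps any subtlety about whether points on opposite branches of $\hHn\cap\mathrm{span}(x_1,x_2)$ are joined by a geodesic of the double cover.
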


\vskip 0,2truecm
\noindent{\em Proof of \ref{it:ag1}} Given a pointed hyperbolic plane $\Pp$, the corresponding warped projection $\pi_\Pp$ restricts to a local diffeomorphism on $M$. It follows from Lemma \ref{l:GeodesicTypeAndScalarProduct} that, since $M$ is acausal, we have $\langle x,y\rangle  \geq -1$ for any pair $x,y\in M$. Lemma \ref{l:WarpedProjLengthIncreasing} then implies that the restriction of ${\pi_\Pp}$ to $M$ is injective and thus a diffeomorphism on its image.

\vskip 0,2truecm
\noindent{\em Proof of \ref{it:ag2}} Tangent vectors to the graph of $f$ at $(x,f(x))$ have the form $(u,{\rm D}f_x(u))$, where $u\in \T_x\D$. Using Lemma \ref{lem:ConfSphere}, one sees that $(u,{\rm D}f_x(u))$ is spacelike if and only if
\[\Vert u\Vert^2 - \Vert {\rm D}f_x(u)\Vert^2>0\ , \]
where the norms are computed using the sperical metrics on $B$ and $\S^n$. It implies 
\begin{equation}
\Vert {\rm D}f_x\Vert <1 \ .
\end{equation}
\hfill\qedsymbol

\begin{lemma}\label{l:WarpedProjectionIncreaseDistances}
Let $S$ be a connected spacelike acausal surface and $\Pp$ a pointed hyperbolic plane with associated warped projection $\pi_\Pp$. The restriction of $\pi_\Pp$ from $S$ to $\pi_\Pp(S)$ increases the induced path distances.
\end{lemma}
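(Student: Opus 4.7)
My plan is to reduce the statement to the infinitesimal length comparison already supplied by Lemma~\ref{l:WarpedProjLengthIncreasing}, using that the warped projection, restricted to $S$, is a diffeomorphism onto its image.

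First I would invoke Proposition~\ref{p:acausalImpliesGraph}\,\ref{it:ag1}: since $S$ is connected and acausal, it is a spacelike graph, so $\pi_{\Pp}|_S \colon S \to \pi_{\Pp}(S)$ is a diffeomorphism. This gives a bijective correspondence between smooth paths in $S$ and smooth paths in $\pi_\Pp(S) \subset H$: every path $\sigma$ in $\pi_\Pp(S)$ from $\pi_\Pp(x)$ to $\pi_\Pp(y)$ is of the form $\pi_\Pp\circ\gamma$ for a unique path $\gamma$ in $S$ from $x$ to $y$, and vice versa.

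Next, since $S$ is spacelike, every smooth path $\gamma \subset S$ has only spacelike tangent vectors. Lemma~\ref{l:WarpedProjLengthIncreasing} then applies directly to give
\[
\operatorname{length}(\gamma) \ \leq \ \operatorname{length}(\pi_\Pp \circ \gamma),
\]
where the length on the left is with respect to the induced Riemannian metric on $S$ and the length on the right is with respect to the hyperbolic metric of $H$ (which coincides with the induced metric on $\pi_\Pp(S)$).

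Finally, taking the infimum over all smooth paths $\gamma \subset S$ joining $x$ to $y$, the left side is the induced distance $d_S(x,y)$, and by the bijective correspondence above, the right side is exactly the induced path distance $d_{\pi_\Pp(S)}(\pi_\Pp(x),\pi_\Pp(y))$. Hence $d_S(x,y) \leq d_{\pi_\Pp(S)}(\pi_\Pp(x),\pi_\Pp(y))$, which is the desired statement. There is no serious obstacle here: the content is already packaged into Lemma~\ref{l:WarpedProjLengthIncreasing} and Proposition~\ref{p:acausalImpliesGraph}, and the only thing to be careful about is that the bijectivity of $\pi_\Pp|_S$ lets us identify the two infima rather than just giving a one-sided inequality.
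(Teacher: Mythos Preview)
Your proof is correct and is essentially the same argument as the paper's: both use that $\pi_\Pp|_S$ is a diffeomorphism onto its image (the paper implicitly, you via Proposition~\ref{p:acausalImpliesGraph}) to lift paths from $\pi_\Pp(S)$ back to $S$, and then apply Lemma~\ref{l:WarpedProjLengthIncreasing} to compare lengths. The paper phrases it by lifting a single shortest arc in $\pi_\Pp(S)$, while you take the infimum over all paths and use the bijection to identify the two infima; these are the same idea.
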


\begin{proof}
Let $\alpha=\pi_\Pp(a)$ and $\beta=\pi_\Pp(b)$ be points in $\pi_\Pp(S)$ with $a,b\in S$. For any path $\gamma$ between $\alpha$ and $\beta$ in $\pi_\Pp(S)$, its preimage by $\pi_\Pp$ in $S$ is a curve between $a$ and $b$ whose length is less than that of the one of $\gamma$ by Lemma  \ref{l:WarpedProjLengthIncreasing}. Taking the infimum over all path between $\alpha$ and $\beta$ yields the result.
\end{proof}

We have several different notions of boundary:

\begin{definition}[\sc Boundaries of acausal surfaces]\label{d:AsymptoticBoundary}
Let $S$ be an acausal surface in $\Hn$.
\begin{enumerate}
	\item The \emph{total boundary} $\Lambda$ of $S$ is $\overline S\setminus \text{int}(S)$, where $\overline S$ is the closure of $S$ in $\cHn$ and $\text{int}(S)$ is its interior.
 	\item The \emph{finite boundary} of $S$, denoted by $\partial S$, is the intersection of $\Lambda$ with $S$.
	\item The \emph{asymptotic boundary} of S, denoted by $\partial_\infty S$, is the intersection of $\Lambda$ with $\bHn$.
	\item The \emph{free boundary} of $S$ (or \emph{frontier}), denoted by $\Fr(S)$ is the complement of $\partial S\cup \partial_\infty S$ in $\Lambda$.
\end{enumerate}
\end{definition}

We will use the same notation for the corresponding objects in $\hHn$. Note that if the induced metric on $S$ is metrically complete, then $\Fr(S)=\emptyset$. If moreover $S$ is a manifold without boundary, then $\Lambda= \partial_\infty S$.

Given a acausal surface $S$ with induced metric $d_\I$, for any point $x$ in $S$, define $d_I(x,\Fr(S))$ as the supremum over all $R$ so that the closed ball of radius $R$ and center $x$ is complete. We also define the pseudo-distance to the frontier as

\[ \eth(x,\Fr(U))\defeq \inf\{\eth(x,z)\mid z\in \Fr(U)\}\ . \]

\begin{proposition}[\sc Boundary of acausal surfaces]\label{p:BoundaryacausalSurface}
Let $S$ be a closed  spacelike surface with boundary in $\hHn$.  Assume that  $\partial S$ is connected and is a a graph, then $S$ is a graph. 
\end{proposition}

\begin{proof}
Let $\pi$ be a warped projection on a hyperbolic plane $H$.
By assumption $\pi(\partial S)$ is a circle $\gamma$ embedded in $H$. By compactness of $S$, $f(x)\defeq \sharp(\pi^{-1}(x))$ is locally constant on each of the connected component of $H\setminus\gamma$. It follows (by compactness) that $f=0$ on the unbounded component of $H\setminus\gamma$. This implies that $\pi^{-1}\gamma=\partial S$. Hence that  $f=1$ in the (interior) neighborhood of $\gamma$. Thus $f=1$ in the bounded connected component of $H\setminus\gamma$.
\end{proof}
Finally, we describe entire spacelike graphs.

\begin{proposition}[\sc Entire spacelike graph]\label{p:EntireGraph}
Let $S$ be a simply connected spacelike surface without boundary.
\begin{enumerate}
	\item If $S$ is properly immersed or if its induced metric is complete, then $S$ is an entire graph. \label{it:eg1}
	\item If $S$ is an entire graph, then it is acausal. \label{it:eg2}
	\item If $S$ is an entire graph, then it intersects any timelike sphere exactly once. \label{it:eg3}
	\item  If $S$ is an entire graph, then its asymptotic boundary $\partial_\infty S$ is a semi-positive loop. \label{it:eg4}
\end{enumerate}
\end{proposition}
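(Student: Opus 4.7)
The plan is to exploit the warped-product parametrization of Proposition \ref{pro:warped} together with the strict pointwise bound $\|\mathrm{d}f_x\|_{\mathrm{op}} < 2/(1+\|x\|^2)$ on the defining map $f\colon \D \to \S^n$ of the graph, which comes from the spacelike condition via Proposition \ref{p:acausalImpliesGraph}(ii).

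For (i), I fix any pointed hyperbolic plane $\Pp$ and observe that $\pi_\Pp|_S$ is a local diffeomorphism, since timelike fibers cannot meet the spacelike tangent plane of $S$. If $S$ is properly immersed, then $\pi_\Pp|_S$ is proper (the fibers of $\pi_\Pp$ itself are compact), hence a covering of the simply connected $H$, hence a diffeomorphism by connectedness of $S$. If instead $S$ has complete induced metric, Lemma \ref{l:WarpedProjLengthIncreasing} shows that any path-lift has length bounded by the length of its base curve; such lifts are Cauchy and extend by completeness, again upgrading $\pi_\Pp|_S$ to a covering. For (ii), I choose the pointed hyperbolic plane centered at $x_1 = \Psi(0,v)$; for any other $x_2 = \Psi(u, f(u))$ in $S$, a direct computation from \eqref{e:defnOfPsi} gives
\[
\langle x_1, x_2\rangle \;=\; -\frac{1+\|u\|^2}{1-\|u\|^2}\,\langle v, f(u)\rangle_W,
\]
so by Lemma \ref{l:GeodesicTypeAndScalarProduct} spacelikeness is equivalent to $d_{\S^n}(v, f(u)) < 2\arctan\|u\|$. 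Integrating the strict pointwise bound along $t\mapsto tu$, $t\in[0,1]$, and substituting $s=t\|u\|$ yields exactly this inequality. Item (iii) is immediate: a timelike sphere is, by definition, the fiber of some $\pi_\Pp$ over its basepoint, and bijectivity of $\pi_\Pp|_S$ forces a single intersection with $S$.

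The substance lies in (iv). Since $f$ is 2-Lipschitz it extends continuously to $\overline\D$, and the projective rescaling of $\Psi$ identifies $\partial_\infty S$, in the product splitting $\partial_\infty\hHn\cong\S^1\times\S^n$ associated to $\Pp$, with the graph of $f|_{\S^1}\colon \S^1\to\S^n$. By Proposition \ref{p:PositiveCirclesAreGraph} and Lemma \ref{l:semipositivenotphotons}, semi-positivity amounts to $f|_{\S^1}$ being 1-Lipschitz but not isometric. Integrating the interior bound along arcs of radius $r$ and letting $r\to 1$ (with $2r/(1+r^2)\to 1$) gives 1-Lipschitzness. For non-isometry, the key inequality is
\[
d_{\S^n}(f(0), f(u_0)) \;\leq\; \int_0^1 |\mathrm{d}f_{s u_0}(u_0)|_{\S^n}\,\mathrm{d}s \;<\; \int_0^1 \frac{2}{1+s^2}\,\mathrm{d}s \;=\; \frac{\pi}{2}
\]
for every $u_0\in\S^1$; the strict inequality follows because $2/(1+s^2)-\|\mathrm{d}f_{s u_0}\|_{\mathrm{op}}$ is continuous and strictly positive on $[0,1)$, hence uniformly positive, and therefore of strictly positive integral, on each compact $[0,1-\varepsilon]$, so on $[0,1)$. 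If $f|_{\S^1}$ were an isometry onto a great circle $G\subset\S^n$, this would force $d_{\S^n}(f(0), g)<\pi/2$ for every $g\in G$; but for any antipodal pair $g,-g\in G$ one has $d_{\S^n}(f(0), g)+d_{\S^n}(f(0), -g) \geq d_{\S^n}(g,-g) = \pi$, forcing one of them to have distance $\geq\pi/2$ from $f(0)$, a contradiction.

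The most delicate point is exactly this last step: the strict pointwise bound on $\|\mathrm{d}f\|_{\mathrm{op}}$ is sharp in the limit $\|x\|\to 1$ (both sides tend to $1$), so a naive passage to the boundary yields only a non-strict 1-Lipschitz comparison on $\partial_\infty S$ and cannot exclude a photonic boundary directly. The saving observation is that strictness survives integration along the radial direction from the center of $\D$, and this, combined with the antipodal rigidity of great circles in $\S^n$, is what actually rules out isometric boundary maps.
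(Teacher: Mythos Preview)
Your argument is correct. Items (i)--(iii) follow the paper essentially verbatim: the covering argument from properness or completeness via the length-increasing property of $\pi_\Pp$, the radial integration of the bound \eqref{e:2Lipschitz} centered at the chosen point of $S$, and the tautological identification of timelike spheres with fibers of a warped projection.

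For (iv) you take a genuinely different route. You reduce to the characterization of Proposition~\ref{p:PositiveCirclesAreGraph} and verify its two hypotheses analytically in a single fixed warped product: the $1$-Lipschitz bound on $f|_{\S^1}$ comes from integrating \eqref{e:2Lipschitz} along circular arcs of radius $r$ and letting $r\to 1$, and non-isometry comes from the strict radial estimate $d_{\S^n}(f(0),f(u_0))<\pi/2$ combined with the antipodal obstruction on a great circle. The paper instead handles the two halves of semi-positivity directly and separately. To rule out negative triples it passes to a \emph{different} warped product, one adapted to a hypothetical $(1,2)$-triple $(a,b,c)$, and observes that three such boundary points would project to at most two points of $\S^1$, contradicting that $\overline S$ is a graph in every splitting. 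To rule out a photon it argues that $\partial_\infty S$ misses $x^\perp$ for any interior point $x$, while a projective line meets every hyperplane. Your approach has the virtue of staying inside one splitting throughout and of making fully explicit why the strictness in \eqref{e:2Lipschitz} survives to the boundary; the paper's treatment of negative triples is more geometric and avoids any limiting procedure, at the price of changing the product structure mid-argument. The two ``not a photon'' arguments are in fact closely related, since the paper's claim that geodesics from $x\in S$ to $\partial_\infty S$ are spacelike ultimately also rests on the strict radial inequality you isolate.
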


\vskip 0,2truecm
\noindent{\em Proof of \ref{it:eg1}} When the induced metric $g_\I$ on $S$ is geodesically complete, the argument comes from \cite[Proposition 3.15]{CTT}. In that case, the warped projection $\pi_\Pp$ on a pointed hyperbolic plane $\Pp$ is length-increasing. In particular we have
\[ \pi_\Pp^*g_{\H^2}\geq g_\I\ \ .\]
It follows that $\pi_\Pp^*g_{\H^2}$ is also complete. As a result, the restriction of $\pi_\Pp$ to $S$ is a proper immersion, hence a covering, and so a diffeomorphism since $S$ is simply connected.

If $S$ is properly immersed, the result follows from the fact that the warped projection is proper, so its restriction to $S$ is a covering.

\vskip 0,2truecm
\noindent{\em Proof of \ref{it:eg2}} This was proved in \cite[Lemma 3.7]{CTT}.

\vskip 0,2truecm
\noindent{\em Proof of \ref{it:eg3}} Given a timelike sphere $\Sigma$ in $\hHn$ and a point $q\in\Sigma$, the orthogonal of $\Sigma$ at $q$ defines a pointed hyperbolic plane $\Pp$ such that $\Sigma=\pi^{-1}_\Pp(q)$. $S$ is then the graph of a map $f$ and so $\Sigma\cap S=(0,f(0))$.

\vskip 0,2truecm
\noindent{\em Proof of \ref{it:eg4}} From Proposition \ref{p:acausalImpliesGraph}, $S$ is the graph of a 1-Lipschitz map from an hemisphere $B$ in $\S^2$ to $\S^n$. Such a map extends to a 1-Lipschitz map from the equator $\S^1$ to $\S^n$. By Proposition \ref{p:PositiveCirclesAreGraph}, its graph $\partial_\infty S$ is semi-positive unless it is a photon or a biphoton.

To prove that $\partial_\infty S$ is not a photon nor a biphoton, observe that from item \ref{it:eg2} the geodesic from any point $x$ in $S$ to any point $y$ in $\partial_\infty S$ is spacelike. In particular, $\partial_\infty S$ is disjoint from the hyperplane $x^\bot$. Given $\phi$ a photon or a biphoton, $\phi$ contains  a pair of antipodal points $(a,b)$. Either $a$ and $b$ are contained in $x^\bot$ or they lie in different connected components of $E\setminus x^\bot$. In both cases, $\phi$ intersect $x^\bot$. Thus $\partial_\infty S$ is semi-positive.
\hfill\qedsymbol

\begin{rmk}
We return briefly to Definition~\ref{def:spacelike-graph}.	 If for a spacelike submanifold of $\hHn$, there is at least one pointed hyperbolic plane for which the warped projection is a diffeomorphism onto its image, then the proof of \ref{it:eg2} above shows that the submanifold is acausal.  Then Proposition~\ref{p:acausalImpliesGraph} implies that the submanifold is a graph over every pointed hyperbolic plane and is thus, by Definition~\ref{def:spacelike-graph}, a spacelike graph.
\end{rmk}

Recall that   $\T_xf$ denotes the tangent map of $f$ at $x$.

\begin{proposition}\label{pro:diamwarp}\label{pro:diamwarp2} 

Let  $\mathsf{Q}=(q,Q)$ and $\Pp=(p,P)$ be pointed hyperbolic planes. Let $\varphi_\Pp$ be the restriction of the warped projection $\pi_\Pp$ to $Q$. Assume that  
$$
d_\GG(\T_pP,\T_qQ)<R\ .
$$ Then 
\begin{enumerate}
\item For each such positive constant $R$, there exists a positive constant $c$ so that $$
\Vert \T_q\varphi_\Pp\Vert\leq c\ ,
$$
\item For any $b$ larger than $1$, there exists such a positive constant $R$ so that 
$$
b^{-1}\leq \Vert \T_q\varphi_\Pp\Vert\leq b\ .
$$
\end{enumerate}
\end{proposition}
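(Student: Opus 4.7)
The plan is to exploit the $\G$-equivariance of the construction and reduce both statements to compactness and continuity arguments on the Riemannian symmetric space $\GR{\hHn}$, pivoting on the observation that when $\mathsf{Q}=\Pp$ the map $\varphi_\Pp$ is the identity.

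\textbf{Equivariance and continuity.} The isometry group $\G$ acts on $\hHn$ by isometries and extends to an isometric action on $\GR{\hHn}$ equipped with its canonical Riemannian metric; moreover warped projections are equivariant in the sense that $\pi_{g\cdot \Pp}=g\circ \pi_\Pp\circ g^{-1}$, and the underlying hyperbolic planes are permuted by $\G$. Consequently the function
\[
F:\GR{\hHn}\times \GR{\hHn}\longrightarrow \R_{\geq 0}\,,\qquad F(\Pp,\mathsf{Q})\defeq \Vert \T_q\varphi_\Pp\Vert\,,
\]
is invariant under the diagonal $\G$-action. It is also continuous: the warped diffeomorphism $\Psi$ depends smoothly on the pointed hyperbolic plane $\Pp$ via the explicit formula \eqref{e:defnOfPsi}, so $\pi_\Pp$ varies smoothly with $\Pp$, and $(q,\T_qQ)$ varies smoothly with $\mathsf{Q}$.

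\textbf{Proof of (1).} By transitivity of $\G$ on $\GR{\hHn}$, one may fix a reference $\Pp$. The hypothesis $d_\GG(\Pp,\mathsf{Q})\leq R$ constrains $\mathsf{Q}$ to lie in a closed metric ball of the complete Riemannian manifold $\GR{\hHn}$, which is compact. The continuous function $F(\Pp,\cdot)$ attains a finite maximum $c$ on this ball, and this bound depends only on $R$ by $\G$-invariance.

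\textbf{Proof of (2).} When $\mathsf{Q}=\Pp$ the hyperbolic plane $Q$ coincides with $P$ and corresponds under $\Psi$ to $\D\times\{v\}$; by the definition of the warped projection, $\pi_\Pp$ acts as the identity on each slice $\D\times\{w\}$, so in particular $\pi_\Pp\vert_P=\Id_P$ and hence $F(\Pp,\Pp)=1$. Given $b>1$, the continuity of $F$ at the diagonal combined with $\G$-invariance and the transitivity of $\G$ on $\GR{\hHn}$ produces a single $R>0$ (depending only on $b$) such that $d_\GG(\Pp,\mathsf{Q})<R$ forces $F(\Pp,\mathsf{Q})\in[b^{-1},b]$; here one uses compactness of the $\G$-stabiliser of a point to pass from a local to a uniform neighborhood.

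\textbf{Expected obstacle.} The main points requiring care are the smooth dependence of $\pi_\Pp$ on $\Pp$ (used for continuity of $F$) and the base case $\pi_\Pp\vert_P=\Id_P$, which is the anchor ensuring that the upper bound in (2) cannot be avoided. Both follow readily from the explicit warped-product formula \eqref{e:WarpedProduct}, and once they are in hand the passage from pointwise continuity to uniform estimates is straightforward by $\G$-equivariance.
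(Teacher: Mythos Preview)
Your proof is correct and follows essentially the same route as the paper's: both arguments rest on the continuity of $(\Pp,\mathsf{Q})\mapsto \Vert \T_q\varphi_\Pp\Vert$, the local compactness of $\GR{\hHn}$, and the anchor $\varphi_\Pp=\Id$ when $\Pp=\mathsf{Q}$. The paper's proof is more terse (it does not spell out the $\G$-equivariance you invoke), but the underlying mechanism is the same compactness-and-continuity argument.
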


\begin{proof}
Since hyperbolic planes have complete induced metrics, for any $\Pp$ and $\mathsf{Q}$ as in the proposition, $\varphi_\Pp$ is a global diffeomorphism and so $\Vert \T_q\varphi_\Pp\Vert>0$. The result then follows from the fact that $\GR{\Hn}$ is locally compact and that $\varphi_\Pp=\Id$ when $\Pp=\mathsf{Q}$.
\end{proof}

\subsection{Strongly positive curves}\label{ss:typeofcurves}
Recall from \ref{sec:renogr} that $\Hn_\lambda$ is the space $\Hn$ equipped with the metric $\g_\lambda=\frac{1}{\lambda} \g$.

For a curve $\gamma$ and $x\in\gamma$, the {\em osculating plane}, denoted $\T^{(2)}_x\gamma$ is (given a parametrization of $\gamma$ so that $\gamma(t_0)=x$) the vector space generated by $\dot\gamma$ and $\nabla_{\dot\gamma}\dot\gamma$. The osculating plane is independent of the parametrization.
 We introduce the following properties of curves  in $\Hn$ which are further refinements of being spacelike and positive.

\begin{definition}[\sc Strongly positive curves]\label{d:TypeOfCurves} A smooth curve $\gamma$ in $\Hn_\lambda$ is \emph{strongly positive} if
\begin{enumerate}
	\item the curve $\gamma$ positive,
 	\item 	 for every point $x$ in $\gamma$, the osculating plane $\T^{(2)}_x\gamma$ has dimension $2$ and is spacelike.
	\item for any pair of disjoint points $x$ and $y$ in $\gamma$, the totally geodesic space containing $x, y$ and the tangent vectors in $\T_y\gamma$ is spacelike.
\end{enumerate}
\end{definition}

It is important to remark that since the lift of a connected positive curve to $\hHn$ has two connected components, we can use any of these to make sense of warped projection, graphs and so on. This fact will be used in the sequel.

\subsubsection{Unpinched curves}\label{sss:unpinched}
Given an acausal curve $\gamma$ in $\Hn$, we can define two distances on $\gamma$: the (extrinsic) spatial distance $\eth$ (see paragraph \ref{sss:Geodesics}) and the distance $d_\gamma$ along $\gamma$. The following notion is a comparison between those two.

\begin{definition}[\sc Unpinched curves]\label{d:Unpinched}
An acausal curve $\gamma$ is called \emph{unpinched} (or \emph{$\delta$-unpinched}) if there exists $\delta>0$ such that for all $x$, $y$ in $\gamma$,
$$
\frac{\eth(x,y)}{d_\gamma(x,y)}\leq \frac{1}{2} \implies \eth(x,y)>\delta\ .
$$
A sequence $\seqk{\gamma}$ is \emph{uniformly unpinched} if there is a $\delta>0$ such that any $\gamma_k$ is $\delta$-unpinched.
\end{definition}

Observe also that if $\gamma$ is  $\delta$-unpinched in $\Hn_\lambda$, it is also $\delta$ unpinched for $\Hn_{\mu}$ for $\mu\leq\lambda$.

\subsubsection{Angular width}\label{sec:angwidth}
Given a curve $\gamma$, we  denote by  $\gamma^{(3)}$  the set of pairwise distinct triples of points in $\gamma$ and by $\text{Gr}_{2,1}(E)$ the set of hyperbolic planes in $\Hn$. Assume $\gamma$ is strongly positive and consider the map
\[\Phi_\gamma:\left\{\begin{array}{llll}
 & \gamma^{(3)} & \longrightarrow & \text{Gr}_{2,1}(E) \\
& (x,y,z) & \longmapsto & x\oplus y\oplus z
\end{array}\right.~,
 \]
Since $\gamma$ is smooth, we have
\[\lim_{\substack{x_1,x_2\to x \\ x_1\neq x_2}} (x_1\oplus x_2)  = x\oplus \T_x\gamma~\ ,\ ~\lim_{\substack{x_1,x_2,x_3\to x \\ x_1\neq x_2\neq x_3}} (x_1\oplus x_2 \oplus x_3) = x\oplus \T_x^{(2)}\gamma~.\]
In particular, $\Phi_\gamma$ extends to a continuous map $\overline\Phi_\gamma: \gamma^3 \to \text{Gr}_{2,1}(E)$ where for $x\neq y$ we have
\[\overline \Phi_\gamma(x,y,y) = x\oplus y\oplus \T_y\gamma~\ ,\ ~\overline \Phi_\gamma(x,x,x)=x\oplus \T^{(2)}_x\gamma~. \]

Let $\gamma$ be a strongly positive curve in $\Hn$. Let $x_0$ be a point in $\gamma$. Consider the codimension 2 subspace $F\defeq  (x_0\oplus \T_{x_0}\gamma)^\bot$ of $E$.  Observe that $F$ has signature $(1,n)$. The projectivization of the orthogonal projection from $E$ to $F$ defines a map
\[\pi_0 : \P(E)\setminus \P(F^\bot) \to \P(F)~.\]
We have
\begin{proposition} Let $\gamma$, $x_0$, and $\pi_0$ be as above, and let $y$ be a point in $\cHn$.  
	If $V\defeq x_0\oplus y\oplus \T_{x_0}\gamma$ has signature $(2,1)$, then $\pi_0(y)$ is a spacelike line in $F$.
\end{proposition}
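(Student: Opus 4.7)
The plan is to reduce the statement to a direct signature computation using the orthogonal decomposition $E = F \oplus F^\bot$ together with additivity of signature on orthogonal direct sums.

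First, I would unpack the signature of $F^\bot = x_0 \oplus \T_{x_0}\gamma$. Since $x_0 \in \Hn$ corresponds to a negative definite line and $\gamma$ is spacelike (so $\T_{x_0}\gamma$ is a positive line), and since $\T_{x_0}\gamma \subset x_0^\bot$ as the tangent line of a curve through $x_0 \in \hHn$, the 2-plane $F^\bot$ is the orthogonal sum of a negative and a positive line, hence non-degenerate of signature $(1,1)$. Consequently $E = F \oplus F^\bot$ is an orthogonal direct sum and $F$ inherits signature $(1,n)$; in particular $\pi_0$ is well-defined on the complement of $\P(F^\bot)$ and lands in $\P(F)$.

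Next, write the orthogonal decomposition $y = y_F + y_{F^\bot}$ with $y_F \in F$ and $y_{F^\bot} \in F^\bot$. I would first verify that $y_F \ne 0$: if $y_F = 0$ then $y \in F^\bot$, which would force $V = x_0 \oplus y \oplus \T_{x_0}\gamma \subseteq F^\bot$, a subspace of signature $(1,1)$; this contradicts the hypothesis that $V$ has signature $(2,1)$, which in particular requires $\dim V = 3$. Thus $\pi_0(y) = \R\, y_F$ is a well-defined line in $F$.

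Finally, I argue that $V = F^\bot \oplus \R y_F$ as an orthogonal direct sum: the vector $y_F = y - y_{F^\bot}$ lies in $V$ since $y \in V$ and $F^\bot \subset V$, and $y_F \perp F^\bot$ by construction, so the right-hand side is contained in $V$ and has dimension $2 + 1 = 3 = \dim V$. By additivity of signature on orthogonal direct sums,
\[
(2,1) = \text{sig}(V) = \text{sig}(F^\bot) + \text{sig}(\R y_F) = (1,1) + \text{sig}(\R y_F),
\]
so $\text{sig}(\R y_F) = (1,0)$, that is, $\q(y_F) > 0$. Therefore $\pi_0(y) = \P(y_F)$ is a spacelike line in $F$, as required. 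The statement is essentially signature bookkeeping; the only point demanding any care is the dimension-and-signature argument showing $y_F \neq 0$, which ensures that $\pi_0(y)$ is even defined.
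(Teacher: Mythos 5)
Your argument is correct and is essentially the paper's argument, written out in more detail. The paper observes directly that the representative vector of $\pi_0(y)$ lies in the orthogonal complement in $V$ of $W = x_0 \oplus \T_{x_0}\gamma$, and concludes from $\text{sig}(W) = (1,1)$ and $\text{sig}(V) = (2,1)$ that this complement is positive; your decomposition $V = F^\bot \oplus \R y_F$ with additivity of signature is exactly this reasoning, with the added (correct and worthwhile) check that $y_F \neq 0$, a point the paper leaves implicit.
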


\begin{proof}
	The vector $\pi_0(y)$ is in the orthogonal in $V$ of the space $W$ generated by $x_0$ and $\T_{x_0}\gamma$; but $W$ has signature $(1,1)$, thus $\pi_0(y)$ is spacelike.
\end{proof}

According to this proposition, $\pi_0$ maps points in $\gamma\setminus\{x_0\}$ to positive definite lines in $F$. If we identify the set of positive definite lines in $F$ with the $n$-dimensional hyperbolic space $\H^n$, we obtain a curve $\pi_{x_0}(\gamma)\subset \H^n$ that we call {\em the angular projection} of $\gamma$ at $x_0$.

\begin{definition}[\sc Angular width]\label{d:AngularWidth}
The {\em angular width} of a compact strongly positive curve $\gamma$ is 
\[w(\gamma)\defeq\sup \left\{ \diam(\pi_{x_0}(\gamma))\mid ~x_0\in \gamma\right\}\ ,\]
where the diameter is computed in $\H^n$.
\end{definition}

\begin{proposition}[\sc Angular width and spacelike surfaces]\label{pro:bd-ang}
Let $\gamma$ be strongly positive curve in $\hHn$. Let $S$ be a spacelike surface with $\partial S\subset\gamma$. Assume that $S$ is included in the convex hull of $\gamma$. Then
for all points $y$ in $\partial S$, we have
$$
d(\T_yS,\T^{(2)}_y\gamma)\leq w(\gamma)\ .
$$
\end{proposition}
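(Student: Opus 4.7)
The plan is to reduce the Grassmannian estimate to a hyperbolic diameter bound in the space $\H^n$ of positive definite lines of $F \defeq (y \oplus \T_y\gamma)^\bot \subset E$, using the angular projection at $y$. First I would observe that both $\T_y S$ and $\T^{(2)}_y\gamma$ contain the common spacelike line $T_0 \defeq \T_y\gamma$ (the former because $\gamma \subset \partial S$, the latter by definition of the osculating plane), so they both lie in the submanifold $N \subset \Gr{\T_y\hHn}$ of positive definite $2$-planes containing $T_0$. The map $T_0 \oplus L \mapsto L$ identifies $N$ with $\H^n$, and a short computation of the canonical Grassmannian metric $-\tr(\varphi^*\varphi)$ at $P = T_0 \oplus \R u$ along a variation $u_t$ on the pseudo-sphere $\{w \in F \mid \q(w)=1\}$ shows this identification is isometric (one finds $h_P(\dot P, \dot P) = -\langle \dot u, \dot u\rangle_F$, which is the hyperbolic metric at $u$). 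Under the correspondence, $\T_y S$ maps to $p_S \defeq [v]$ for any nonzero $v \in \T_y S \cap F$ (spacelike since $S$ is spacelike), and $\T^{(2)}_y \gamma$ maps to $p_\gamma \defeq [\nabla_{\dot\gamma}\dot\gamma(y)]$ (spacelike by Definition \ref{d:TypeOfCurves}(ii)). It therefore suffices to prove $d_{\H^n}(p_S, p_\gamma) \leq w(\gamma)$.

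Next I would interpret both points through the angular projection $\pi_y$. Writing $\pi_F \colon E \to F$ for the orthogonal projection, so that $\pi_y([x]) = [\pi_F(x)]$, an arclength Taylor expansion of $\gamma$ at $y$ combined with the Gauss identity $\ddot\gamma = \nabla_{\dot\gamma}\dot\gamma + \q(\dot\gamma)\, y$ gives
\[\pi_F(\gamma(t)) = \tfrac{t^2}{2}\,\nabla_{\dot\gamma}\dot\gamma(y) + O(t^3)\ ,\]
since $\pi_F$ kills both $y$ and $\dot\gamma(y)$ while fixing $\nabla_{\dot\gamma}\dot\gamma(y) \in F$. Hence $\pi_y(\gamma(t)) \to p_\gamma$ from both sides of $y$, so $p_\gamma \in \overline{\pi_y(\gamma)}$; moreover $\pi_F(\gamma \setminus \{y\})$ is entirely contained in the single connected component $F^+$ of the positive cone of the signature $(1,n)$ space $F$ that contains $\nabla_{\dot\gamma}\dot\gamma(y)$, by the connectedness of each half of $\gamma \setminus \{y\}$ and the above expansion.

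Finally I would place $p_S$ in the closed geodesic convex hull of $\pi_y(\gamma)$. Choose a smooth curve $\sigma \colon [0,\delta) \to S$ transverse to $\gamma$ at $y$, and decompose $\dot\sigma(0) = \alpha\,\dot\gamma(y) + w$ with $0 \neq w \in F$ parallel to $v$. Because $S \subset \CH(\gamma)$, each $\sigma(\epsilon)$ is, as a vector in $E$, a positive combination of points of $\gamma$, and applying $\pi_F$ yields a positive combination of vectors in $F^+$ (any lift equal to $y$ contributes $0$), which again lies in the convex cone $F^+$. The projective class of such a positive combination is a Klein-model geodesic convex combination of the corresponding points of $\pi_y(\gamma)$, hence lies in $\Conv_{\H^n}(\pi_y(\gamma))$. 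On the other hand $\pi_F(\sigma(\epsilon)) = \epsilon w + O(\epsilon^2)$, so this projective class tends to $[w] = p_S$, giving $p_S \in \overline{\Conv_{\H^n}(\pi_y(\gamma))}$. Since hyperbolic distance is convex along geodesics, the closed geodesic convex hull of any subset of $\H^n$ has the same diameter as the subset itself, whence
\[d(\T_y S, \T^{(2)}_y \gamma) \leq d_{\H^n}(p_S, p_\gamma) \leq \diam\bigl(\pi_y(\gamma)\bigr) \leq w(\gamma)\ . \]
The main subtlety I expect is the cone argument: that positive combinations in $E$ project under $\pi_F$ to vectors remaining in the single component $F^+$, so that their projective classes are honest Klein-model convex combinations of points of $\H^n$. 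This rests entirely on the Taylor analysis of the second paragraph and the connectedness of $\gamma$.
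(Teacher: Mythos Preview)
Your proof is correct and follows essentially the same route as the paper's: reduce to $\H^n$ via the identification of positive $2$-planes through $\T_y\gamma$ with positive lines in $F$, recognise $p_\gamma$ and $p_S$ as limits of the angular projection along $\gamma$ and along a transverse curve in $S$, and bound their distance by the diameter of $\pi_y(\gamma)$ using that $S\subset\CH(\gamma)$. The paper takes the transverse curve orthogonal to $\partial S$ (so that $\dot c(0)$ already lies in $F$), whereas you allow any transverse $\sigma$ and strip off the $\dot\gamma$-component; you are also more explicit than the paper about the single-cone issue for $\pi_F(\gamma\setminus\{y\})$ and about why $\diam(\Conv A)=\diam A$ in $\H^n$, both of which the paper leaves implicit.
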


\begin{proof}  Let $\gamma$, $x_0$, and $\pi_0$ be as above. Since $\pi_0$ is a linear map and thus preserves convex hulls, $\pi_0(S)$ is included in the convex hull of $\pi_0(\gamma)$. Let $c$ be a curve in $S$ starting from $x_0$ that is orthogonal to $\partial S$. We parametrize $c$ by arc length so that $c(0)=x_0$. Observe first that
$$
\lim_{s\to 0}\pi_0(c(s))=\pi_0(\dot c(0))\ .
$$ 
It follows that $\dot c(0)$ belongs to the convex hull (in $\Hn$) of $\pi_0(\gamma)$. Similarly
$$
\lim_{s\to 0}\pi_0(\gamma(s))=\pi_0(n_0)\ .
$$
where $n_0$ and $T_{x_0}\gamma$ generates $\T^{(2)}\gamma$. It follows that 
$$
d_\GG(\T_{x_0}S,\T^{(2)}_{x_0}\gamma)=d_{{\bf H}^n}(\dot c(0),n_0)\leq w(\gamma)\ .
$$
This concludes the proof.	
\end{proof}

\subsubsection{Deformation of strongly positive curves}

We now introduce the class of curves for which we prove a finite Plateau problem.

\begin{definition}\label{defi:Deformable}
A \emph{deformation} of a strongly positive curve $\gamma$ in $\Hn$ is an isotopy $\{\gamma_t\}_{t\in[0,1]}$ with $\gamma=\gamma_1$ such that
\begin{enumerate}
	\item every curve $\gamma_t$ is strongly positive,
	\item the curve $\gamma_0$ lies in a hyperbolic plane.
\end{enumerate}
A strongly positive curve admitting a deformation is called \emph{deformable}.
\end{definition}
Observe that by compactness of the isotopy, if $\{\gamma_t\}_{t\in[0,1]}$ is a deformation, the angular width $w(\gamma_t)$ is uniformly bounded.

\subsection{Maximal surfaces}

\subsubsection{Second fundamental form}

Consider a spacelike embedding $u: S \hookrightarrow M$, where $(M,\g)$ is a pseudo-Riemannian manifold of signature $(2,n)$ and $S$ a surface. The pull-back bundle $u^*\T M$ splits orthogonally as 
\[u^*\T M=\T S\oplus \No S\ ,\]
where the {\em normal bundle} $\No S$ is the orthogonal of the tangent bundle $\T S$. We denote their induced metric by $g_\I$ and $g_N$ respectively. Observe that $g_\I$ is positive definite, while $g_N$ is negative definite.

We recall that  {\em second fundamental form} $\II$,  which is a symmetric 2-tensor on $S$ with values in $N$,  and the {\em shape operator} $B$ which is a 1-form on $S$ with values in   $\Hom(\No S,\T S)$  are given by 
\[g_\I\left(Y,B(X)\xi \right)=g_N\left(\II(X,Y),\xi \right) = \g\left(\nabla_X Y,\xi\right)\ ,\] 
where  $X$ and $Y$ are  vector fields along $S$,  $\xi$  is a section of the normal bundle and $\nabla$ the Levi-Civita connection on $M$. For instance the second fundamental $\II_0$ form on $\T_x\Hn$, where we see $\Hn$ locally isometrically embedded in $E$ is given by 
$$
\II_0(u,u)=\g(u,u)\ x\ .
$$
Thus if $\gamma$ is a geodesic in $\Sigma\subset\Hn\subset E$, we have
\begin{eqnarray}
\left.\frac{\rm d^2}{{\rm d t}^2}\right\vert_{t=s}\gamma(t)=\II(u,u)+\g(u,u)\ x\ .	\label{eq:geod}
\end{eqnarray}
The {\em norm} of the second fundamental form $\II$ is 
\begin{equation}\label{e:NormOfII}
\Vert \II\Vert ^2 \defeq - \max_{\vert v\vert=1} \sum_{i=1,2} \g\left(\II(v,e_i),\II(v,e_i) \right)\ ,	
\end{equation}
where $(e_1,e_2)$ is an orthonormal basis of $\T_x S$. 

If $M_\Lambda$ denotes the manifold $M$ equipped with the metric $\g_\lambda= \frac{1}{\lambda}\g$, we have the following:

\begin{lemma}\label{l:RescalingSecondFundamentalForm}
Let $S$ be a spacelike surface in $M$, with fundamental form $\II$. The second fundamental form $\II_\lambda$ of $S$ in $M_\lambda$ satisfies $\Vert \II_\lambda\Vert^2=\lambda\Vert \II\Vert^2$.	
\end{lemma}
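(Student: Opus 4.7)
The plan is to exploit the fact that a constant conformal rescaling of the ambient pseudo-Riemannian metric does not alter the Levi-Civita connection, and then to track how the definition \eqref{e:NormOfII} of $\Vert\II\Vert^2$ scales when one switches from $\g$ to $\g_\lambda=\lambda^{-1}\g$.

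First I would observe that the Koszul formula expresses $\nabla$ in terms of the metric only through ratios, so $\nabla$ is the Levi-Civita connection of both $\g$ and $\g_\lambda$. Moreover the decomposition $u^*\T M=\T S\oplus\No S$ is the same for the two metrics (orthogonality is preserved by constant rescaling), so the second fundamental form is unchanged as an $\No S$-valued symmetric bilinear form on $\T S$:
\[
\II_\lambda(X,Y)=(\nabla_X Y)^{\No S}=\II(X,Y).
\]

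Next I would rewrite \eqref{e:NormOfII} in $(M_\lambda,\g_\lambda)$. A basis $(e_1,e_2)$ orthonormal for $g_\I$ rescales to a $g_{\lambda,\I}$-orthonormal basis $(f_1,f_2)=(\sqrt{\lambda}\,e_1,\sqrt{\lambda}\,e_2)$, and a unit tangent $v$ for $\g_\lambda$ has the form $v=\sqrt{\lambda}\,u$ with $u$ a unit tangent for $\g$. Using bilinearity of $\II$,
\[
\II_\lambda(v,f_i)=\II(\sqrt{\lambda}\,u,\sqrt{\lambda}\,e_i)=\lambda\,\II(u,e_i),
\]
and since $\g_\lambda=\lambda^{-1}\g$,
\[
\g_\lambda\bigl(\II_\lambda(v,f_i),\II_\lambda(v,f_i)\bigr)=\lambda^{-1}\lambda^{2}\g\bigl(\II(u,e_i),\II(u,e_i)\bigr)=\lambda\,\g\bigl(\II(u,e_i),\II(u,e_i)\bigr).
\]

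Finally I would take the maximum over unit vectors and sum over $i$:
\[
\Vert\II_\lambda\Vert^2=-\max_{|v|_{\g_\lambda}=1}\sum_{i=1,2}\g_\lambda\bigl(\II_\lambda(v,f_i),\II_\lambda(v,f_i)\bigr)=-\lambda\max_{|u|_\g=1}\sum_{i=1,2}\g\bigl(\II(u,e_i),\II(u,e_i)\bigr)=\lambda\Vert\II\Vert^2,
\]
which is the claim. There is no real obstacle here; the only mild care needed is the bookkeeping of the two rescalings (the rescaling of the argument $v$ and of the basis $e_i$) which together produce the factor $\lambda$.
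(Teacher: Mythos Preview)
Your proof is correct and follows essentially the same approach as the paper: the paper's proof is a terse two-sentence version of exactly what you wrote, noting that the Levi-Civita connection is unchanged under the constant rescaling and that one then tracks the substitutions $v\mapsto\sqrt{\lambda}\,v$ in \eqref{e:NormOfII}. You have simply carried out that bookkeeping explicitly.
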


\begin{proof}
 Observe  that a  unit vector with respect to $\g_\lambda$ has the form $\sqrt\lambda v$, where $v$ is a unit vector for $\g$. The result then follows from tracking the effect of the subsequent substitutions in \eqref{e:NormOfII}, after observing that the Levi-Civita connection is a conformal invariant.
\end{proof}

\begin{definition}
The \emph{mean curvature}  is the normal vector field $$\mathsf H\defeq\tr_{g_\I}\left(\II\right)=\II(e_1,e_1)+\II(e_2,e_2) \ .$$
\end{definition}

\subsubsection{Variation of the area}

Given a spacelike embedding $u: S \hookrightarrow M$, one can define the \emph{area functional}, associating to any compact set $K$ in  $S$ the number
$$\mathcal{A}_K(u) \defeq  \int_K \dvol_{g_\I}\ ,$$
where $\dvol_{g_\I}$ is the volume form of $g_\I$. The following is classical, but we include the proof for the sake of notation.

\begin{lemma}
A spacelike surface $u_0: S \hookrightarrow M$ is a critical point of the area functional if and only if $\mathsf H=0$.
\end{lemma}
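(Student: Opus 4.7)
The plan is to compute the first variation of the area functional along a compactly supported normal deformation and show that the Euler--Lagrange equation is precisely $\mathsf{H} = 0$.

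First I would fix a compact set $K \subset S$, an open neighborhood $K \subset K' \subset S$, and consider a smooth one-parameter family of spacelike embeddings $\{u_t\}_{t \in (-\epsilon,\epsilon)}$ with $u_0$ the given embedding and with variational vector field $V \defeq \tfrac{d}{dt}\big|_{t=0} u_t$ supported inside $K'$. Since the property of being spacelike is open, the induced metrics $g_\I(t) \defeq u_t^* \g$ remain Riemannian for small $t$. Decompose $V = V^T + V^\perp$ according to the splitting $u_0^* \T M = \T S \oplus \No S$. Because $V^T$ integrates to a compactly supported flow of diffeomorphisms of $S$, its contribution to $\mathcal{A}_K$ vanishes to first order, and so it suffices to treat variations with $V = V^\perp$ normal.

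Next I would use the classical identity
\begin{equation*}
\frac{d}{dt}\bigg|_{t=0} \dvol_{g_\I(t)} = \frac{1}{2}\,\tr_{g_\I}\!\bigl(\dot g_\I\bigr)\,\dvol_{g_\I},
\end{equation*}
where $\dot g_\I = \tfrac{d}{dt}|_{t=0} g_\I(t)$. For $X,Y$ local tangent fields to $u_0(S)$ extended to be Lie-transported by the variation, one has $\dot g_\I(X,Y) = \g(\nabla_X V, Y) + \g(\nabla_Y V, X)$ from the Levi-Civita connection of $\g$. Since $V = V^\perp$ is normal and $Y$ is tangent, differentiating $\g(V^\perp, Y) = 0$ gives $\g(\nabla_X V^\perp, Y) = -\g(V^\perp, \nabla_X Y)$, and by the definition of $\II$ recalled in paragraph \ref{def:bas-curv}, the normal component of $\nabla_X Y$ is $\II(X,Y)$. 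Therefore $\dot g_\I(X,Y) = -2\,\g(V^\perp, \II(X,Y))$, and tracing with $g_\I$ yields $\tfrac{1}{2}\tr_{g_\I}(\dot g_\I) = -\g(V^\perp, \mathsf{H})$.

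Integrating over $K$, the first variation formula becomes
\begin{equation*}
\frac{d}{dt}\bigg|_{t=0} \mathcal{A}_K(u_t) = -\int_K \g\bigl(V^\perp, \mathsf{H}\bigr)\,\dvol_{g_\I}.
\end{equation*}
If $\mathsf{H} \equiv 0$, the right-hand side vanishes for every variation, so $u_0$ is a critical point. Conversely, suppose $\mathsf{H}(x_0) \neq 0$ at some $x_0 \in S$. Since $\g$ restricted to $\No S$ is negative definite, hence non-degenerate, we may pick a compactly supported smooth section $\xi$ of $\No S$ with $\g(\xi, \mathsf{H}) < 0$ in a neighborhood of $x_0$ and vanishing elsewhere. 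Extending $\xi$ to a variational field via the exponential map provides a compactly supported variation for which the first variation is strictly positive, contradicting criticality. Thus $\mathsf{H} = 0$ everywhere, concluding the proof.

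The only mild subtlety, compared to the Riemannian case, lies in bookkeeping the signs coming from the negative-definite normal metric; the non-degeneracy of $\g|_{\No S}$ is exactly what is needed to run the standard argument producing a test variation in the converse direction.
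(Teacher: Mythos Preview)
Your proof is correct and follows essentially the same approach as the paper: both compute the first variation of the induced metric along a normal deformation, obtain $\dot g_\I(X,Y)=-2\g(V^\perp,\II(X,Y))$, and trace to get the mean curvature. Your treatment is slightly more complete in that you explicitly dispose of tangential variations and spell out the converse via a test section using the non-degeneracy of $\g|_{\No S}$, whereas the paper restricts to normal (geodesic) deformations from the outset.
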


\begin{proof}
Let  $\xi$ be a normal vector field with compact support. Let $\{u_t\}_{t\in (-\epsilon, \epsilon)}$ be a smooth deformation of $u_0$ so that $t\to u_t(x)$ are geodesics with initial tangent vector $\xi$. For $\epsilon$ small enough, the image $u_t(S)$ is spacelike. We respectively denote by $g_t,B_t$ and $\II_t$ the induced metric, shape operator and second fundamental form of $u_t$.

Let $G$ and $\nabla$ be respectively the pull-back of the metric and connection of $M$ by $U:(t,x)\to u_t(x)$. The metric $G$ restricts to $g_t$ on $S\times \{t\}$. We have
\[\partial_t G(X,Y) = G(\nabla_\xi X,Y) + G(X,\nabla_\xi Y) = G(\nabla_X\xi,Y)+G(X,\nabla_Y \xi)\ .\]
Thus, restricting to $S\times\{0\}$, we obtain
\begin{equation}\label{e:FirstVariationMetric}
\dot{g}_0(X,Y):=\frac{d}{dt}_{\vert_{t=0}} g_t(X,Y) = -2g_0\left(B_0(X)\xi,Y\right) = -2 g_N\left(\II_0(X,Y),\xi\right).
\end{equation}
In particular, if $(e_1,e_2)$ is an orthonormal framing of $(\T S,g_0)$, and $(e^1,e^2)$ its dual, we have 
\[\dvol_{g_t}= \det\left(\Id -2t \left(g_N\left(\II_0(e_i,e_j),\xi\right) \right)_{i,j=1,2} +o(t) \right) e^1\wedge e^2~.\]
It follows that 
\begin{equation}\label{e:FirstVariationAreaForm}
\dot{\dvol}_{g_t} =  -2g_N(\mathsf H,\xi)\dvol_{g_0}\ .
\end{equation}
Thus  $\dot{\mathcal A}_K(u_0) = -2\int_K g_N(\mathsf H,\xi)\dvol_{g_0}$ and the result follows. 
\end{proof}

We now compute the second variation of the area functional.

\begin{proposition}\label{prop:SecondVariationAreaForm}
Given a spacelike surface $u_0: S \hookrightarrow M$ with $\mathsf H=0$, a non-zero normal deformation along $\xi\in\Omega^0(S,\No S)$ and $K\subset S$ a compact subset, we have
$$
\ddot{\mathcal A}_K(u_0) = \int_K W(\xi)\ \dvol_{g_0}\ ,
$$
where $W(\xi)=2\tr_{g_0}(Q_\xi)$ for the symmetric tensor $Q_\xi$ given by
\[Q_\xi(X,Y)=g_0(\left(R_0(\xi,X)\xi,Y\right)-g_0(B_0(X)\xi,B_0(Y)\xi)+g_N\left(\nabla_X^N\xi,\nabla_Y^N\xi \right) \ .\]
Here $R_0$ is the Riemann curvature tensor of $u_0^*D$ and $D$ the Levi-Civita connection of $M$.
\end{proposition}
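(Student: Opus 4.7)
The plan is to extend the first-variation computation done in the previous lemma to second order, and then integrate. I would set up the variation so that for fixed $x$, the curve $t \mapsto u_t(x)$ is a geodesic with initial velocity $\xi(x)$; along this variation I extend $\xi$ to a vector field whose integral curves are these geodesics, so that $\nabla_\xi \xi = 0$. I would also extend tangent vector fields $X, Y$ on $S$ so that $[\xi, X] = [\xi, Y] = 0$ (pushforward by the flow of $\xi$), so that $\nabla_\xi X = \nabla_X \xi$ and likewise for $Y$.

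With these conventions, I would first redo the derivation of Equation~\eqref{e:FirstVariationMetric} to get $\dot g_0(X,Y) = G(\nabla_X \xi, Y) + G(X, \nabla_Y \xi)$, and then differentiate once more in $t$. The key identity is $\nabla_\xi \nabla_X \xi = R(\xi, X)\xi$, which comes from the definition $R(\xi,X)\xi = \nabla_\xi \nabla_X \xi - \nabla_X \nabla_\xi \xi - \nabla_{[\xi,X]}\xi$, the geodesic equation $\nabla_\xi \xi = 0$, and $[\xi,X]=0$. This yields
\[
\ddot g_0(X,Y) = 2\,G(R(\xi,X)\xi, Y) + 2\,G(\nabla_X \xi, \nabla_Y \xi).
\]
I would then decompose $\nabla_X \xi = -B(X)\xi + \nabla^N_X \xi$ into tangential and normal parts (the tangential coefficient being $-B(X)\xi$ follows from differentiating $g(\xi,Y)=0$ along $X$), which, since tangent and normal bundles are $G$-orthogonal, gives
\[
G(\nabla_X \xi, \nabla_Y \xi) = g_0\!\left(B(X)\xi, B(Y)\xi\right) + g_N\!\left(\nabla^N_X \xi, \nabla^N_Y \xi\right).
\]

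Next I would expand the volume element. Writing $g_t(e_i,e_j) = \delta_{ij} + t\,a_{ij} + \tfrac{t^2}{2}\,b_{ij} + O(t^3)$ in a $g_0$-orthonormal frame $(e_1,e_2)$, with $a_{ij} = \dot g_0(e_i,e_j)$ and $b_{ij} = \ddot g_0(e_i,e_j)$, a direct expansion of $\sqrt{\det g_t}$ gives
\[
\tfrac{d^2}{dt^2}\Big|_{t=0}\!\sqrt{\det g_t} = \tfrac12 \tr b + \tfrac14 (\tr a)^2 - \tfrac12 \tr(a^2).
\]
The crucial simplification is that $\tr a = -2 g_N(\mathsf H,\xi) = 0$ by the maximality hypothesis, eliminating the $(\tr a)^2$ term. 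Moreover $\tr(a^2) = 4 \sum_{i,j} g_N(\II(e_i,e_j),\xi)^2 = 4 \sum_i g_0(B(e_i)\xi, B(e_i)\xi)$, using $g_N(\II(X,Y),\xi) = g_0(B(X)\xi, Y)$ and Bessel's identity in the orthonormal frame.

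Collecting terms, $\tfrac12 \tr b$ contributes $\tr_{g_0}[g(R(\xi,\cdot)\xi,\cdot)] + \tr_{g_0}[g_0(B(\cdot)\xi, B(\cdot)\xi)] + \tr_{g_0}[g_N(\nabla^N_\cdot \xi, \nabla^N_\cdot \xi)]$, while $-\tfrac12 \tr(a^2)$ subtracts $2\tr_{g_0}[g_0(B(\cdot)\xi, B(\cdot)\xi)]$, so the net tangential term is $-\tr_{g_0}[g_0(B(\cdot)\xi, B(\cdot)\xi)]$. This is exactly $\tr_{g_0}(Q_\xi)$, and so $\ddot{\dvol}_{g_t}\big|_{t=0} = W(\xi)\,\dvol_{g_0}$. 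Integrating over $K$ gives the claim. The main bookkeeping obstacle is making sure the tangential/normal split is handled consistently so that the sign in front of $g_0(B(\cdot)\xi, B(\cdot)\xi)$ in $Q_\xi$ comes out negative, which is precisely the mechanism by which $\tr(a^2)$ dominates the corresponding contribution from $\tr b$.
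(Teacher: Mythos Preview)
Your argument is sound and takes a genuinely different route from the paper. The paper differentiates the first-variation identity $\dot{\dvol}=-2g_N(\mathsf H,\xi)\dvol_{g_0}$ once more (using $\mathsf H_0=0$), thereby reducing the problem to computing $\dot B_0$; it does this by evaluating $\partial_t G(\nabla_X\xi,Y)$ in two ways. You instead expand $\sqrt{\det g_t}$ directly to second order and track the three contributions $\tfrac12\tr b$, $\tfrac14(\tr a)^2$, $-\tfrac12\tr(a^2)$. Your approach is more elementary and avoids any discussion of whether $\xi$ stays normal along the variation; the paper's approach is a bit slicker once one accepts the shortcut $\ddot{\dvol}=-2g_N(\dot{\mathsf H},\xi)\dvol$.

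There is, however, a bookkeeping slip in your last paragraph that you should not gloss over. Your computation correctly produces $\tr_{g_0}(Q_\xi)$, not $2\tr_{g_0}(Q_\xi)$; a sanity check on a flat spacelike plane in $\E^{2,1}$ with $\xi=f\,\nu$ gives $\ddot{\dvol}/\dvol=-|\nabla f|^2=\tr_{g_0}(Q_\xi)$, confirming your value. The factor $2$ in the statement (and in the paper's proof) is inherited from equation~\eqref{e:FirstVariationAreaForm}, where $\det$ appears in place of $\sqrt{\det}$. Since the only use of this proposition is Corollary~\ref{c:StabilityMaximalSurfaces}, which depends only on the sign of $Q_\xi$, the discrepancy is harmless for the paper's purposes---but you should note it explicitly rather than assert that your $\tr_{g_0}(Q_\xi)$ equals $W(\xi)$.
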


\begin{proof}
We use the same notation as the previous proof. By equation (\ref{e:FirstVariationAreaForm}) when $\mathsf H=0$, we have
\[\ddot{\dvol}_{g_0} = -2 g_N( \nabla_\xi \mathsf H,\xi) \dvol_{g_0}=W(\xi)\dvol_{g_0}\ ,\]
where $W(\xi)=2\tr_{g_0}(Q_\xi)$ for the symmetric tensor $Q_\xi$ defined by 
\[Q_\xi(X,Y)=-g_0\left(\nabla_\xi B_0(X)\xi,Y\right)\ .\]
Our goal is to compute $Q_\xi(X,Y)$.\\

One the one hand, we have
\begin{eqnarray*}
\partial_t G(\nabla_X\xi,Y) & = & G(\nabla_\xi\nabla_X\xi,Y)+G(\nabla_X\xi,\nabla_\xi Y) \\
& = &  G(\nabla_\xi\nabla_X\xi,Y)+G(\nabla_X\xi,\nabla_Y \xi) \\
& = &  G\left(R(\xi,X)\xi,Y\right) +G\left(\nabla_X\nabla_\xi\xi,Y\right)+G\left(\nabla_{[\xi,X]}\xi,Y\right)+G(\nabla_X\xi,\nabla_Y\xi) \\
& = &  G\left(R(\xi,X)\xi,Y\right) + G\left(\nabla_X\xi,\nabla_Y\xi\right)\ ,
\end{eqnarray*}
where $R(a,b)c=\nabla_a\nabla_bc-\nabla_b\nabla_ac-\nabla_{[a,b]}c$.

Restricting to $S\times \{0\}$, we obtain
\[\frac{d}{dt}_{\vert_{t=0}} g_t(-B_t(X)\xi,Y) = R_0(\xi,X,\xi,Y)+g_0\left(B_0(X)\xi,B_0(Y)\xi\right)+g_N\left(\nabla^N_X\xi,\nabla^N_Y\xi\right)\ \ .\]
On the other hand, using equation (\ref{e:FirstVariationMetric}), we thus get
\begin{eqnarray*}
\frac{d}{dt}_{\vert_{t=0}} g_t(-B_t(X)\xi,Y) & = & \dot{g_0}(-B_0(X)\xi,Y) + g_0(-\nabla_\xi B_0(X)\xi,Y) \\
& = & 2g_0(B_0(X)\xi,B_0(Y)\xi)+Q_\xi(X,Y)\ .
\end{eqnarray*}
This gives
\begin{eqnarray}
	Q_\xi(X,Y) = R_0(\xi,X,\xi,Y)-g_0\left(B_0(X)\xi,B_0(Y)\xi\right)+g_N\left(\nabla^N_X\xi,\nabla^N_Y\xi\right)\ . \label{eq:2varA}
\end{eqnarray}

\end{proof}

\begin{corollary}\label{c:StabilityMaximalSurfaces}
If $u: S \hookrightarrow \Hn$ is a spacelike surface with $\mathsf H=0$, and let $\xi$ be a non-zero normal deformation supported on a compact $K\subset S$. Then the second variation of the area satisfies
$$\ddot{\mathcal A}_K(u)\leq 4 \int_K g_N(\xi,\xi) \dvol_{g_0} <0~.$$
\end{corollary}
\begin{proof}
Because $\Hn$ has curvature $-1$, we have
\[g_0\left(R_0(\xi,X)\xi,Y\right) = g_N(\xi,\xi)g_0(X,Y)\ .\]
Thus, we obtain
$$\tr_{g_0}(Q_\xi)=2g_N(\xi,\xi) + R~,$$
where $R=\tr_{g_0}\big(-g_0(B_0(.)\xi,B_0(.)\xi)+g_N(\nabla^N_.\xi,\nabla^N_.\xi)\big)\leq 0$. This concludes the proof.
\end{proof}

\subsubsection{Maximal surfaces}\label{ss:MaximalSurfaces} Corollary \ref{c:StabilityMaximalSurfaces} motivates the following definition

\begin{definition}
A spacelike immersion from a surface $S$ to $M$ is a \emph{maximal surface} if $\mathsf H=0$.
\end{definition}

In the sequel, we will denote by $\Sigma$ a maximal surface, and $S$ any surface.

Corollary \ref{c:StabilityMaximalSurfaces} implies the stability of maximal surfaces in $\Hn$ and $\hHn$: given a maximal surface $u$, there is no non-zero compactly supported normal deformation with $\ddot{\mathcal A}(u)=0$.

Calculating the tangential part of the curvature tensor of $\nabla$, one obtains the following

\begin{proposition}\label{p:GaussEquation}
If $\Sigma$ is a maximal surface in $M$ and $P$ is a tangent plane to $\Sigma$  equipped with an orthonormal frame  $(e_1,e_2)$, then we have
\[K_\Sigma(P) = K_M(P) - q_N(\II(e_1,e_1))- q_N(\II(e_1,e_2))\ ,\]
where $K_\Sigma$ and $K_M$ are the sectional curvatures of $\Sigma$ and $M$, respectively, and $q_N$ is the (negative definite) quadratic form on $N$. 

In particular if $M=\Hn$, since $q_N$ is negative definite, $K_\Sigma\geq -1$. 
\end{proposition}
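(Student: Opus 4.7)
The plan is to derive the result from the standard Gauss equation for pseudo-Riemannian submanifolds and then invoke the maximality condition $\mathsf H = 0$.

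First, I would recall (or derive via the standard computation) that for any spacelike surface $\Sigma$ in a pseudo-Riemannian manifold $M$, the tangential component of the ambient curvature tensor satisfies the Gauss equation
\[
R_\Sigma(X,Y,Z,W) = R_M(X,Y,Z,W) + g_N\bigl(\II(X,Z),\II(Y,W)\bigr) - g_N\bigl(\II(X,W),\II(Y,Z)\bigr),
\]
for vector fields $X,Y,Z,W$ tangent to $\Sigma$. This is obtained in the usual way by decomposing $\nabla_X \nabla_Y Z - \nabla_Y \nabla_X Z - \nabla_{[X,Y]} Z$ into tangential and normal parts using the Weingarten relations $\nabla_X Y = \nabla^\Sigma_X Y + \II(X,Y)$ and $\nabla_X \xi = -B(X)\xi + \nabla^N_X \xi$; the key point is that the derivation only depends on the Levi-Civita character of $\nabla$ and not on the signature of the ambient metric, with $g_N$ appearing (rather than $-g_N$) because the shape operator and the second fundamental form are already paired via $g_N$ in the definition recalled just before Proposition~\ref{p:GaussEquation}.

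Next, I would evaluate at $X=Z=e_1$, $Y=W=e_2$ with $(e_1,e_2)$ an orthonormal frame of $P$, so that the left-hand side becomes $K_\Sigma(P)$ and the first term on the right becomes $K_M(P)$, yielding
\[
K_\Sigma(P) = K_M(P) + g_N\bigl(\II(e_1,e_1),\II(e_2,e_2)\bigr) - g_N\bigl(\II(e_1,e_2),\II(e_1,e_2)\bigr).
\]
Now I would use the maximality hypothesis $\mathsf H = \II(e_1,e_1)+\II(e_2,e_2)=0$, which gives $\II(e_2,e_2) = -\II(e_1,e_1)$, and thus
\[
g_N\bigl(\II(e_1,e_1),\II(e_2,e_2)\bigr) = -g_N\bigl(\II(e_1,e_1),\II(e_1,e_1)\bigr) = -q_N\bigl(\II(e_1,e_1)\bigr).
\]
Substituting yields exactly the claimed formula
\[
K_\Sigma(P) = K_M(P) - q_N\bigl(\II(e_1,e_1)\bigr) - q_N\bigl(\II(e_1,e_2)\bigr).
\]

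For the final assertion about $M = \Hn$, I would simply observe that $K_M \equiv -1$ and that $q_N$ is negative definite on the normal bundle (since $\g$ has signature $(2,n)$ and $\T\Sigma$ is spacelike, so $N$ is negative definite), which makes both subtracted terms nonnegative and gives $K_\Sigma \geq -1$. There is no serious obstacle here: the only point requiring care is the sign bookkeeping in the Gauss equation in pseudo-Riemannian signature, but since the second fundamental form and shape operator are defined here so that $g_N(\II(X,Y),\xi) = g_\I(Y,B(X)\xi)$, the standard derivation goes through verbatim.
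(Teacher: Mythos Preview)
Your proof is correct and follows exactly the approach the paper indicates: the paper does not give a detailed proof but simply says ``Calculating the tangential part of the curvature tensor of $\nabla$, one obtains the following,'' which is precisely the Gauss equation computation you carry out. Your careful handling of the sign conventions (in particular the role of $g_N$ versus $-g_N$ given how $\II$ and $B$ are paired) and the use of $\mathsf H=0$ to rewrite $g_N(\II(e_1,e_1),\II(e_2,e_2))$ are exactly what is needed.
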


\subsubsection{Convex hull}
Recall from paragraph \ref{sss:ConvexHull} that any semi-positive loop in $\cHn$ has a well-defined convex hull. The following was proved in \cite[Proposition 3.26]{CTT} for the asymptotic boundary case. The proof is the same in our case, but we include it for the sake of completness.

\begin{proposition}\label{p:MaximalSurfaceContainedInConvexHull}
If $\Sigma$ is a maximal surface in $\Hn$ whose total boundary $\Lambda$ (see Definition \ref{d:AsymptoticBoundary}) is a semi-positive loop, then $\Sigma$ is contained in the convex hull of $\Lambda$.
\end{proposition}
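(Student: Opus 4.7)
The plan is to translate the containment $\Sigma \subset \CH(\Lambda)$ into a collection of linear inequalities coming from separating hyperplanes, and then to apply a maximum principle on $\Sigma$ to a single linear functional on $E$ restricted to a lift in $\hHn$. First, using Lemma~\ref{l:DisjointBarycenter}, I lift $\Lambda$ to one of its components $\Lambda_+ \subset \chHn$ and $\Sigma$ to the corresponding component $\Sigma_+ \subset \hHn$. By the definition of the convex hull in paragraph~\ref{sss:ConvexHull}, it is enough to show that for any $v \in E$ with $\langle v,x\rangle \leq 0$ for all $x \in \Lambda_+$, the linear function $f_v(y) \defeq \langle v,y\rangle$ is also non-positive on $\Sigma_+$.

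The key analytic identity is $\Delta^\Sigma f_v = 2 f_v$ on $\Sigma_+$. This uses the fact that $\hHn = \{\q = -1\}$ is umbilical in $E$: the position vector $y$ is itself normal to $\hHn$ at $y$, and for tangent vector fields $X,Y$ along $\Sigma \subset \hHn$ one has
\[
D_X Y \;=\; \nabla^\Sigma_X Y \;+\; \II^\Sigma(X,Y) \;+\; \langle X,Y\rangle\, y.
\]
Differentiating $f_v$ twice then gives $\Hess^\Sigma f_v(X,Y) = \langle v,\II^\Sigma(X,Y)\rangle + \langle X,Y\rangle f_v(y)$, and tracing against an orthonormal frame of $(\T_y\Sigma, g_\I)$ produces $\Delta^\Sigma f_v = \langle v,\mathsf H\rangle + 2 f_v = 2 f_v$ by the maximality $\mathsf H = 0$.

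To run the maximum principle despite the possibly non-compact total boundary, I use a perturbation. Let $b \in E$ be a lift of the barycenter of a positive triple in $\Lambda_+$; Proposition~\ref{p:PropertiesConvexHull}~\ref{it:PropCH2} together with Corollary~\ref{c:NegativeScalarProduct} shows that $\langle b,x\rangle < 0$ for every $x \in \Lambda_+$. Set $v_\epsilon \defeq v + \epsilon b$ for $\epsilon>0$, so that $f_{v_\epsilon} < 0$ strictly on all of $\Lambda_+$. On the asymptotic part $\Lambda_+ \cap \partial_\infty\hHn$, writing $y = t x + O(1)$ with $t \to +\infty$ for $y \in \hHn$ approaching an isotropic limit $x$, the strict inequality $\langle v_\epsilon, x\rangle < 0$ forces $f_{v_\epsilon}(y) \to -\infty$. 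Hence the set $\Omega_\epsilon \defeq \{y \in \Sigma_+ : f_{v_\epsilon}(y) \geq 0\}$ is relatively compact in the interior of $\Sigma$, being bounded away from every part of $\Lambda_+$ (finite, free or asymptotic). If $\Omega_\epsilon$ were non-empty, $f_{v_\epsilon}$ would attain a strictly positive maximum at an interior critical point $p$, where $\Hess^\Sigma f_{v_\epsilon}(p) \leq 0$ and hence $\Delta^\Sigma f_{v_\epsilon}(p) \leq 0$ would contradict $\Delta^\Sigma f_{v_\epsilon}(p) = 2 f_{v_\epsilon}(p) > 0$. Therefore $f_{v_\epsilon} \leq 0$ on $\Sigma_+$, and letting $\epsilon \to 0$ yields $f_v \leq 0$, which is the desired inequality.

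The main obstacle is the non-compactness of $\overline{\Sigma_+}$ whenever $\Lambda$ meets the Einstein universe or contributes a free boundary: on such a surface the function $f_v$ need neither be bounded above nor attain its supremum. The perturbation $v \mapsto v_\epsilon$ is precisely the device that upgrades the weak boundary inequality $f_v \leq 0$ into genuine decay $f_{v_\epsilon} \to -\infty$ at infinity, thereby making the positivity set compactly contained in $\Sigma$ and replacing any Omori--Yau-type argument by a direct application of the interior maximum principle.
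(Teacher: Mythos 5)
Your proof is correct and uses the same mechanism as the paper's: the identity $\Delta f_v = 2 f_v$ for restrictions of separating linear functionals, followed by a maximum principle. The paper simply invokes ``the classical maximum principle'' after deriving that identity, whereas your perturbation $v \mapsto v + \epsilon b$ together with the decay estimate at the asymptotic boundary usefully makes that step rigorous when $\Sigma$ is non-compact; note only that at the interior critical point $p$ the maximum of $f_{v_\epsilon}$ need not be strictly positive (it could equal zero, giving no contradiction from the identity alone), but this is harmless --- $\Delta f_{v_\epsilon}(p) \leq 0$ together with $\Delta f_{v_\epsilon}(p) = 2 f_{v_\epsilon}(p)$ already yields $f_{v_\epsilon}(p) \leq 0$, which is exactly the inequality you want.
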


\begin{proof}
Consider a connected component $\Sigma_0$ of the preimage of $\Sigma$ in $\hHn$, and let $\varphi_0$ be a linear form on $E$ which is positive on $\Lambda$ and denote by $\varphi$ its restriction to $\Sigma$.  Now, let $\gamma$ be a geodesic on $\Sigma$ that we consider as a curve in $E$, with $\dot\gamma(0)=u$, then using equation \eqref{eq:geod} in the last equality
\begin{eqnarray*}
	\Hess_x\varphi(u,u)=\left.\frac{\rm d^2}{{\rm d t}^2}\right\vert_{t=0}\varphi(\gamma(t))
	=\varphi_0\left(\left.\frac{\rm d^2}{{\rm d t}^2}\right\vert_{t=0}\gamma(t))\right)
=\q(u)\ \varphi(x)+\varphi_0(\II(u,u))\ ,
\end{eqnarray*} 
where $\II$ is the second fundamental form of $\Sigma$ and $\Hess_x\phi$ is the Hessian of $\varphi$ at $x$. Taking the trace yields
\[\Delta \varphi = 2\varphi~.\]
The classical maximum principle thus implies that $\varphi$ is positive on $\Sigma$ and so $\Sigma$ is contained in the convex hull of $\Lambda$. 	
\end{proof}

\subsection{Gauß lift and holomorphic curves}\label{ss:GaussLift} Recall from Subsection \ref{ss:Grassmannians} that the Grassmannian $\GR{M}$ of positive definite 2-planes in $M$ is the fiber bundle over $M$ whose fiber over $x$ is the Riemannian symmetric space $\Gr{\T_xM}$. The tangent space of $\GR{M}$ at $(x,P)$ splits as
\[\T_{(x,P)}\GR{M} = \T_x M \oplus \Hom(P,P^\bot)=P\oplus P^\bot\oplus \Hom(P,P^\bot)\ .\]
Furthermore, the canonical Riemannian metric $g$ on $\GR{M}$ is given by
\[g=\left(g_{\vert P},-g_{\vert P^\bot},h_P\right)\ ,\]
where  $h_P$ is the Riemannian metric on the fiber defined in paragraph \ref{ss:RiemannianSymmetricSpace}.

Given a spacelike surface $u: S \hookrightarrow M$, we define its \emph{Gauß lift} by
\[
\Gamma:\left\{
\begin{array}{llll}
 & S & \longrightarrow & \GR{M} \\
& x & \longmapsto & \left(u(x),\T u(\T_xS)\right)\ . \end{array}\right. \]
One easily checks that in the splitting describe above, $\T \Gamma = (\T u, \II)$ where $\II\in \Omega^1(S,\Hom(\T S,{\mathsf N} S))$ is the second fundamental form. 

We denote by $g_\II$ and $d_\II$ the induced metric and distance  respectively on $S$ by $\Gamma$.

\begin{proposition}\label{pro:IetII}
For any spacelike surface we have 
\[g_\II= g_\I + Q\leq (1+\Vert \II\Vert^2)g_\I \ \ , \]
where  $Q(x,y)=-\tr_{g_\I}\left(g_N\left(\II(x,.),\II(y,.) \right) \right)$
\end{proposition}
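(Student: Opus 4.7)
The plan is to directly unfold the two terms of the induced metric on the Gauss lift through the canonical splitting of $\T_{(x,P)}\GR{M}$, and then compare the fiber piece to the pointwise norm of $\II$.

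First I would use the identity stated just before the proposition: in the horizontal/vertical splitting
\[ \T_{(x,P)}\GR{M} = \T_x M \oplus \Hom(P,P^\bot), \]
the tangent map of $\Gamma$ is $\T\Gamma = (\T u, \II)$, where $\II(v)\in\Hom(P,P^\bot)$ is the linear map $e\mapsto \II(v,e)$ from $P=\T_x\Sigma$ to its $g$-orthogonal $P^\bot={\mathsf N}_x\Sigma$. Applying the definition of the canonical Riemannian metric
\[ g=\left(g_{|P},-g_{|P^\bot},h_P\right), \]
one gets
\[ g_\II(v,w) = g_\I(v,w) + h_P\bigl(\II(v,\cdot),\II(w,\cdot)\bigr). \]

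Next I would expand the fiber term using the Harish-Chandra metric $h_P(\varphi,\psi)=-\tr(\varphi^*\psi)$ from paragraph \ref{ss:RiemannianSymmetricSpace}. Picking a $g_\I$-orthonormal frame $(e_1,e_2)$ of $P$ and using that $\varphi^*$ is the adjoint with respect to $\q$, one computes
\[ \tr\bigl(\II(v,\cdot)^*\II(w,\cdot)\bigr) = \sum_{i=1,2} g_N\bigl(\II(v,e_i),\II(w,e_i)\bigr), \]
so $h_P(\II(v,\cdot),\II(w,\cdot)) = -\tr_{g_\I}\bigl(g_N(\II(v,\cdot),\II(w,\cdot))\bigr) = Q(v,w)$. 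This proves the equality $g_\II = g_\I + Q$.

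For the inequality, it suffices to show $Q(v,v)\leq \|\II\|^2\, g_\I(v,v)$ for every $v\in\T_x\Sigma$. By homogeneity (both sides are quadratic in $v$) I may assume $v$ is unit. But then
\[ Q(v,v) = -\sum_{i=1,2} g_N\bigl(\II(v,e_i),\II(v,e_i)\bigr) = -\sum_{i=1,2}\g\bigl(\II(v,e_i),\II(v,e_i)\bigr), \]
which is at most $\|\II\|^2$ by the very definition \eqref{e:NormOfII}. Adding $g_\I$ yields the claimed bound. There is no real obstacle here beyond keeping the sign conventions straight: the fiber metric $h_P$ is positive because $g_{|P^\bot}$ is negative definite, which is precisely what makes the adjoint-trace formula match the minus sign appearing inside $Q$.
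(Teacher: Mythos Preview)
Your proof is correct and follows essentially the same approach as the paper: both unfold $\T\Gamma=(\T u,\II)$ in the horizontal/vertical splitting, identify the fiber term via $h_P(\varphi,\psi)=-\tr(\varphi^*\psi)$ in an orthonormal frame, and read off $Q$. The paper routes the trace computation through the shape operator $B=\II^*$ while you compute it directly, and you are slightly more explicit about deducing the inequality from the definition of $\Vert\II\Vert^2$, but these are cosmetic differences.
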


\begin{proof}
As noted above, $\T\Gamma=(\T u,\II)\in \Omega^1\left(S,\T S\oplus \Hom(\T s,\No S) \right)$. The first term in the expression of $\Gamma^*g$ is clear, so we just have to explain the second one. We recall from subsection \ref{ss:RiemannianSymmetricSpace} the bilinear form $h_P(\varphi,\psi)=-\tr(\varphi^*\psi)$ where $\varphi,\psi\in \T_P\Gr{E}$ and $\varphi^*: P^\bot \to P$ is the adjoint of $\varphi$ using the induced scalar product. In particular, the second term is given by $h_P\left(\II(x,.)\II(y,.)\right) =-\tr_{g_\I}\left(\II^*(x,.)\II(y,.)\right)$.

Using $\II^*=B$, and taking an orthonormal framing $(e_1,e_2)$ of $(\T S,g_\I)$, the second term may then be written as
\[-\sum_{i=1,2} g_\I\left(B(x)\II(y,e_i),e_i\right) = \sum_{i=1,2} -g_N\left(\II(x,e_i),\II(y,e_i) \right) = Q(x,y)\ .\]
This proves the result.
\end{proof}

Given a point $x$ on a spacelike acausal surface $\Sigma$ in $\Hn$, we can define three Riemannian metrics on $\Sigma$: the metric $g_\I$ induced by the metric on $\Hn$, the metric $g_\II$ induced by the Gauß lift and the metric $g^x_H\defeq \pi_x^*g_H$ where $\pi_x$ is the warped projection on the pointed hyperbolic plane $(x,H)$ tangent to $\Sigma$ at $x$, and $g_H$ is the hyperbolic  Riemannian metric on $H$.  

\begin{corollary}\label{c:UpperBoundLiftMetric} 
Let $S$ be a spacelike surface whose second fundamental form is uniformly bounded by $M$,
\begin{enumerate}
	\item  the  Riemannian tensors  $g_\I$,  $g_\II$  are uniformly equivalent, with a constant only depending on $M$. 
	\item For any $R$, the metrics $g_\I$ and $g^x_H$ are uniformly equivalent on the ball of center $x$ and radius $R$ with respect to $g_\I$ with a constant only depending on $M$ and $R$. Moreover the projection $\pi_x$ from $S$ equipped with $d_\I$ and $d_\II$ is Lipschitz.
\end{enumerate}
\end{corollary}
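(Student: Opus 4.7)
The plan is to derive the two items from Propositions \ref{pro:IetII} and \ref{pro:diamwarp} respectively.

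For item (i), Proposition \ref{pro:IetII} gives $g_\II = g_\I + Q$, and the tensor $Q$ is nonnegative because it is built from $-g_N$, which is positive definite on the normal bundle. Combined with the upper bound $g_\II \leq (1+\|\II\|^2)g_\I$ from the same proposition, this yields $g_\I \leq g_\II \leq (1+M^2)g_\I$, giving the uniform equivalence with a constant depending only on $M$.

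For item (ii), the lower bound $g^x_H \geq g_\I$ is immediate from Lemma \ref{l:WarpedProjLengthIncreasing}, which asserts that $\pi_x$ increases the length of spacelike curves. For the upper bound, I would first use that the Gauss lift $\Gamma: S \to \GR{\Hn}$ has derivative $\T\Gamma = (\T u, \II)$, whose $g_\I \to g$ operator norm is at most $\sqrt{1+\|\II\|^2} \leq \sqrt{1+M^2}$; hence for $y$ in the $g_\I$-ball $B_R(x)$, the Grassmannian distance between $\Gamma(x) = (x, \T_x H)$ and $\Gamma(y) = (y, \T_y H_y)$, where $H_y$ denotes the hyperbolic plane tangent to $S$ at $y$, is bounded by $\sqrt{1+M^2}\, R$. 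Since $\T_y S = \T_y H_y$, the tangent map of $\pi_x|_S$ at $y$ coincides with that of $\pi_x|_{H_y}$, so Proposition \ref{pro:diamwarp} applied to the pointed planes $(x, H)$ and $(y, H_y)$ yields a constant $c = c(M,R)$ such that $\|\T_y(\pi_x|_S)\| \leq c$. This gives $g^x_H \leq c^2\, g_\I$ on $B_R(x)$, as required.

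The Lipschitz statement then follows by combining the equivalence just established with item (i): the pointwise operator norm of $\T\pi_x$ is uniformly bounded with respect to both $g_\I$ and $g_\II$, so $\pi_x$ is Lipschitz from $(S, d_\I)$ and from $(S, d_\II)$ into $H$ on the ball $B_R(x)$.

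The main obstacle will be the upper bound in item (ii): although $S$ is not a hyperbolic plane, Proposition \ref{pro:diamwarp} only treats restrictions of warped projections to hyperbolic planes. The way around this is the elementary observation that at each $y$, the differentials of $\pi_x|_S$ and $\pi_x|_{H_y}$ coincide, reducing the computation at each point to a situation covered by the proposition; the curvature bound $\|\II\| \leq M$ then serves solely to guarantee the Lipschitz control of $\Gamma$, keeping the pointed plane $(y, H_y)$ in a bounded region of $\GR{\Hn}$ as $y$ varies in $B_R(x)$, which is the hypothesis required to extract a uniform constant $c$.
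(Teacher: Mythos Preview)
Your proof is correct and follows essentially the same approach as the paper: item (i) via Proposition~\ref{pro:IetII}, and item (ii) by bounding the Gauss lift of $B_R(x)$ inside a ball of $\GR{\Hn}$ (the paper uses radius $R(1+M)$, you use the sharper $R\sqrt{1+M^2}$) and then invoking Proposition~\ref{pro:diamwarp}. Your write-up is in fact more careful than the paper's, which simply asserts that Proposition~\ref{pro:diamwarp} makes $\pi_x$ ``infinitesimally biLipschitz'' without spelling out either the reduction to tangent hyperbolic planes (your observation that $\T_y(\pi_x|_S)=\T_y(\pi_x|_{H_y})$) or the lower bound via Lemma~\ref{l:WarpedProjLengthIncreasing}.
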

\begin{proof} By the previous proposition, the bound on the norm of $\II_\Sigma$ implies that $g_\I$ and $g_\II$ are biLipschitz.
Moreover, the Gauß lift of a ball $B_R$ with respect to $g_\I$  in $\GR{\hHn}$ is contained in the closed ball of center $\T_xS$ and radius $R(1+M)$. In particular, the restriction of the warped projection $\pi_x$ to $B_R$ is infinitesimally biLipschitz by Proposition \ref{pro:diamwarp}. This shows that $g_\I$ and $g^x_{H}$ are uniformly equivalent. 
\end{proof}
\begin{corollary}\label{c:complete} 
If $S$ is a properly immersed  spacelike surface  without boundary whose second fundamental form is uniformly bounded,
then $(S, g_\I)$ is complete.
\end{corollary}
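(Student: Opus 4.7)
The plan is to show that $(S,g_\II)$ is complete and then transfer this completeness to $(S,g_\I)$. Since Corollary~\ref{c:UpperBoundLiftMetric}(1) gives a uniform equivalence $g_\I \asymp g_\II$, the induced distances are bi-Lipschitz equivalent, and so a Cauchy sequence for $d_\I$ is a Cauchy sequence for $d_\II$; a $d_\II$-limit is a $d_\I$-limit because $d_\I \leq C\, d_\II$. Thus it suffices to establish completeness for the metric $g_\II$ induced by the Gauss lift.

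The next step is to interpret the Gauss lift $\Gamma : S \to \GR{\Hn}$ as an isometric immersion of $(S,g_\II)$ into the Riemannian manifold $(\GR{\Hn},g)$. This is immediate from the definition $g_\II = \Gamma^* g$ and the fact, noted in subsection~\ref{ss:GaussLift}, that $\T\Gamma = (\T u,\II)$ is injective since $\T u$ is already injective. Furthermore, $\Gamma$ is proper: if $\Gamma(y_n) = (u(y_n),P_n) \to (x,P)$ in $\GR{\Hn}$, then in particular $u(y_n) \to x$ in $\Hn$, and the properness of $u$ forces $(y_n)$ to have a convergent subsequence in $S$. Finally, $(\GR{\Hn},g)$ is complete, since it is a homogeneous Riemannian manifold for $\G = \SO_0(2,n+1)$ with the canonical invariant metric described in subsection~\ref{ss:Grassmannians}.

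The conclusion now follows from the standard fact that a proper isometric immersion $f : (M,g_M) \to (N,g_N)$ into a complete Riemannian manifold has complete domain. Applied to $\Gamma$: given a $d_\II$-Cauchy sequence $(y_n)$ in $S$, the inequality $d_{\GR{\Hn}}(\Gamma(y_n),\Gamma(y_m)) \leq d_\II(y_n,y_m)$ shows that $(\Gamma(y_n))$ is Cauchy in $\GR{\Hn}$, hence converges to some $(x,P)$ by completeness. Properness of $\Gamma$ implies $(y_n)$ lies eventually in a compact subset of $S$; combined with the Cauchy property this yields a limit $y \in S$. Hence $(S,g_\II)$ is complete, and therefore so is $(S,g_\I)$.

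I do not expect a serious obstacle here: the architecture has already been prepared by Corollary~\ref{c:UpperBoundLiftMetric} (uniform equivalence of $g_\I$ and $g_\II$), by the formula $\T\Gamma = (\T u,\II)$ (making $\Gamma$ an isometric immersion for $g_\II$), and by the homogeneity of $\GR{\Hn}$ (giving completeness of the target). The only point requiring care is verifying properness of $\Gamma$ from properness of $u$, which reduces simply to projecting from $\GR{\Hn}$ to $\Hn$.
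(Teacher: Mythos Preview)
Your proof is correct, but it takes a different route from the paper's. The paper argues via the warped projection: a properly immersed spacelike surface is an entire graph over each tangent hyperbolic plane $H_x$ (Proposition~\ref{p:EntireGraph}\ref{it:eg1}), so $(S,g^x_H)$ is isometric to the complete hyperbolic plane; then part~(2) of Corollary~\ref{c:UpperBoundLiftMetric} makes $g_\I$ and $g^x_H$ uniformly equivalent on the $g_\I$-ball of radius~$1$ about $x$, forcing that ball to have compact closure for every $x$, hence $(S,g_\I)$ is complete. You instead go up to the Grassmannian: the Gauss lift $\Gamma$ is a proper isometric immersion of $(S,g_\II)$ into the complete homogeneous Riemannian manifold $\GR{\Hn}$, so $(S,g_\II)$ is complete, and part~(1) of Corollary~\ref{c:UpperBoundLiftMetric} transfers this to $g_\I$.

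Your argument is arguably cleaner: it avoids invoking the entire-graph property and the local comparison with $g^x_H$, and it packages everything into the standard fact about proper isometric immersions into complete targets. The paper's approach, on the other hand, stays entirely in $\Hn$ and reuses the warped-projection machinery that is needed elsewhere anyway. Both arguments consume the $\II$-bound through Corollary~\ref{c:UpperBoundLiftMetric}, just through different clauses.
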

\begin{proof}
If $S$ is properly immersed, it is thus a global graph over each of its tangent planes. Thus by the previous corollary, for each $x$ in $S$, the ball of radius $1$ with center $x$ with respect to $g_I$ is complete, since the ball of radius $1$ with center $x$ with respect to $g^x_H$ is complete. This shows that $(S, g_\I)$ is complete. 
\end{proof}
\subsubsection{Holonomic distribution}\label{sss:HolonomicDistribution} 
Using the splitting \begin{equation}
 \T_{(x,P)}\GR{M} = \T_x M \oplus \Hom(P,P^\bot)\ , \label{eqdef:split} \end{equation}
 where $\T_x M$ is the {\em horizontal distribution}
we  define the {\em holonomic distribution} $\mathcal{D}$ on $\GR{M}$ by 
\begin{equation}\mathcal{D}_{(x,P)}:=P\oplus \Hom(P,P^\bot)\ .\label{eqdef:holon}
\end{equation}
The following is straightforward.
\begin{lemma}
The Gauß lift of a spacelike surface in $M$ is tangent to the holonomic distribution.
\end{lemma}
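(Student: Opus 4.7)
The plan is to unwind the splitting \eqref{eqdef:split} applied along the Gauss lift $\Gamma$ and observe that the horizontal component of $\T\Gamma$ automatically lands in $P$ when $P$ is taken to be the tangent plane of the surface. Concretely, fix a point $x \in S$ and set $P = \T u(\T_x S) \subset \T_{u(x)} M$, so that $\Gamma(x) = (u(x), P)$. For any $v \in \T_x S$, the derivative $\T_x \Gamma(v) \in \T_{(u(x),P)} \GR{M}$ decomposes with respect to \eqref{eqdef:split} into a horizontal part in $\T_{u(x)} M$ and a vertical part in $\Hom(P, P^\bot)$; the horizontal part is simply $\T u(v)$, while the vertical part was already identified (in the computation preceding Proposition \ref{pro:IetII}) as $\II(v, \cdot)$.

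Now the horizontal component $\T u(v)$ lies in $P$ by the very definition of $P$, so
\[
\T_x\Gamma(v) \in P \oplus \Hom(P, P^\bot) = \mathcal{D}_{\Gamma(x)},
\]
using \eqref{eqdef:holon}. Since $v \in \T_x S$ was arbitrary, the image of $\T_x \Gamma$ is contained in $\mathcal{D}_{\Gamma(x)}$, which is exactly the statement that $\Gamma$ is tangent to the holonomic distribution.

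There is no real obstacle here: the statement is essentially a tautology once one notices that the horizontal part of a Gauss lift equals the derivative of the underlying immersion, which by construction takes values in the fiber being recorded. The only mild point to be careful about is distinguishing the horizontal lift (parallel transport) splitting from the tangent-normal splitting $P \oplus P^\bot$ of $\T_{u(x)} M$, and noting that these two splittings agree on the horizontal subspace. Spacelikeness of the surface is needed only to ensure that $\Gamma$ is well-defined, i.e.\ that $P = \T u(\T_x S)$ is a positive definite 2-plane and hence an actual point of $\GR{M}$.
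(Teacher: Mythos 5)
Your proof is correct and takes exactly the approach the paper has in mind: the paper labels this lemma "straightforward" and omits a proof precisely because the horizontal component of $\T\Gamma$ is $\T u$, which by the very definition of the Gauss lift takes values in the plane $P$ being recorded. Your remark that spacelikeness is used only to ensure $P = \T u(\T_x S)$ is a genuine point of $\GR{M}$ is also accurate and is the right caveat to flag.
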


\subsubsection{Almost complex structure and holomorphic Gauß lift} The holonomic distribution $\mathcal{D}$ carries a natural almost-complex structure ${J}$ defined by taking the rotation $i$ of angle $\frac{\pi}{2}$ on $P$ and the pre-composition by $i$ on $\Hom(P,P^\bot)$:
\begin{equation}
	J(u,A)=(iu, A\circ i)\ . \label{eqdef:almJ}
\end{equation}
The following is classical (see \cite{LabouriePseudoHolomorphic}). 

\begin{proposition}\label{p:PseudoHolomorphicGaussLift}
A spacelike surface $u: S \hookrightarrow M$ is maximal if and only if its Gauß lift $\Gamma: S \to \GR{M}$ is $J$-holomorphic when $S$ is equipped with the complex structure $j$ induced by $g_\I$.
\end{proposition}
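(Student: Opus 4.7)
The plan is to verify the $J$-holomorphicity condition $\T\Gamma\circ j = J\circ\T\Gamma$ pointwise and show that its only nontrivial content is the vanishing of the mean curvature. Fix $x\in S$, set $P\defeq \T_xu(\T_xS)\subset\T_{u(x)}M$, and decompose
\[\T_x\Gamma(v)=\bigl(\T_xu(v),\,\widetilde{\II}(v)\bigr)\]
according to the splitting \eqref{eqdef:split}, where $\widetilde{\II}(v)\in\Hom(P,P^\bot)$ is defined by $\widetilde{\II}(v)(\T_xu(w))\defeq\II(v,w)$. Since $\T_xu$ is an oriented isometry from $(\T_xS,g_\I)$ onto the oriented positive-definite plane $P$, it intertwines the intrinsic complex structure $j$ on $\T_xS$ with the rotation $i$ on $P$. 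Consequently the horizontal component of the equation $\T\Gamma\circ j=J\circ\T\Gamma$, namely $\T u\circ j=i\circ\T u$, holds automatically.

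For the vertical component in $\Hom(P,P^\bot)$, the definition \eqref{eqdef:almJ} of $J$ gives $J(\T u(v),\widetilde{\II}(v))=(i\T u(v),\widetilde{\II}(v)\circ i)$. Thus the remaining condition is the pointwise identity
\[\widetilde{\II}(jv)=\widetilde{\II}(v)\circ i\quad \text{in }\Hom(P,P^\bot),\]
which upon evaluating on $\T_xu(w)$ and using the intertwining property becomes
\[\II(jv,w)=\II(v,jw)\qquad\text{for all } v,w\in\T_xS.\]

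I would then check this algebraic identity in an orthonormal frame $(e_1,e_2)$ of $(\T_xS,g_\I)$ with $je_1=e_2$ and $je_2=-e_1$. By symmetry of $\II$, the case $v=w=e_i$ is automatic, and the two off-diagonal cases $(v,w)=(e_1,e_2)$ and $(v,w)=(e_2,e_1)$ give the same condition, namely $\II(e_2,e_2)=-\II(e_1,e_1)$, i.e.\ $\mathsf H=\II(e_1,e_1)+\II(e_2,e_2)=0$. This establishes the equivalence in both directions simultaneously.

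I do not anticipate a genuine obstacle; this is essentially a linear algebra unpacking of the definition of $J$ in \eqref{eqdef:almJ}. The only step requiring care is the identification of the complex structure $j$ on $\T_xS$ with the rotation $i$ on $P$, which relies on $\T_xu$ being an oriented isometry for $g_\I$ — this is automatic from the spacelike hypothesis and the canonical orientation on $P$ arising from the Grassmannian $\Gr{\T_{u(x)}M}$.
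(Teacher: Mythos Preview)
Your proof is correct and follows essentially the same approach as the paper: decompose $\T\Gamma=(\T u,\II)$ along the splitting of the holonomic distribution, observe that the horizontal component is automatically $J$-holomorphic, and reduce the vertical condition to $\II$ being symmetric and trace-free. The paper states this last equivalence abstractly in one line, whereas you verify it by an explicit frame computation, but the argument is the same.
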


\begin{proof}
Considering the splitting $u^* \T M=\T S\oplus \No S$, we get $\Gamma^* \mathcal{D} = \T S\oplus \Hom(\T S,\No S)$, where $\mathcal{D}$ is the distribution on $\GR{M}$ defined in \eqref{eqdef:holon}. In particular, $\T\Gamma\in\Omega^1\left(S,\T S\oplus \Hom(\T S,\No S) \right)$ is identified with $(\T u,\II)$.  

The first factor is clearly ${J}$-holomorphic. For the second factor, it follows from the observation that a map $\varphi: \T_xS \to \Hom(\T_xS,\No_xS)$ satisfies $\varphi\circ j=J \circ \varphi$ if and only if $\varphi$ is symmetric and trace-less.
\end{proof}

\begin{definition}[\sc Boundary condition]\label{sss:BoundaryConditions}
Let $\gamma$ be a strongly positive curve (Definition \ref{d:TypeOfCurves}). \begin{enumerate}
 	\item The {\em boundary condition} associated to $\gamma$ is the immersed submanifold
\begin{equation}
W(\gamma)\defeq
\big\{(x,P)\in\GG(\Hn)\mid x\in\gamma, \ \T_x\gamma\subset P\big\}\ . \label{eqdef:bdc}
\end{equation}
\item For any positive number $K$, the {\em local boundary condition}, is the open subset of  $W(\gamma)$
\begin{equation}
	W_K(\gamma)
	\defeq
	 \big\{(x,P)\in W(\gamma) \mid   d(P,\T^{(2)}_x\gamma)<K \big\}\ . \label{eqdef:local-boundary-condition}
\end{equation}
 \end{enumerate}
 \end{definition}
We have 

\begin{proposition} \label{pro:bc}
	Let $\gamma$ be a strongly positive curve of regularity $C^{k,\alpha}$. \begin{enumerate}
 \item	$W(\gamma)$ is a submanifold of regularity $C^{k-1,\alpha}$  tangent to the holonomic distribution. 
 \item  $\T W(\gamma)$ is a totally real subspace of half the dimension of the holonomic distribution.
 \item Finally, let $\omega$ be the orthogonal projection from $\T\GG(\Hn)$ to the horizontal distribution, then if $(x, P)$ belongs to $W(\gamma)$, we have  $\omega(\T_PW(\gamma))=\T_x\gamma$.
 \end{enumerate}
\end{proposition}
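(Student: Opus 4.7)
The plan is to parametrize $W(\gamma)$ concretely as a bundle over $\gamma$, compute its tangent spaces via a parallel-transport argument, and then verify the three claims as direct consequences. Over each $x\in\gamma$, the fiber of $W(\gamma)$ consists of positive-definite $2$-planes $P \subset \T_x\Hn$ containing $\T_x\gamma$; equivalently $P = \T_x\gamma \oplus L$ with $L$ a spacelike line in $\T_x\gamma^\perp$, parametrized by an open subset of the $n$-dimensional hyperbolic space of spacelike lines in that signature $(1,n)$ subspace. Since $\gamma$ has regularity $C^{k,\alpha}$, the unit tangent field $x\mapsto \T_x\gamma$ is of class $C^{k-1,\alpha}$, so $W(\gamma)$ is a $C^{k-1,\alpha}$ submanifold of $\GR{\Hn}$ of dimension $n+1$, which is exactly half the real dimension $2+2n$ of the holonomic distribution $\mathcal{D}_{(x,P)} = P \oplus \Hom(P,P^\perp)$.

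Next I would compute $\T_{(x,P)}W(\gamma)$ explicitly. For a curve $t\mapsto(x(t),P(t))$ in $W(\gamma)$ starting at $(x,P)$, the horizontal-vertical splitting of \eqref{eqdef:split} gives a tangent vector $(\dot x,\varphi)$ with $\dot x=\lambda T$ (where $T$ is the unit tangent to $\gamma$) and $\varphi\in\Hom(P,P^\perp)$. Parallel transporting $\T_{x(t)}\gamma$ back to $x$ and enforcing the inclusion $\T_{x(t)}\gamma\subset P(t)$ yields at first order the constraint
\[
\varphi(T) \;=\; \lambda\,\pi_{P^\perp}\!\left(\nabla_T T\right),
\]
where the right-hand side is meaningful precisely because $\gamma$ is strongly positive and so has a well-defined spacelike osculating plane. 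Thus
\[
\T_{(x,P)}W(\gamma) \;=\; \bigl\{(\lambda T,\varphi):\lambda\in\R,~\varphi(T)=\lambda\,\pi_{P^\perp}(\nabla_T T)\bigr\},
\]
which has dimension $n+1$. Since $\T_x\gamma\subset P$, the horizontal component $\lambda T$ lies in $P$, hence $\T_{(x,P)}W(\gamma)\subset \mathcal{D}_{(x,P)}$; this proves (i). Claim (iii) is then immediate: $\omega(\lambda T,\varphi)=\lambda T$ and every $\lambda\in\R$ is realized, so $\omega(\T_{(x,P)}W(\gamma))=\R T=\T_x\gamma$.

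For (ii), given $(u,A)\in \T W(\gamma)\cap J\T W(\gamma)$ with $u=\lambda T$, applying $J(u,A)=(iu,A\circ i)$ as in \eqref{eqdef:almJ} gives a horizontal component $\lambda\, iT$; but $iT$ is orthogonal to $T$ in the positive-definite plane $P$, so $\lambda\, iT\in\R T$ forces $\lambda=0$. Then $u=0$ and the original constraint forces $A(T)=0$. The membership $(0,A\circ i)\in \T W(\gamma)$ now forces $A(iT)=0$ as well, and since $\{T,iT\}$ spans $P$ we conclude $A=0$. Combined with the dimension matching, this establishes total reality. The only real technical step is the parallel-transport derivation of the constraint on $\varphi(T)$, which requires careful bookkeeping with the horizontal-vertical decomposition; once the tangent space is in hand, parts (i)--(iii) follow formally.
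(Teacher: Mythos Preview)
Your proof is correct and follows essentially the same route as the paper: identify $W(\gamma)$ as a bundle over $\gamma$, describe $\T_{(x,P)}W(\gamma)$ inside the splitting \eqref{eqdef:split}, and read off items (i)--(iii). You are in fact more careful than the paper on one point: the paper asserts $\T_{(x,P)}W(\gamma)=\T_x\gamma\oplus\{A:\T_x\gamma\subset\ker A\}$, which is only literally true when $P=\T_x^{(2)}\gamma$ (so that $\nabla_TT\in P$); your constraint $\varphi(T)=\lambda\,\pi_{P^\perp}(\nabla_TT)$ is the correct general form. Either description yields an $(n+1)$-dimensional subspace of $\mathcal D_{(x,P)}$ with horizontal projection $\T_x\gamma$ and trivial intersection with its $J$-image, so the three conclusions are unaffected.
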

\begin{proof} The first item is obvious. The last item follows from the definition of $W(\gamma)$. Let $(x,P)$ be in $W(\gamma)$. Using the splitting \eqref{eqdef:split},
$$
\T_{(x,P)}W(\gamma)=\T_x\gamma\oplus \{A\in\Hom(P,P^\perp)\mid \T_x\gamma\subset \ker(A)\}\ .
$$
It  follows from the definition \eqref{eqdef:holon}, that $\T_{(x,P)} W(\gamma)\subset \mathcal D_{(x,P)}$. Moreover the definition \eqref{eqdef:almJ} of the almost complex structure $J$ implies that
$$
\T_{(x,P)}W(\gamma)\oplus {J}\T_{(x,P)}W(\gamma)=\mathcal D_{(x,P)}\ .
$$
This completes the proof.
\end{proof}

\section{Uniqueness}\label{sec:Uniqueness}

This section is devoted to the proof of the following result.

\begin{theorem}[\sc Uniqueness]\label{t:Uniqueness}
Let $\Sigma$ be a complete maximal surface in $\H^{2,n}$ whose total boundary $\Lambda$ is either finite and positive or asymptotic (see Definition \ref{d:AsymptoticBoundary}). Then $\Sigma$ is the unique complete maximal surface bounded by $\Lambda$.
\end{theorem}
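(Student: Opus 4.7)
The strategy is to compare the two surfaces through the scalar product function $F(x,y) \defeq \langle x,y\rangle$ on $\Sigma_1 \times \Sigma_2$ and to invoke a version of the Omori--Yau maximum principle (as suggested by Tamburelli). First one lifts both surfaces consistently to $\hHn$: by Lemma~\ref{l:DisjointBarycenter} the preimage of $\Lambda$ in $\P_+(E)$ splits into two connected components, and both $\Sigma_i$ can be lifted to the same chosen component. Proposition~\ref{p:MaximalSurfaceContainedInConvexHull} places both lifts inside $\CH(\Lambda_+)$, and Proposition~\ref{p:PropertiesConvexHull}\ref{it:PropCH2} then gives $F \leq 0$ on $\Sigma_1 \times \Sigma_2$, with $F = -1$ precisely at coincidence points $x=y$.

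The key PDE is the identity $\Delta^{\Sigma_1 \times \Sigma_2} F = 4F$. For fixed $y$, the function $x \mapsto \langle x,y\rangle$ on $\Sigma_i$ has second derivative along a $\Sigma_i$-geodesic $\gamma$ equal to $\langle \ddot\gamma, y\rangle$; decomposing $\ddot\gamma = \II_{\Sigma_i}(\dot\gamma, \dot\gamma) + |\dot\gamma|^2_{g_\I}\, x$ using the Gauss formula for $\Sigma_i \subset \hHn$ together with the second fundamental form $\II^{\hHn \subset E}(X,Y) = \langle X, Y\rangle\, x$ of $\hHn$ in $E$, and invoking maximality $\mathsf H_{\Sigma_i}=0$, one finds $\Delta^{\Sigma_i} F(\cdot, y) = 2 F(\cdot, y)$ in each factor, whence the product Laplacian is $4F$. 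By Proposition~\ref{p:GaussEquation} each $\Sigma_i$ has sectional curvature $\geq -1$, and combined with completeness this makes $\Sigma_1 \times \Sigma_2$ a complete Riemannian manifold with Ricci curvature bounded below. The strong Omori--Yau principle therefore applies to $F$ (bounded above by $0$), producing a sequence $(x_k, y_k)$ with $F(x_k, y_k) \to F^* \defeq \sup F$, $|\nabla F|(x_k, y_k) \to 0$, and $\limsup \Hess F(x_k, y_k) \leq 0$.

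From this sequence I would extract geometric rigidity. Writing $y_k = -F_k x_k + \nu_k + w_k$ with $\nu_k \in \No_{x_k}\Sigma_1$ and $w_k \in T_{x_k}\Sigma_1$, the vanishing gradient forces $w_k \to 0$, and the normalisation $\langle y_k, y_k\rangle = -1$ together with the negative-definiteness of $\No_{x_k}\Sigma_1$ gives $F_k^2 \leq 1 + |w_k|^2$, so $F^* \in [-1, 0]$. The strong Hessian bound, evaluated on variations of the form $(X, 0)$, $(0, Y)$, and $\tfrac{1}{\sqrt 2}(X, \pm Y)$ in $T_{x_k}\Sigma_1 \oplus T_{y_k}\Sigma_2$, combined with the trace-free condition $\tr_{g_\I}\II_{\Sigma_i} = 0$, rigidifies the configuration and should force $F^* = -1$. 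Consequently $\nu_k \to 0$, so the Omori sequence accumulates on the diagonal: there is a true coincidence point $p \in \Sigma_1 \cap \Sigma_2$ with matching tangent planes. Unique continuation for the analytic quasilinear elliptic maximal-surface equation then propagates $\Sigma_1 = \Sigma_2$ near $p$, and connectedness concludes.

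The main obstacle is the rigidity step that squeezes the asymptotic Omori data to force $F^* = -1$ rather than merely $F^* \in [-1,0]$. In the asymptotic-boundary case, the Omori sequence might a priori escape to $\Lambda \subset \bHn$, where $F$ can tend to $0$ along "diagonal sequences" approaching a common boundary point; one needs the full strength of the stability of maximal surfaces in $\Hn$ (Corollary~\ref{c:StabilityMaximalSurfaces})—equivalently the negative-definite normal bundle combined with the $-1$ curvature of $\Hn$—to rule out such "near-touch at infinity" configurations and locate the coincidence in the interior.
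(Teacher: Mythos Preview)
Your setup matches the paper's: the function $F(x,y)=\langle x,y\rangle$ on $\Sigma_1\times\Sigma_2$, the lift to $\hHn$ via the convex hull, and the appeal to an Omori-type principle. But the identity $\Delta F = 4F$ is not the lever that works. Since $F\leq 0$, this only gives $\limsup\Delta F\leq 0$ along an Omori sequence, which is vacuous; and the vague ``rigidification'' from the full Hessian at the Omori sequence is exactly the step you flag as unresolved. The paper replaces this with a sharp \emph{pointwise, directional} Hessian lower bound: at every $p=(x,y)$ one can choose unit vectors $u\in\T_x\Sigma_1$, $v\in\T_y\Sigma_2$ so that, with $w=(u,v)$,
\[
\Hess_pF(w,w)\;\geq\; 2F(p)+2\ .
\]
The construction is: since $\mathsf H=0$, the quadratic form $\langle\II_1(\cdot,\cdot),y\rangle$ has eigenvalues $\pm\lambda_1$; take $u$ to be a unit eigenvector for $+\lambda_1$ (assuming $\lambda_1\geq\lambda_2$ after possibly swapping the surfaces), so that the two second-fundamental-form terms in the Hessian sum to something nonnegative. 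Then choose $v\in\T_y\Sigma_2$ as the unit vector whose orthogonal projection to $\T_x\Sigma_1$ is a positive multiple of $u$; the kernel of that projection is negative definite, so $\langle u,v\rangle=\sqrt{\q(v)}\geq 1$. Plugging into the explicit Hessian formula
\[
\Hess_pF(w,w)=(\q(u)+\q(v))F(p)+2\langle u,v\rangle+\langle\II_1(u,u),y\rangle+\langle x,\II_2(v,v)\rangle
\]
gives the bound. Now the weak Omori principle (on a complete manifold with curvature bounded below, here by Proposition~\ref{p:GaussEquation}) applied to $F$ with this Hessian bound forces $\sup F\leq -1$: if $F(p)>-1+\delta$ then $\Hess_pF(w,w)>2\delta$, so $F$ would be unbounded, contradicting $F\leq 0$.

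Your endgame via unique continuation is also fragile, because the Omori sequence need not converge in $\Sigma_1\times\Sigma_2$. The paper sidesteps this entirely: for each fixed $x\in\Sigma_1$, the timelike sphere $\pi_\Pp^{-1}(x)$ (for $\Pp$ tangent to $\Sigma_1$ at $x$) meets $\Sigma_2$ in a point $y$ with $\langle x,y\rangle\geq -1$, with equality iff $y=x$. Combined with $\sup F\leq -1$, this gives $\sup_{y}F(x,y)=-1$ for \emph{every} $x\in\Sigma_1$, hence $x\in\Sigma_2$. No limiting point, no analyticity, no unique continuation needed.
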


This statement was proved in \cite[Theorem 3.21]{CTT} in the case of a cocompact group action on $\Sigma$, and the proof relies on a maximum principle. We will adapt this maximum principle here to the non-compact case using a weak version of Omori's maximum principle. Such an adaptation was made in the case of $\mathbf{H}^{2,2}$ for polygonal surfaces in \cite{TamburelliWolf} and suggested to us by the first author of that work.

We work by contradiction. Suppose there exists two maximal surfaces $\Sigma_1$ and $\Sigma_2$ sharing a common boundary, denoted by $\Lambda$. By Proposition~\ref{p:MaximalSurfaceContainedInConvexHull}, both surfaces are contained in the convex hull $\CH(\Lambda)$ of $\Lambda$. Lift of $\mathcal{CH}(\Lambda)$ to $\hHn$ and recall that the scalar product of any pair of points in this lift is negative by Proposition \ref{p:PropertiesConvexHull}, item \ref{it:PropCH2}. This defines a lift of $\Sigma_1$ and $\Sigma_2$ in $\hHn$ that we denote the same way.

Consider the function
\[B: \left\{\begin{array}{lll}
 \Sigma_1 \times \Sigma_2 & \longrightarrow & \R\ , \\
 (x,y) & \longmapsto & \langle x,y\rangle\ .	
\end{array}\right.
 \]
We remark that since $B$ is negative everywhere, $B$ is bounded from above.

\subsection{Lower bound on the Hessian}

We prove the following estimate.

\begin{lemma}\label{l:LowerBoundHessB}
Let $p=(x,y)$ be a point  in $\Sigma_1\times \Sigma_2$. Then there exists two unit vectors $u_0$ and $v_0$ in $\T_x\Sigma_1$ and $\T_y\Sigma_2$ respectively such that, for $w_0=(u_0,v_0)$, we have
\begin{equation}\label{e:HessianOfB}
	\operatorname{Hess}_pB(w_0,w_0) \geq 2B(p) + 2~.
\end{equation}
\end{lemma}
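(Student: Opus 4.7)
The plan is to first derive an explicit formula for $\Hess_p B$, then reduce the desired inequality to a structural statement about positive definite $2$-planes in $E$, and finally conclude by an averaging argument that crucially uses the maximality of both $\Sigma_i$.

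I would begin by computing the ambient second derivatives $\ddot x(0), \ddot y(0) \in E$ along geodesics $(x(t), y(t))$ in $\Sigma_1 \times \Sigma_2$ with unit initial velocities $u, v$. Differentiating the defining relation $\langle x, x\rangle = -1$ and using $\bar\nabla_X x = X$ for the flat connection $\bar\nabla$ on $E$ shows that $\bar\nabla_X Y = \nabla^{\hHn}_X Y + \langle X, Y\rangle\, x$ for vector fields tangent to $\hHn$. Combined with the definition of the second fundamental form $\II_i$ of $\Sigma_i$ inside $\hHn$, this yields $\ddot x(0) = \II_1(u, u) + x$ and similarly $\ddot y(0) = \II_2(v, v) + y$. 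Expanding
\[
\Hess_p B\bigl((u,v),(u,v)\bigr) = \langle \ddot x(0), y\rangle + 2\langle u, v\rangle + \langle x, \ddot y(0)\rangle
\]
then gives
\[
\Hess_p B\bigl((u,v),(u,v)\bigr) = 2B(p) + \langle \II_1(u,u), y\rangle + \langle x, \II_2(v,v)\rangle + 2\langle u, v\rangle,
\]
so the lemma reduces to producing unit vectors $u \in \T_x\Sigma_1$ and $v \in \T_y\Sigma_2$ with $\langle \II_1(u,u), y\rangle + \langle x, \II_2(v,v)\rangle + 2\langle u, v\rangle \geq 2$.

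The key structural input is that the orthogonal projection $\pi \defeq P_W|_V \colon V \to W$, with $V \defeq \T_x\Sigma_1$ and $W \defeq \T_y\Sigma_2$, has both singular values at least $1$. Indeed, $W$ is a positive definite $2$-plane inside $E$, and since $E$ has signature $(2, n+1)$ its maximal positive definite subspaces have dimension exactly $2$. Hence the $(n+1)$-dimensional complement $W^\perp$ is negative definite, and for any unit vector $u \in V$, writing $u = P_W(u) + u^\perp$ with $u^\perp \in W^\perp$ yields
\[
1 = \langle u, u\rangle = |P_W(u)|^2 + \langle u^\perp, u^\perp\rangle \leq |P_W(u)|^2.
\]
Consequently the singular values satisfy $\sigma_1 \geq \sigma_2 \geq 1$, and in particular $\sigma_1 + \sigma_2 \geq 2$.

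To conclude I would average over a suitable circle of directions. Let $\{e_1, e_2\}$ and $\{f_1, f_2\}$ be orthonormal bases of $V$ and $W$ adapted to the singular value decomposition of $\pi$, so that $\langle e_i, f_j\rangle = \sigma_i \delta_{ij}$, and set $u_\theta \defeq \cos\theta\, e_1 + \sin\theta\, e_2$ and $v_\theta \defeq \cos\theta\, f_1 + \sin\theta\, f_2$. A direct computation gives $\langle u_\theta, v_\theta\rangle = \sigma_1 \cos^2\theta + \sigma_2 \sin^2\theta$, whose average over $\theta \in [0, 2\pi]$ is $(\sigma_1 + \sigma_2)/2 \geq 1$. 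Because $\Sigma_1$ and $\Sigma_2$ are maximal, the symmetric bilinear forms $(X,Y) \mapsto \langle \II_1(X,Y), y\rangle$ and $(X,Y) \mapsto \langle x, \II_2(X,Y)\rangle$ are trace-free on $V$ and $W$, so $\langle \II_1(u_\theta, u_\theta), y\rangle$ and $\langle x, \II_2(v_\theta, v_\theta)\rangle$ are pure linear combinations of $\cos 2\theta$ and $\sin 2\theta$, each averaging to $0$. Averaging the Hessian formula over $\theta$ therefore gives $2B(p) + (\sigma_1 + \sigma_2) \geq 2B(p) + 2$, and the choice $w = (u_{\theta_0}, v_{\theta_0})$ at any $\theta_0$ where the Hessian meets or exceeds the average satisfies \eqref{e:HessianOfB}. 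The most delicate step is the structural inequality $\sigma_i \geq 1$ above, which both explains why the constant $+2$ on the right-hand side is the natural one and ties the estimate directly to the signature $(2, n+1)$ of the ambient space.
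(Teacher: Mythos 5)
Your proposal is correct, and the route is genuinely different from the paper's. Both proofs derive the same Hessian formula $\Hess_pB(w,w)=2B(p)+\langle\II_1(u,u),y\rangle+\langle x,\II_2(v,v)\rangle+2\langle u,v\rangle$ and both hinge on the same structural fact: because $E$ has signature $(2,n+1)$, the orthogonal complement of a positive definite $2$-plane is negative definite, so the orthogonal projection between two such planes has all singular values $\geq 1$. The paper exploits this asymmetrically: it picks $u$ to be the top eigendirection of $\langle\II_1(\cdot,\cdot),y\rangle$ with eigenvalue $\lambda_1$, invokes a WLOG swap so that $\lambda_1\geq\lambda_2$ to force $\langle\II_1(u,u),y\rangle+\langle x,\II_2(v,v)\rangle\geq 0$ for \emph{any} unit $v$, and then chooses the unit $v\in\T_y\Sigma_2$ projecting onto the ray of $u$, giving $\langle u,v\rangle=\|\pi(v)\|\geq 1$. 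Your proof instead rotates a singular-value-adapted pair of orthonormal frames by a common angle $\theta$ and averages: trace-freeness of $\II_i$ (i.e.\ maximality) annihilates the $\II$-terms on average, and the cross term averages to $(\sigma_1+\sigma_2)/2\geq 1$, so some $\theta_0$ achieves the bound. Your version is more symmetric (no WLOG swap, treats the two $\II$-terms identically) and makes transparent why the constant $2$ is $\sigma_1+\sigma_2$; the paper's version produces an explicit pair $(u,v)$ rather than an existential one. Both are valid and essentially equal in length.
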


We first compute the Hessian of $B$.

\begin{lemma}\label{l:II2}
The Hessian of $B$ at a point $p=(x,y)$ in $\Sigma_1\times \Sigma_2$	 in the direction $w=(u,v)\in \T_x\Sigma_1\times \T_y\Sigma_2$ is given by
\[\operatorname{Hess}_pB(w,w) = (\q(u)+\q(v))B(p)+2\langle u,v\rangle 	+\langle \II_1(u,u),y\rangle + \langle x,\II_2(v,v)\rangle. \]
Here $\II_i$ is the second fundamental form of $\Sigma_i$.
\end{lemma}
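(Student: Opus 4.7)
The plan is to compute the Hessian by differentiating $B$ twice along a geodesic in the product Riemannian manifold $\Sigma_1\times\Sigma_2$ (with the induced metrics), then use the fact that $B$ extends to a bilinear form on $E\times E$ so that second derivatives in $E$ factor through the inclusion $\hHn\hookrightarrow E$. Specifically, let $w=(u,v)\in \T_x\Sigma_1\oplus\T_y\Sigma_2$ and let $\gamma(t)=(x(t),y(t))$ be the product geodesic with $\gamma(0)=(x,y)$ and $\dot\gamma(0)=w$, so that $x(t)$ is a geodesic in $\Sigma_1$ with $\dot x(0)=u$ and similarly $y(t)$ is a geodesic in $\Sigma_2$ with $\dot y(0)=v$. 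Then by bilinearity of $\langle\cdot,\cdot\rangle$ on $E$,
\[
\operatorname{Hess}_pB(w,w) \;=\; \frac{d^2}{dt^2}\Big\vert_{t=0}\!\langle x(t),y(t)\rangle \;=\; \langle\ddot x(0),y\rangle+2\langle u,v\rangle+\langle x,\ddot y(0)\rangle,
\]
where $\ddot x(0)$ and $\ddot y(0)$ denote the ambient accelerations in $E$.

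The key step is to identify these ambient accelerations. Because $\langle x(t),x(t)\rangle=-1$ along $\hHn$, differentiating twice yields $\langle \ddot x,x\rangle=-\q(\dot x)$, and the geometry of the pseudo-sphere tells us that the normal to $\hHn$ in $E$ at $x$ is the line $\R\cdot x$ itself; hence the component of $\ddot x$ normal to $\hHn$ is $\q(\dot x)\,x$. The tangential component in $\hHn$ decomposes, via the Gauss formula for $\Sigma_1\subset\hHn$, into the intrinsic Levi-Civita acceleration $\nabla^{\Sigma_1}_{\dot x}\dot x$ (which vanishes because $x(t)$ is a geodesic in $\Sigma_1$) plus the normal part $\II_1(\dot x,\dot x)$ relative to $\Sigma_1\subset\hHn$. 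Evaluating at $t=0$,
\[
\ddot x(0)=\II_1(u,u)+\q(u)\,x, \qquad \ddot y(0)=\II_2(v,v)+\q(v)\,y.
\]

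Substituting these expressions into the formula above, and using $B(p)=\langle x,y\rangle$, gives
\[
\operatorname{Hess}_pB(w,w)=\langle \II_1(u,u),y\rangle+\q(u)B(p)+2\langle u,v\rangle+\q(v)B(p)+\langle x,\II_2(v,v)\rangle,
\]
which is exactly the claimed identity. There is no real obstacle here: the only subtlety is the correct sign/normalization in the equation $\ddot x(0)=\II_1(u,u)+\q(u)\,x$, and this is fixed unambiguously by the two constraints $\langle x,x\rangle=-1$ on $\hHn$ and $\langle \II_1(u,u),x\rangle=0$ (since $\II_1$ takes values normal to $\Sigma_1$ inside $\hHn$, hence perpendicular to $x$).
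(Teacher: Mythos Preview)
Your proof is correct and follows essentially the same approach as the paper: both compute the Hessian along a product geodesic and decompose the ambient $E$-acceleration $\ddot x(0)$ via the two-step Gauss formula (first $\Sigma_1\subset\hHn$, giving $\II_1(u,u)$, then $\hHn\subset E$, giving $\q(u)\,x$). The paper writes the umbilical relation as $(D_XY)_p=(\nabla_XY)_p+\langle X,Y\rangle p$ while you recover the same term by differentiating $\langle x,x\rangle=-1$, but this is the same computation.
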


\begin{proof}
Let $\gamma_1$ be a geodesic in $\Sigma_1$ with $\dot\gamma_1(0)=u$, while $\gamma_2$ is a geodesic in $\Sigma_2$ with $\dot\gamma_2(0)=v$, then, using equation \eqref{eq:geod} in the last equality
\begin{eqnarray*}
\operatorname{Hess}_pB(w,w)&=&\left.\frac{{\rm d}^2}{{{\rm d} t}^2}\right\vert_{t=0}\braket{\gamma_1(t),\gamma_2(t)}\crcr
&=&\langle \left.\frac{{\rm d}^2}{{{\rm d} t}^2}\right\vert_{t=0} \gamma_1(t),y \rangle + 2\langle u,v\rangle + \langle x,\left.\frac{{\rm d}^2}{{{\rm d} t}^2}\right\vert_{t=0}\gamma_2(t)\rangle\crcr
&=& \braket{\q(u)\ x+\II_1(u,u),y}+ 2\langle u,v\rangle + \braket{ x, \q(v)\ y + \II_2(v,v)}\ ,	
\end{eqnarray*}
and the result follows. \end{proof}

\begin{proof}[Proof of Lemma \ref{l:LowerBoundHessB}]
Since the surfaces $\Sigma_1$ and $\Sigma_2$ are maximal, the quadratic forms $\langle \II_1(.,.),y\rangle$ and $\langle x,\II_2(.,.)\rangle$ have opposite eigenvalues $\pm \lambda_1$ and $\pm\lambda_2$ respectively. Thus at a given point $p=(x,y)\in \Sigma_1\times \Sigma_2$, up to switching $\Sigma_1$ and $\Sigma_2$, we may assume $\lambda_1\geq\lambda_2\geq 0$. Observe then that for any unit vector $v$ in $v\in \T_y\Sigma_2$ we have
$$
\braket{x,\II_2(v,v)}\geq -\lambda_2\geq-\lambda_1\ .
$$
Now let us choose a unit vector  $u_0\in \T_x\Sigma_1$ such that $\langle \II_1(u_0,u_0),y\rangle=\lambda_1$. Let $w=(u,v)$, then we obtain from  lemma \ref{l:II2} that for any unit $v$ in $\T_y\Sigma_2$, 
\[\operatorname{Hess}_pB((u_0,v),(u_0,v))=2B(p)+2\langle u_0,v\rangle + \langle \II_1(u_0,u_0),y\rangle + \langle x,\II_2(v,v)\geq  2B(p)+2\langle u_0,v\rangle .\]

It is now enough in order to conclude the proof of the lemma to find a unit vector $v_0$ such that $\braket{u_0,v_0}\geq 1$.

Let $\pi$ be the orthogonal projection from $E$ to  $\T_x\Sigma_1$. Since the kernel of $\pi$ is negative definite, we have $\q(a)\leq \q(\pi(a))$ for any $a\in E$. Because $\T_y\Sigma_2$ is positive definite, it follows that the restriction of $\pi$ to $\T_y\Sigma_2$ is a linear isomorphism. Let $v_1$ to be the unique vector such that $\pi(v_1)=u_0$ and observe that $0<\q(v_1)\leq\q(u_0)=1$. Finally, let  $v_0$ the unit vector defined by $v_0=\sqrt{\q(v_1)}^{-1}v_1$. Then
\[\langle u_0,v_0\rangle = \sqrt{\q(v_1)}^{-1}\langle u_0,v_1\rangle = \sqrt{\q(v_1)}^{-1}\langle u_0,\pi(v_1)\rangle =\sqrt{\q(v_1)}^{-1}\geq 1~.\]
The result now follows.
\end{proof}

\subsection{A maximum principle}

The following is a weaker version of Omori's maximum principle \cite{Omori}.

\begin{proposition}[\sc Maximum Principle]\label{p:MaximumPrinciple}
Let $M$ be a complete Riemannian manifold  without boundary whose sectional curvature is bounded from below. Let $f$ be a function of $M$ satisfying the following:
\begin{enumerate}
	\item The function $f$ is of class $\mathcal{C}^2$.
	\item There are positive constants $A,\Lambda$ so that, if $f(x)>A$, then  there is a non-zero vector $v$ in $\T_xM$ such that
$$\operatorname{Hess}_xf(v,v)\geq \Lambda\Vert v\Vert^2.$$
\end{enumerate}
Then either $f$ is bounded by $A$, or $f$ is unbounded.
\end{proposition}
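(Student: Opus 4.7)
The plan is to deduce the statement from the classical Omori--Yau maximum principle, using the sectional curvature lower bound (which implies a Ricci lower bound) to guarantee that we are in its scope.

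The argument runs by contradiction. Assume $f$ is bounded from above and that $\sup_M f > A$. We wish to derive a contradiction with the assumed Hessian lower-bound hypothesis. The Omori--Yau maximum principle (available because $M$ is complete without boundary and has Ricci curvature bounded below, which follows from the sectional-curvature lower bound) furnishes, for any bounded-above $C^2$ function $f$, a sequence $\seqk{x}$ in $M$ such that
\begin{equation*}
  f(x_k)\to \sup_M f,\qquad \|\nabla f(x_k)\|\to 0,\qquad \Hess_{x_k} f(v,v)\leq \tfrac{1}{k}\|v\|^2\quad \text{for every } v\in \T_{x_k}M.
\end{equation*}
In particular, the Hessian is asymptotically negative semi-definite along $\seqk{x}$.

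On the other hand, since $f(x_k)\to\sup_M f>A$, for all sufficiently large $k$ we have $f(x_k)>A$, so the hypothesis provides a non-zero vector $v_k\in \T_{x_k}M$ with
\begin{equation*}
  \Hess_{x_k} f(v_k,v_k)\geq \Lambda \|v_k\|^2.
\end{equation*}
Comparing with the Omori--Yau bound at the same point yields $\Lambda \|v_k\|^2 \leq \tfrac{1}{k}\|v_k\|^2$, that is $\Lambda\leq 1/k$, which for $k$ large enough contradicts $\Lambda>0$.

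The only delicate issue is citing the correct form of the Omori--Yau principle; the more classical statement requires a Ricci curvature lower bound, and the one we need (the full Hessian version, not just the gradient one) is due to Yau (see, e.g., Yau's original paper or the Pigola--Rigoli--Setti monograph). Once that tool is available, the argument is essentially a one-line comparison. No compactness or exhaustion is required, and the hypothesis on the Hessian only needs to hold at points where $f>A$, precisely so that it can be applied along an Omori--Yau sequence converging to the supremum.
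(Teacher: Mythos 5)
Your argument is correct, but it takes a genuinely different route from the paper. You invoke the full strength of Omori's maximum principle (the Hessian version: for a complete manifold with sectional curvature bounded below, every $C^2$ function bounded above admits a sequence $\seqk{x}$ with $f(x_k)\to \sup f$, $\|\nabla f(x_k)\|\to 0$ and $\Hess_{x_k}f\leq \tfrac{1}{k}g$), whereas the paper deliberately proves its weaker statement from scratch. The paper's route is a localized argument: it uses a Hessian comparison estimate for the squared distance function on a small ball (Proposition~\ref{p:HessianComparison}), perturbs $f$ by $-\kappa d^2_{x_0}$, and shows that if $f(x_0)>A$ then there is a nearby point where $f$ has increased by a definite amount, from which unboundedness follows by iteration. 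Your proof is shorter and cleaner, but it outsources the hard work to an external theorem, while the paper's proof is essentially self-contained and only uses the Hessian comparison for the distance function. One attribution slip worth fixing: the Hessian version of the maximum principle you are using is due to Omori (1967), under a sectional curvature lower bound; Yau's 1975 contribution is the Laplacian version under a Ricci lower bound. Your phrasing suggests the Hessian form is Yau's and that a Ricci lower bound suffices for it, which is not the case; since the statement here does assume sectional curvature bounded below, you should cite Omori's form directly.
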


We will denote by $B(x_0,r)$ the ball in $M$ of center $x_0$ and radius $r$, and $d_{x_0}$ the function distance to $x_0$. Recall the classical Hessian comparison theorem (see for instance \cite[Chapter 1]{Besson}).

\begin{proposition}[\sc Hessian comparison principle]\label{p:HessianComparison}
Let $M$ be a complete Riemannian manifold whose sectional curvature is bounded below. There exists positive constants $\epsilon$ and $\lambda$ such that for every point $x_0$ in $M$, and any vector $v\in \T_x M$ with $x$ in $B(x_0,\epsilon)$, we have
\[\operatorname{Hess}_xd^2_{x_0}(v,v)\leq \lambda \Vert v\Vert^2~,\]
\end{proposition}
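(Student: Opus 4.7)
The plan is to reduce the estimate to a computation in geodesic polar coordinates centred at $x_0$, combined with the classical Rauch--Hessian comparison theorem. First I would fix $x_0$ and work on the geodesic ball $B(x_0,\epsilon)$, taking $\epsilon$ smaller than the injectivity radius at $x_0$. Then $r(x) \defeq d_{x_0}(x)$ is smooth away from $x_0$, and $r^2 = d_{x_0}^2$ extends smoothly across $x_0$ itself. For any $x \in B(x_0,\epsilon) \setminus \{x_0\}$ and $v \in \T_x M$, decompose $v = \alpha\,\partial_r + w$ with $w \perp \partial_r$. Using the chain-rule formula $\operatorname{Hess}(f\circ r) = f'(r)\operatorname{Hess}(r) + f''(r)\,dr\otimes dr$ with $f(t)=t^2$, together with the Gauss-lemma identity $\operatorname{Hess}(r)(\partial_r,\cdot) = 0$, I would obtain
\[
\operatorname{Hess}_x(d_{x_0}^2)(v,v) \;=\; 2\alpha^2 \,+\, 2r\,\operatorname{Hess}_x(r)(w,w).
\]

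Next I would invoke the standard Hessian comparison theorem: if the sectional curvatures of $M$ are bounded below by some $-K^2$, then for any unit vector $w \perp \nabla r$ one has $\operatorname{Hess}_x(r)(w,w) \leq K\coth(Kr)$. Since $rK\coth(Kr)\to 1$ as $r\to 0$, I can fix $\epsilon>0$ (depending only on $K$) so small that $rK\coth(Kr) \leq 2$ for all $r \leq \epsilon$. Combining the two estimates then yields
\[
\operatorname{Hess}_x(d_{x_0}^2)(v,v) \;\leq\; 4\,\Vert v\Vert^2,
\]
so $\lambda = 4$ does the job. The analogous lower bound $\operatorname{Hess}_x(d_{x_0}^2)(v,v) \geq c\Vert v\Vert^2$ could be obtained symmetrically using an upper curvature bound together with Rauch's other inequality, but it is not needed here.

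The main obstacle I foresee is that the \emph{uniform} choice of $\epsilon$ in $x_0$ requires a uniform lower bound on the injectivity radius, which is not automatic from sectional curvature bounded below alone. In the setting in which Proposition~\ref{p:HessianComparison} is used---the product $\Sigma_1 \times \Sigma_2$ of complete maximal surfaces in $\Hn$---one combines the a priori curvature bound $K_\Sigma \geq -1$ coming from Proposition~\ref{p:GaussEquation} with the bounded-geometry considerations developed in Appendix~\ref{app:bg} in order to extract such a uniform $\epsilon$. Alternatively, since the application within Proposition~\ref{p:MaximumPrinciple} only uses the comparison estimate at the specific points where $f$ becomes large, an $x_0$-dependent $\epsilon$ would in fact suffice, and uniformity can be recovered a posteriori from compactness of the relevant sublevel sets.
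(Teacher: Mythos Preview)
Your computation is exactly the standard derivation, and the paper in fact offers no proof of its own: it simply cites the result as the ``classical Hessian comparison theorem'' with a reference to \cite{Besson}. So there is nothing to compare on the level of argument.

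Your reservation about the injectivity radius is well taken as stated, but both of your proposed fixes are off. Appealing to the bounded-geometry material in Appendix~\ref{app:bg} would be circular here: at this point in the paper (Section~\ref{sec:Uniqueness}) no bound on the second fundamental form of $\Sigma_1,\Sigma_2$ is available, and curvature $\geq -1$ alone does not control the injectivity radius of a complete surface (think of a thin hyperbolic cusp). Allowing an $x_0$-dependent $\epsilon$ is also problematic, since the proof of Proposition~\ref{p:MaximumPrinciple} iterates the step $f(y)\geq f(x_0)+\kappa\epsilon^2$ with $y$ as the new basepoint, and a shrinking $\epsilon$ would kill the divergence.

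The clean resolution is that the upper Hessian bound $\operatorname{Hess}_x d_{x_0}^2 \leq \lambda\, g$ holds in the \emph{barrier} (support-function) sense for every $x$ with $d(x,x_0)<\epsilon$, regardless of whether $x$ lies in the cut locus: along any minimizing geodesic from $x_0$ to $x$ one builds, via the comparison Jacobi fields you already used, a $C^2$ function $\phi\geq d_{x_0}^2$ touching $d_{x_0}^2$ from above at $x$ with $\operatorname{Hess}_x\phi\leq \lambda\, g$. This is Calabi's trick, and it is exactly enough for the maximum-principle argument in Proposition~\ref{p:MaximumPrinciple}: if $y$ maximises $g=f-\kappa d_{x_0}^2$, then $y$ also maximises $f-\kappa\phi$, which is genuinely $C^2$, and one concludes as before. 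No injectivity-radius hypothesis is needed.
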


\begin{proof}[Proof of Proposition \ref{p:MaximumPrinciple}]
Let $\epsilon$ be as in Proposition \ref{p:HessianComparison} and $A$ and $\Lambda$ as in the statement of the proposition. Let $\kappa>0$ be chosen so that for every $x$, we have the following inequality:  for every $z$ in  $B(x,\epsilon)$, 
\[\kappa\operatorname{Hess}_zd^2_{y}(v,v)\leq \frac{\Lambda}{2} \Vert v\Vert^2~.\]
 To prove the proposition, it is enough to show that if there exists $x_0$, with $f(x_0)>A$, then there exists $y$ with $f(y)\geq f(x_0)+\kappa\epsilon$. Choose $x_0$ so that $f(x_0)>A$.
Let now $y$ achieve the maximum of $g\defeq f-\kappa d_{x_0}$ on the ball $B(x_0,\epsilon)$. Observe that at this maximum,  the inequality $g(y)\geq g(x_0)$ reads
\begin{eqnarray}
f(y)\geq f(x_0)+\kappa d^2(x_0,y)\ . \label{ineq:omori}	
\end{eqnarray}
In particular, $f(y)\geq f(x_0)>A$.
Thus there exists $v$ so that $\Hess_yf(v,v)>\Lambda\Vert v\Vert^2$, hence
$$
\Hess_yg(v,v)= \Hess_yf(v,v) -\kappa \Hess_yd^2_{x_0}(v,v)\geq \frac{\Lambda}{2} \Vert v\Vert^2>0\ .
$$
Hence $y$ cannnot be a point in the interior of $B(x_0,\epsilon)$. Thus $d(x_0,y)=\epsilon$. The inequality \eqref{ineq:omori} now reads 
$$
f(y)\geq f(x_0)+\kappa\epsilon^2\ .
$$
This concludes the proof.
\end{proof}

\subsection{Proof of Theorem \ref{t:Uniqueness}}

We will combine two lemmas.

\begin{lemma}\label{l:SupB} For any $x$ in $\Sigma_1$, the supremum
$M_x$ of $B$ on $\{x\}\times \Sigma_2$ is  greater or equal to $-1$, with equality if and  only if  $x$ belongs to $\Sigma_2$.
\end{lemma}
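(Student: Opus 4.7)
The plan is to separate the inequality $M_x\ge -1$ from the equality characterization $M_x=-1\Longleftrightarrow x\in\Sigma_2$. Both assertions will follow from playing off the acausality of $\Sigma_2$ (granted by completeness together with Proposition~\ref{p:EntireGraph}~\ref{it:eg1},~\ref{it:eg2} in the asymptotic case, and analogously in the finite positive boundary case) against the warped-product structure of $\hHn$ based at a pointed hyperbolic plane $\Pp=(x,H)$ with apex $x$.

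First, suppose $x\in\Sigma_2$. Then immediately $B(x,x)=\langle x,x\rangle=-1$, so $M_x\ge -1$. On the other hand, acausality of $\Sigma_2$ gives a spacelike geodesic from $x$ to every $y\in\Sigma_2\setminus\{x\}$; Lemma~\ref{l:GeodesicTypeAndScalarProduct} then forces $|\langle x,y\rangle|>1$, and the sign convention from the beginning of the section (together with Corollary~\ref{c:NegativeScalarProduct}) pins the sign down to $\langle x,y\rangle<-1$. Hence $M_x=-1$, attained at $y=x$. Conversely, suppose $x\notin\Sigma_2$. Choose a pointed hyperbolic plane $\Pp=(x,H)$; the timelike sphere $T_x\defeq\pi_\Pp^{-1}(x)$ passes through $x$ itself. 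In the asymptotic case Proposition~\ref{p:EntireGraph}~\ref{it:eg3} guarantees that $\Sigma_2$ meets $T_x$ at a unique point $y_0$, and in the finite positive boundary case the analogous statement holds because $\Sigma_1$ and $\Sigma_2$ are both spacelike graphs over $H$ with the common boundary curve $\pi_\Pp(\Lambda)\subset H$, so their images under $\pi_\Pp$ coincide and contain $\pi_\Pp(x)=x$. Since $x\notin\Sigma_2$ we have $y_0\ne x$, and Lemma~\ref{l:WarpedProjLengthIncreasing} yields $\langle x,y_0\rangle>-1$, whence $M_x\ge\langle x,y_0\rangle>-1$. This simultaneously establishes the inequality $M_x\ge -1$ in general and the implication $M_x=-1\Longrightarrow x\in\Sigma_2$.

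The main obstacle I foresee is making rigorous, in the finite positive boundary case, the assertion that $T_x$ meets $\Sigma_2$. This boils down to identifying the image $\pi_\Pp(\Sigma_2)$: since both $\Sigma_1,\Sigma_2$ are acausal spacelike graphs over $H$ by Proposition~\ref{p:acausalImpliesGraph}, and they share the same boundary curve $\pi_\Pp(\Lambda)$ in $H$, the two images coincide and contain $\pi_\Pp(x)=x$; the corresponding point $y_0$ is then the sought intersection. Once this geometric fact is secured, the remainder of the argument reduces to the two elementary scalar-product comparisons above, and the lemma follows.
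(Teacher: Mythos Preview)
Your proof is correct and follows essentially the same approach as the paper's: both arguments locate a point $y_0\in\Sigma_2$ on the timelike sphere $\pi_\Pp^{-1}(x)$ and invoke the scalar-product inequality for distinct points in a common fiber (you cite Lemma~\ref{l:WarpedProjLengthIncreasing} directly, the paper rederives it via the polarization identity), then handle the equality case through acausality of $\Sigma_2$. The only cosmetic difference is the order of the two cases and that the paper chooses $H$ tangent to $\Sigma_1$ at $x$, which is not actually needed; your justification that $\pi_\Pp(\Sigma_1)=\pi_\Pp(\Sigma_2)$ in the finite case is exactly the point the paper leaves implicit.
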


\begin{proof}
Consider a pointed hyperbolic plane $\P=(x,H)$ tangent to $\Sigma_1$ at a point $x$ in the interior of $\Sigma_1$. In the non-compact case, both surfaces are graphs above the entire $\P$ while in the compact case, they are graphs above the compact domain bounded by the image of the warped projection of their common boundary.

In particular, the fiber above $x$ of the warped projection on $\P$ is a totally geodesic timelike sphere that intersects $\Sigma_2$ in a unique point $y$; furthermore, the geodesic passing through $x$ and $y$ is timelike. This gives $$
	B(x,y)=\frac{1}{2}\left( \langle x,x\rangle +\langle y,y\rangle -\langle x-y,x-y\rangle\right)\geq -1\ .$$
with equality if and only if $y=x$.

Assume conversely that $x$ belongs to both $\Sigma_1$ and $\Sigma_2$. . Then, for all $y$ in $\Sigma_2$, the arc $[x,y]$ is spacelike since $\Sigma_2$ is achronal. Thus 
$$
	B(x,y)=\frac{1}{2}\left( \langle x,x\rangle +\langle y,y\rangle -\langle x-y,x-y\rangle\right)\leq -1=B(x,x)\ .
$$
This concludes the proof.
\end{proof}

We now prove 

\begin{lemma}\label{l:LowB}
The supremum $M$ of $B$ on $\Sigma_1\times\Sigma_2$ is equal to $-1$.
\end{lemma}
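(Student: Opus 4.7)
The plan is to combine the lower bound $M \geq -1$ (already given by Lemma \ref{l:SupB}) with a maximum-principle argument applied to the function $B$ on the product Riemannian manifold $\Sigma_1 \times \Sigma_2$, equipped with the product of the induced metrics. Assume for contradiction $M > -1$, and choose a constant $A$ with $-1 < A < M$, so that $\Lambda \defeq A+1 > 0$. By Lemma~\ref{l:LowerBoundHessB}, at every point $p=(x,y)$ with $B(p) > A$ there exist unit vectors $u\in\T_x\Sigma_1$, $v\in\T_y\Sigma_2$ such that $w=(u,v)$ satisfies
$$
\Hess_p B(w,w) \geq 2B(p) + 2 > 2(A+1) = \Lambda\,\|w\|^2\ ,
$$
so the Hessian hypothesis of Proposition~\ref{p:MaximumPrinciple} is met at such $p$.

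Next, I would verify the remaining geometric hypotheses. Proposition~\ref{p:GaussEquation} gives $K_{\Sigma_i} \geq -1$, so the product has sectional curvature bounded from below. The function $B$ is smooth, and it is bounded above (in fact strictly negative) by our choice of lift of $\CH(\Lambda)$ via Proposition~\ref{p:PropertiesConvexHull}\ref{it:PropCH2}.

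When $\Lambda$ is asymptotic, neither $\Sigma_i$ has finite boundary, so $\Sigma_1\times\Sigma_2$ is a complete Riemannian manifold without boundary, and Proposition~\ref{p:MaximumPrinciple} applies directly: since $B$ is bounded, one concludes $B \leq A < M$, contradicting the definition of $M$. When $\Lambda$ is a finite positive loop, both $\Sigma_i$ are compact with boundary $\Lambda$, so $\Sigma_1\times\Sigma_2$ is compact and $B$ attains $M$ at some $p_0 = (x_0,y_0)$. If $p_0$ is interior, then $\Hess_{p_0}B$ is negative semi-definite, contradicting the strictly positive direction above. If $p_0$ lies on the topological boundary, then, up to exchanging the factors, $x_0 \in \Lambda = \partial\Sigma_2 \subset \Sigma_2$, so $x_0 \in \Sigma_1\cap\Sigma_2$, and the second part of Lemma~\ref{l:SupB} (which only requires $x$ to belong to both surfaces) yields $B(x_0,y_0) \leq -1$, so $M \leq -1$, again a contradiction.

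The main delicate point will be to match up the constants so that the Hessian bound $2B + 2$ from Lemma~\ref{l:LowerBoundHessB} becomes the uniform positive Hessian bound required by Proposition~\ref{p:MaximumPrinciple}, and to handle the finite-boundary case separately, since the Omori-type principle we have is stated for manifolds without boundary. The asymptotic case is where Omori's principle does the real work, replacing compactness; the finite-boundary case is essentially the classical maximum principle, combined with the boundary value estimate coming from Lemma~\ref{l:SupB}.
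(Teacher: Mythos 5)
Your proof is correct and takes essentially the same approach as the paper: a classical maximum-principle argument in the compact case (interior maximum gives a sign contradiction on the Hessian; boundary maximum reduces to Lemma~\ref{l:SupB}), and the weak Omori principle in the asymptotic case with curvature bounded below via Proposition~\ref{p:GaussEquation}. The only differences are cosmetic: you wrap the whole argument in a single contradiction hypothesis $M > -1$ and choose the constants $A$, $\Lambda = A+1$ upfront, whereas the paper proves $M \leq -1$ directly by letting $\delta \to 0$ in $A = -1+\delta$; your bookkeeping of $\|w\|^2 = 2$ against $\Lambda\|w\|^2$ is in fact slightly cleaner than the paper's.
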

\begin{proof}
	We first consider the compact case. Let $m$ be the point where  the function $B$ achieves its maximum $M$ on $ \Sigma_1 \times \Sigma_2$.  
	\begin{enumerate}
		\item 	Assume first that $m=(x,y)$ where $x$ and $y$ belong to the interior of  $\Sigma_1$ and   $\Sigma_2$ respectively. At such a point $m$, the Hessian of $B$  is non-positive. Then  Lemma \ref{l:LowerBoundHessB} says 
	$$M=B(m)\leq -1\ .$$
	\item 	Assume now that $m=(x,y)$ where (say) $x$ belongs to $\partial\Sigma_1$ (the case where $y$ belongs to $\partial\Sigma_2$ is treated in a symmetric fashion).   Since $x$ then also belongs to $\Sigma_2$, by  Lemma \ref{l:SupB}
	$$-1=M_x=M\ .$$
\end{enumerate}

In both situations $M\leq -1$.
 
In the non-compact case, we follow a similar argument using the weak version of the Omori maximum principle Proposition~\ref{p:MaximumPrinciple}. First, note that the Riemannian manifold $\Sigma_1\times \Sigma_2$ has sectional curvature larger than $-1$ by Proposition \ref{p:GaussEquation}. For any $x=(x_1,x_2)$ with $B(x)>-1+\delta$ with $\delta>0$,  Lemma \ref{l:LowerBoundHessB} implies that there exists $w$ tangent to $x$ with
\[\operatorname{Hess}_{x}B(w,w)\geq \left(2B(x)+2\right)\Vert w\Vert^2\geq 2\delta\  \Vert w\Vert^2~.\]
 Since we noted after its definition that $B$ is bounded from above,  Proposition~\ref{p:MaximumPrinciple} then implies that $B$ is bounded by $-1+\delta$. Since this is true for all $\delta$, the function $B$ is bounded by $-1$. Thus $M\leq -1$ in the non-compact case as well. \end{proof}

 We can now conclude the proof of the  Uniqueness Theorem \ref{t:Uniqueness}.  Combining the two Lemmas \ref{l:SupB} and  \ref{l:LowB}, we obtain that for all $x$ in $\Sigma_1$, we have $M_x=-1$. Thus, using the equality case in the same  Lemma \ref{l:SupB}, we obtain that $x$ belongs to $\Sigma_2$. Thus  $\Sigma_1$ is equal to $\Sigma_2$. This concludes the proof.

\section{Main compactness theorem}\label{sec:MainCompactness}
 
The main result of this section is the following compactness theorem concerning complete surfaces.

We start with a definition.

\begin{definition}\label{d:ConvergenceAsGraph}
	A sequence of complete acausal surfaces $\seqk{\Sigma}$ with boundary  converges as a graph over an open subset $V$ of a pointed hyperbolic plane $\Pp_0$ associated to the warped projection $\pi_0$, if,  denoting $\Sigma^V_k\defeq \Sigma_k\cap\pi_0^{-1}(V)$ 
	\begin{itemize}
	\item the sequence  $\seqk{U}$, with 	$U_k\defeq\pi_0(\Sigma^V_k)$ converges smoothly as  open sets with smooth boundary in  $V$ to an open set with smooth  boundary $U_0$, 
	\item there exists a smooth complete acausal surface $\Sigma_0$ over $\pi_0^{-1}(U_0)$, so that $\Sigma^{U_k}_k$ converges to $\Sigma_0$.
	\end{itemize}
\end{definition}
We will mainly use this definition of the convergence of complete acausal surfaces $\Sigma_k$ in settings where the projections $U_k$ are fixed, i.e. not varying with the parameter $k$.
Recall the Gauß lift $\Gamma(S) \subset \GG(M)$, described in section~\ref{ss:GaussLift}, of a surface in $S \subset M$ into its Grassmannian $\GG(M)$.

\begin{theorem}[\sc Compactness theorem]\label{t:CompactCompleteTheorem}
Let  $\seqk{\Sigma}$ be a sequence of connected complete acausal maximal surfaces in $\Hn$, and let $\gamma_k\defeq \partial \Sigma_k$ be the finite boundary of $\Sigma_k$. 
Assume that we have the following boundary conditions:  
\begin{enumerate}
\item The sequence $\seqk{\gamma}$ is strongly positive and uniformly unpinched.
\item There is a positive constant $A$ so that for all $k$, for all $x$ in $\gamma_k$, we have $d\left(\T_{x_k}\Sigma_k, \T_{x_k}^{(2)}\gamma_k\right)\leq A$.
 \item  The sequence $\seqk{\gamma}$ has $C^\infty$ bounded geometry.
\item There is a pointed hyperbolic plane $\Pp$ within a  uniformly bounded distance of the Gauß lift $\Gamma(\Sigma_k)$.
\end{enumerate}

Then, the sequence of  surfaces $\seqk{\Sigma}$ converges as a graph on every bounded ball of $\Pp$.\end{theorem}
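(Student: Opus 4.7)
The plan is to exploit the pseudo-holomorphic nature of the Gauss lifts $\Gamma_k \defeq \Gamma(\Sigma_k)$ provided by Proposition \ref{p:PseudoHolomorphicGaussLift}, together with the fact, established in Proposition \ref{pro:bc}, that the boundary conditions $W(\gamma_k)$ are totally real submanifolds tangent to the holonomic distribution. Convergence of the maximal surfaces then reduces, via the Gauss lift, to a compactness statement for $J$-holomorphic half-disks with a totally real boundary condition, to be handled by a Schwarz-lemma argument of the type developed in appendix \ref{app:phol}.

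First I would collect the geometric consequences of the four hypotheses. The proximity of $\Gamma(\Sigma_k)$ to a fixed pointed hyperbolic plane $\Pp$ (hypothesis (iv)), combined with the angular bound at the boundary (hypothesis (ii)) and the bounded geometry of $\gamma_k$ (hypothesis (iii)), confines each $\Gamma_k$ to a fixed compact region $\mathcal{K}$ of $\GG(\Hn)$ and each $\gamma_k$ to a bounded region of $\Hn$. Proposition \ref{pro:diamwarp} then ensures that the warped projection $\pi_\Pp$ is uniformly bi-Lipschitz when restricted to surfaces with tangent planes in $\mathcal{K}$, so that the claimed convergence of $\Sigma_k$ as graphs over bounded sets of $\Pp$ will follow from smooth convergence of the $\Gamma_k$ in $\GG(\Hn)$ in the sense of Definition \ref{d:ConvergenceAsGraph}.

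The crux of the argument, and the main obstacle, is a uniform curvature estimate
\[
\sup_k \sup_{x \in \Sigma_k} \Vert \II_{\Sigma_k}(x)\Vert \leq C\ .
\]
I would prove this by contradiction and rescaling, following the blueprint announced in the introduction. If the bound fails, choose concentration points $x_k \in \Sigma_k$ realizing the blow-up and rescale the ambient metric by $\lambda_k \defeq \Vert \II_{\Sigma_k}(x_k)\Vert^{-2}$. By Lemma \ref{l:RescalingSecondFundamentalForm} the rescaled second fundamental forms are bounded, with value $1$ at $x_k$. The renormalized Grassmannians $\GG(\Hn_{\lambda_k})$ converge to $\GG(\bR)$ by Proposition \ref{pro:GeomTrans}, and the rescaled Gauss lifts remain $J$-holomorphic with uniformly bounded geometry, so a subsequence converges to the Gauss lift of a complete maximal surface $\Sigma_\infty$ in $\bR$ with $\Vert \II_{\Sigma_\infty}(x_\infty)\Vert = 1$; a Bernstein-type theorem in $\bR$ then forces $\II_{\Sigma_\infty} \equiv 0$, a contradiction. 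The subtle case is when $x_k$ accumulates on $\gamma_k$: here the uniform unpinchedness together with the angular bound (ii) rescales $\gamma_k$ to an affine line, so that $\Sigma_\infty$ is a half-plane with totally geodesic boundary, and the same Bernstein statement applies with a totally real boundary condition.

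Once the uniform bound on $\Vert \II_k\Vert$ is established, Corollary \ref{c:UpperBoundLiftMetric} identifies the intrinsic metric and the Gauss-lift metric up to bi-Lipschitz constants, and Corollary \ref{c:complete} gives completeness of $(\Sigma_k, g_\I)$, so the $\Gamma_k$ are $J$-holomorphic maps from surfaces of uniformly bounded geometry into the fixed compact set $\mathcal{K} \subset \GG(\Hn)$, with totally real boundary on the submanifolds $W(\gamma_k)$, themselves of uniformly bounded geometry by hypothesis (iii). The pseudo-holomorphic Schwarz lemma of appendix \ref{app:phol} then produces, after passing to a subsequence, a smoothly convergent limit on compact subsets whose image is the Gauss lift of a smooth complete maximal surface $\Sigma_0$. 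Pushing this convergence down by the warped projection $\pi_\Pp$, which by Corollary \ref{c:UpperBoundLiftMetric} is uniformly bi-Lipschitz on $\Sigma_k$, produces the required convergence of $\Sigma_k$ as graphs over every bounded ball of $\Pp$.
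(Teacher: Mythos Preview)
Your outline follows the paper's strategy closely---Gauss lift, pseudo-holomorphic curves, rescaling plus a Bernstein theorem in $\bR$, then Schwarz-lemma compactness---so the architecture is right. But two of your steps do not go through as written.

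First, the rescaling argument is incomplete. Picking $x_k$ where $\Vert\II_{\Sigma_k}\Vert$ blows up and rescaling by $\lambda_k=\Vert\II_{\Sigma_k}(x_k)\Vert^{-2}$ normalizes the second fundamental form \emph{at} $x_k$, but gives you no control on nearby points: there could be points arbitrarily close to $x_k$ with much larger $\Vert\II\Vert$, and then the rescaled surfaces have no uniform curvature bound on any fixed-radius ball, so you cannot extract a smooth limit. The paper repairs this with a $\Lambda$-maximum lemma (Lemma~\ref{lem:lml}): one replaces $x_k$ by a nearby point where $\Vert\II\Vert$ is, up to a fixed factor, locally maximal on a ball whose radius shrinks like $\Vert\II\Vert^{-1/2}$. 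After rescaling, this ball has radius tending to infinity and $\Vert\II\Vert\le 1$ on it, which is exactly what is needed to feed into the local-control convergence and then invoke Bernstein.

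Second, your claim that hypotheses (ii)--(iv) ``confine each $\Gamma_k$ to a fixed compact region $\mathcal{K}$ of $\GG(\Hn)$'' is false: hypothesis (iv) only supplies a single base point of $\Gamma(\Sigma_k)$ near $\Pp$, and the surfaces themselves may be non-compact (indeed $\gamma_k$ may be empty). The paper's argument is genuinely local-to-global: one first proves convergence over a small ball centred at the base point (this already requires the $\Vert\II\Vert$ bound and a careful ``good neighbourhood'' construction, Proposition~\ref{pro:control-proj}, which uses the uniform unpinchedness of $\gamma_k$ to guarantee that the piece of $\Sigma_k$ over a small disk is a disk or a semi-disk, not something with several boundary arcs), and then propagates this convergence ball-by-ball across $\Pp$ via a connectedness argument (Proposition~\ref{pro:glob-contr}). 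Your proposal skips both the good-neighbourhood reduction and the propagation step.
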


The definition of bounded geometry and convergence for spacelike surfaces and strongly positive curves is given in Appendix \ref{app:bg}, the definition of uniformly unpinched is given in Definition \ref{d:TypeOfCurves}, and the definition of finite boundary is given in Definition \ref{d:AsymptoticBoundary}, while the definition of the Gauß lift is given in section \ref{ss:GaussLift}.

The first three hypotheses can be thought of as a $C^1$ bound along the boundary, while the fourth one is an interior $C^0$ bound.

This theorem implies readily a uniform bound on the second fundamental form of complete acausal surfaces without boundary. Such a result is also a consequence of a result by Ishihara \cite{Ishihara}. However the Ishihara bound is not optimal and we will improve upon it in our setting in a subsequent paper \cite{QS}.

We next describe a bound on the second fundamental form in the non-complete case.

If an acausal maximal surface $\Sigma$ is not complete, we define its frontier $\Fr(\Sigma)$ and the distance $\eth(x,\Fr(\Sigma))$ to the frontier as in paragraph \ref{sss:Geodesics} and Definition \ref{d:AsymptoticBoundary}. We may refer to the non-complete case as the  {\em free boundary} case. In this setting, we will have two results: we will have a both a local bound on the geometry as well as a local compactness theorem.

\begin{theorem}\label{t:CompactTheorem}
Let  $\seqk{\Sigma}$ be a sequence of connected acausal maximal surfaces in $\Hn$, and let $\gamma_k\defeq \partial \Sigma_k$ be the finite boundary of $\Sigma_k$. Let also $\seqk{x}$ be a sequence of points so that $x_k$ belongs to $\Sigma_k$.

Assume that we have the following boundary conditions:  
\begin{enumerate}
\item The sequence $\seqk{\gamma}$ is strongly positive and uniformly unpinched,
\item There is a positive constant $A$ so that for all $k$, for all $x$ in $\gamma_k$, we have $d\left(\T_{x_k}\Sigma_k, \T_{x_k}^{(2)}\gamma_k\right)\leq A$.
 \item  The sequence $\seqk{\gamma}$ has $C^\infty$ bounded geometry.
\end{enumerate}
Assume furthermore that $\eth(x_k,\Fr(\Sigma_k))$ is bounded from below by  a positive constant $R$. Then there exists a positive constant $\epsilon$ less than $R$ so that for all $k$ we have that
\begin{itemize}
 \item 	the second fundamental form of $\Sigma_k$ is uniformly bounded on the ball $\Sigma_k^\epsilon$ (with respect to $d_I$) on $\Sigma_k$ of center $x_k$ and radius $\epsilon$. 
 \item  the sequence $\seqk{x_k,\Sigma^\epsilon}$ subconverges smoothly.
 \end{itemize}
\end{theorem}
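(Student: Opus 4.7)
The plan is to prove both conclusions simultaneously by contradiction and blow-up, using the pseudo-holomorphic interpretation of the Gauss lift together with a Bernstein-type theorem in the flat limit. First I would lift each $\Sigma_k$ to its Gauss lift $\Gamma(\Sigma_k)\subset\GR{\Hn}$, which by Proposition~\ref{p:PseudoHolomorphicGaussLift} is a $J$-holomorphic curve for the almost complex structure on the holonomic distribution, with boundary lying in the totally real submanifold $W(\gamma_k)$ of Proposition~\ref{pro:bc}. Hypothesis~(2) moreover places this boundary inside the local boundary condition $W_A(\gamma_k)$, while (1) and (3) give uniform bounded geometry for the boundary condition in the sense of Appendix~\ref{app:bg}.

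For the bound on $\Vert\II\Vert$, suppose for contradiction that there exist radii $\epsilon_k\to 0$ and points $y_k\in\Sigma_k$ with $d_\I(x_k,y_k)<\epsilon_k$ and $\Vert\II_k\Vert(y_k)\to\infty$. A standard Brody-Hofer point-picking lemma allows one to assume that $\Vert\II_k\Vert\leq 2\Vert\II_k(y_k)\Vert$ on the $d_\I$-ball of radius $c_k/\Vert\II_k(y_k)\Vert$ about $y_k$ for some $c_k\to\infty$, while preserving $\pd(y_k,\Fr(\Sigma_k))\geq R/2$. Set $\lambda_k\defeq\Vert\II_k(y_k)\Vert^{-2}\to 0$ and pass to the rescaled space $\Hn_{\lambda_k}$. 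By Lemma~\ref{l:RescalingSecondFundamentalForm} the rescaled surface $\tilde\Sigma_k$ has second fundamental form of norm $1$ at $y_k$ and uniformly bounded on $g_{\I,\lambda_k}$-balls of arbitrarily large radius; by Proposition~\ref{pro:GeomTrans} the renormalized Grassmannians $\GR{\Hn_{\lambda_k}}$ converge smoothly to $\GR{\bR}$ with $\bR=\E^{2,n}$.

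By Corollary~\ref{c:UpperBoundLiftMetric} the bound on $\Vert\II\Vert$ makes $g_\I$ and $g_\II$ uniformly equivalent on those balls, so the Gauss lifts of $\tilde\Sigma_k$ are $J_{\lambda_k}$-holomorphic curves of uniformly bounded geometry. Invoking the pseudo-holomorphic compactness theorem of Appendix~\ref{app:phol}, a subsequence converges smoothly to a $J_0$-holomorphic curve in $\GR{\bR}$, itself the Gauss lift of a spacelike maximal surface $\Sigma_\infty\subset\bR$ with $\Vert\II_\infty\Vert(y_\infty)=1$. If $d_{\I,\lambda_k}(y_k,\gamma_k)\to\infty$, then $\Sigma_\infty$ is a complete boundaryless spacelike maximal surface in $\E^{2,n}$. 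Otherwise the uniform unpinched hypothesis together with (2) forces the rescaled curves $\tilde\gamma_k$ to converge to a spacelike affine line in $\bR$, making $\Sigma_\infty$ a maximal surface with straight boundary which extends by reflection across this line to a complete spacelike maximal surface in $\bR$. A Bernstein-type theorem for complete spacelike maximal surfaces in $\bR$ then forces $\Sigma_\infty$ to be an affine $2$-plane, whence $\Vert\II_\infty\Vert\equiv 0$, contradicting $\Vert\II_\infty\Vert(y_\infty)=1$. Once the uniform bound on $\Vert\II_k\Vert$ on $\Sigma_k^\epsilon$ is established, a final application of the pseudo-holomorphic compactness theorem of Appendix~\ref{app:phol}, with totally real boundary condition in $W_A(\gamma_k)$, yields the smooth subconvergence of $(x_k,\Sigma_k^\epsilon)$.

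The main obstacle is the boundary case of the blow-up: verifying that the rescaled boundary $\tilde\gamma_k$ really converges to an affine spacelike line (the uniform unpinched hypothesis of Definition~\ref{d:Unpinched} is precisely what prevents the curve from collapsing at scale $\lambda_k$, while (2) preserves the transversality between the limit surface and the limit line up to the boundary), and that the reflection extension of $\Sigma_\infty$ across this line is smooth enough for the Bernstein theorem to apply to a genuinely complete maximal surface. Verifying this with the pseudo-holomorphic compactness from Appendix~\ref{app:phol} up to the totally real boundary $W_A(\tilde\gamma_k)$ is the technical heart of the argument.
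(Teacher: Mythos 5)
Your proposal follows essentially the same route as the paper: Gauss lift to a pseudo-holomorphic curve, a point-picking lemma, blow-up to the rescaled Grassmannians converging to $\GG(\bR)$, pseudo-holomorphic compactness, and a Bernstein theorem in $\bR$ to obtain the contradiction; the paper's $\Lambda$-Maximum Lemma~\ref{lem:lml} plays the role of your Brody--Hofer lemma, and its Proposition~\ref{pro:CT1} is exactly your blow-up argument, while the subconvergence in the second bullet is Proposition~\ref{pro:glob-contr}.

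Two points are worth flagging. First, for the boundary case of Bernstein you propose to reflect $\Sigma_\infty$ across the limiting spacelike line in $\bR$ and apply the boundary-free statement. The paper instead proves a boundary Bernstein theorem directly (Theorem~\ref{theo:BernT}, item~2), and the reflection happens one level up: it shows $\partial\Sigma_\infty$ is totally geodesic for the metric pulled back by the Gauss map $\varphi:\Sigma_\infty\to\operatorname{Gr}_{2,0}(\T_x\bR)$, doubles the Riemann surface to conclude parabolicity, and then applies Liouville in the bounded symmetric domain where $W_L$ is a real slice. Your Schwarz-type reflection of the surface itself should also work (the reflection across $L$, acting by $-1$ on $L^\perp$, is an ambient isometry preserving maximality and matching tangent planes along $L$), but you would have to supply the $C^1$-matching and elliptic regularity argument that makes the doubled surface smooth; the paper's version avoids analyzing regularity of the doubled immersion by only ever doubling the conformal type. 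Also, the hypothesis you want for ``$\tilde\gamma_k$ converges to a spacelike line'' is the combination of uniformly unpinched (1) and bounded geometry (3); condition (2) is used elsewhere, namely to control $d_\GG(\T\Sigma_k,\T^{(2)}\gamma_k)$ along the boundary, which keeps the Gauss lift within the local boundary condition $W_A(\gamma_k)$.

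Second, the step you describe as ``invoking the pseudo-holomorphic compactness theorem of Appendix~\ref{app:phol}'' hides a substantial construction. Theorems~\ref{theo:holoFB} and~\ref{theo:holoBB} are statements about holomorphic maps from $\D$ or $\S$ with bounded area and with $f(\Fr)$ kept at a definite distance from $f(0)$, so one needs, around each point of $\Sigma_k$ away from the frontier, a neighborhood which is a graph over a controlled domain in the tangent hyperbolic plane, meeting $\partial\Sigma_k$ in at most one arc, with controlled Gauss-lift area and frontier. This is Proposition~\ref{pro:control-proj}, and it relies on the unpinching hypothesis (via Lemma~\ref{lem:proj-arc}) and a plane-topology lemma in Appendix~B. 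You flag this region as ``the technical heart,'' which is accurate, but it deserves to be singled out as a separate input rather than folded into the appeal to Appendix~\ref{app:phol}.
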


In Section \ref{sec:Specific}, we will describe three avatars 
of our compactness theorem.

\subsection{Structure of the proof}
The structure of the proof is as follows: 
\begin{enumerate}
	\item In Paragraph \ref{sec:proj-achr}, we describe how to construct "good"neighborhoods of points on acausal surfaces: see Proposition \ref{pro:control-proj}.
	\item In Paragraph \ref{sec:loc-contr}, we use this good neighborhood together with results on holomorphic curves to show local subconvergence under a uniform bound on the second fundamental form.
	\item  In Paragraph \ref{sec:glob-contr}, we extend this subconvergence globally, again under a uniform bound on the second fundamental form.
	\item In Paragraph \ref{sec:BernT}, we prove a Bernstein type theorem: {\em complete maximal surfaces without boundary in the pseudo-Euclidean space $\E^{2,n}$ are spacelike planes}; we also prove a boundary version.
	\item in Paragraph \ref{sec:bd-II} we use the subconvergence, a renormalisation and the Bernstein type theorem to prove a uniform bound on the second fundamental form.
	\item  We conclude the proof of the main compactness Theorem~\ref{t:CompactCompleteTheorem} in paragraph \ref{sec:CompT}, also proving Theorem \ref{t:CompactTheorem}.
 .
\end{enumerate}

\subsection{Preliminary : constructing \enquote{good} neighborhoods}\label{sec:proj-achr} This paragraph is devoted to the proof of Proposition \ref{pro:control-proj} below.

In order to apply the theory of holomorphic curves to prove our main compactness theorems, we want to find neighborhoods of points in an acausal surface that are homeomorphic to disks with \emph{at most} one connected arc in the boundary. We also want to control the size of these neighborhoods: they should not be too small (with respect to the warped projection), and their Gauß lift should be uniformly bounded.

Let $\Sigma$ be a spacelike surface in $\Hn_\lambda$. We denote by
\begin{enumerate}
	\item $d_\I$ the induced metric on $\Sigma$ from the metric on $\Hn_\lambda$, and $\area_I$ the corresponding area form,
	\item $\Gamma:\Sigma\to\GR{\Hn_\lambda}$ the  Gauß lift, $d_{\II}$ the induced metric and $\area_\II$ the corresponding area,
	\item $d_\mathcal{G}$ the metric on $\Grn$.
\end{enumerate} 
Given a point $x$ in $\Sigma$,  let 
\begin{enumerate}
	\item  $\Pp_x=(x,H_x)$ be the pointed hyperbolic plane tangent to $\Sigma$ at $x$ (that is such that $\T_xH_x=\T_x\Sigma$),
	\item  $d_H$ be the metric induced on $H_x$ by $\Hn_\lambda$, and $U_R$ be the disk of center $x$ and radius $R$ in $H_x$, and
   \item $\pi_x$ the warped projection from $\Sigma$ to  $H_x$ defined by $\P_x$,
\end{enumerate} 

\begin{proposition}\label{pro:control-proj} There exist constants $A$ and $\delta_0$ so that for any $\delta$ less than $\delta_0$, we have the following. Let $\Sigma$ be an acausal surface in $\Hn_\lambda$ with $\lambda\leq 1$,
 \begin{enumerate}
  \item  whose finite boundary $\partial \Sigma$ is $\delta$-unpinched, and 
  \item whose second fundamental form  has norm bounded by $1$. 
 \end{enumerate}

Then  for all positive $\kappa$ less than $\frac{1}{100}\delta$, any $x$ in $\Sigma$ with $d_\I(x,\Fr(\Sigma))\geq \delta$ admits an open neighborhood $\mathring\Sigma_x$ in $\Sigma$  homeomorphic to the disk with:
\begin{enumerate}
	\item\label{it:i-control-proj}  $\mathring\Sigma_x\cap \partial\Sigma$ has at most one connected component,
	\item\label{it:ii-control-proj} for all  $y$ in $\mathring\Sigma_x$, we have $d_\GG(\T_y\Sigma, \T_x\Sigma) \leq  A\kappa$, 
   \item\label{it:iii-control-proj} the subset $\mathring\Sigma_x$ is a graph over a subset $V_x$ of the disk $U_\kappa$ in $H_x$, and
	\item\label{it:iv-control-proj}  we have   $d(x, \pi_x(\Fr(\mathring \Sigma))\geq \frac{\kappa}{A}$.
\end{enumerate}
\end{proposition}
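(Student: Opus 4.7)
The plan is to define $\mathring\Sigma_x$ as the connected component of $x$ in $\pi_x^{-1}(U_\kappa)\cap\Sigma$. The curvature bound $\Vert\II\Vert\leq 1$ will give uniform bilipschitz control of the warped projection $\pi_x$ on a small intrinsic ball around $x$, from which (ii), (iii), and (iv) follow directly. The $\delta$-unpinched hypothesis on $\partial\Sigma$ is then used to obtain (i), which I expect to be the main technical obstacle.

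First I would establish uniform control on $\pi_x$. Proposition \ref{pro:IetII} together with $\Vert\II\Vert\leq 1$ gives $g_\II \leq 2 g_\I$, and hence the Gauss lift is $\sqrt{2}$-Lipschitz:
\[
d_\GG\bigl(\T_y\Sigma, \T_x\Sigma\bigr) \leq \sqrt{2}\, d_\I(x, y).
\]
Proposition \ref{pro:diamwarp} then provides constants $\rho_0 > 0$ and $b > 1$ (with $b$ arbitrarily close to $1$ if $\rho_0$ is small) such that, for every $y\in B_\I(x,\rho_0)$, the differential $\T_y\pi_x\vert_{\T_y\Sigma}$ is $b$-bilipschitz. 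Since $\Sigma$ is acausal, $\pi_x\vert_\Sigma$ is globally injective by Proposition \ref{p:acausalImpliesGraph}, and so on any intrinsic ball $B_\I(x,\rho)\subseteq\Sigma$ with $\rho\leq\rho_0$, the projection $\pi_x$ is a $b$-bilipschitz diffeomorphism onto its image in $H_x$.

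Next I would choose $\delta_0$ small enough (proportional to $\rho_0/b$) so that for every $\kappa < \delta/100 \leq \delta_0/100$ the scale $2b\kappa$ is much less than both $\rho_0$ and $\delta$. Since $d_\I(x,\Fr(\Sigma))\geq\delta > 2b\kappa$, a standard bootstrapping argument using the bilipschitz estimate shows $\mathring\Sigma_x\subseteq B_\I(x,2b\kappa)$, and hence avoids $\Fr(\Sigma)$. The bilipschitz control then realizes $\mathring\Sigma_x$ as a topological disk graphing over an open subset $V_x\subseteq U_\kappa$ containing a disk of radius $\kappa/A$ around $x$ (for $A$ large enough in terms of $b$), proving (iii) and (iv); property (ii) follows immediately from the Gauss-lift Lipschitz estimate applied to points in $B_\I(x,2b\kappa)$, taking $A \geq 2\sqrt{2}\,b$.

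The hard part will be (i). Assume, for contradiction, that $\mathring\Sigma_x\cap\partial\Sigma$ has two distinct connected components and pick $a,b$ on different components. Then $\pd(a,b) \leq d_\I(a,b) \leq 4b\kappa < \delta$, and Definition \ref{d:Unpinched} gives $d_\gamma(a,b) < 2\pd(a,b) \leq 8b\kappa$. The shorter arc $\alpha$ of $\partial\Sigma$ joining $a$ to $b$ therefore has intrinsic length at most $8b\kappa$, and projects to a curve $\pi_x(\alpha)\subseteq H_x$ of length at most $8b^2\kappa$. The closing argument amounts to showing that $\alpha\subseteq\mathring\Sigma_x$, which contradicts the separation of $a$ and $b$; for this, one must choose $A$ and $\delta_0$ so that the endpoints $\pi_x(a)$ and $\pi_x(b)$ lie deep enough inside $U_\kappa$ that a curve of controlled length starting from them cannot escape $U_\kappa$. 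The balancing of these constants---intertwining the choices of $A$, $\rho_0$, $b$, and $\delta_0$---is the delicate technical point of the proof.
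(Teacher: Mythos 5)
Your overall plan — control the warped projection via the curvature bound, then leverage the unpinched hypothesis for item (i) — correctly identifies the two ingredients, and your recognition that (i) is the crux is accurate. But the route you propose has gaps that the paper's actual construction is designed to avoid, so the two approaches are genuinely different, and yours, as sketched, does not close.

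First, defining $\mathring\Sigma_x$ as the connected component of $x$ in $\pi_x^{-1}(U_\kappa)\cap\Sigma$ does not by itself give items (ii), (iv), or the \emph{homeomorphic-to-a-disk} requirement, which you treat as routine. The set $V_x=\pi_x(\mathring\Sigma_x)$ is a connected open subset of $U_\kappa$ cut out by $\pi_x(\partial\Sigma)$; without further input it could have a deep slit or spiral, so a point $y\in\mathring\Sigma_x$ need not be joinable to $x$ inside $V_x$ by a path of length comparable to $\kappa$. Since $\pi_x$ is length-increasing, the bound $d_\I(x,y)\leq d_{V_x}(x,\pi_x(y))$ is only as good as the \emph{intrinsic} diameter of $V_x$, which is exactly what is not yet controlled. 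So the claimed inclusion $\mathring\Sigma_x\subseteq B_\I(x,2b\kappa)$ is not a ``standard bootstrap''; in the paper, controlling the shape of the nearest boundary arc is precisely the content of Lemma~\ref{lem:proj-arc}, which already uses the unpinched hypothesis. Your outline thus deploys the unpinched hypothesis once (for (i)), whereas it is needed earlier, to make (ii) and (iv) work.

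Second, even granting the inclusion $\mathring\Sigma_x\subseteq B_\I(x,2b\kappa)$, the contradiction argument for (i) has a gap. You correctly deduce that the short arc $\alpha\subset\partial\Sigma$ from $a$ to $b$ has $d_I$-length $O(\kappa)$ and projects to a curve of length $O(\kappa)$ in $H_x$. But its endpoints $\pi_x(a),\pi_x(b)$ can sit arbitrarily close to $\partial U_\kappa$, so $\pi_x(\alpha)$ can leave $U_\kappa$; and if you instead define $\mathring\Sigma_x$ with a smaller radius $\kappa'\ll\kappa$ so that $\pi_x(\alpha)$ stays inside $U_\kappa$, the arc can still exit and re-enter the smaller region $\pi_x^{-1}(U_{\kappa'})$, landing $a$ and $b$ in distinct components of $\mathring\Sigma_x\cap\partial\Sigma$. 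The paper avoids this by \emph{not} taking a connected component of a preimage: it instead applies a plane-topology lemma (Lemma~\ref{lem:plane}) to produce a topological disk $U\subset U_R$ whose boundary is explicitly a single subarc of the nearest boundary arc $c_R$ together with a subarc of the inner circle $\partial U_{R/A}$, and sets $\mathring\Sigma_x=\pi_x^{-1}(U)$. That guarantees (i), the disk structure, and (iv) in one stroke.

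A smaller technical point: the inequality $\pd(a,b)\leq d_\I(a,b)$ that you invoke is not established in the paper and, in fact, can fail (the intrinsic distance on a wiggling spacelike graph can undercut the ambient spacelike geodesic length, since spacelike geodesics are not length minimizers in indefinite signature). The paper instead uses $\pd(a,b)\leq d_H\bigl(\pi_x(a),\pi_x(b)\bigr)$, which follows from the warped projection being distance-increasing (Lemma~\ref{l:WarpedProjectionIncreaseDistances}); you should make the same substitution.
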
 
 
\subsubsection{The construction: controlling projections of arcs} We assume throughout this paragraph that  
\vskip 0.2truecm
{\em  $\Sigma$ is a spacelike acausal surface with second fundamental form of norm  bounded by $1$ and non-empty $\delta$-unpinched boundary}.
\vskip 0.2truecm
Assuming that $U_R\cap \pi_x(\partial \Sigma)$ is non-empty, we choose $w$ to be a closest point in $U_R\cap \pi_x(\partial \Sigma)$ to $x$:
$$
d_H(x,w)=\inf\{(d_H(x,y)\mid y\in U_R\cap \pi_x(\partial \Sigma)\}\ .
$$
Let $\omega$ the preimage of $w$.

Let  $c_R=c_R(x,\Sigma)$ be the connected component of $U_R\cap \pi_x(\partial \Sigma)$ containing $w$, and $\gamma_R$ be the preimage of $c_R$. The point $w$ and the arc $c_R$ are not uniquely chosen, and when $U_R$ is disjoint from $\pi_x(\partial \Sigma)$, they do not exist.

We begin our approach to Proposition~\ref{pro:control-proj} by showing, under the background assumption that $x$ is relatively distant to the image of the frontier of $\Sigma$, that short geodesic arcs in $H_x$ may be lifted to arcs in $\Sigma$, assuming the geodesic arcs in $H_x$ are near $x$.

\begin{lemma}[\sc Lifting arcs]\label{lem:liftdoedarc}
For any positive constants $\delta$ and $\epsilon$, with   $\epsilon\leq \frac{1}{4}\delta$, and point $x$ in $\Sigma$ with $d_I(x,\Fr(\Sigma))\geq \delta$ the following holds. Let $c: [0,\epsilon] \to H_x$ be a geodesic arc (parametrized by arclength) not intersecting $\pi_x(\partial \Sigma)$ except possibly at its extremities and such that
\[d_I(\xi_0,x)\leq \epsilon~,~\pi_x(\xi_0)=c(0)~,\]
then $c$ is contained in $\pi_x(\Sigma)$. Moreover, if $\pi_x(\xi_1)=c(\epsilon)$, then
\begin{eqnarray}
 d_I(\xi_1,x)\leq 2\epsilon~,~d_{\mathcal G}(\T_{\xi_1}\Sigma,\T_x\Sigma )\leq 4\epsilon~. 	\label{ineq:arcx}
\end{eqnarray}

\end{lemma}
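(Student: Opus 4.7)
The strategy is to lift the arc $c$ through the local diffeomorphism $\pi_x$ starting at $\xi_0$ and push the lift as far as possible, exploiting the length-increasing property of $\pi_x$ and the bound on $\|\II\|$ to ensure the lift cannot terminate before time $\epsilon$. Since $\Sigma$ is acausal, Proposition~\ref{p:acausalImpliesGraph} guarantees that $\pi_x$ is a global diffeomorphism onto its image, so the lift $\gamma\colon[0,T)\to\Sigma$ with $\pi_x\circ\gamma=c|_{[0,T)}$ and $\gamma(0)=\xi_0$ is well-defined on a maximal half-open interval $[0,T)$ with $T\le\epsilon$.

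The plan is to collect two a priori estimates along such a lift and use them to bootstrap $T=\epsilon$. First, by Lemma~\ref{l:WarpedProjLengthIncreasing} the warped projection is length-increasing for spacelike curves, so the induced length of $\gamma|_{[0,t]}$ is at most $t$. Combined with the triangle inequality and the hypothesis $d_I(\xi_0,x)\le\epsilon$, this gives
\[
d_I(\gamma(t),x)\le d_I(\gamma(t),\xi_0)+d_I(\xi_0,x)\le t+\epsilon\le 2\epsilon,
\]
and hence $d_I(\gamma(t),\Fr(\Sigma))\ge \delta-2\epsilon\ge 2\epsilon$ since $\epsilon\le\delta/4$. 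Second, by Proposition~\ref{pro:IetII} and $\|\II\|\le 1$, the Gauss map $\Gamma\colon(\Sigma,g_\I)\to(\GG(\Hn_\lambda),g)$ is $\sqrt{2}$-Lipschitz, so the Gauss lift of a path of $g_\I$-length $\le\epsilon+t$ joining $x$ to $\gamma(t)$ has length at most $\sqrt{2}(\epsilon+t)\le 2\sqrt{2}\,\epsilon<4\epsilon$, providing $d_{\mathcal G}(\T_{\gamma(t)}\Sigma,\T_x\Sigma)<4\epsilon$.

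To extend the lift past $T$ in case $T<\epsilon$, I would argue as follows. The bounded $g_\I$-distance to $x$ and the uniform bound on the Gauss lift confine $(\gamma(t),\T_{\gamma(t)}\Sigma)$ to a compact subset of $\GG(\Hn_\lambda)$ lying at fixed distance from $\Fr(\Sigma)$; by smoothness and local completeness of $\Sigma$ away from its frontier, $\gamma(t)$ admits a limit $\gamma(T)\in\Sigma$ with $\T_{\gamma(T)}\Sigma$ within the same compact neighbourhood. The hypothesis that $c$ does not meet $\pi_x(\partial\Sigma)$ in the open interval $(0,\epsilon)$ forces $\gamma(T)$ to lie in the interior of $\Sigma$. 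Then Proposition~\ref{pro:diamwarp}, item (i), yields a uniform lower bound on the norm of $\T_{\gamma(T)}\pi_x$ (since $d_{\mathcal G}(\T_{\gamma(T)}\Sigma,\T_xH_x)<4\epsilon$ is uniformly bounded), so $\pi_x$ is a local diffeomorphism at $\gamma(T)$ and the lift extends slightly beyond $T$, contradicting maximality.

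The main technical subtlety is the extension step: one has to be sure that as $t\to T^-$ the sequence $\gamma(t)$ does not leak into the frontier and that the tangent planes $\T_{\gamma(t)}\Sigma$ cannot degenerate to a plane on which $\pi_x$ is not injective. Both issues are controlled precisely by the two a priori estimates above, which in turn rely on the unpinched hypothesis through $\delta$ (to keep $\gamma$ away from $\Fr(\Sigma)$) and on the $C^2$ bound $\|\II\|\le 1$ (to keep $\T\Sigma$ close to $\T_xH_x$ in $\GG$). Once the extension is established and $T=\epsilon$, setting $\xi_1=\gamma(\epsilon)$ and reading off the bounds from Steps~1 and~2 gives exactly the inequalities~\eqref{ineq:arcx}.
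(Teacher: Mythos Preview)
Your proof is correct and follows essentially the same continuation argument as the paper: lift $c$ through $\pi_x$, use the length-increasing property of the warped projection to bound $d_\I(\gamma(t),x)\le 2\epsilon$ and hence keep $\gamma$ away from $\Fr(\Sigma)$, and then invoke Proposition~\ref{pro:IetII} with $\|\II\|\le 1$ for the Gauss-lift estimate. One small remark: the $\delta$-unpinched hypothesis is not actually used in this lemma (only the distance bound $d_\I(x,\Fr(\Sigma))\ge\delta$ matters here), so your final attribution of the frontier control to unpinching is slightly off, though harmless.
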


\begin{proof}
	 Let 
$$
I_0\defeq\{t\in I\mid \forall s\leq t,\ c(s)\in \pi_x(\Sigma)\} .$$ 
The set $I_0$ is open and non-empty. Let $\xi: I_0\to\Sigma$ be the lift of $c$ starting from $\xi_0$. Since $\pi_x$ is length-increasing, for all $t$ in $I_0$, 
\begin{eqnarray}
	d_\I(\xi(t),x)\leq 	d_\I(\xi(t),\xi_0)+	d_\I(\xi_0,x)\leq 2\epsilon\   .\label{ineq:arcx2}
\end{eqnarray}
Hence we get, 
 $$
d_I(\xi(t),\Fr(\Sigma))\geq d_I(x,\Fr(\Sigma))- d_I(\xi(t),x)\geq \frac{1}{2}\delta\ .  
$$
It follows that $I_0$ is closed, so $I_0=[0,\epsilon]$ and thus $c$ lies in $\pi_x(\Sigma)$. The inequality \eqref{ineq:arcx} follows directly from the inequality \eqref{ineq:arcx2}, and the bound on the second fundamental form applied to Proposition \ref{pro:IetII}.
\end{proof}

Our final ingredient for the proof of Proposition~\ref{pro:control-proj} is a statement that, still assuming that $x$ is reasonably distant from the frontier of $\Sigma$, that if the nearest component of the image of the boundary of $\Sigma$ comes very near $x$, then that component is unique. 

\begin{lemma}\label{lem:proj-arc} For any positive constant $\delta$, there exists a constant $K\geq 1$ so that  for any $R< \frac{1}{100}\delta$,  the following holds.

Choose $x$ in $\Sigma$ so that $c_R(x,\Sigma)$ is not empty. Assume that $d_I(x,\Fr(\Sigma))>\delta$, we have 
\begin{enumerate} 
\item\label{it:projarc1} If the arc $c_R$  intersects $U_{R/K}$, then $c_R$  is the unique connected component of $\pi(\partial\Sigma)\cap U_R$ intersecting $U_{R/K}$.
\item\label{it:projarc2} for all $\zeta$ in $\gamma_R$, 
\begin{eqnarray}
	d_\I(\zeta,x)\leq K \cdot R \ .
\end{eqnarray}
\end{enumerate}
\end{lemma}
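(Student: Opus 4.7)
The plan combines three ingredients: the lifting lemma (Lemma~\ref{lem:liftdoedarc}), which transfers control from $H_x$-geodesics to $\Sigma$; the unpinched property of $\partial\Sigma$, which controls arc length in terms of spatial distance; and Corollary~\ref{c:UpperBoundLiftMetric}, which under the bound $\Vert\II\Vert\leq 1$ gives a biLipschitz comparison between $d_\I$ and $\pi_x^*g_H$ on $d_\I$-balls of a fixed radius, with constants depending only on $\delta$.

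First I would set up a base point $\omega$. Since $w$ realizes the minimum of $d_H(x,\cdot)$ on $U_R\cap\pi_x(\partial\Sigma)$, the radial geodesic from $x$ to $w$ in $H_x$ is disjoint from $\pi_x(\partial\Sigma)\setminus\{w\}$, so Lemma~\ref{lem:liftdoedarc} applied with $\epsilon=d_H(x,w)\leq R<\delta/4$ yields $d_\I(x,\omega)\leq 2d_H(x,w)\leq 2R$ and $d_\GG(\T_\omega\Sigma,\T_x\Sigma)\leq 4R$.

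For item~\ref{it:projarc2}, I would bound the arc length of $\gamma_R$ (hence $d_\I(\omega,\zeta)$ for each $\zeta\in\gamma_R$) by $K'R$ for some $K'=K'(\delta)$. Parametrize $\gamma_R$ by arc length from $\omega$ and follow it in both directions inside a ball $B_\I(\omega,T)$ on which Corollary~\ref{c:UpperBoundLiftMetric} supplies a uniform biLipschitz constant $C=C(\delta)$ for $\pi_x$. If the $\gamma_R$-arc inside this ball had length exceeding $K'R$ with $K'$ very large, then since its projection lies in the disk $U_R$ of diameter $2R$, a pigeonhole argument would produce two points $\xi_1,\xi_2$ whose projections satisfy $d_H(\pi_x(\xi_1),\pi_x(\xi_2))\ll R$ while their arc-length separation in $\partial\Sigma$ is bounded below by a definite multiple of $R$. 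The biLipschitz estimate would give $\eth(\xi_1,\xi_2)\leq d_\I(\xi_1,\xi_2)\ll R<\delta$, and the $\delta$-unpinched condition would then force $d_{\partial\Sigma}(\xi_1,\xi_2)<2\eth(\xi_1,\xi_2)$, contradicting the lower bound on $d_{\partial\Sigma}(\xi_1,\xi_2)$. This caps the length of $\gamma_R$ by $K'R$, giving $d_\I(x,\zeta)\leq 2R+K'R=KR$.

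For item~\ref{it:projarc1}, I would argue by contradiction: assume a second component $c'_R\neq c_R$ also meets $U_{R/K}$. Choosing the closest point $v\in c'_R$ to $x$ and taking a small perturbation of the radial geodesic from $x$ to $v$ to avoid $\pi_x(\partial\Sigma)\setminus\{v\}$ (using that this set is one-dimensional), I would lift to obtain $\eta\in\gamma'_R$ with $d_\I(x,\eta)<2R/K$. Thus $d_\I(\omega,\eta)<4R/K$, well below $\delta$, so the unpinched property forces $d_{\partial\Sigma}(\omega,\eta)<8R/K$. On the other hand, any arc in $\partial\Sigma$ connecting $\omega\in\gamma_R$ to $\eta\in\gamma'_R$ has its $\pi_x$-projection exiting $U_R$, so passes through some $\sigma\in\partial\Sigma$ with $d_H(x,\pi_x(\sigma))=R$; the biLipschitz comparison then gives $d_\I(\omega,\sigma)\geq C''R$ for some $C''=C''(\delta)>0$, hence $d_{\partial\Sigma}(\omega,\eta)\geq C''R$, contradicting the previous bound when $K$ is large. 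The main obstacle will be making the pigeonhole step in (ii) precise at the right scale, and handling in (i) the perturbed path when several components of $\pi_x(\partial\Sigma)$ crowd near $x$; both hinge on the interplay between the one-dimensional geometry of $\pi_x(\partial\Sigma)$ in the two-dimensional disk $U_R$ and the unpinched control on arc length.
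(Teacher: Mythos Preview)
Your overall strategy for both items is workable in spirit, but you are missing the one observation that makes the proof short, and your argument for item~\ref{it:projarc1} has a genuine gap.

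The missing observation is Lemma~\ref{l:WarpedProjectionIncreaseDistances}: the warped projection $\pi_x$ \emph{increases} spatial distance, so for any $\omega,\zeta\in\Sigma$ one has directly
\[
\eth(\omega,\zeta)\ \leq\ d_H\bigl(\pi_x(\omega),\pi_x(\zeta)\bigr).
\]
With this, item~\ref{it:projarc2} is immediate: for $\zeta\in\gamma_R$ the right-hand side is at most $2R<\delta$, so the $\delta$-unpinched condition gives $d_{\partial\Sigma}(\omega,\zeta)\leq 2\eth(\omega,\zeta)\leq 4R$, and hence $d_\I(x,\zeta)\leq d_\I(x,\omega)+d_{\partial\Sigma}(\omega,\zeta)\leq 5R$. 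No pigeonhole argument is needed, and you never have to worry about keeping the biLipschitz comparison on the correct ball or about inverting $\pi_x$.

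For item~\ref{it:projarc1}, your perturbation step is not justified: if $c_R$ separates $x$ from $v$ inside $U_{R/K}$ (which nothing prevents), no small perturbation of the radial geodesic from $x$ to $v$ avoids $c_R$, and Lemma~\ref{lem:liftdoedarc} cannot be applied to produce your point $\eta$. The paper avoids this problem entirely by reversing the logic. It first proves, by a connectedness argument along the boundary arc, an assertion of the form ``if $d_{\partial\Sigma}(\zeta,\omega)\leq 4R/K$ then $\zeta\in\gamma_R$'' (using the biLipschitz estimate of Corollary~\ref{c:UpperBoundLiftMetric} to control $\pi_x$ along short pieces of $\partial\Sigma$). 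Then for any $\zeta\in\partial\Sigma$ with $\pi_x(\zeta)\in U_{R/K}$ one again uses $\eth(\omega,\zeta)\leq d_H(\pi_x(\omega),\pi_x(\zeta))\leq 2R/K<\delta$ and the unpinched condition to force $d_{\partial\Sigma}(\omega,\zeta)\leq 4R/K$, whence $\zeta\in\gamma_R$. This shows every boundary point projecting into $U_{R/K}$ already lies in $\gamma_R$, which is exactly item~\ref{it:projarc1}. Your contradiction argument can be salvaged once you replace the lifting-to-$\eta$ step by the direct bound $\eth(\omega,\eta)\leq d_H(\pi_x(\omega),\pi_x(\eta))$, but at that point it collapses into the paper's argument.
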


\begin{figure}[!h] 
	\begin{center}
	\includegraphics[width=60mm]{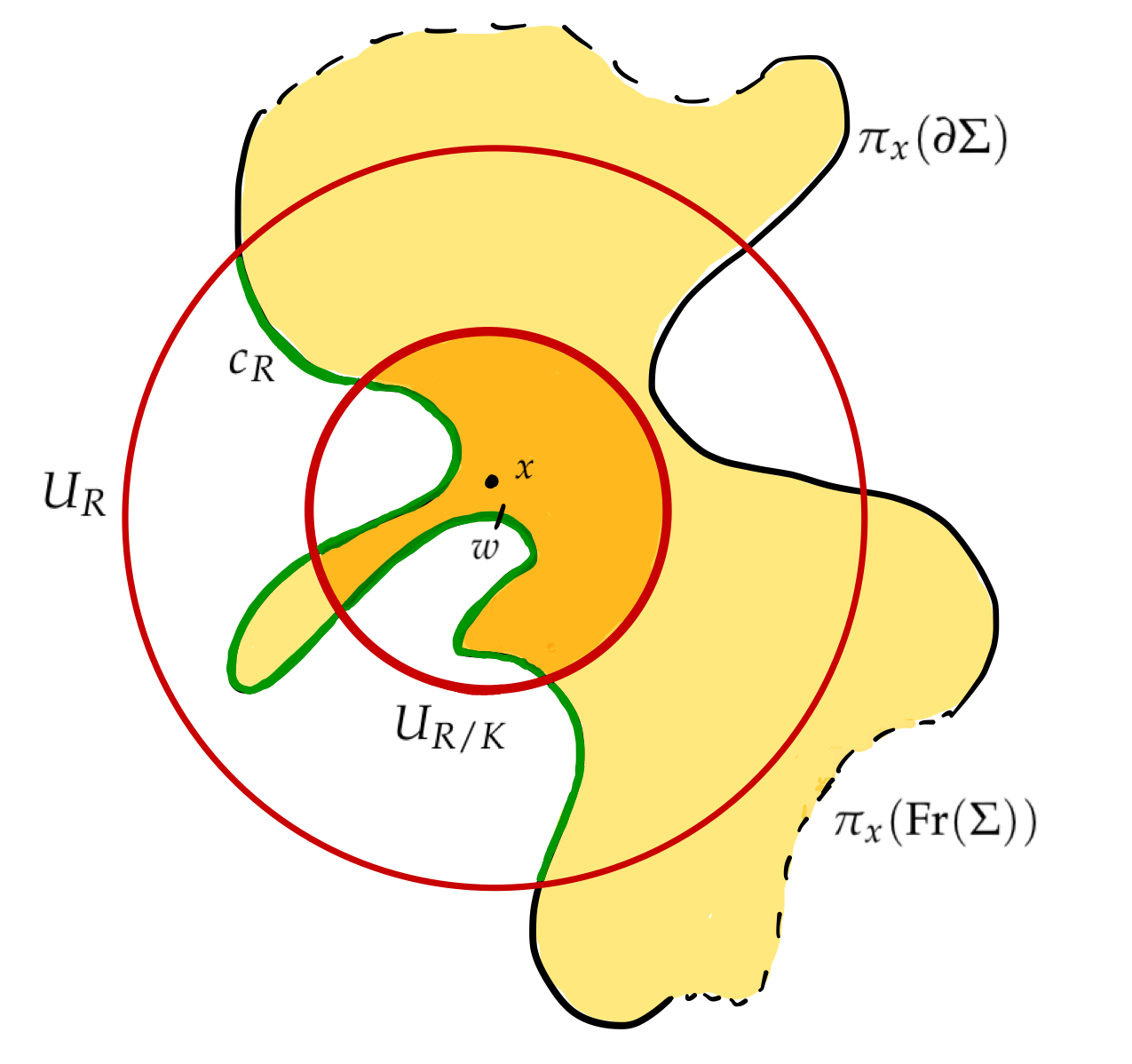}
	\caption{Lemma \ref{lem:proj-arc}}
	\label{fig:lemma56}
	\end{center}
\end{figure}

\begin{proof} 
Let us prove the first assertion. Let $R'=\frac{R}{K}$ where we choose $K$ in the sequel of the proof.

Assume that $\pi_x(\partial\Sigma)$ intersects $U_{R'}$. It follows that $c_R$ intersects $U_{R'}$. Let $w$ be a closest point to $x$ in $c_R$. Thus $w$ belongs to $U_{R'}$. Let $\omega$ be the preimage of $w$ in $\Sigma$. Consider the geodesic arc from $x$ to $w$ whose length is less that $R'$ and so less that $\frac{1}{4}\delta$. Applying the inequality \eqref{ineq:arcx}, we obtain that 
$$
d_I(\omega,x)\leq 2R' \leq \frac{\delta}{50}\ .
$$

We now prove
\vskip 0.2 truecm
 \noindent{\em Assertion A:  if $\zeta\in \partial\Sigma$ and $d_{\partial\Sigma}(\zeta,\omega)\leq  4R'$, then $\zeta\in \gamma_R$.}
 \vskip 0.2 truecm
 
Since $d_{\partial\Sigma}(\zeta,\omega)$ is finite,  $\zeta$ and $\omega$ are by definition in the same connected component of $\partial \Sigma$.  Let  $I$ be the arc along $\partial\Sigma$ from $\zeta$ to $\omega$ that we order from $\omega$ to $\zeta$.  Let $F$ be the closed subset of $I$ defined by 
$$
F=\left\{\xi \in I \mid \forall \theta \in I, \ \theta \leq \xi\implies d_H(\pi_x(\theta),x)\leq \frac{R}{2}\right\}\ .
$$
Obviously $F\subset \gamma_R$, as elements in $F$ have projections to $H$ that are within $\frac{R}{2}$ of $x$, while elements of $\gamma_R$ have projections within $R$ of $x$. The assertion will then follow from the fact  that $\zeta$ belongs to $F$. We will thus show that $F=I$ using a connectedness argument.
\begin{itemize}
	\item  $F$ is non-empty: since 
$
d_H(w,x)\leq R'<\frac{R}{4}$, 
 $F$  contains $\omega$. (Here we demand $K\geq4$.)
\item $F$ is open: assume $\xi$ belongs to  $F$ with $\xi<\zeta$ . By continuity, and because elements of $F$ project to within the smaller $\frac{R}{2}$ ball of $x$, while elements of $\gamma_R$ project to the larger $R$-ball around $x$, we can find $\eta$ in $I$, with $\zeta>\eta>\xi$ so that the interval $J$ joining $\eta$ to $\xi$ lies in $\gamma_R$.  By Lemma \ref{c:UpperBoundLiftMetric} there exists a constant $k$ only depending on $R$, so that  for  $\sigma$ in $J$, 
\begin{eqnarray*}
	d_H(\pi_x(\omega),\pi_x(\sigma))&\leq& k \cdot d_{\I}(\omega,\sigma)\\ 
	&\leq& k \cdot d_{\partial\Sigma}(\omega,\sigma)\\ &\leq& k \cdot d_{\partial\Sigma}(\omega,\zeta)\leq 4k\cdot R'\ .
\end{eqnarray*}
Thus $d_H(x,\pi_x(\sigma))\leq R'(1+4k)$.
If we now choose $K> 2(1+4 k)$, it follows that $\eta$ belongs to $F$.  This shows that $F$ is open.
\item Finally $F$ is obviously closed.
	\end{itemize}
We conclude that $F=I$ and so $\zeta \in F \subset V_{R/2}$, which implies that $\zeta \in \gamma_R$. This completes the proof of Assertion A.
\vskip 0.2 truecm
Now, let $\zeta$ be in the intersection  $\partial\Sigma\cap \pi_x^{-1}(U_{R'})$. The geodesic (in $\Hn$) between $\omega$ and $\zeta$ is spacelike, and so contained in some hyperbolic plane. Since by Lemma \ref{l:WarpedProjectionIncreaseDistances} the warped projection increases the distance, we find
$$
\eth(\omega,\zeta)\leq d_H (\pi_x(\omega),\pi_x(\zeta))\leq 2R'\leq \delta\ .
$$
By the $\delta$-unpinched condition 
\begin{eqnarray*}
	d_{\partial\Sigma}(\omega,\zeta)\leq 2\eth(\omega,\zeta)\leq 4R'\ . 
\end{eqnarray*}
Using  Assertion A, we can now conclude that $\zeta$ belongs to $\gamma_R$.  This proves item \ref{it:projarc1}.

For item \ref{it:projarc2}, consider $\zeta$ in $\gamma_R$. As above 
$$
\eth(\omega,\zeta)\leq d_H (\pi_x(\omega),\pi_x(\zeta))\leq 2R\leq \delta\ .
$$
Thus by  the unpinched condition 
\begin{eqnarray}
	d_{\partial\Sigma}(\omega,\zeta)\leq 2\eth(\omega,\zeta)\leq 4R\ . \label{ineq:liftarc2}
\end{eqnarray}
It follows that 
$$
d_\I(\zeta,x)\leq d_I(\omega,x)+d_{\partial\Sigma}(\zeta,\omega)\leq R'+4R\leq 5 R\ .
$$
This concludes the proof.
\end{proof}

\subsubsection{Proof of Proposition \ref{pro:control-proj}} If a curve is $\delta$-unpinched it is also $\kappa$-unpinched for all $\kappa\leq\delta$. Thus it is enough to prove the proposition for $\kappa=\delta$ and we will do so to avoid burdening the notation.

We choose $R\leq \frac{1}{100}\delta$ as in Lemma \ref{lem:proj-arc}.  Let $K$, $U_R$, $\gamma_R$ and $c_R$ be as in the conclusion of this lemma and the paragraph above it.

We may choose $R$  so that the projection of $\partial\Sigma$ with $U_{R/A}$ intersects transversally.  We apply Lemma \ref{lem:proj-arc} by choosing $A \geq K$ so that $c_R$ is the unique component of $\pi_x(\partial\Sigma)\cap U_R$ intersecting $U_{R/A}$. We now apply Lemma \ref{lem:plane}, to $\gamma=c_R$. We thus obtain a topological  disk $U$ in $U_R$, whose boundary is $\partial U= \alpha_0\cup\alpha_1$, where $\alpha_0$ is a subarc of $c_R$, $\alpha_1$ a subarc of $\partial U_{R/A}$,  and so that $U$ is contained in $U_R$ (see Figure \ref{fig:goodneighborhoods}).

\begin{figure}[!h] 
	\begin{center}
	\includegraphics[width=60mm]{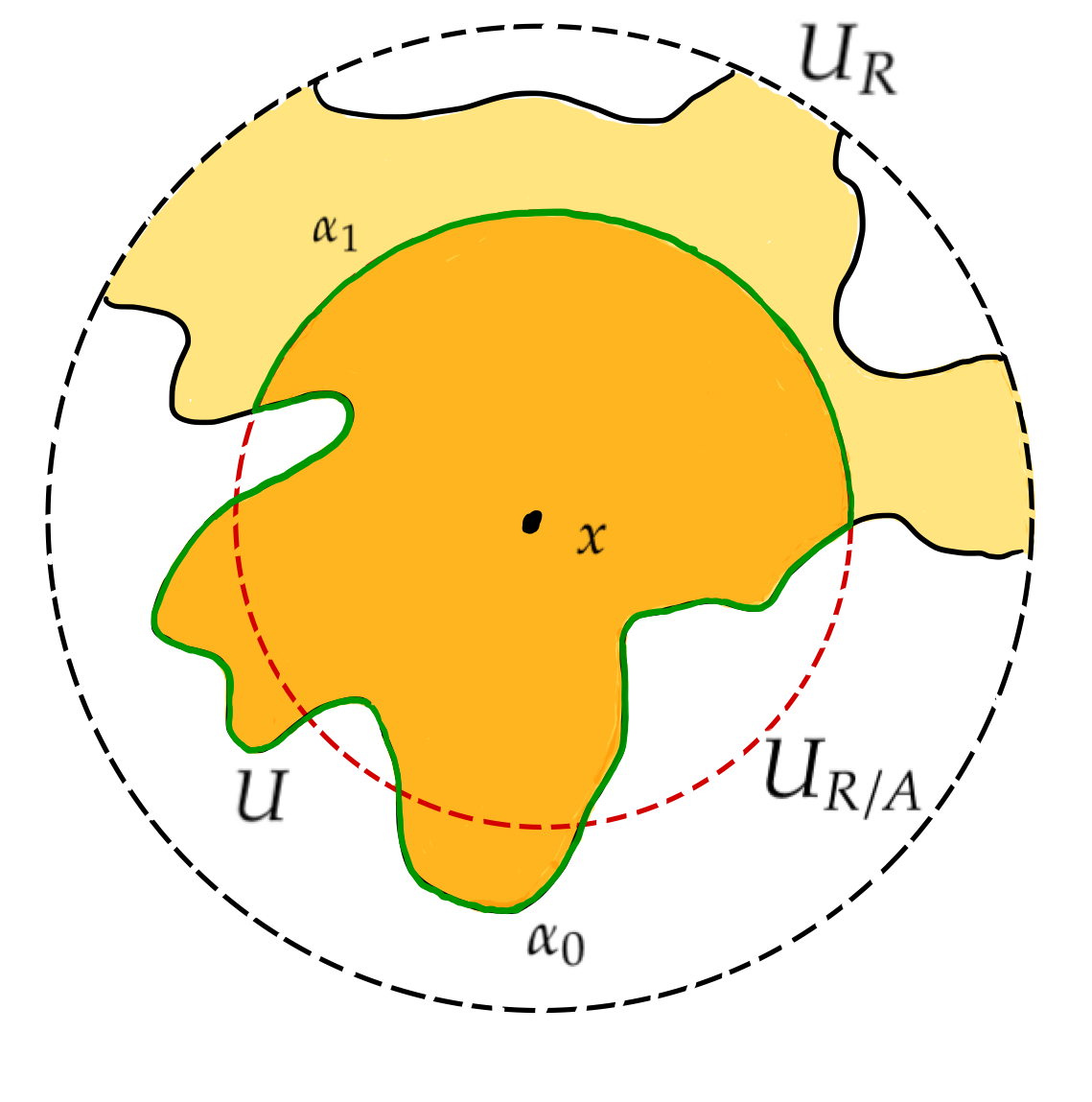}
	\caption{Good neighborhoods}
	\label{fig:goodneighborhoods}
	\end{center}
\end{figure}

Now let $\mathring\Sigma$ be the preimage of $U$.

We first  want to show that  $U$  is a subset of $\pi_x(\Sigma)$. Or in other words that any $y$ in $U$ has a preimage in $\Sigma$.  First taking the closest point $z$ in $c_R$ to $y$,  
 the hyperbolic geodesic arc  $I\defeq [y,z]$ lies in $U$. Let $\zeta$ be the preimage of $z$.

By Lemma \ref{lem:proj-arc}, $d_I(\zeta,x)\leq K \cdot R$ where $K$ is the constant of that lemma. Applying  Lemma \ref{lem:liftdoedarc} with $\epsilon=K\cdot R$, for 
$$
R\leq \frac{\delta}{8K}\,
$$
we obtain that $y=\pi_x(\xi)$ with $\xi$ in $\mathring\Sigma$ and  
\begin{eqnarray}\label{ineq:K_0K_1}
	d_\I(\xi,x)\leq 2K\cdot R\ ,\ d_{\mathcal G}(\T_\xi\Sigma,\T_x\Sigma)\leq 4 K\cdot R\ .\label{ineq:K_0K_1}
\end{eqnarray}

We have just shown that $\mathring\Sigma$ is a graph over $U$, that $U$ is a topological disk,  inequality \eqref{ineq:K_0K_1} holds and the boundary of $\mathring\Sigma$ is connected. 
We have thus proven that the first three items in the proposition.

The last item follows from the fact that $\pi_x(\Fr(\mathring\Sigma))=\alpha_1$ and thus $d(x,\pi_x(\Fr(\mathring\Sigma)))$ is equal to $R_1$.
\qed

\subsection{Local control}\label{sec:loc-contr}
Let us consider the following situation.
\begin{enumerate}
\item Let $\seqk{\lambda}$ be a  sequence of strictly positive numbers converging to $\lambda_\infty\geq 0$.
\item Let $\Sigma_k$ be an acausal maximal surface (possibly with boundary) in $\Hn_{\lambda_k}$.
\item Let $x_k$ be a point in $\Sigma_k$.
\item Let $H_k$  be the totally geodesic plane containing $x_k$, so that  $\T_{x_k}H_k=\T_{x_k}\Sigma_k$, and $\pi_k$ the corresponding warped projection.
\end{enumerate}

By convention, if $\lambda_\infty=0$, we let $\Hn_{\lambda_\infty}$ to be ${\mathbf E}^{2,n}$, the flat pseudo-Euclidean space of signature $(2,n)$.

\begin{definition}[\sc Local Control Hypothesis $(*)$]\label{sec:cond*ST}

The sequence $\{\Sigma_k,x_k, \lambda_k\}_{k\in \mathbb N}$ satisfies {\em the local control hypothesis $(*)$ }, if there exist positive constants  $B$  and $\kappa$  so that
\begin{enumerate}
	\item \label{it:1*}$\Sigma_k$ is a  maximal surface in $\Hn_{\lambda_k}$ which is a graph over $U_k$, where
	\begin{enumerate}
		\item  \label{it:1a*} the set  $U_k$ is a connected submanifold of $B(x_k,\kappa)$, the open ball of center $x_k$ and radius $\kappa$ in $H_k$ ;
		\item \label{it:a1*} the diameter of $\Gamma(\Sigma_k)$ is bounded by $B\kappa$ ;
	\item  \label{it:1b*} we have $d_H(x_k,\Fr(U_k))> {\kappa}{B}^{-1}$ ;
		\item  \label{it:1c*} the  boundary  of $U_k$ is connected.
	
	\end{enumerate}
		\item We have the bound  $
	\Vert \II_{\Sigma_k}\Vert\leq 1\ .$
\item Finally let $\gamma_k=\partial\Sigma_k$ be the finite boundary of $\Sigma_k$. Assume that $\gamma_k$ is 
 strongly positive, and that 
\begin{enumerate}
\item \label{hyp:lcbd1} The sequence of arcs $\seqk{\gamma}$ converges smoothly (in the sense of Appendix \ref{app:bg})  to a strongly positive curve $\gamma_\infty$.
\item \label{hyp:lcbd2}  For any point $y$ in $\gamma_k$, we have  $d_\GG(\T_y\Sigma_k, \T_y^{(2)}\gamma_k)\leq B$. \end{enumerate}
\end{enumerate}
\end{definition} 
Strongly positive curves in $\Hn_\lambda$ with $\lambda>0$ are defined in Definition~\ref{d:TypeOfCurves}. For $\lambda=0$, that is for the pseudo-Euclidean space $\E^{2,n}$, we apply the same definition, replacing {\em hyperbolic plane} by {\em euclidean plane} in the phrasing.

The goal of this paragraph is to show the following:

\begin{proposition}[\sc Convergence with local control]\label{pro:loc-contr}
	For $\kappa$ small enough,  assuming the local control hypothesis $(*)$ and the notation therein, then, after extracting a subsequence, the sequence 
$\{x_k,\Sigma_k, \Hn_{\lambda_k}\}_{k\in\mathbb N}$ converges in the sense of Appendix \ref{app:bg} to  $\{x_\infty,\Sigma_\infty, \Hn_{\lambda_\infty}\}$  where $\Sigma_\infty$ is a maximal surface with boundary $\gamma_\infty$.
\end{proposition}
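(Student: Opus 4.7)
The plan is to exploit the pseudo-holomorphic structure of the Gauss lifts established in Proposition~\ref{p:PseudoHolomorphicGaussLift}. Since each $\Sigma_k$ is maximal, its Gauss lift $\Gamma_k \colon \Sigma_k \to \GR{\Hn_{\lambda_k}}$ is a $J_k$-holomorphic curve tangent to the holonomic distribution, with boundary values lying in the local totally real submanifold $W_B(\gamma_k)$ of Definition~\ref{sss:BoundaryConditions} and Proposition~\ref{pro:bc} (the uniform parameter $B$ coming from hypothesis~\ref{hyp:lcbd2}). Under hypothesis~$(*)$ the image of $\Gamma_k$ has uniformly bounded diameter, the boundary arcs $\gamma_k$ converge smoothly to $\gamma_\infty$, and by Proposition~\ref{pro:GeomTrans} the target Grassmannians $\GR{\Hn_{\lambda_k}}$ themselves converge to $\GR{\Hn_{\lambda_\infty}}$. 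A boundary Gromov-type compactness theorem for pseudo-holomorphic curves, of the kind provided by Appendix~\ref{app:phol}, will then produce the required subsequential limit.

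\medskip

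In more detail, I would first uniformize each $\Sigma_k$ conformally as either the unit disk or the upper half-disk, according to whether the boundary arc meets the relevant neighborhood, normalizing so that the preimage of $x_k$ is the origin. The bound $\Vert\II_{\Sigma_k}\Vert\leq 1$ combined with Corollary~\ref{c:UpperBoundLiftMetric} guarantees that the induced metric $g_\I$, the Gauss-lift pullback $\Gamma_k^*g$, and the warped-projection pullback $\pi_{x_k}^*g_{H_k}$ are mutually uniformly bi-Lipschitz on a definite neighborhood of $x_k$; hypothesis~\ref{it:a1*} then supplies a uniform area bound. Consequently the compositions $f_k\defeq\Gamma_k\circ u_k$ form a family of $J_k$-holomorphic maps from a fixed model domain, with uniformly bounded image, totally real boundary values in $W_B(\gamma_k)$, and uniformly bounded derivative (since $\Vert df_k\Vert^2$ is controlled by $1+\Vert\II_{\Sigma_k}\Vert^2$ via Proposition~\ref{pro:IetII}, so a priori bubbling is ruled out). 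Applying the pseudo-holomorphic compactness theorem from Appendix~\ref{app:phol}, and using that the almost complex structures $J_k$ and totally real submanifolds $W_B(\gamma_k)$ converge smoothly to $J_\infty$ and $W_B(\gamma_\infty)$ (thanks to hypothesis~\ref{hyp:lcbd1}), a subsequence converges in $C^\infty_{\mathrm{loc}}$ to a $J_\infty$-holomorphic limit $f_\infty$ still tangent to the holonomic distribution. Reversing Proposition~\ref{p:PseudoHolomorphicGaussLift}, the base-point projection $\Sigma_\infty\defeq\pi\circ f_\infty$ is a maximal surface with boundary $\gamma_\infty$, and the convergence $\Sigma_k\to\Sigma_\infty$ takes place in the bounded-geometry sense of Appendix~\ref{app:bg}.

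\medskip

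The main obstacle is the Gromov-type boundary compactness statement itself, since the target Grassmannian varies with $k$ and may degenerate geometrically when $\lambda_\infty=0$ (in which case the ambient space converges to $\mathbf{E}^{2,n}$), and the totally real submanifold $W_B(\gamma_k)$ is moving as well; a version of boundary pseudo-holomorphic compactness adapted to these features is required, and this is precisely the role of Appendix~\ref{app:phol}. A secondary technical point is controlling the conformal structure near the boundary after uniformization: one must prevent the boundary arc from collapsing to a point in the conformal model. This is ruled out by hypothesis~\ref{it:1b*}, which keeps the projection of the frontier a definite distance from $x_k$, combined with the smooth convergence of $\gamma_k$ to $\gamma_\infty$.
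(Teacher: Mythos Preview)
Your outline is the same as the paper's: lift to the Grassmannian, uniformize by $\D$ or $\S$, invoke the pseudo-holomorphic compactness of Appendix~\ref{app:phol}, and project back.  Two points, however, deserve correction.

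First, the sentence ``uniformly bounded derivative (since $\Vert df_k\Vert^2$ is controlled by $1+\Vert\II_{\Sigma_k}\Vert^2$ via Proposition~\ref{pro:IetII}\ldots)'' is not right.  Proposition~\ref{pro:IetII} compares the two metrics $g_\I$ and $g_\II$ on $\Sigma_k$ itself; it says nothing about $\Vert \T f_k\Vert$ measured against the \emph{hyperbolic} metric on $\D$ or $\S$, which is what you need after uniformization.  The conformal factor of $u_k$ is precisely the unknown.  The bound you want is the content of Gromov's Schwarz Lemma (Lemmas~\ref{lem:ghsl} and~\ref{lem:bsl}), which is built into Theorems~\ref{theo:holoFB} and~\ref{theo:holoBB}; so the conclusion survives, but the justification should point there, not to Proposition~\ref{pro:IetII}.

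Second, and more seriously, you have not explained why the limit $f_\infty$ is the Gauss lift of a maximal \emph{surface} rather than a constant or a curve whose projection to $\Hn_{\lambda_\infty}$ degenerates.  Proposition~\ref{p:PseudoHolomorphicGaussLift} only says that the Gauss lift of a maximal surface is $J$-holomorphic; the converse you invoke requires that the horizontal projection $\alpha$ (to the factor $P$ in the splitting~\eqref{eqdef:split}) pulls back to a non-vanishing form.  The compactness theorems of Appendix~\ref{app:phol} by themselves do not prevent $f_\infty$ from being constant.  This is exactly what the paper handles in Lemma~\ref{lem:limigraph}: the bound $\Vert\II_{\Sigma_k}\Vert\leq 1$ gives $\Vert \T f_k\Vert\leq \sqrt{2}\,\Vert f_k^*\alpha\Vert$ (again via Proposition~\ref{pro:IetII}, but now in the useful direction), which is the hypothesis of Proposition~\ref{pro:hypboundTf}, and then Theorem~\ref{theo:imm-limi-bd} forces $f_\infty^*\alpha$ to be nowhere zero.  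Without this step the projection $\pi\circ f_\infty$ need not be an immersed surface at all, and your final sentence does not go through.
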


We prove this proposition in paragraph \ref{sec:pro-loc-contr}.

\subsubsection{Distance and area estimates} Let $M_k=\T \Sigma_k$ be the Gauß lift of $\Sigma_k$ in $\GG(\Hn_{\lambda_k})$,  and let $y_k=\T_{x_k}\Sigma_k$ be the lift of $x_k$. Then set $d_\GG$ to be the Riemannian distance in $\GG(\Hn_{\lambda_k})$.

\begin{lemma}\label{lem:boundSp} We have the following estimates. For $\kappa$ small enough, and assuming the local control hypothesis $(*)$, there exist positive constants $A$, $b$ and $a$ depending only on $\kappa$ and $B$ so that 
\begin{eqnarray}
\area(M_k)&\leq& A\ ,\\
\hbox{for all $w$ in $\Sigma_k$}, \  d_\GG(\T_{x_k}\Sigma_k,\T_w\Sigma_k)&<&b \kappa\  , \\
\hbox{for all $u$ in $\operatorname{Fr}(M_k)$}, \ d_\GG(y_k,u)&>&a\ .
\end{eqnarray}
\end{lemma}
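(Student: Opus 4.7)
The plan is to treat each of the three estimates separately.

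For the area bound, by Proposition \ref{pro:IetII} and the hypothesis $\Vert \II_{\Sigma_k}\Vert \leq 1$, we have $g_\II \leq 2\, g_\I$ pointwise on $\Sigma_k$, hence $\area_\II(M_k) \leq 2\, \area_\I(\Sigma_k)$. Since $\Sigma_k$ is a spacelike graph over $U_k \subset B(x_k,\kappa) \subset H_k$, the warped projection $\pi_k$ is length-increasing by Lemma \ref{l:WarpedProjLengthIncreasing}, so $g_\I \leq \pi_k^* g_H$ and $\area_\I(\Sigma_k) \leq \area_H(B(x_k,\kappa)) = 2\pi(\cosh\kappa - 1)$. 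This yields the area bound with $A$ depending only on $\kappa$.

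The second estimate is essentially a restatement of the diameter hypothesis: for any $w \in \Sigma_k$, both $y_k = \Gamma(x_k)$ and $\Gamma(w) = (w,\T_w\Sigma_k)$ lie in $M_k$, so $d_\GG(y_k, \Gamma(w)) \leq \diam_\GG(\Gamma(\Sigma_k)) \leq B\kappa$, and we can take $b = B$.

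The third estimate, the bound on $d_\GG(y_k, u)$ for $u \in \Fr(M_k)$, is the main obstacle. The first step is to find a point $z \in \Fr(\Sigma_k)$ associated to $u$: pick a sequence $w_n \in \Sigma_k$ with no accumulation in $\Sigma_k$ such that $\Gamma(w_n) \to u$ in $\GG$; by continuity of the bundle projection $\GG \to \Hn$, the $w_n$ accumulate at some $z \in \Fr(\Sigma_k)$. Since $\Sigma_k$ is a graph over $U_k$, the continuity of $\pi_k$ gives $\pi_k(z) \in \Fr(U_k)$, and the hypothesis $d_H(x_k,\Fr(U_k)) > \kappa/B$ yields $d_H(x_k,\pi_k(z)) > \kappa/B$.

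The hard step is transferring this lower bound in $H_k$ back up to $\GG$. I would argue as follows. Along any path from $y_k$ to $u$ in $\GG$, the tangent planes $P(t)$ stay in a compact subset of the Grassmannian bundle, by the diameter bound on $\Gamma(\Sigma_k)$ together with the convergence $\Gamma(w_n) \to u$. Hence by Lemma \ref{l:UniformlyBilipschitz} the orthogonal projections onto the $P(t)$ are uniformly bilipschitz, and by Proposition \ref{pro:diamwarp} the warped projections of these planes onto $H_k$ have uniformly bounded operator norm. This yields a constant $c > 0$, depending only on the data, such that the length in $\GG$ of any such path dominates $c$ times the $H_k$-length of its image under the composition $\GG \to \Hn \to H_k$ (bundle projection followed by warped projection). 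Since this image is a path from $x_k$ to $\pi_k(z)$ in $H_k$, its hyperbolic length is at least $d_H(x_k,\pi_k(z)) > \kappa/B$. Thus $d_\GG(y_k, u) > c\kappa/B$, and taking $a = c\kappa/B$ completes the proof.
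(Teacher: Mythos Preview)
Your treatment of the first two estimates is correct and matches the paper: the diameter bound is item (i.b) of the hypothesis with $b=B$, and the area bound follows from $g_\II\le 2g_\I$ (Proposition~\ref{pro:IetII}) together with the fact that $\Sigma_k$ is a graph over a subset of a hyperbolic disk of radius~$\kappa$.

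The third estimate, however, has a genuine gap. You assert that ``along any path from $y_k$ to $u$ in $\GG$, the tangent planes $P(t)$ stay in a compact subset,'' justifying this by the diameter bound on $\Gamma(\Sigma_k)$. That bound only controls the \emph{endpoints} $y_k$ and $u$ (both lie in the $B\kappa$-ball about $y_k$); it says nothing about an arbitrary path between them, which is free to wander through all of $\GG$. Since you are after a lower bound on $d_\GG(y_k,u)$, you must control \emph{every} path, and your compactness claim fails for long ones. The fix is a standard dichotomy: either the path leaves the ball of some fixed radius $R$ about $y_k$ (and then has length $\ge R$), or it stays inside, in which case the smooth map $(y,P)\mapsto \pi_k(y)$ from $\GG$ to $H_k$ is $L$-Lipschitz on that ball for some $L=L(R)$, and the path has length at least $L^{-1}d_H(x_k,\Fr(U_k))>L^{-1}\kappa/B$. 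Note also that Lemma~\ref{l:UniformlyBilipschitz} and Proposition~\ref{pro:diamwarp} are not quite the right tools here: they concern orthogonal projections between linear $2$-planes and warped projections between hyperbolic planes, whereas what you need is simply that the smooth map $\GG\to H_k$, $(y,P)\mapsto\pi_k(y)$, is Lipschitz on compacta.

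The paper sidesteps the path analysis entirely: it observes that $\Fr(M_k)$ is contained in the closed set $Z=\{(y,P)\in\GG:\pi_k(y)\in\Fr(U_k)\}$, and that $y_k\notin Z$ since $\pi_k(x_k)=x_k$ lies at $d_H$-distance greater than $\kappa/B$ from $\Fr(U_k)$; the positivity and uniformity of $d_\GG(y_k,Z)$ then follow from the Lipschitz property of $(y,P)\mapsto\pi_k(y)$ near $y_k$.
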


\begin{proof} The second inequality is a direct consequence of \eqref{it:a1*} with $b=B$. It then follows from  Corollary \ref{c:UpperBoundLiftMetric}, that  for $\kappa$ small enough, the projection from $M_k$ (equipped with $d_{\rm II}$) to $U_k$ is  infinitesimally biLipschitz for some constant only depending on $B$. The first inequality follows. 

The third estimate follows from the fact that $\operatorname{Fr}(M_k)\subset Z$, where $Z$ is the closed subset of $\GG(\Hn_\lambda)$ of points $(y,P)$ with $\pi_k(y)\in \operatorname{Fr}(U_k)$. So $a=\inf_{z\in Z}d(H_0,z)$ is positive and only depends on $\kappa$ and $A$.
\end{proof}

\subsubsection{The holomorphic translation}

Let us consider the two possible cases:

\begin{itemize}
\item[{\em Case 1:}] There exists some positive $\epsilon$ so that $d_\GG(\partial M_k,y_k)\geq \epsilon$ for all $k$ in $\N$.
\item[{\em Case 2}:]  There exist $w_k\in \partial M_k$, with $\lim_{k\to\infty}d_\GG(w_k,y_k)=0$.
\end{itemize}
We refer to the notation of Appendix \ref{app:phol}: $\D$ denotes the open unit disk in $\C$, while $\S=\{z\in\D,~\Re(z)\geq 0\}$ is the semi-disk.  Corresponding to these two cases, we consider the following holomophic maps
\begin{enumerate}
	\item in Case 1, we consider the uniformization $f_k: \D \to M_k\setminus \partial{M_k}$, so that $f_k(0)=y_k$.
	\item in Case 2, we consider the uniformization $f_k: \S \to M_k$ so that $f_k(0)=w_k$.
\end{enumerate}
To lighten the notation, we will write $U=\D$ or $\S$. Our hypotheses implies the following. 
\begin{lemma}\label{lem:HolBound}
The maps $f_k$ are holomorphic immersions. Moreover, for $\kappa$ small enough we have the following bounds \begin{eqnarray}
\area(f_k(U))&\leq& A\ ,\\
\hbox{for all $w$ in $U$}, \  d(f_k(0),f_k(w))&<&b\kappa \ ,\\ 
\hbox{for all $w$ in $\Fr(U)$}, \  d(f_k(0),f_k(w))&>&a\ ,
\end{eqnarray}
where $A$, $b$ and $a$ only depends on $\kappa$.
\end{lemma}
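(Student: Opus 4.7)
The strategy is to transfer the bounds on $M_k$ established in Lemma \ref{lem:boundSp} through the uniformizations $f_k$, using that each $f_k$ is a biholomorphism onto its image.

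\textbf{Holomorphic immersion.} By Proposition \ref{p:PseudoHolomorphicGaussLift}, the Gauss lift $M_k\subset\GG(\Hn_{\lambda_k})$ is $J$-holomorphic, so it carries an intrinsic complex structure for which the inclusion $M_k\hookrightarrow\GG(\Hn_{\lambda_k})$ is $J$-holomorphic. By hypothesis \ref{it:1c*} the boundary $\partial U_k$ is connected, so $U_k$ is a topological disk (or half-disk in the bordered case), hence so are $\Sigma_k$ and its Gauss lift $M_k$. The maps $f_k$ defined by uniformization are then biholomorphisms from $\D$ onto $M_k\setminus\partial M_k$ (Case 1) or from $\S$ onto $M_k$ (Case 2); in particular they are smooth holomorphic immersions.

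\textbf{Area bound.} Since $f_k$ is injective and the induced area form on $M_k$ as a submanifold of $\GG(\Hn_{\lambda_k})$ is intrinsic, we have
\[
\area(f_k(U))=\area(f_k(U))\leq\area(M_k)\leq A,
\]
directly from the first inequality of Lemma \ref{lem:boundSp}.

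\textbf{Distance bounds.} In Case 1 we have $f_k(0)=y_k$ and $f_k(U)\subset M_k$, so the second and third inequalities of Lemma \ref{lem:boundSp} immediately yield $d_\GG(f_k(0),f_k(w))<b\kappa$ for every $w\in\D$ and $d_\GG(f_k(0),f_k(w))>a$ for every $w\in\Fr(\D)$ (interpreting $f_k$ via its continuous boundary extension, which exists by standard boundary regularity of conformal maps between smooth bordered Riemann surfaces). In Case 2, $f_k(0)=w_k\in\partial M_k$ and $d_\GG(w_k,y_k)\to 0$ by hypothesis; passing to a tail of the sequence, we may assume $d_\GG(w_k,y_k)<\kappa$, and then the triangle inequality gives
\[
d_\GG(f_k(0),f_k(w))\leq d_\GG(w_k,y_k)+d_\GG(y_k,f_k(w))<(b+1)\kappa
\]
for all $w\in\S$, and likewise $d_\GG(f_k(0),f_k(w))\geq d_\GG(y_k,f_k(w))-d_\GG(y_k,w_k)>a-\kappa$ for $w\in\Fr(\S)$ mapping to $\Fr(M_k)$. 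Provided $\kappa$ is chosen sufficiently small relative to $a$, these $O(\kappa)$ corrections can be absorbed into new constants $b'=b+1$ and $a'=a/2$, still depending only on $\kappa$ and $B$.

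The only mildly delicate point is the continuous (in fact smooth) extension of $f_k$ across $\Fr(U)$ needed for the third estimate; it follows from smoothness of $\Sigma_k$ near its free boundary together with classical boundary regularity for biholomorphisms between smooth bordered Riemann surfaces. Once this is in place, the proof reduces entirely to bookkeeping with Lemma \ref{lem:boundSp} and triangle inequalities.
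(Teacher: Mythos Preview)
Your approach is exactly the paper's: invoke the holomorphic interpretation of the Gauss lift (paragraph \ref{ss:GaussLift}) and read off the three bounds from Lemma \ref{lem:boundSp}. The paper's proof is a one-sentence pointer; you have simply unpacked it.

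One small gap in your Case 1 treatment: the uniformization $f_k:\D\to M_k\setminus\partial M_k$ sends $\Fr(\D)$ (in the accumulation sense) into $\partial M_k\cup\Fr(M_k)$, not just $\Fr(M_k)$. Lemma \ref{lem:boundSp} only controls the distance from $y_k$ to $\Fr(M_k)$; for the $\partial M_k$ part you need the defining hypothesis of Case 1 itself, namely $d_\GG(\partial M_k,y_k)\geq\epsilon$. The correct lower bound is then $\min(a,\epsilon)$. Also, you need not invoke boundary regularity of conformal maps: the paper interprets $f_k(\Fr(U))$ as the set of accumulation values of $f_k$ along sequences tending to $\Fr(U)$ (see the paragraph before Corollary \ref{lem:limibord} and Appendix \ref{app:phol}), so no continuous extension is required.
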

\begin{proof}
	This lemma is an immediate  consequence of the holomorphic translation described in proposition \ref{p:PseudoHolomorphicGaussLift} as well as of the bounds on $M_k$ obtained in Lemma \ref{lem:boundSp}.
\end{proof}
If $g$ is a map from $U$ to a space $X$, we define as in appendix \ref{app:phol}, 
$
g(\Fr(U))
$
to be the set of those points $x$ in $X$ so that there exists a sequence $\seqk{z}$ tending to $\Fr(U)$ with 
$$\lim_{k\to\infty}(g(z_k))=x .$$

\begin{corollary}\label{lem:limibord} For $\kappa$ small enough,  the following holds. After extracting a subsequence,the family $\seqk{f}$ converges to a 	non-constant holomorphic map $f_\infty$ so that, 
$$
f_\infty(\Fr(U))\subset \lim_{k\to\infty}f_k(\Fr(U))\ .
$$
\end{corollary}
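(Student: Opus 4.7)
The plan is to apply a Gromov-type compactness theorem for pseudo-holomorphic curves from Appendix \ref{app:phol}. Each $f_k$ is $J$-holomorphic by Proposition \ref{p:PseudoHolomorphicGaussLift} applied to the Gauss lift; in Case 2, the boundary arc of $\S$ maps into the submanifold $W(\gamma_k)$, which by Proposition \ref{pro:bc} is totally real of half the holonomic dimension --- precisely the data for a bordered pseudo-holomorphic problem with elliptic regularity up to the boundary.

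The three bounds of Lemma \ref{lem:HolBound} match the hypotheses of such a compactness theorem: a uniform area bound $A$; a uniform diameter bound $b\kappa$ placing the images in a compact subset of the ambient Grassmannians (whose underlying spaces $\Hn_{\lambda_k}$ converge either to $\Hn_{\lambda_\infty}$ or, if $\lambda_\infty = 0$, to $\bR$ by Proposition \ref{pro:GeomTrans}); and a uniform lower bound $a$ on the distance from $f_k(0)$ to $f_k(\Fr(U))$. In Case 2, the smooth convergence $\gamma_k\to \gamma_\infty$ from hypothesis (\ref{hyp:lcbd1}) of $(*)$ supplies the convergence of the totally real boundary conditions $W(\gamma_k)\to W(\gamma_\infty)$. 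Extracting a subsequence, we obtain a $J$-holomorphic limit $f_\infty$, with smooth convergence on compact subsets of $U$ (up to the boundary arc in Case 2).

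The main obstacle is non-constancy of $f_\infty$. If $f_\infty$ were constant, equal to $p_\infty := \lim f_k(0)$, then smooth convergence on compact subsets of $U$ would force all interior values of $f_k$ to converge to $p_\infty$, while the frontier separation $d(f_k(0), f_k(\Fr(U))) > a$ would force the remaining image to concentrate into bubbles at $\Fr(U)$. The absorbed-bubble formulation of the compactness theorem in Appendix \ref{app:phol}, combined with the monotonicity formula for pseudo-holomorphic curves and the uniform area bound, will rule this scenario out. Concretely, one selects $z_k \in U$ with $d(f_k(0), f_k(z_k)) = a/2$; a Schwarz-type estimate for pseudo-holomorphic curves, exploiting the area and diameter bounds, forces $z_k$ to remain at definite hyperbolic distance from $\partial U$, so that after extraction $z_k \to z_\infty \in U$ and $d(f_\infty(0), f_\infty(z_\infty)) = a/2 > 0$.

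Finally, the inclusion $f_\infty(\Fr(U)) \subset \lim_{k\to\infty} f_k(\Fr(U))$ follows directly from the definitions: any $x \in f_\infty(\Fr(U))$ arises as $\lim_n f_\infty(z_n)$ for some $z_n \to \Fr(U)$, and by smooth convergence on compact subsets a diagonal extraction produces indices $k_n$ and points $z'_n$ with $z'_n \to \Fr(U)$ and $f_{k_n}(z'_n) \to x$, placing $x$ in the limiting frontier set.
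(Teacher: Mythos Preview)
Your overall setup is right: Lemma \ref{lem:HolBound} feeds exactly the hypotheses of the appendix compactness theorems, and the case split (free boundary versus boundary) is the correct organization. But the last two paragraphs contain a real gap and an inversion of the logic.

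\textbf{The frontier inclusion is not a soft diagonal argument.} Your extraction produces interior points $z'_n\to\Fr(U)$ with $f_{k_n}(z'_n)\to x$. This does \emph{not} place $x$ in $\lim_k f_k(\Fr(U))$: for each fixed $k_n$ the value $f_{k_n}(z'_n)$ is an interior value, and nothing you have written controls its distance to the frontier set $f_{k_n}(\Fr(U))$. That control is precisely the nontrivial content of Theorems \ref{theo:holoFB} and \ref{theo:holoBB}: one recenters at $y_k\to\Fr(U)$, uses the global area bound to force the recentered maps to degenerate to a constant on a fixed ball, and then the extremal-length estimate of Lemma \ref{lem:bdry} gives $d\big(f_{n_k}(y_k),f_{n_k}(Z)\big)\to 0$. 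The area hypothesis is essential here and your diagonal argument never uses it. So you should simply invoke Theorem \ref{theo:holoFB} in Case~1 and Theorem \ref{theo:holoBB} in Case~2, as the paper does.

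\textbf{Non-constancy comes after, not before, the frontier inclusion.} Your bubble/monotonicity sketch is vague (the phrase ``definite hyperbolic distance from $\partial U$'' is ill-posed, since every interior point is at infinite hyperbolic distance from the frontier), and the claimed Schwarz-type control only gives a \emph{lower} bound on the hyperbolic distance from $0$ to your $z_k$, not the upper bound you need to trap $z_k$ in a compact set. The clean argument, implicit in the paper, goes the other way: once the frontier inclusion is established, suppose $f_\infty\equiv p_\infty$; then $p_\infty\in f_\infty(\Fr(U))\subset \lim_k f_k(\Fr(U))$, contradicting $d(f_k(0),f_k(\Fr(U)))>a$ together with $f_k(0)\to p_\infty$.

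One smaller point for Case~2: the paper passes to the \emph{local} boundary condition $W_A(\gamma_k)$ of Definition \ref{sss:BoundaryConditions}, not $W(\gamma_k)$ itself. Hypothesis \ref{hyp:lcbd2} of $(*)$ guarantees that $f_k(\partial\S)\subset W_A(\gamma_k)$, and it is this bounded piece whose smooth convergence (via \ref{hyp:lcbd1}) is fed into Theorem \ref{theo:holoBB}.
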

\begin{proof}
	The Lemma \ref{lem:HolBound} guarantees that we can apply the results on pseudoholomorphic cuves obtained in appendix \ref{app:phol}.  More precisely, we split the discussion in the two cases described in the beginning of this paragraph:
	
	\noindent{\em Case 1:} We are in the free boundary case and we  apply Theorem \ref{theo:holoFB} to get the result.
	
	 \noindent{\em Case 2:} In this case, let us consider $W_k=W_A(\gamma_k)$ defined in Definition~\ref{sss:BoundaryConditions}. By the hypotheses \ref{hyp:lcbd1} and \ref{hyp:lcbd2}, the totally real submanifold $W_k$ converges smoothly to a totally real submanifold. We can now apply Theorem \ref{theo:holoBB} to get the result.
	 
\end{proof}
With the notation above, we have:
\begin{lemma}\label{lem:limigraph} The holomorphic map $f_\infty$ is an immersion at $0$. Moreover, $f_\infty(U)$ is the Gauß lift of a maximal surface.
\end{lemma}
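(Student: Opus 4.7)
The proof proceeds in two steps, matching the two assertions of the lemma.

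For the immersion at $0$, the plan is to derive a uniform lower bound $|df_k(0)|\ge c>0$ and then pass to the smooth limit from Corollary~\ref{lem:limibord}. The key observation is that because $\Sigma_k$ is maximal, the second fundamental form satisfies $\II\circ i=\II(i\,\cdot)$, so the Grassmannian projection $p:\GG\to\Hn_{\lambda_k}$ restricts to a biholomorphism $M_k\to\Sigma_k$, intertwining the restriction of $J$ to $TM_k$ with the complex structure on $\Sigma_k$ determined by $g_\I$. Hence $p\circ f_k:U\to\Sigma_k$ is a univalent conformal map. By Corollary~\ref{c:UpperBoundLiftMetric} together with the bound $\|\II_{\Sigma_k}\|\le 1$, the warped projection $\pi_k:\Sigma_k\to U_k$ is a bilipschitz equivalence between $g_\I$ and $g_{H_k}|_{U_k}$ with uniform constants; hypothesis~\ref{it:1b*} ($d_H(x_k,\Fr(U_k))>\kappa/B$) then supplies a uniform positive lower bound on the $g_\I$-inradius of $\Sigma_k$ at $x_k=p\circ f_k(0)$. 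A Koebe-type distortion estimate applied to the conformal map $p\circ f_k$ therefore yields $|d(p\circ f_k)(0)|_{g_\I}\ge c_1>0$. A direct computation in the splitting $T_{(x,P)}M_k=\{(v,\II(v))\}$ shows $|dp|\le 1$ (the map $(v,\II(v))\mapsto v$ is non-expanding), so $|df_k(0)|_{g_\GG}\ge c_1$. Smooth convergence then delivers $|df_\infty(0)|\ge c_1>0$.

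For the identification of $f_\infty(U)$ as a Gauss lift, each $M_k=f_k(U)$ is tangent to the holonomic distribution $\mathcal D$ defined in~\eqref{eqdef:holon}, since $M_k$ is a Gauss lift. As $\mathcal D$ is a smooth distribution on $\GG$ and $f_k\to f_\infty$ smoothly (with $\lambda_k\to\lambda_\infty$), the limit $f_\infty(U)$ is also tangent to $\mathcal D$; likewise $f_\infty$ is $J$-holomorphic as a smooth limit of $J$-holomorphic maps. The bound $|d(p\circ f_k)(0)|\ge c_1$ passes to the limit, so $p\circ f_\infty$ is an immersion at $0$, which means $\Sigma_\infty\defeq p(f_\infty(U))$ is a smooth surface near $x_\infty=p(f_\infty(0))$. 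For any $(x,P)\in f_\infty(U)$ near $(x_\infty,T_{x_\infty}\Sigma_\infty)$, the tangent space $T_{(x,P)}f_\infty(U)\subset\mathcal D_{(x,P)}=P\oplus\Hom(P,P^\perp)$ projects via $dp$ onto $T_x\Sigma_\infty$, and its horizontal part lies in $P$; a dimension count forces $T_x\Sigma_\infty=P$. Thus $f_\infty(U)$ is locally the Gauss lift of $\Sigma_\infty$, and Proposition~\ref{p:PseudoHolomorphicGaussLift} applied in the reverse direction (using $J$-holomorphicity of the Gauss lift) identifies $\Sigma_\infty$ as a maximal surface.

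The main obstacle is the uniform lower bound on $|df_k(0)|$: smooth convergence of holomorphic immersions does not by itself prevent the derivative from collapsing at the base point in the limit, as the example $f_k(z)=(z^2,z/k)$ illustrates. Overcoming this requires simultaneously the boundedness of the second fundamental form and the graph property supplied by the local control hypothesis~$(*)$; without either ingredient, the Koebe estimate above fails. Once the lower bound is established, the rest of the argument is a routine combination of smooth convergence, closedness of the holonomic distribution $\mathcal D$, and the characterization of Gauss lifts of maximal surfaces as $\mathcal D$-tangent $J$-holomorphic curves.
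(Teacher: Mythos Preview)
Your overall architecture is reasonable, and your second step (showing a $J$-holomorphic curve in $\GG$ tangent to $\mathcal D$ on which the horizontal projection is immersive must be a Gauss lift, hence by Proposition~\ref{p:PseudoHolomorphicGaussLift} the lift of a maximal surface) is essentially the same as the paper's. The difficulty is entirely in the first step, and there your argument has a genuine gap.

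The ``Koebe-type distortion estimate'' you invoke is not a standard result. Koebe's theorem bounds $\vert\psi'(0)\vert$ in terms of the inradius for univalent maps $\psi:\D\to\Omega\subset\C$; you are applying it to the conformal map $p\circ f_k:\D\to(\Sigma_k,g_\I)$, whose target is a Riemannian surface with variable curvature $K_{g_\I}\geq -1$. There is no off-the-shelf statement giving $\vert d(p\circ f_k)(0)\vert_{g_\I}\geq c(r_0)$ from a lower bound $r_0$ on the $g_\I$-inradius, and the obvious routes fail: the warped projection $\pi_k$ is \emph{not} conformal, so you cannot transport the problem to a domain in $H_k$ and apply classical Koebe there; and the Ahlfors--Schwarz/Yau comparison lemmas require a curvature \emph{upper} bound on the target by a negative constant, which $g_\I$ does not have. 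You would need an ad hoc comparison argument (using $\Delta\log\lambda\leq\lambda^2$ on the pullback conformal factor) that you do not supply. Moreover, in Case~2 the basepoint $f_k(0)=w_k$ lies on $\partial M_k$, so $p\circ f_k(0)\in\partial\Sigma_k$ and the inradius there is zero; your inradius argument does not address the semi-disk case at all.

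The paper takes a different route that avoids these issues. It introduces the $V$-valued $1$-form $\alpha$ given by orthogonal projection onto the $P$-factor in the splitting~\eqref{eq:split2}, observes that a holomorphic curve in $\GG$ is a Gauss lift precisely when $\alpha$ restricts injectively to it, and then checks the hypotheses of the appendix Theorem~\ref{theo:imm-limi-bd}. The crucial input is the pointwise inequality $\Vert\T f_k\Vert\leq\sqrt{2}\,\Vert f_k^*\alpha\Vert$, which follows directly from $g_\II\leq 2g_\I$ (Proposition~\ref{pro:IetII}) using $\Vert\II\Vert\leq 1$. This inequality simultaneously (via Proposition~\ref{pro:hypboundTf}) verifies the technical hypothesis~\eqref{hyp:boundTf} and, passing to the limit, forces $f_\infty^*\alpha\not\equiv 0$ because $f_\infty$ is non-constant by Corollary~\ref{lem:limibord}. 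Theorem~\ref{theo:imm-limi-bd} then gives $f_\infty^*\alpha$ nowhere zero, handling both the free-boundary and boundary cases uniformly. The machinery of that appendix theorem---an integral estimate on the connection form of the pulled-back metric---is precisely what replaces the Koebe argument you are reaching for.
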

\begin{proof} We have the orthogonal splitting
\begin{eqnarray}
\T_{(x,P)}\GG(\Hn)=P\oplus P^\perp\oplus\Hom(P,P^\perp)\ .\label{eq:split2}
 \end{eqnarray}
Let us consider the complex line subbundle  $V$ of $T\GG$ over $\GG(\Hn)$ so that, in the splitting above, we have $V_{(x,P)}=P$. The orthogonal projection from $
\T_{(x,P)}\GG(\Hn)$ to $P$ is a complex morphism, and thus gives rise to a form  $\alpha$ in $\Omega^1_{\mathbb C}(\GG,V)$.

By construction, if $M$ is the lift of a maximal surface, then $\alpha$ restricted to $\T M$ is injective. Conversely, if $\alpha$ is non-zero restricted to a holomorphic curve $M$, then $M$ is the lift of a maximal surface. 

For any $k$, we now choose a real line bundle $L_k$ in $V$ so that along $W(\gamma_k)$, we have $L_k=\omega(\T W(\gamma_k))$ (see Proposition \ref{pro:bc}).

Let $v$ be a tangent to $M_k$, and $u=\omega(v)$. 
By Proposition \ref{pro:IetII},
$$
\Vert v\Vert^2 = g_\II(u,u)\leq (1+\Vert \II\Vert^2) g_\I(u,u)\leq 2\  \Vert \alpha(v)\Vert ^2\ ,\ 
$$
where in the last inequality we have used the  assumption that the norm of the second fundamental form of $\Sigma_k$ is bounded by 1. Thus it follows that 
$$
\Vert \T f_k\Vert\leq  \sqrt{2}\  \Vert f_k^*\alpha \Vert \ .
$$
According to Proposition \ref{pro:hypboundTf}, this last inequality is enough to imply that the hypotheses of Theorem \ref{theo:imm-limi-bd} are all satisfied. Thus  $f_\infty^*\alpha$ is non-zero, and in particular $f_\infty(U)$ is  the lift of a maximal surface.
\end{proof}
\subsubsection{Proof of Proposition \ref{pro:loc-contr}}\label{sec:pro-loc-contr}  The proposition is a consequence of Lemma \ref{lem:limibord} and \ref{lem:limigraph}.

\subsection{Global control}\label{sec:glob-contr} Our goal in this subsection is to prove a global compactness result under assumptions that we make now precise:

\begin{enumerate}
\item Let $\seqk{\lambda}$ be a bounded sequence of positive numbers.
\item Let $\seqk{R}$ be a sequence of positive number so that $\lim_{k\to\infty}R_k=\infty$.
\item Let $\seqk{\Sigma}$ be a sequence of  connected complete acausal maximal  surfaces in $\Hn_{\lambda_k}$. Let $x_k\in \Sigma_k$ and let
 $$B_k(R)\defeq \{z\in \Sigma_k\mid d_\I(z,x_k)\leq R\},$$
 and let $H_k$ be the totally geodesic plane tangent to $\Sigma_k$ at $x_k$.
\end{enumerate}
\begin{definition}[\sc Global control hypothesis $(**)$]\label{sec:condSSTAR}

The sequence  $\{\Sigma_k,\lambda_k\}_{k\in \mathbb N}$ satisfies the {\em global control hypothesis $(**)$ }, if there exist positive constants $A$, $M_0$  and $\delta$ so that 		
\begin{enumerate}
\item The sequence $\{x_k,H_k,\Hn_{\lambda_k}\}_{k\in\mathbb N}$ converges (in the sense of Definition \ref{def:bdgeom2}).
\item  We have the bound $
 \Vert \II_{\Sigma_k}\Vert\leq M_0$ on $B_k(R_k)$.
	\item The 1-dimensional manifold  $\gamma_k\defeq \partial\Sigma_k$  is  strongly positive and $\delta$-unpinched, 
\begin{enumerate}
\item \label{hyp:lcbd1g} The sequence $\{\gamma_k\}_{k\in\mathbb N}$ has bounded geometry, and
\item \label{hyp:lcbd2g} For any point $y$ in $\gamma_k$, we have $d_\GG\left(\T_y\Sigma_k, \T_y^{(2)}\gamma_k\right)\leq A$. \end{enumerate}
\end{enumerate}
\end{definition}
     
Our goal is now to show the 

\begin{proposition}[\sc Convergence with global control]\label{pro:glob-contr}
	Assuming the global control hypothesis $(**)$.  
\begin{enumerate}
	\item  Then for any positive $R$, there exists $\epsilon$, only depending on $\delta$, $A$ and $\seqk{\gamma}$ so that if  $\seqk{x}$ is a sequence of points with $x_k$ in $\Sigma_k$, with the property that $\eth(x_k,\Fr(\Sigma_k))$ is bounded from below by  $R$ we have: the sequence of pointed surfaces $\{(x_k,\Sigma^\epsilon_k)\}_{k\in\mathbb N}$ subconverges smoothly on the ball  $\Sigma^\epsilon_k$ in $\Sigma_k$, where $\Sigma^\epsilon_k$ has center $x_k$ and $d_\I$-radius $\epsilon$. 
\item 	Assume now that $\Sigma_k$ is complete, so that in particular $Fr(\Sigma_k)$ is empty. Let  $\pi_0$ be the warped projection on  some pointed hyperbolic plane $\Pp_0=(H_0,x_0)$. Assume that 
	$d_{\mathcal G}(\Pp_0,\Gamma (\Sigma_k) )$
 	is uniformly bounded. Then $\{\Sigma_k,\Hn_{\lambda_k}\}_{k\in\mathbb N}$ subconverges as a graph on every compact set of $H_0$ (see Definition \ref{d:ConvergenceAsGraph}).
\end{enumerate}	
\end{proposition}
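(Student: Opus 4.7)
The plan is to reduce Proposition \ref{pro:glob-contr} to the local convergence result Proposition \ref{pro:loc-contr} by using Proposition \ref{pro:control-proj} as a bridge: the latter produces, around any point of $\Sigma_k$ sufficiently distant from the frontier, a neighborhood that satisfies the local control hypothesis $(*)$. Once such neighborhoods are in hand, Proposition \ref{pro:loc-contr} yields subconvergence on them; the biLipschitz comparison of $g_\I$ and $g_\II$ from Proposition \ref{pro:IetII} then transfers this to subconvergence on an intrinsic ball, and a covering/diagonalization argument upgrades local subconvergence to graph subconvergence on compact subsets of $H_0$.

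\textbf{Part (1).} Fix $R > 0$ and let $p_k \in \Sigma_k$ with $\eth(p_k,\Fr(\Sigma_k)) \geq R$; since $\eth \leq d_\I$ along a spacelike surface, we also have $d_\I(p_k,\Fr(\Sigma_k)) \geq R$. Choose $\delta'' = \min(\delta,R,\delta_0)$, so $\gamma_k$ is $\delta''$-unpinched, and pick $\kappa > 0$ small, depending only on $\delta,R,A$. For $k$ large enough that the $\|\II\|$-bound of hypothesis $(**)$ covers the neighborhood under consideration, Proposition \ref{pro:control-proj} produces neighborhoods $\mathring{\Sigma}_{p_k}$ that are graphs over subsets of the disk of radius $\kappa$ in the tangent hyperbolic plane $\Pp_{p_k}$, with Gauss-lift diameter bounded by $A\kappa$, with at most one boundary arc on $\gamma_k$, and with $d(p_k,\pi_{p_k}(\Fr(\mathring{\Sigma}_{p_k}))) \geq \kappa/A$. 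Combined with the hypothesis $d_\GG(\T_y\Sigma_k,\T_y^{(2)}\gamma_k) \leq A$ and the bounded geometry of $\seqk{\gamma}$ (which allows smooth extraction of a limit boundary arc), these properties verify the local control hypothesis $(*)$ for the pointed data $(p_k,\mathring{\Sigma}_{p_k},\lambda_k)$. Extract subsequences along which $\lambda_k$ and $\Pp_{p_k}$ converge, using local compactness of the Grassmannian bundle, and apply Proposition \ref{pro:loc-contr}. Proposition \ref{pro:IetII} with $\|\II\|\leq 1$ makes $g_\I$ and $g_\II$ uniformly biLipschitz, so the smooth convergence passes to the $d_\I$-ball $\Sigma^\epsilon_k$ for some $\epsilon > 0$ depending only on $R,\delta,A,\seqk{\gamma}$.

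\textbf{Part (2).} When $\Sigma_k$ is complete, $\Fr(\Sigma_k) = \emptyset$, so part (1) applies at every point with any $R$. Given compact $K \subset H_0$, the uniform bound on $d_\GG(\Pp_0,\Gamma(\Sigma_k))$ together with Proposition \ref{pro:diamwarp} makes $\pi_0|_{\Sigma_k}$ uniformly biLipschitz on $\pi_0^{-1}(K) \cap \Sigma_k$, which therefore has $d_\I$-diameter bounded independently of $k$. Cover it by a uniformly bounded number of $d_\I$-balls $\Sigma^\epsilon_k(p_{k,i})$, apply part (1) to each sequence $\seqk{p_i}$, and diagonalize. The limit pieces agree on overlaps and glue to a maximal surface that is a graph over a neighborhood of $K$; exhausting $H_0$ by a nested sequence of compact sets and diagonalizing once more produces graph subconvergence on every bounded ball of $H_0$ in the sense of Definition \ref{d:ConvergenceAsGraph}.

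\textbf{Main obstacle.} The technical heart of the argument is Proposition \ref{pro:control-proj}, which delivers the topological-disk neighborhoods with controlled Gauss lift and at most one boundary arc; its proof rests on the $\delta$-unpinched hypothesis through Lemmas \ref{lem:liftdoedarc} and \ref{lem:proj-arc} for lifting boundary arcs. A secondary subtlety is compatibility between the moving tangent projections $\pi_{p_{k,i}}$ used for the local pieces in part (1) and the fixed $\pi_0$ used to read off the graph structure in part (2); this compatibility is the content of Proposition \ref{pro:diamwarp} applied to the uniform bound on $d_\GG(\Pp_0,\Gamma(\Sigma_k))$.
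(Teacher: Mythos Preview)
Your treatment of part (1) matches the paper's: you invoke Proposition \ref{pro:control-proj} to manufacture the local data required by hypothesis $(*)$ and then feed it to Proposition \ref{pro:loc-contr}. The paper packages the first step as a separate lemma (Lemma \ref{lem:1e*}), but the content is the same.

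Part (2), however, has a genuine gap. You assert that the uniform bound on $d_\GG(\Pp_0,\Gamma(\Sigma_k))$ together with Proposition \ref{pro:diamwarp} makes $\pi_0|_{\Sigma_k}$ uniformly biLipschitz on $\pi_0^{-1}(K)\cap\Sigma_k$, and hence that this set has bounded $d_\I$-diameter. But the hypothesis $d_\GG(\Pp_0,\Gamma(\Sigma_k))\leq C$ is an \emph{infimum}: it only supplies, for each $k$, a \emph{single} point $z_k\in\Sigma_k$ with $d_\GG(\T_{z_k}\Sigma_k,\Pp_0)\leq C$ (this is exactly how the paper reads the hypothesis). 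Proposition \ref{pro:diamwarp} needs the tangent plane at \emph{every} point of $\pi_0^{-1}(K)\cap\Sigma_k$ to be close to $\Pp_0$, and you have no a priori control on those tangent planes. You might hope to get it from the bound $\Vert\II\Vert\leq 1$ by integrating along $\Sigma_k$, but that requires a bound on the $d_\I$-distance from $z_k$ to $\pi_0^{-1}(K)$, which is precisely what you are trying to prove: the argument is circular.

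The paper sidesteps this with a propagation argument rather than a covering argument. One defines $U\subset H_0$ to be the set of points $z$ such that $\seqk{\Sigma}$ converges as a graph over $B_\epsilon(z)$; the single point $z_k$ furnished by the hypothesis shows $U\neq\emptyset$, and part (1) shows that once $\seqk{\Sigma}$ converges as a graph over $B_\epsilon(z_0)$, the tangent planes over $B_{\epsilon/2}(z_0)$ are uniformly controlled, so part (1) applies again there and $U$ contains its own $\epsilon/2$-neighborhood. Hence $U=H_0$. This step-by-step propagation is what replaces the global biLipschitz control you assumed.
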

We remark that by rescaling, that is replacing  $\lambda_k$ by $\lambda_k M_0^{-2}$ in the hypothesis and reversing in the conclusion, we can always assume that $M_0=1$. We will do so in the next paragraph.

\subsubsection{Getting a  local control}
Let us first prove the following lemma.
\begin{lemma}\label{lem:1e*}
	Assume that  $\{\Sigma_k,\lambda_k\}_{k\in \mathbb N}$ satisfies  the global control hypothesis $(**)$ with $M_0=1$. Then for any positive $R$, there exists $\kappa$ and $B$ only depending on $R$, $A$, $\delta$ and $\seqk{\gamma}$ so that the following is true. Let  $\seqk{x}$ with $x_k$
	 in $\Sigma_k$ and $\eth(x_k,\Fr(\Sigma_k))$ bounded below by $R$, then there exists a sequence of maximal surfaces $\seqk{\mathring\Sigma}$ so that 
	\begin{enumerate}
		\item We have the inclusions 
		$\mathring\Sigma_k\subset \Sigma_k$, and $ \partial\mathring\Sigma_k\subset \partial\Sigma_k$.
		\item the sequence $\{\mathring\Sigma_k,  x_k, \lambda_k\}_{k\in \mathbb N}$ satisfies  the local control hypothesis $(*)$ for $\kappa$ and $B$.
	\end{enumerate}
\end{lemma}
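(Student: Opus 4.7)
The plan is to apply Proposition \ref{pro:control-proj} directly at each $x_k$ and set $\mathring\Sigma_k$ to be the neighborhood produced there, then verify each clause of the local control hypothesis $(*)$.

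First I would reduce to the setting where $\lambda_k \leq 1$: since $\seqk{\lambda}$ is bounded by some $\Lambda$, rescale by $\Lambda^{-1}$; this only improves the $\delta$-unpinching by the remark in paragraph \ref{sss:unpinched}. Next, I need to convert the ambient bound $\eth(x_k,\Fr(\Sigma_k))\geq R$ from hypothesis $(**)$ into the intrinsic bound $d_\I(x_k,\Fr(\Sigma_k))\geq R'$ required by Proposition \ref{pro:control-proj}; this uses the bound $\Vert\II_{\Sigma_k}\Vert\leq 1$ on $B_k(R_k)$ (with $R_k\to\infty$) and Corollary \ref{c:UpperBoundLiftMetric}, which together imply that $d_\I$ and the ambient distance are uniformly comparable on bounded scales. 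With $\delta'\defeq\min(\delta,R')$, fixing $\kappa$ positive with $\kappa<\min(\delta'/100,\delta_0)$ then produces, for each $k$, a neighborhood $\mathring\Sigma_k$ of $x_k$ homeomorphic to a disk satisfying items (i)-(iv) of Proposition \ref{pro:control-proj}. By construction, $\mathring\Sigma_k\subset\Sigma_k$ and $\partial\mathring\Sigma_k\subset\partial\Sigma_k$, so item (1) of our conclusion holds.

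Verifying $(*)$ should be routine. Item (iii) of Proposition \ref{pro:control-proj} will yield hypothesis (1a) (graph over a connected $U_k\subset B(x_k,\kappa)\subset H_k$) and (1d) (connectedness of $\partial U_k$, since $U_k$ is a topological disk). Item (iv) yields (1c) with any $B\geq A$. Hypothesis (1b) combines item (ii) of Proposition \ref{pro:control-proj} (fiber displacement bounded by $A\kappa$) with Proposition \ref{pro:IetII} (giving $g_\II\leq 2 g_\I$ since $\Vert\II\Vert\leq 1$) and Corollary \ref{c:UpperBoundLiftMetric} (so the $d_\I$-diameter of $\mathring\Sigma_k$ is $O(\kappa)$); the $d_\GG$-diameter of $\Gamma(\mathring\Sigma_k)$ is therefore bounded by $B\kappa$ for a suitable $B$. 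The bound $\Vert\II_{\mathring\Sigma_k}\Vert\leq 1$ of (2) is inherited from $\Sigma_k$, since $\mathring\Sigma_k\subset B_k(R_k)$ for $k$ large.

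The slightly delicate part will be hypothesis (3). By item (i) of Proposition \ref{pro:control-proj}, $\partial\mathring\Sigma_k = \mathring\Sigma_k\cap\gamma_k$ is either empty or a single sub-arc of $\gamma_k$. If it is empty along some subsequence, I pass to that subsequence, where (3) is vacuous. Otherwise the sub-arcs $\partial\mathring\Sigma_k$ have uniformly bounded length by Corollary \ref{c:UpperBoundLiftMetric}, and by hypothesis \ref{hyp:lcbd1g} of $(**)$ (bounded geometry of $\seqk{\gamma}$) they subconverge smoothly to a strongly positive limit $\gamma_\infty$, establishing (3a). Hypothesis (3b), the bound $d_\GG(\T_y\mathring\Sigma_k,\T_y^{(2)}\partial\mathring\Sigma_k)\leq A$, transfers directly from hypothesis \ref{hyp:lcbd2g} of $(**)$. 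The chief bookkeeping challenge is ensuring that the constants $\kappa$ and $B$ depend only on the specified parameters $R$, $A$, $\delta$, and $\seqk{\gamma}$, which requires carefully tracking the constants arising in Proposition \ref{pro:control-proj} and in the comparison between intrinsic, ambient, and Gauss-lift metrics.
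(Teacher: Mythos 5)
Your proposal is correct and follows the same approach as the paper: the paper's own proof is a one-paragraph citation of Proposition \ref{pro:control-proj} for items \ref{it:1a*}--\ref{it:1c*} of the local control hypothesis $(*)$, with the observation that the remaining items are inherited directly from the global control hypothesis $(**)$. Your write-up is simply a more detailed version of this, filling in the rescaling to $\lambda_k \leq 1$, the conversion of the ambient frontier bound $\eth(x_k,\Fr(\Sigma_k))\geq R$ into the intrinsic bound required by Proposition \ref{pro:control-proj} (which the paper leaves implicit), and the item-by-item verification --- all of which is faithful to the paper's intent.
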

\begin{proof} Let us use Proposition \ref{pro:control-proj} to construct, given $\kappa$ small enough, a $\mathring\Sigma_k$ and  an open set $U_k$ which will satisfy items \ref{it:1a*}, \ref{it:a1*},   \ref{it:1b*},  and \ref{it:1c*} of the local control hypothesis $(*)$ ({\it cf.} see Definition \ref{sec:cond*ST}), and thus the general  item \ref{it:1*} is satisfied. Since all the other items are consequences of  the global control  hypothesis $(**)$ ({\it cf.} see Definition \ref{sec:condSSTAR}), it follows that the local control hypothesis $(*)$ is satisfied for $\mathring\Sigma_k$.  This concludes the proof.
\end{proof}

\subsubsection{Proof of the Global Control Proposition \ref{pro:glob-contr}} 

Let  $\{\Sigma_k,\lambda_k\}_{k\in \mathbb N}$ be a sequence satisfying  the global control hypothesis $(**)$ ({\it cf.} see Definition \ref{sec:condSSTAR}).

Let  $\seqk{x}$ be a sequence of points with $x_k\in \Sigma_k$ with $\eth(x_k,\Fr(\Sigma_k))$ bounded from below by a positive constant. Let $\{\mathring\Sigma_k\}_{k\in\N}$  sequence of maximal surface obtained by Lemma \ref{lem:1e*}.

Since by Proposition \ref{pro:loc-contr}, the sequence $\{x_k,\mathring\Sigma_k, \Hn_{\lambda_k}\}_{k\in\mathbb N}$ subconverges, it follows that there exists a constant $\epsilon$, depending on the sequence $\{\Sigma_k,\lambda_k, x_k\}_{k\in \mathbb N}$ so that 
 $\{x_k,\mathring\Sigma^\epsilon_k, \Hn_{\lambda_k}\}_{k\in\mathbb N}$ subconverges smoothly. However, since we can choose our sequence $\seqk{x}$ arbitrarily, provided 
 $\eth(x_k,\Fr(S_k))$ is bounded from below, it follows that we can choose $\epsilon$ to depend only on $\delta$, $A$ and the sequence $\seqk{\gamma}$.

 This concludes the proof of the first item of the proposition.
 
 Let us show the second item.  Let $\Pp_0$ be as in the proposition and $\pi_0$ the warped projection on $\Pp_0$. Recall that the warped projection is a dilation.
 
 Consider $y$ in $\Pp_0$, so that $y=\pi_0(x_k)$ with $x_k\in \Sigma_k$ and $d_\GG(\Pp_0,\T_{x_k}\Sigma_k)$ uniformly bounded. The first item guarantees that $\Sigma_k$ converges as a graph over the ball $B_\epsilon(y)$ of center $y$ and radius $\epsilon$ in $\Pp_0$.
 
 Let now $U$ be the subset of $\Pp_0$,  consisting of those points $z$ so that $\seqk{\Sigma}$ converges as a graph over $B_\epsilon(z)$. We focus on one such particular $z_0 \in U$; in particular, if $\seqk{x}$ is a sequence of points, with $x_k$ in $\Sigma_k$ so that $\{\pi_0(x_k)\}_{k\in \mathbb N}$ converges to an element $w$ in $B_{\epsilon/2}(z_0)$, then $d_\GG(\T_{x_k}\Sigma_k,\Pp_0)$ stays uniformly bounded. Thus, by the previous argument, and using that the $\Fr(\Sigma_k)$ is empty,$w$ belongs to $U$.
 
 It follows that $U$ contains the $\frac{\epsilon}{2}$ neighborhood of itself. Thus, if $U$ is non-empty, then $U=\Pp_0$.
 
 The hypothesis guarantees the existence of a sequence $\seqk{z}$ of points so that $d_\GG(\T_{z_k}\Sigma_k,\Pp_0)$ stays bounded.  It follows that $\pi_0(z_k)$ is a bounded sequence, hence subconverges to a point $y$ which belongs to $U$. Hence $U$ is not empty.
 
 This concludes the proof of the proposition.

\subsection{Bernstein type theorem}\label{sec:BernT}

In this paragraph, we prove the following.

\begin{theorem}[\sc Bernstein for maximal surfaces]\label{theo:BernT}
\begin{enumerate}
	\item 	Let $\Sigma$ be a complete maximal surface without boundary in $\bR$. Then $\Sigma$ is a totally geodesic 2-plane.
	\item Let $\Sigma$ be a complete maximal surface in $\bR$ whose boundary $\partial \Sigma$ is a geodesic. Then $\Sigma$ is a half-plane.
\end{enumerate}
\end{theorem}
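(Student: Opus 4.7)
The plan is to prove both parts via the classical holomorphic Gauss map strategy, exploiting flatness of $\bR$. First, since $K_M\equiv 0$, Proposition~\ref{p:GaussEquation} specializes to
\[K_\Sigma \,=\, -q_N(\II(e_1,e_1))\,-\, q_N(\II(e_1,e_2))\,\geq\, 0,\]
as $q_N$ is negative definite on the normal bundle. Thus $\Sigma$ carries a Riemannian metric of non-negative Gauss curvature. Since the embedding $x:\Sigma\hookrightarrow \bR$ satisfies $\Delta_\Sigma x = \mathsf{H}=0$, each ambient coordinate restricts to a harmonic function on $\Sigma$; in the no-boundary case this rules out $\Sigma$ being compact, since then harmonic functions would be constant and $\Sigma$ would degenerate to a point. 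A complete non-compact Riemannian $2$-manifold with $K\geq 0$ is then parabolic in the potential-theoretic sense (Ahlfors, Blanc--Fiala), so every bounded holomorphic function on $\Sigma$ is constant.

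For part (i), I will promote this parabolicity to the Gauss map. In the flat setting $\GR{\bR}\cong \bR\times \Gr{\bR}$ is a trivial bundle, the horizontal distribution is integrable, and the projection $p:\GR{\bR}\to \Gr{\bR}$ onto the fiber Grassmannian is holomorphic for the Hermitian symmetric structure discussed in Paragraph~\ref{ss:RiemannianSymmetricSpace}. Composing with the $J$-holomorphic Gauss lift $\Gamma:\Sigma\to \GR{\bR}$ of Proposition~\ref{p:PseudoHolomorphicGaussLift} yields a holomorphic Gauss map $g\defeq p\circ \Gamma:\Sigma\to \Gr{\bR}$. By Harish-Chandra the target embeds as a bounded symmetric domain $D\subset \C^N$; the coordinates of $g$ viewed in $\C^N$ are then bounded holomorphic functions on the parabolic $\Sigma$, hence constants. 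So $g$ is constant, $\Sigma$ has constant tangent plane and therefore lies in a spacelike affine $2$-plane $P$. By connectedness and completeness, $\Sigma=P$.

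For part (ii), I will apply a Schwarz-type doubling. Let $\ell\supset \partial\Sigma$ be the affine spacelike line supporting the geodesic boundary, and let $\sigma:\bR\to \bR$ be the involution which is the identity on $\ell$ and $-\Id$ on $\ell^\perp$; this is an isometry of $\bR$ because the quadratic form is preserved by $-\Id$ on any linear subspace. Setting $\tilde\Sigma\defeq \sigma(\Sigma)$, the surfaces $\Sigma$ and $\tilde\Sigma$ share the boundary $\ell$ and have matching tangent planes along it: the second tangent direction (orthogonal to $\ell$ inside $\Sigma$) lies in $\ell^\perp$ and is sent to its opposite by $\sigma$, so spans the same line. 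Hence $\Sigma\cup\tilde\Sigma$ is a $C^1$ maximal surface without boundary, and elliptic regularity for the maximal-surface system with an analytic totally geodesic boundary condition upgrades the matching to smoothness. The doubled surface is complete, so part (i) identifies it with a spacelike plane, of which $\Sigma$ is the half bounded by $\ell$.

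The main obstacle is the smoothness claim in the doubling step: verifying that $\Sigma\cup\tilde\Sigma$ genuinely falls under the hypotheses of part (i) and is not merely $C^1$ along $\ell$ requires a Schwarz reflection principle for the maximal-surface PDE across a totally geodesic analytic boundary, which is standard but requires care. The remaining steps --- the curvature sign, the parabolicity, and the holomorphicity of the Gauss map --- follow from the tools developed in Sections~\ref{sec:PseudoHyperbolic Geometry} and~\ref{sec:Submanifolds}, although one should also check that the ``bounded symmetric domain'' property used for $\Gr{\bR}$ (stated in the paper only for $\Gr{E}$ with $E$ of signature $(2,n+1)$) transfers without modification to $\bR$ of signature $(2,n)$.
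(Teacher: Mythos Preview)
Your part~(i) coincides with the paper's proof: holomorphic Gauss map into $\Gr{\bR}$, non-negative curvature via Proposition~\ref{p:GaussEquation}, parabolicity by Blanc--Fiala, and Liouville on a bounded domain.

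For part~(ii) you take a genuinely different route. The paper never reflects $\Sigma$ inside the ambient $\bR$; it stays with the Gauss map $\varphi$. Since $\partial\Sigma$ is an ambient geodesic it is also intrinsically geodesic in $\Sigma$, so the conformal double of $\Sigma$ is complete with $K_\Sigma\geq 0$ and hence conformal to $\C$; thus $\Sigma$ itself is uniformized by the half-plane. The key observation is that $\varphi(\partial\Sigma)$ lands in $W_L=\{P\in\Gr{\bR}:L\subset P\}$, a totally real, totally geodesic submanifold which in the Harish-Chandra realization sits inside a real linear subspace of $\C^n$. Classical Schwarz reflection for holomorphic functions with real boundary values then extends $\varphi$ to a bounded entire map, and Liouville finishes. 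Your ambient doubling is also valid --- the $C^2$ matching across $\ell$ that you flag does in fact hold, since in graph coordinates $f=\partial_1 f=\partial_1^2 f=0$ along $\ell$ and $\mathsf H=0$ then forces $\partial_2^2 f=0$ there, after which elliptic bootstrap gives smoothness --- but it requires a PDE reflection principle for the maximal-surface system, whereas the paper's route reduces everything to the textbook one-variable Schwarz principle and sidesteps that verification entirely.
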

 The first part of this result was proved by Ishihara in \cite{Ishihara} for $p$-dimensional entire graphs in the signature $(p,q)$ pseudo-Euclidean space $\E^{p,q}$. Here, for completeness, we give a more direct proof for the case $\bR$. The reader may wish to compare the analogous proof of Chern \cite{Chern1969} in the classical setting.   

\begin{proof} Observe first that  $\GR{\bR}$ is isomorphic to $\bR\times \Gr{\T_x\bR}$ where $x$ is any point in $\E^{2,n}$. By the proof of Proposition \ref{p:PseudoHolomorphicGaussLift},
the projection of the Gauß map $\Gamma:\Sigma \to \GR{\bR}$ to the second factor of the decomposition above yields a holomorphic map  $\varphi: \Sigma\to \Gr{\T_x \bR}$. 

Observe now that $\Gr{\bR}=\SO_0(2,n)/(\SO(2)\times\SO(n))$ is the symmetric space of $\SO(2,n)$ and, by a theorem of Harish-Chandra (see again \cite{Clerc:2003aa}), is biholomorphic to a bounded domain in $\C^n$.

We remark that $\Sigma$ is conformal to $\C$: by Gauß' equation, the induced metric on $\Sigma$ has non-negative curvature 
(see Proposition \ref{p:GaussEquation}): by a result of Blanc and Fiala \cite{BlancFiala:1942} (see also \cite{Huber:1957}), we see that
$\Sigma$ uniformizes as the complex plane $\C$.
By Liouville theorem  $\varphi$ is constant. Hence  $u(\Sigma)$ is a spacelike plane.

Let us now consider the boundary case. Let us construct $\varphi$ as above. Let  $L$ be a spacelike line in $\bR$ and 
$$
W_L\defeq\left\{P\in \Gr{\T_x \bR}\mid  L\subset P\right\}\ .
$$
Then $W_L$ is totally real, and  totally geodesic: the geodesic between two 2-planes with a common line $L$, consists of planes containing $L$.
It follows that $\partial \Sigma$ is totally geodesic for the induced metric on $\Sigma$ by $\varphi$, and thus the same argument applied to the doubling shows that $S$ is unifomized by the half-plane.

In the corresponding symmetric domain picture we can assume that the image of $W_L$ is a contained in  real vector subspace in $\C^n$. Thus, again, the Liouville theorem allows us to conclude that $\varphi$ is constant.
\end{proof}

\subsection{Bounds on the second fundamental form}\label{sec:bd-II}

 Let $\seqk{\Sigma}$ be a sequence of acausal maximal surfaces which are complete with respect to $d_\I$. Let $\gamma_k\defeq \partial \Sigma_k$ be the finite boundary of $\Sigma_k$. Assume that we have the following boundary conditions:  
\begin{enumerate}
\item The sequence $\seqk{\gamma}$ is strongly positive and uniformly unpinched.
 \item There is a positive constant $A$ so that for all $k$, for all $x$ in $\gamma_k$, we have $d\left(\T_{x_k}\Sigma_k, \T_{x_k}^{(2)}\gamma_k\right)\leq A$.
 \item The sequence $\seqk{\gamma}$ has bounded geometry.
\end{enumerate}
Our main result is now:
\begin{proposition}\label{pro:CT1} For every $\epsilon$, there exists some constant $M$, such that if $\seqk{x}$ is a sequence with $x_k\in \Sigma_k$ so that $d_\I(x,\Fr(\Sigma_k))>\epsilon$, then  the second fundamental form of $\Sigma_k$ at $x_k$ has norm uniformly bounded by $M$.
\end{proposition}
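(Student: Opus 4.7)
The plan is a standard blow-up/renormalisation argument, combining a Hofer-type point-picking lemma with the Bernstein Theorem~\ref{theo:BernT} and the Global Control Proposition~\ref{pro:glob-contr}.

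I would argue by contradiction: suppose there exist $\epsilon>0$, a subsequence, and points $x_k\in\Sigma_k$ with $d_\I(x_k,\Fr(\Sigma_k))>\epsilon$ and $M_k\defeq\|\II_{\Sigma_k}(x_k)\|\to\infty$. First I would apply a Hofer-type point-picking argument inside the ball $B_\I(x_k,\epsilon/2)$ to obtain new points $y_k\in\Sigma_k$ with
\[\|\II_{\Sigma_k}(y_k)\|\eqdef M'_k\geq M_k\to\infty,\qquad d_\I(y_k,x_k)\leq\tfrac{\epsilon}{2},\qquad d_\I(y_k,\Fr(\Sigma_k))\geq\tfrac{\epsilon}{2},\]
and such that $\|\II_{\Sigma_k}\|\leq 2M'_k$ on the intrinsic ball of radius $(M'_k)^{-1/2}$ around $y_k$. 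I would then rescale the ambient space by setting $\lambda_k\defeq(M'_k)^{-2}$ and viewing $\Sigma_k$ as a complete acausal maximal surface in $\Hn_{\lambda_k}$. By Lemma~\ref{l:RescalingSecondFundamentalForm} the rescaled second fundamental form satisfies $\|\II_{\Sigma_k,\lambda_k}(y_k)\|=1$, and is bounded by $\sqrt 2$ on a $d_{\I,\lambda_k}$-ball of radius $\sqrt{M'_k}\to\infty$ (since distances scale by $M'_k$). By Proposition~\ref{pro:GeomTrans} the ambient spaces $\Hn_{\lambda_k}$ converge to $\bR$.

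Next I would split into two cases according to the rescaled distance of $y_k$ to the finite boundary $\gamma_k=\partial\Sigma_k$. In \emph{Case~A}, when $M'_k\cdot d_\I(y_k,\gamma_k)\to\infty$, I would apply Proposition~\ref{pro:glob-contr} (in its boundary-free version, working inside an exhaustion by large $d_{\I,\lambda_k}$-balls around $y_k$ not meeting $\gamma_k$) to extract a subsequential limit: a complete maximal surface $\Sigma_\infty\subset\bR$ without boundary, passing through some $y_\infty$, with $\|\II_{\Sigma_\infty}(y_\infty)\|=1$. This contradicts the first part of Bernstein's Theorem~\ref{theo:BernT}, which forces $\Sigma_\infty$ to be a totally geodesic 2-plane. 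In \emph{Case~B}, when $M'_k\cdot d_\I(y_k,\gamma_k)$ stays bounded, the finite boundary remains visible after rescaling. Here I would verify that the Global Control Hypothesis $(**)$ of Definition~\ref{sec:condSSTAR} holds for the rescaled sequence: the $\delta$-unpinched condition persists by the remark following Definition~\ref{d:Unpinched}, the angle bound at the boundary transfers since the Grassmannian distance $d_\GG$ is a conformal invariant of the ambient metric (Proposition~\ref{pro:GeomTrans}), and the bounded geometry of $\seqk{\gamma}$ in the rescaled metric degenerates in a controlled way: the curves $\gamma_k$, having uniformly bounded intrinsic curvature in $\Hn$, converge after rescaling by $M'_k\to\infty$ to a spacelike geodesic in $\bR$. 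Applying Proposition~\ref{pro:glob-contr} then yields a complete maximal surface $\Sigma_\infty$ in $\bR$ whose boundary is a geodesic and with $\|\II(y_\infty)\|=1$. This contradicts the second part of Theorem~\ref{theo:BernT}, which forces $\Sigma_\infty$ to be a half-plane.

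The main obstacles will be technical rather than conceptual. The most delicate step is verifying, in Case~B, that after rescaling by $\lambda_k\to 0$, the sequence of boundary curves $\gamma_k$ and the surfaces $\Sigma_k$ together satisfy all items of the Global Control Hypothesis~$(**)$ uniformly, and that the rescaled limit of $\gamma_k$ is genuinely a geodesic (not some degenerate object), so that the boundary Bernstein theorem applies cleanly. The second delicate point is the Hofer point-picking: one must ensure the new points $y_k$ do not escape to the frontier, for which a standard iteration starting from $x_k$ within the ball $B_\I(x_k,\epsilon/2)$ suffices, using the lower bound $d_\I(x_k,\Fr(\Sigma_k))>\epsilon$ to keep $y_k$ at distance at least $\epsilon/2$ from the frontier. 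Once these points are established, the remainder of the argument is essentially the assembly of Proposition~\ref{pro:glob-contr} and Theorem~\ref{theo:BernT}.
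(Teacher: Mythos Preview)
Your proposal is correct and follows essentially the same route as the paper: contradiction, point-picking (the paper packages this as a ``$\Lambda$-Maximum Lemma''), rescaling so that the second fundamental form is bounded by a fixed constant on balls of radius tending to infinity, then applying the Global Control Proposition~\ref{pro:glob-contr} and the Bernstein Theorem~\ref{theo:BernT}. The only cosmetic difference is that the paper does not split into your Cases~A and~B; it simply observes that after rescaling the boundary curves $\gamma_k$ (having bounded geometry and being uniformly unpinched) converge to a spacelike geodesic in $\bR$, so the limit surface has boundary either empty or a geodesic, and both parts of Theorem~\ref{theo:BernT} are invoked in one breath.
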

This proposition concludes the proof of the first item of Theorem \ref{t:CompactTheorem}.
\begin{proof}

We first recall without proof the following folkloric result (see for instance Paragraph 1.D in \cite{FolHarm} for a proof).

\begin{lemma}[\sc $\Lambda$-maximum lemma]\label{lem:lml} Let $(X,d)$ be a metric space. Let $\eta$ be a positive number and assume that the ball $B_y(\eta)$ of radius $\eta$ centered at $y$ is complete. Then there exists a constant $\Lambda \geq 1$ 
only depending on $\eta$, such that every positive locally bounded function $f$ on $X$ with $f(y)\geq 1$ admits a {\em $\Lambda$-maximum} on $B_y(\eta)$, that is a point $x$ so that 
\begin{eqnarray}
	f(x)&\geq& \sup\left\{f(y)\ ,\ \frac{1}{\Lambda}f(z)\mid \forall z \hbox{ such that } d(x,z)<\frac{1}{\Lambda\sqrt{f(x)}}\right\}\ . \label{eq:lammax} 
\end{eqnarray}
	\end{lemma}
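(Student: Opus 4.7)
The plan is to carry out a Hofer-style iterative construction. I would start with $x_0\defeq y$ (at which $f\geq 1$) and attempt to build inductively a sequence $\seqk{x}$ as follows: if the current point $x_k$ already satisfies the $\Lambda$-maximum inequality \eqref{eq:lammax}, we are done; otherwise, by the failure of \eqref{eq:lammax} at $x_k$, we may select $x_{k+1}$ with
\[
d(x_k,x_{k+1})<\frac{1}{\Lambda\sqrt{f(x_k)}} \qquad \text{and} \qquad f(x_{k+1})>\Lambda f(x_k).
\]
A trivial induction gives $f(x_k)\geq \Lambda^k f(x_0)\geq \Lambda^k$, so the step lengths decay at least geometrically: $d(x_k,x_{k+1})<\Lambda^{-1-k/2}$.

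The heart of the argument is the telescoping distance estimate
\[
d(x_0,x_n)\leq \sum_{k=0}^{n-1}\frac{1}{\Lambda\sqrt{f(x_k)}}\leq \sum_{k=0}^{\infty}\Lambda^{-1-k/2}=\frac{1}{\Lambda-\sqrt{\Lambda}}.
\]
I would then fix $\Lambda\geq 1$ large enough (depending only on $\eta$) that this sum is strictly less than $\eta$; this is the only place $\eta$ enters, and the resulting choice of $\Lambda$ is independent of $f$ and of $y$.

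To conclude, I would argue by contradiction. If the iterative process never terminated, then $\seqk{x}$ would be a Cauchy sequence entirely contained in $B_y(\eta)$; by completeness of this ball it would converge to some $x_\infty\in B_y(\eta)$. But $f(x_k)\to\infty$ would contradict the local boundedness of $f$ near $x_\infty$. Hence the construction must terminate at some step, producing a point $x$ satisfying the second part of \eqref{eq:lammax}. The condition $f(x)\geq f(y)$ required by the first part of \eqref{eq:lammax} then comes for free, because $f(x_k)\geq f(x_0)=f(y)$ at every step of the construction, the sequence being strictly increasing in $f$-value.

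I do not anticipate any real obstacle: the argument is a clean combination of a geometric-series bound against the iteration and completeness of the ball to rule out the non-terminating case. The only subtle point worth verifying carefully is that $\Lambda$ depends solely on $\eta$, which is transparent from the displayed geometric sum.
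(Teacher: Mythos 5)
Your argument is correct, and it coincides with the standard folklore proof (the Gromov--Hofer iterative construction) that the paper refers to but does not reproduce: the authors simply cite Paragraph~1.D of \cite{FolHarm}. The key points --- geometric decay of step lengths from $f(x_k)\geq\Lambda^k$, the resulting telescoping bound $\frac{1}{\Lambda-\sqrt{\Lambda}}$ forced below $\eta$ by choosing $\Lambda$ large in terms of $\eta$ alone, completeness of $B_y(\eta)$ to extract a limit, and local boundedness of $f$ to derive a contradiction --- are all present and correctly assembled, and your observation that monotonicity of $f(x_k)$ makes the first half of \eqref{eq:lammax} automatic closes the argument cleanly.
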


We now argue by contradiction. For each $k$, let us define the function $f_k:=\Vert\II_k\Vert$ on $\Sigma_k$.  Assume that the second fundamental form of the sequence $\Sigma_k$ is unbounded. Let then $\seqk{y}$ so that $y_k\in\Sigma_k$ and 
$$
\lim_{k\to\infty}f_k(y_k)=+\infty\ .
$$
For each $k$, we apply the $\Lambda$-Maximum Lemma \ref{lem:lml} to $f_k$ and $y_k$, taking $\eta=1$, and thus obtaining a $\Lambda$-maximum $x_k$ of $f_k$ in $B(y_k,1)$. Let $\lambda_k\defeq(\Lambda f_k(x_k))^{-2}$ so in particular we have 
$$
\lim_{k\to\infty}f_k(x_k)=\infty\ ,\ \ \lim_{k\to\infty}\lambda_k=0 \ ,  \ f_k(z) \leq \frac{1}{\sqrt{\lambda_k}}\ .$$
 for all $z$, with 
 $d_I(x_k,z)\leq \lambda_k^{\frac{1}{4}}\Lambda^{-\frac{1}{2}}$.
We renormalize the metric of $\Hn$ by $\lambda_k$ -- following  paragraph \ref{sec:renogr}. We shall denote, by $\Sigma^1_k$, the surface  $\Sigma_k$ seen as a surface in $\Hn_{\lambda_k}$ and denote all the geometric objects associated to $\Sigma^1_k$ with a superscript 1. Then by Lemma \ref{l:RescalingSecondFundamentalForm}, $\Vert \II^1_k\Vert\leq 1$ for all the points $z$ in $\Sigma^1_k$ so that 
$$
d_I^1(z,x_k)\leq R_k\defeq\lambda_k^{-\frac{1}{4}}\Lambda^{-\frac{1}{2}}.
$$
Moreover $\Vert\II^1(x_k)\Vert=1/\Lambda$.

We can now apply the first item of the  Global Control Proposition \ref{pro:glob-contr}, to obtain that, after extracting a subsequence, the sequence $\{x_k, \Sigma^1_k, \Hn_{\lambda_k}\}_{k\in\mathbb N}$ converges smoothly to 
$(x_0,\Sigma_0,{\mathbf R}^{2,n})$, and in particular the norm of the second fundamental form of $\Sigma_0$  at $x_0$ is $\Lambda^{-1}$.

Observe now that since  $\seqk{\gamma}$ has bounded geometry and is $\delta$-unpinched, then $\seqk{\gamma^1}$ converges to a geodesic: more precisely, for every sequence $\seqk{z}$ so that $z_k\in \gamma^1_k$, then $\{z_k,\gamma^1_k,\Hn_{\lambda_k}\}_{k\in\mathbb N}$ converges to $(z_0,\gamma_0,{\mathbf R}^{2,n})$ 
where $z_0 \in \gamma_0$, and $\gamma_0$ is a spacelike geodesic -- see the definition \ref{def:bdgeom2} and the observations thereafter.
Thus the boundary of $\Sigma_0$ is either empty or a geodesic. Thus by our Bernstein Theorem, the surface $\Sigma_0$ is totally geodesic. We have obtained our contradiction.

This concludes the proof of Proposition \ref{pro:CT1}.
\end{proof}

\subsection{Proof of Theorem \ref{t:CompactCompleteTheorem} and \ref{t:CompactTheorem} \label{sec:CompT}}

The theorems now follows  from the successive use of 
\begin{itemize}
	\item Proposition \ref{pro:CT1}  which guarantees that under the assumptions of the theorems the second fundamental form is uniformly bounded.
	\item Proposition \ref{pro:glob-contr} then ensures the conclusions of the theorems.
	\end{itemize}

\section{Three specific compactness theorems}
\label{sec:Specific}
Theorems  \ref{t:CompactCompleteTheorem} and  \ref{t:CompactTheorem} are quite general. They incarnate in three specific versions whose hypotheses are easier to handle.

\subsubsection{Boundary-free version}
The boundary-free version is of special interest and will be proved in Paragraph \ref{sec:bdry-free}. Int the next theorem, we explain three different compactness results that will be useful in this paper and its sequel.
\begin{theorem}[\sc Compactness theorem-boundary free]\label{t:CompactFreeTheorem}
Let $\seqk{\Sigma}$ be a sequence of connected complete acausal maximal surfaces without boundary. For each  $k$, let $x_k$ be a point in $\Sigma_k$, let  $\Lambda_k$ be the asymptotic boundary of $\Sigma_k$ (see definition \ref{d:AsymptoticBoundary}) and $\overline\Sigma_k\defeq\Sigma_K\cup\Lambda_k$.
\begin{enumerate}
	\item Assume that the sequence $\{\T_{x_k}\Sigma_k\}_{k\in\mathbb N}$ converges, then the sequence of pointed maximal surfaces   $\{(x_k, \Sigma_k)\}_{k\in\mathbb N}$ subconverges.
	\item   Assume that $\seqk{\Lambda}$ converges  to a semi-positive loop $\Lambda_0$.
Then the sequence $\seqk{\Sigma}$ subconverges smoothly as a graph on every pointed hyperbolic plane to a complete maximal surface $\Sigma_0$ whose asymptotic boundary is $\Lambda_0$. 
\item Assume that  $\{(x_k, \Sigma_k)\}_{k\in\mathbb N}$ converges to $(x_0,\Sigma_0)$  then any sequence $\seqk{y}$ so that $y_k$ belongs to $\overline\Sigma_k$ subconverges to a point in $\Sigma_0\cup\Lambda_0$ where $\Lambda_0$ is a semi-positive loop which is the asymptotic boundary of $\Sigma_0$ 
\end{enumerate}
\end{theorem}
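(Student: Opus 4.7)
The plan is to reduce all three statements to the main compactness theorem (Theorem \ref{t:CompactCompleteTheorem}), combined with the graph description of complete acausal surfaces in the warped product coordinates $\hHn \cong \D \times \S^n$ of Proposition \ref{pro:warped}. For \emph{Part (1)}, each $\Sigma_k$ having empty finite boundary, the first three hypotheses of Theorem \ref{t:CompactCompleteTheorem} are vacuously satisfied. For hypothesis (iv), take $\Pp_k := (x_k, H_k)$ tangent to $\Sigma_k$ at $x_k$; then $\Pp_k \in \Gamma(\Sigma_k)$, and by assumption $\Pp_k$ converges to some $\Pp_\infty$, so $d(\Pp_\infty, \Gamma(\Sigma_k)) \to 0$. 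Theorem \ref{t:CompactCompleteTheorem} then produces graph convergence on every bounded ball of $\Pp_\infty$, which is precisely the claimed subconvergence.

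For \emph{Part (2)}, fix a pointed hyperbolic plane $\Pp_0 = (q_0, H_0)$. By Proposition \ref{p:EntireGraph}, each $\Sigma_k$ is an entire graph over $H_0$, corresponding to a $2$-Lipschitz map $f_k : \D \to \S^n$ by Proposition \ref{p:acausalImpliesGraph}, with $\Lambda_k$ the graph of the $1$-Lipschitz boundary map $f_k|_{\partial \D}$ in view of Proposition \ref{p:PositiveCirclesAreGraph}. Let $x_k \in \Sigma_k$ lie over $q_0$. The Lipschitz bounds together with the convergence $\Lambda_k \to \Lambda_\infty$ confine both $x_k$ and the tangent planes $\T_{x_k}\Sigma_k$ to compact sets, so after extraction $(x_k, \T_{x_k}\Sigma_k)$ converges in $\GG(\Hn)$; Part (1) then gives $(x_k, \Sigma_k) \to (x_\infty, \Sigma_\infty)$, equivalently $f_k \to f_\infty$ smoothly on compacta of $\D$. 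To identify $\partial_\infty \Sigma_\infty$ with $\Lambda_\infty$, I would exploit the sharper bound $\Vert df_k \Vert < 2/(1 + \Vert x \Vert^2)$ from \eqref{e:2Lipschitz}, which tightens to $1$ at $\partial \D$. Integrating this estimate along radial segments approaching $\partial \D$ yields a uniform-in-$k$ modulus of continuity up to the boundary, so $f_\infty$ extends continuously to $\overline \D$ with $f_\infty|_{\partial \D}$ equal to the uniform limit of $f_k|_{\partial \D}$, which by hypothesis graphs $\Lambda_\infty$.

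For \emph{Part (3)}, first extract a subsequence with $\Lambda_k \to \Lambda'$: the $1$-Lipschitz boundary maps are equicontinuous, so by Arzelà--Ascoli their subsequential uniform limit is a $1$-Lipschitz graph, necessarily semi-positive by Proposition \ref{p:EntireGraph} (iv) applied to $\Sigma_\infty$. Part (2) then gives graph convergence of $\Sigma_k$ to a complete maximal surface with asymptotic boundary $\Lambda'$; compatibility with the given $(x_k, \Sigma_k) \to (x_\infty, \Sigma_\infty)$ forces $\Lambda' = \Lambda_\infty$. The modulus-of-continuity argument from Part (2) promotes smooth convergence on compacta of $\D$ to uniform convergence $\overline f_k \to \overline f_\infty$ on $\overline \D$. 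Writing $y_k = \Psi(u_k, \overline f_k(u_k))$ with $u_k \in \overline \D$ (using the warped diffeomorphism $\Psi$ of \eqref{e:defnOfPsi}) and extracting $u_k \to u_\infty$, uniform convergence yields $y_k \to \Psi(u_\infty, \overline f_\infty(u_\infty)) \in \overline \Sigma_\infty = \Sigma_\infty \cup \Lambda_\infty$.

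The main obstacle is the boundary identification in Part (2): the interior $2$-Lipschitz bound alone does not guarantee equicontinuity up to $\partial \D$, so one must carefully exploit the pointwise decay of $\Vert df_k \Vert$ near the boundary to extract a uniform modulus of continuity. This same estimate then drives the $\overline \D$-uniform convergence used in Part (3).
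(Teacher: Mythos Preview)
Your Part (1) is fine and matches the paper's approach: the boundary hypotheses of the compactness theorem are vacuous, and the convergence of $\T_{x_k}\Sigma_k$ supplies the needed pointed hyperbolic plane at bounded distance from $\Gamma(\Sigma_k)$.

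The real gap is in Part (2), and it is not the one you identify. Your sentence ``The Lipschitz bounds together with the convergence $\Lambda_k \to \Lambda_\infty$ confine both $x_k$ and the tangent planes $\T_{x_k}\Sigma_k$ to compact sets'' is unjustified, and this is precisely the heart of the matter. The bound $\Vert df_k\Vert < 2$ from \eqref{e:2Lipschitz} is an \emph{open} condition: it says only that $\T_{x_k}\Sigma_k$ is positive definite, not that it stays in a compact subset of the (non-compact) Grassmannian fiber $\Gr{\T_{x_k}\hHn}$. Nothing in the $2$-Lipschitz bound or in the convergence of the boundary values $f_k|_{\partial\D}$ prevents $\Vert df_k|_0\Vert \to 2$, i.e.\ the tangent plane from degenerating. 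The paper handles exactly this via Proposition~\ref{p:APrioriEstimates}, whose proof is not elementary: one assumes $d_\GG(\T_{x_k}\Sigma_k,\T_qH)\to\infty$, renormalizes by $g_k\in\G$ so that $g_k(\T_{x_k}\Sigma_k)$ is fixed, applies the compactness theorem to the renormalized surfaces, and then uses the proximality of the diverging sequence $g_k^{-1}$ acting on $\PE$ (Proposition~\ref{c:DynamicDivergingSequence}) to produce a point of the limit surface lying outside every neighborhood of $\CH(\Lambda_\infty)$, contradicting Proposition~\ref{p:MaximalSurfaceContainedInConvexHull}. This dynamical argument is the genuine content of Part (2), and your proposal skips it entirely.

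Conversely, what you flag as the ``main obstacle'' is not one. A $2$-Lipschitz map on $\D$ extends to a $2$-Lipschitz map on $\overline\D$, and a uniformly Lipschitz family is already equicontinuous; Arzel\`a--Ascoli then gives uniform subconvergence on $\overline\D$ directly, without needing the sharper decay of $\Vert df_k\Vert$ near $\partial\D$. Once Part (2) is correctly established, your approach to Part (3) via uniform convergence on $\overline\D$ is a valid alternative to the paper's case split on whether $\pi_0(y_k)$ stays bounded.
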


\subsubsection{Boundary vanishing at infinity}

\begin{theorem}[\sc Vanishing boundary]\label{theo:bdry-vanish}
Let $\Lambda_0$ be a 
positive loop in $\bHn$ and $\seqk{\Sigma}$ be a sequence of connected complete acausal maximal surfaces.

Assume that  $\gamma_k=\partial \Sigma_k$ is compact. Assume furthermore that 
\begin{enumerate}
    \item  $\seqk{\gamma}$ converges to $\Lambda_0$,
    \item  the sequence $\seqk{\gamma}$ is a sequence of strongly positive curves of bounded geometry which are also uniformly unpinched,
    \item the angular width of $\gamma_k$ is uniformly bounded.
\end{enumerate}
Then $\seqk{\Sigma}$ converges as a graph on any pointed hyperbolic plane to maximal surface with asymptotic boundary $\Lambda_0$.
\end{theorem}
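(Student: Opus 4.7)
The plan is to apply the main Compactness Theorem \ref{t:CompactCompleteTheorem} to the sequence $\seqk{\Sigma}$, identify the asymptotic boundary of the subsequential limit as $\Lambda_0$, and then invoke the uniqueness Theorem \ref{t:Uniqueness} to upgrade subsequential convergence to full convergence.

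To verify the hypotheses of Theorem \ref{t:CompactCompleteTheorem}, note first that strong positivity, uniform unpinched-ness, and bounded geometry of $\seqk{\gamma}$ are given directly. For the osculating-plane bound (hypothesis~(ii)), Proposition \ref{p:MaximalSurfaceContainedInConvexHull} gives $\Sigma_k \subset \CH(\gamma_k)$ since each $\Sigma_k$ is maximal, whence Proposition \ref{pro:bd-ang} yields
\[
d_\GG\left(\T_y \Sigma_k, \T_y^{(2)}\gamma_k\right) \leq w(\gamma_k),
\]
uniformly bounded by hypothesis. For the interior $C^0$-bound (hypothesis~(iv)), I fix a pointed hyperbolic plane $\Pp_0 = (p_0, H_0)$ with $p_0$ in the interior of $\CH(\Lambda_0)$. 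Because $\gamma_k \to \Lambda_0 \subset \bHn$, the warped projection $\pi_0(\gamma_k)$ eventually exits every compact subset of $H_0$; since each $\Sigma_k$ is complete and acausal, Proposition \ref{p:EntireGraph} shows it is a graph over a domain of $H_0$ that eventually contains $p_0$, providing $q_k \in \Sigma_k$ with $\pi_0(q_k) = p_0$. As $q_k$ lies on the compact timelike fiber of $\pi_0$ above $p_0$, the Gauss lift $\Gamma(q_k) = (q_k, \T_{q_k}\Sigma_k)$ remains in a fixed compact subset of $\GR{\Hn}$, supplying the desired bound on $d_\GG(\Pp_0, \Gamma(\Sigma_k))$.

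Applying Theorem \ref{t:CompactCompleteTheorem} then produces, after extraction, a limit $\Sigma_\infty$ that is a complete acausal maximal surface; since $\gamma_k \cap K = \emptyset$ eventually for every compact $K \subset \Hn$, $\Sigma_\infty$ has no finite boundary. Passing the inclusion $\Sigma_k \subset \CH(\gamma_k)$ to the limit, $\Sigma_\infty \subset \CH(\Lambda_0)$, so by Proposition \ref{p:PropertiesConvexHull}\ref{it:PropCH5} we obtain $\partial_\infty \Sigma_\infty \subset \Lambda_0$. For the reverse inclusion, I would work in the warped product $\hHn \cong \D \times \S^n$ attached to $\Pp_0$: each $\Sigma_k$ is the graph of a $2$-Lipschitz map $f_k \colon \Omega_k \subset \D \to \S^n$ by Proposition \ref{p:acausalImpliesGraph}, with the domains $\Omega_k$ exhausting $\D$, and graph convergence gives $f_k \to f_\infty$ locally uniformly, where $\Sigma_\infty = \operatorname{graph}(f_\infty)$. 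Any $x \in \Lambda_0$ projects to some $\bar u \in \partial \D$, and any sequence $u_k \in \Omega_k$ with $u_k \to \bar u$ produces points $(u_k, f_k(u_k)) \in \Sigma_k$ converging to $x$, placing $x$ in $\partial_\infty \Sigma_\infty$.

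The main obstacle I anticipate is ensuring that the $2$-Lipschitz graph functions $f_k$ pass to the limit coherently at the boundary: the trace of $f_k$ along $\partial \Omega_k$ gives $\gamma_k$, which converges to the $1$-Lipschitz graph $\Lambda_0$ on $\partial \D$, and one must verify that the interior limit $f_\infty$ extends continuously to match this boundary datum. Once this is secured, Theorem \ref{t:Uniqueness} shows $\Sigma_\infty$ is uniquely characterized by $\partial_\infty \Sigma_\infty = \Lambda_0$, so the full sequence converges; convergence as a graph on an arbitrary pointed hyperbolic plane then follows from convergence on $\Pp_0$ by smooth change of warped coordinates, using that $\Sigma_\infty$ is an entire graph by Proposition \ref{p:EntireGraph}\ref{it:eg1}.
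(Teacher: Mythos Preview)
Your verification of hypothesis~(iv) of Theorem~\ref{t:CompactCompleteTheorem} contains a genuine gap. You argue that since $q_k$ lies on the compact timelike fiber $\pi_0^{-1}(p_0)$, the Gauss lift $\Gamma(q_k)=(q_k,\T_{q_k}\Sigma_k)$ stays in a fixed compact subset of $\GR{\Hn}$. But the fibre of $\GR{\Hn}\to\Hn$ over each point is $\Gr{\T_x\Hn}$, the Riemannian symmetric space of $\SO(2,n)$, which is \emph{non-compact}. Boundedness of the base point $q_k$ says nothing about boundedness of the tangent plane $\T_{q_k}\Sigma_k$; a priori this plane could run off to infinity in the Grassmannian fibre (becoming nearly lightlike), and nothing you have written excludes this.

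This is not a technicality: controlling $\T_{q_k}\Sigma_k$ is exactly the content of Proposition~\ref{p:APrioriEstimates}, which the paper proves by a separate proximality argument (Section~\ref{ss:Proximality}). The idea there is to assume $d_\GG(\T_qH,\T_{x_k}\Sigma_k)\to\infty$, translate $\Sigma_k$ back by $g_k\in\G$ so that the tangent planes are normalised, extract a limit surface $\Sigma_0'$ via Theorem~\ref{t:CompactTheorem}, and then use the dynamics of the divergent sequence $g_k^{-1}$ on $\PE$ to push a point of $\Sigma_0'$ out of $\CH(\Lambda_0)$, contradicting $\Sigma_k\subset\CH(\gamma_k)$. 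You should either invoke Proposition~\ref{p:APrioriEstimates} directly for hypothesis~(iv), or reproduce this argument. The remainder of your outline (angular width for hypothesis~(ii) via Proposition~\ref{pro:bd-ang}, convex-hull containment for $\partial_\infty\Sigma_\infty\subset\Lambda_0$, and the graph/Lipschitz argument for the reverse inclusion) is along the right lines and matches the paper's approach.
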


This will be proved in Paragraph \ref{sec:bdry-vanish}.

\subsubsection{Finite boundary version}
We also have a finite boundary version  that  will be proved in Paragraph \ref{sec:bdry-cpct}.

\begin{theorem}[\sc Compact surfaces]\label{t:CompactBoundaryTheorem}  Let 
$\seqk{\Sigma}$ be a sequence of compact acausal maximal surfaces in $\Hn$.  Let $\gamma_k$ be the 1-dimensional manifold $\gamma_k\defeq\partial \Sigma_k$. Assume that $\seqk{\gamma}$ are topological circles that converge smoothly to a topological circle $\gamma$ which is strongly  positive. Then $\seqk{\Sigma}$ subconverges smoothly to a maximal surface whose boundary is $\gamma$.
\end{theorem}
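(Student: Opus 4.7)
The plan is to deduce Theorem \ref{t:CompactBoundaryTheorem} from the main compactness Theorem \ref{t:CompactCompleteTheorem} by verifying its four boundary hypotheses for the sequence $\{\Sigma_k\}$. Each compact maximal surface with boundary has complete induced metric and empty frontier, so we are in the setting of Theorem \ref{t:CompactCompleteTheorem} rather than Theorem \ref{t:CompactTheorem}.

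First, strong positivity of each $\gamma_k$ and its bounded geometry follow directly from smooth convergence to the smooth strongly positive curve $\gamma$. Uniform unpinchedness also follows from smooth convergence: $\gamma$ is smooth and spacelike, so infinitesimally $\eth(x,y)/d_\gamma(x,y) \to 1$, hence there exists $\delta_0 > 0$ for which $\gamma$ is $\delta_0$-unpinched, and smooth convergence transfers this estimate to $\{\gamma_k\}$ with a uniform constant for $k$ large.

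The angular bound $d_\GG(\T_y \Sigma_k,\, \T^{(2)}_y \gamma_k) \leq A$ along the boundary is the key geometric input. By Proposition \ref{p:MaximalSurfaceContainedInConvexHull}, each $\Sigma_k$ is contained in the convex hull of $\gamma_k$, so Proposition \ref{pro:bd-ang} yields this bound with $A = w(\gamma_k)$, the angular width. Smooth convergence $\gamma_k \to \gamma$ to a strongly positive curve ensures that $w(\gamma_k)$ is uniformly bounded. To produce a pointed hyperbolic plane uniformly close to $\Gamma(\Sigma_k)$, fix any $q \in \gamma$ and choose a pointed hyperbolic plane $\Pp = (q, H)$ with $\T_q H$ near the osculating plane $\T^{(2)}_q \gamma$; for $q_k \in \gamma_k$ converging to $q$, the lift $(q_k, \T_{q_k}\Sigma_k) \in \Gamma(\Sigma_k)$ lies within uniformly bounded $\GR{\Hn}$-distance of $\Pp$, by combining the angular bound just established with continuity of osculating planes along $\gamma_k \to \gamma$.

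Theorem \ref{t:CompactCompleteTheorem} then furnishes subconvergence of $\{\Sigma_k\}$ as a graph over every bounded ball of $\Pp$ to a maximal surface $\Sigma_\infty$. Because the finite boundaries $\gamma_k$ converge smoothly to $\gamma$ coherently with the graph convergence of the interiors, $\partial \Sigma_\infty = \gamma$. The main obstacle in this argument is precisely the control of the geometry of $\Sigma_k$ along and near its boundary: \emph{a priori} tangent planes of $\Sigma_k$ could oscillate wildly at $\gamma_k$ even when $\gamma_k$ itself converges smoothly, and it is only the combination of convex hull confinement (Proposition \ref{p:MaximalSurfaceContainedInConvexHull}) with the angular width estimate (Proposition \ref{pro:bd-ang}) that rules this out; once that is secured, the heavy analytic apparatus of Section \ref{sec:MainCompactness} handles the interior subconvergence.
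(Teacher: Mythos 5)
Your proof is correct and takes essentially the same route as the paper: both reduce to verifying the four hypotheses of Theorem~\ref{t:CompactCompleteTheorem}, citing Proposition~\ref{p:MaximalSurfaceContainedInConvexHull} together with Proposition~\ref{pro:bd-ang} for the angular bound along the boundary. The paper's proof is terser (it does not spell out uniform unpinchedness nor the existence of a pointed hyperbolic plane within bounded distance of $\Gamma(\Sigma_k)$, leaving those implicit in the compactness of the limiting curve), whereas you fill in those details explicitly; the extra detail is sound.
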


\subsection{Proximality of the action of $\G$ on $\PE$}\label{ss:Proximality}
In the first subsection, we study the dynamics on $\PE$ of a divergent sequence in $\G$. These dynamics will be helpful when proving the convergence of a sequence of maximal surfaces. 

The main result of this section is the following:

\begin{proposition}[\sc Proximality]
	\label{c:DynamicDivergingSequence}
Let $\seqj{g}$ be an unbounded sequence in the subgroup of $\G$ fixing a point $x$ in $\Hn$.
Then there exists  a hyperplane $H$ of degenerate signature $(1,n)$ through $x$, such that for any compact set $C$ in $\Proj(E)$ not intersecting  $H$, the orbit of $C$ under $\seqj{g}$ accumulates to a point in $\bHn$ orthogonal to $x$.
\end{proposition}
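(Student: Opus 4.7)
My plan is to apply the Cartan $KAK$ decomposition to $\seqj{g}$ inside $\Stab_\G(x)$ and read off the attracting dynamics from the diagonal part. Restricting to the invariant subspace $x^\bot\subset E$ of signature $(2,n)$: since each $g_j$ fixes the line $\R x$, its action on $E$ is determined by its restriction to $x^\bot$, which identifies $\{g_j|_{x^\bot}\}_{j\in\N}$ with an unbounded sequence in $\SO_0(x^\bot)\cong\SO_0(2,n)$. Fix a Witt basis $(e_1,e_2,f_1,\dots,f_{n-2},e_2^*,e_1^*)$ of $x^\bot$, with $e_i,e_i^*$ isotropic, $\langle e_i,e_j^*\rangle=\delta_{ij}$, other pairings among the $e_i,e_j^*$ zero, and $\langle f_i,f_j\rangle=-\delta_{ij}$. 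Let $A$ be the $\R$-split torus acting as $\mathrm{diag}(e^t,e^s,1,\dots,1,e^{-s},e^{-t})$ in this basis with positive Weyl chamber $t\geq s\geq 0$, and let $K=\SO(2)\times\SO(n)$ be a maximal compact subgroup of $\SO_0(x^\bot)$. The $KAK$ decomposition gives $g_j|_{x^\bot}=k_j a_j \ell_j$ with $k_j,\ell_j\in K$ and $a_j=\mathrm{diag}(e^{t_j},e^{s_j},1,\dots,1,e^{-s_j},e^{-t_j})$. After a subsequence I may assume $k_j\to k_\infty$ and $\ell_j\to\ell_\infty$ in $K$; unboundedness of $\seqj{g}$ forces $t_j\to+\infty$.

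Next I would set $H\defeq \ell_\infty^{-1}\bigl((e_1^*)^\bot\bigr)\subset E$. Since $e_1^*$ is isotropic, $(e_1^*)^\bot$ is a degenerate hyperplane with one-dimensional radical $\R e_1^*$, and the induced form on the quotient has signature $(1,n)$, so $H$ has degenerate signature $(1,n)$. Because $\ell_\infty$ preserves $x$ and $\langle x,e_1^*\rangle=0$ (as $e_1^*\in x^\bot$), we get $x\in H$. For any $y\in \P(E)\setminus H$ choose a lift $v\in E$ and expand $\ell_\infty(v)=\alpha e_1+\beta e_2+\sum_i c_if_i+\delta e_2^*+\epsilon e_1^*+r x$; the condition $y\notin H$ is exactly $\alpha\neq 0$. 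Dividing $a_j\ell_j(v)$ by $e^{t_j}$, the components along $f_i,e_2^*,e_1^*$ and $x$ all tend to zero, while $\beta\,e^{s_j-t_j}e_2$ stays bounded. Passing to a further subsequence so that $e^{s_j-t_j}\to c\in[0,1]$, we obtain
\[
\frac{a_j\ell_j(v)}{e^{t_j}}\longrightarrow \alpha e_1+c\beta e_2,
\]
a nonzero isotropic vector in the isotropic plane $\mathrm{span}(e_1,e_2)\subset x^\bot$. Composing with $k_j\to k_\infty\in K$ gives $g_j(y)\to [k_\infty(\alpha e_1+c\beta e_2)]\in\bHn$, orthogonal to $x$ because $k_\infty$ preserves $x^\bot$.

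The main subtlety concerns the subleading eigenvalue $s_j$: when $t_j-s_j\to+\infty$ the limit collapses to $[k_\infty e_1]$, whereas when $s_j/t_j\to 1$ the $e_2$-direction survives and the limit depends on $y$ through $\beta$. Either way the limit is isotropic and lies in $x^\bot$, which is all that the proposition requires. Finally, the action of $g_j$ on $\R x$ (multiplication by $\pm 1$) contributes nothing to the projective limit because the $\R x$-component of $v$ stays bounded while the $e_1$-component of $a_j\ell_j(v)$ grows like $|\alpha|\,e^{t_j}\to\infty$; this is exactly where the hypothesis $\alpha\neq 0$, i.e.\ $y\notin H$, is needed to guarantee that the limit is well defined and nonzero.
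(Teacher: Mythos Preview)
Your argument is essentially the paper's: a $KAK$ decomposition inside $\Stab_\G(x)\cong\SO_0(2,n)$ (up to index two), reading the attracting hyperplane from the repelling isotropic direction of the Cartan part. The paper phrases the Cartan subgroup via a \emph{Barbot crown} $(c_+,d_+,d_-,c_-)\subset x^\bot$ and splits into a \emph{regular} case ($\lambda_j/\mu_j\to\infty$, limit is a single point) and a \emph{singular} case ($\lambda_j/\mu_j$ bounded, limit lies on a photon), proving each via a separate lemma; your parameter $c=\lim e^{s_j-t_j}$ unifies these two cases nicely, which is a small expository gain.

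There is one genuine gap: your Witt basis $(e_1,e_2,f_1,\dots,f_{n-2},e_2^*,e_1^*)$ of $x^\bot$ presupposes that $x^\bot$ has Witt index $2$, i.e.\ that $n\geq 2$. When $n=1$, the space $x^\bot$ has signature $(2,1)$ and Witt index $1$, so there is no isotropic $e_2$ orthogonal to $e_1$, and $\SO_0(2,1)$ has real rank $1$. The paper treats this case separately by taking a Barbot crown in $E$ (not in $x^\bot$) with $x$ on the spacelike geodesic between $c_+$ and $c_-$; the sequence is then automatically regular. You should add this case. A secondary imprecision: $\Stab_\G(x)$ contains $\SO_0(2,n)$ only as an index-two subgroup (the sign on $\R x$), so either pass to a subsequence in that component or run $KAK$ with $K=\mathrm O(2)\times\mathrm O(n)$; your final paragraph already shows the sign is harmless projectively.
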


\subsubsection{Cartan decomposition}\label{sss:AsymptoticAction}

We call a \emph{Barbot crown} a semi-positive loop $\mathcal{C}$ in $\bHn$ made of 4 segments of photons (those objects appeared in the work of Barbot in \cite{barbot} under the name \emph{crown}). Denote the vertices of $\mathcal C$ by $\{c_+,d_+,d_-,c_-\}$, where the planes $\gamma=c_+\oplus c_-$ and $\delta=d_+\oplus d_-$ are non-degenerate, thus giving rise to space-like geodesics in $\Hn$. In particular,  $\gamma$ is a subset of $\delta^\bot$. In particular, $F\defeq\span\{\mathcal{C}\}$ is a non-degenerate subspace of signature $(2,2)$.

The group $\A$ which fixes $\{c_+,d_+,d_-,c_-\}$  and $F^\bot$ pointwise is a {\em Cartan subgroup} of $\G$. By construction every element of $\A_0 $, the component of the identity of $\A$, is characterised by its restriction on $F$ which, in the basis given by $(c_+,d_+,d_-,c_-)$  is a diagonal matrix
\[a(\lambda,\mu)\defeq \left(\begin{array}{cccc}
\lambda & 0&0 &0 \\
0& \mu &0 &0 \\
0&0 & 1/\mu &0 \\
0&0 &0 & 1/\lambda
\end{array}\right)\ , \text{ with }\lambda,\mu>0 \ .\]

Given  $\K$ a maximal compact subgroup of  $\G$, the corresponding \textit{Cartan decomposition} of $\G$ states that any element $g$ in $\G$ may be written as $g=k'a k$ where $k',k\in \K$ and $a=a(\lambda,\mu)$ is in  $\A_0 $. Note that this decomposition is not unique; however, if we impose the condition $\lambda\geq \mu \geq 1$, the pair $(\lambda,\mu)$ is uniquely defined. We call $(\log(\lambda),\log(\mu))$ the \emph{Cartan projection} of $g$.

\subsubsection{Asymptotics of the action of $\G$} Let  $\seqj{g}$ an unbounded sequence in $\G$, let $g_j=k'_ja_jk_j$ be the Cartan decomposition of $g_j$,  and  $(\log(\lambda_j),\log(\mu_j))$ the Cartan projection of $g_j$. We distinguish two cases:

\begin{enumerate}
\item The sequence $\seqj{g}$ is called {\em $\Pp_1$-divergent} when the sequence $\{\lambda_j/\mu_j\}_{j\in\N}$ is unbounded.
\item The sequence $\seqj{g}$ is called {\em non $\Pp_1$-divergent} when  the sequence $\{\lambda_j/\mu_j\}_{j\in\N}$ is bounded.
\end{enumerate}

Here $\Pp_1$ refers to the parabolic subgroup in $\G$ stabilizing a point in $\bHn$ and the terminology is inspired from \cite{GGKW}.

\begin{lemma}[\sc $\Pp_1$-divergent sequence]\label{l:DynamicOfGroupAction1}
Let $\seqj{g}$ be a $\Pp_1$-divergent sequence in $\G$. Then, up to extracting a subsequence, there exist two points $p$ and $q$ in $\bHn$ such that for any compact set $C$  in $\Proj(E)$ and not intersecting $\Proj(q^\bot)$, the sequence $\{g_j\}_{j\in\N}$ converges uniformly to $p$ on $C$.
\end{lemma}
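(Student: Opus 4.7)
The plan is to translate the convergence statement into the standard proximal dynamics of a diverging Cartan factor. Write the Cartan decomposition $g_j=k'_ja_jk_j$ with $a_j=a(\lambda_j,\mu_j)$ and $\lambda_j\geq\mu_j\geq 1$, and, using compactness of $\K$, extract a subsequence along which $k_j\to k_\infty$ and $k'_j\to k'_\infty$. Since $\{g_j\}$ is unbounded, $\lambda_j\to\infty$, and by the regularity hypothesis $\lambda_j/\mu_j\to\infty$ as well.

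In the basis $(c_+,d_+,d_-,c_-)$ of $F$ completed by an orthonormal basis of $F^\bot$, the matrix of $a_j$ is diagonal with entries $(\lambda_j,\mu_j,1/\mu_j,1/\lambda_j,1,\ldots,1)$. Decomposing $y=\alpha c_++\beta d_++\gamma d_-+\delta c_-+y'$ with $y'\in F^\bot$, I compute
\[a_jy=\lambda_j\alpha c_++\mu_j\beta d_++\frac{\gamma}{\mu_j}d_-+\frac{\delta}{\lambda_j}c_-+y',\]
so that, whenever $\alpha\neq 0$, dividing by $\lambda_j$ and letting $j\to\infty$ yields $[a_jy]\to[c_+]$ in $\Proj(E)$. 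Using that $c_-$ is isotropic and orthogonal to $d_\pm$ and to $F^\bot$, while $\langle c_+,c_-\rangle\neq 0$, the condition $\alpha\neq 0$ is equivalent to $\langle y,c_-\rangle\neq 0$, so the exceptional locus for this argument is precisely $\Proj(c_-^\bot)$.

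Now set $p\defeq k'_\infty(c_+)$ and $q\defeq k_\infty^{-1}(c_-)$; both automatically lie in $\bHn$ because $\K$ preserves the isotropic cone of $\q$. The convergence $k_j\to k_\infty$ guarantees that for any $y\notin\Proj(q^\bot)$, the image $k_j(y)$ stays outside $\Proj(c_-^\bot)$ for $j$ large, so the analysis of the previous paragraph applies and $a_jk_j(y)\to[c_+]$ in $\Proj(E)$. Applying $k'_j$ and using continuity of the $\K$-action on $\Proj(E)$ gives $g_j(y)\to p$, which is the desired statement.

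I expect no serious obstacle: the argument is the textbook proximal dynamics for regular sequences in a real semisimple Lie group. The only two places where one must be slightly careful are identifying the exceptional hyperplane (handled by the orthogonality relations built into the Barbot crown) and ensuring that the convergence of $a_jk_j(y)$ is uniform enough to combine with $k'_j\to k'_\infty$ (handled by the compactness of $\K$ together with continuity of the projective action).
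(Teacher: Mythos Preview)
Your proof is correct and follows essentially the same route as the paper: Cartan decomposition, compactness of $\K$ to extract convergent $k_j,k'_j$, and then the proximal dynamics of $a_j$ with attracting point $c_+$ and repelling hyperplane $c_-^\bot$, transported by $k'_\infty$ and $k_\infty^{-1}$. The only cosmetic difference is that the paper phrases the diagonal analysis via the splitting $E=c_+\oplus c_-^\bot$ and spectral radii, whereas you write out the coordinates explicitly; also note that regularity only gives $\lambda_j/\mu_j$ unbounded, so one should extract a further subsequence before asserting $\lambda_j/\mu_j\to\infty$.
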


\begin{proof}
Let $V=c_+$ and $W=c_-^\bot$. The group $\A$ preserves the splitting $E=V\oplus W$. Moreover, the element $a_j$ acts by $\lambda_j$ on $V$ and has spectral radius $\mu_j$ on $W$.

Let $y$ be in $\Proj(E)$ but not in $\Proj(W)$, so that  $y=[(v,w)]\in \Proj(V\oplus W)$ with $v\neq 0$.  Since $\lambda_j/\mu_j$ is unbounded, up to extracting a subsequence, we have that $\{a_j\cdot y\}_{j\in\N}$ converges to $[(v,0)]$.

Now since $\K$ is compact, the sequences $\seqj{k'}$ and $\seqj{k}$ subconverge to $k'_0$ and $k_0$ respectively. It follows that for any point $y$ in $\P(E)$ and not in $\P(k_0^{-1}W)$, the sequence $\{g_j\cdot y\}_{j\in\N}$ subconverges to $\P(k'_0V)$. Setting $q=(k_0^{-1}W)^\bot$ and $p=k_0'V$ yields the result.
\end{proof}

\begin{lemma}[\sc Non $\Pp_1$-divergent sequence]\label{l:DynamicOfGroupAction2}
Let $\seqj{g}$ be a Non $\Pp_1$-divergent sequence in $\G$. Then up to extracting a subsequence, there exist photons $\varphi$  and $\psi$ such that for any  compact set $C$ in $\P(E)$ and not intersecting $\P(\psi^\bot)$, the sequence $\{g_j\}_{j\in\N}$ converges uniformly  to a point in $\P(\varphi)$.
\end{lemma}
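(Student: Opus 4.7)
The plan is to carry out the same style of analysis as in the proof of Lemma \ref{l:DynamicOfGroupAction1}, but keeping track of both leading Cartan weights at once, since in the singular case they blow up at comparable rates.

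First, since $\{g_j\}$ is unbounded while $\lambda_j/\mu_j$ remains bounded, the condition $\lambda_j\geq\mu_j\geq 1$ forces $\mu_j\to\infty$ (hence $\lambda_j\to\infty$). By extracting a subsequence, I may assume $\lambda_j/\mu_j\to\rho\in[1,\infty)$, so that $\mu_j/\lambda_j\to 1/\rho>0$. Using compactness of $\K$, I further extract so that $k_j\to k_0$ and $k_j'\to k_0'$ in $\K$.

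Next, I analyze the asymptotic projective action of $a_j=a(\lambda_j,\mu_j)$. Using the Barbot crown, decompose $E=V\oplus F^\bot\oplus W$ where $V\defeq c_+\oplus d_+$ and $W\defeq c_-\oplus d_-$. Since $\gamma\subset\delta^\bot$ forces $\langle c_+,d_+\rangle=\langle c_-,d_-\rangle=0$, both $V$ and $W$ are totally isotropic $2$-planes, so $\mathbf P(V)$ and $\mathbf P(W)$ are photons. In this splitting, $a_j$ acts as the scaled diagonal matrix with blocks $\mathrm{diag}(\lambda_j,\mu_j)$ on $V$, identity on $F^\bot$, and $\mathrm{diag}(\mu_j^{-1},\lambda_j^{-1})$ on $W$. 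Dividing by $\lambda_j$ gives an operator whose restriction to $V$ converges to $\mathrm{diag}(1,1/\rho)$ (which has rank $2$), while the restrictions to $F^\bot$ and $W$ tend to zero. Consequently, for any $y\in\mathbf P(E)\setminus\mathbf P(V^\bot)$, the sequence $\{a_j\cdot y\}_{j\in\mathbf N}$ converges to a point in $\mathbf P(V)$.

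Finally, I transfer this back via $g_j=k_j'a_jk_j$. Set
\[\psi\defeq k_0^{-1}(V),\qquad \varphi\defeq k_0'(V),\]
both of which are photons since $k_0,k_0'\in\K\subset\G$ preserve $\q$. If $y\notin\mathbf P(\psi^\bot)$, then $k_j\cdot y\to k_0\cdot y\notin\mathbf P(V^\bot)$, so by the previous paragraph $a_jk_j\cdot y$ converges to a point in $\mathbf P(V)$; applying the convergent sequence $k_j'\to k_0'$ then shows $g_j\cdot y$ accumulates in $k_0'\cdot\mathbf P(V)=\mathbf P(\varphi)$, as required.

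The main (minor) obstacle will be verifying uniformity in the last step: one must check that the convergence $a_jk_j\cdot y\to \mathbf P(V)$ is locally uniform on the open set $\mathbf P(E)\setminus\mathbf P(\psi^\bot)$ in order to cleanly compose with the convergence of $k_j'$. This follows from writing $k_j\cdot y$ in coordinates adapted to $V\oplus F^\bot\oplus W$ and controlling the denominators of the projective normalization, using that the $V$-component of $k_0\cdot y$ is nonzero by the choice of exceptional set.
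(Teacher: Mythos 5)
The overall strategy matches the paper's: split $E$ using the Barbot crown, extract convergent subsequences of $\{k_j\}$ and $\{k_j'\}$, analyze the rescaled action of $a_j$ on the pieces, and transfer back via $g_j = k_j' a_j k_j$. Your finer decomposition $E = V\oplus F^\perp\oplus W$ (with $W\defeq c_-\oplus d_-$) and the normalization by $\lambda_j$ are fine, and your observation that the boundedness of $\lambda_j/\mu_j$ forces $\mu_j\to\infty$ is exactly the point the paper uses implicitly; the uniformity remark at the end is a legitimate (and handleable) concern.

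However, there is a genuine error in the identification of the exceptional set, and it invalidates the final transfer. Once you know $a_j/\lambda_j$ converges to an operator $T$ with image $V$ and kernel $F^\perp\oplus W$, the correct condition for $a_j\cdot y$ to converge into $\P(V)$ is that a lift of $y$ have nonzero component along $V$ in your splitting, i.e. $y\notin\P(F^\perp\oplus W)$. You assert instead $y\notin\P(V^\perp)$, but these are different subspaces: because $V = c_+\oplus d_+$ is totally isotropic, one has $V^\perp = V\oplus F^\perp$, which contains $V$ and is disjoint from your $W = c_-\oplus d_-$, whereas $\ker T = F^\perp\oplus W = (c_-\oplus d_-)^\perp$. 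Concretely, your condition admits $y$ with $k_0\cdot y=[c_-]$: this $y$ is not in $\P(k_0^{-1}V^\perp)$, yet $a_j\cdot[c_-]=[c_-]$ is fixed and lies outside $\P(V)$, so $g_j\cdot y$ accumulates in $\P(k_0'(c_-\oplus d_-))$, not $\P(\varphi)$. The fix is to take $\psi\defeq k_0^{-1}(c_-\oplus d_-)$ (equivalently $\psi\defeq k_0^{-1} W^\perp$ with the paper's $W=(c_-\oplus d_-)^\perp$), so that $\psi^\perp = k_0^{-1}\left((c_-\oplus d_-)^\perp\right)$ is precisely $k_0^{-1}(\ker T)$. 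With that substitution, the rest of your argument goes through.
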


\begin{proof}
Set $V=c_+\oplus d_+$ and $W=(c_-\oplus d_-)^\bot$. Note that $V$ and $W^\bot$ are isotropic planes (that is photons) preserved by $\A$ and such that $E=V\oplus W$. The spectral radius of $a_j$ on $V$ is $\lambda_j$ and is less or equal to $1$ on $W$.

Since $\seqj{g}$ is unbounded , there is a subsequence of $\seqj{\lambda}$ that tends to infinity. 

Again, for any $y$ in $\Proj(E)$ not in $\Proj(W)$, the sequence $\{a_j\cdot y\}_{j\in\N}$ subconverges to a point in $\Proj(V)$. Setting $\psi\defeq k_0^{-1}W^\bot$ and $\varphi\defeq k'_0V$ yields the result.
\end{proof}

\subsubsection{Proof of Proposition \ref{c:DynamicDivergingSequence}}

First assume that $n\geq 2$, so the group $\Fix(x)=\Stab(x^\bot)\cong \SO(2,n)$ has rank $2$. We can thus take a Barbot crown $\mathcal{C}$ with vertices $\{c_+,d_+,d_-,c_-\}$ in $x^\bot$, which implies that the corresponding Cartan subgroup $\A$ is in $\Fix(x)$.  We consider the  Cartan decomposition (in $\Fix(x)$) $g_j=k'_ja_jk_j$, with  $k'_j$ and  $k_j$ as elements in a maximal compact subgroup $\K_F$ of $\Fix(x)$. Observe that $\K_F$ preserves the orthogonal to $x$.

In the $\Pp_1$-divergent case, we apply Lemma  \ref{l:DynamicOfGroupAction1}, observing that the points  $p$ and $q$ are in $\P(x^\bot)$, and the result follows with $H\defeq \Proj(q^\bot)$.

In the non $\Pp_1$-divergent case, we apply Lemma  \ref{l:DynamicOfGroupAction2}  and observe that  $\varphi$ and $\psi$ are contained in $x^\bot$. We take any point $q$ in $\psi$, and then $H\defeq q^\bot$ so that  $\psi^\bot$ is a subset of $H$.

Finally, if $n=1$, the stabilizer of point is isogenic $\ms{SO}(2,1)$ and has rank 1. Thus the sequence $\{g_j\}_{j\in\mathbb N}$ is $\Pp_1$-divergent and we apply Lemma  \ref{l:DynamicOfGroupAction1}.

\subsection{A priori $C^0$- estimates}

Both Theorems \ref{t:CompactFreeTheorem} and \ref{theo:bdry-vanish}
will follow from some {\it a priori} estimate that we now state:

\begin{proposition}\label{p:APrioriEstimates}
Let $\seqk{\Sigma}$ be a sequence of complete maximal surfaces and $\Lambda_0$ a semi-positive curve in $\bHn$ satisfying either the hypotheses of Theorem  \ref{t:CompactFreeTheorem} or those of Theorem  \ref{theo:bdry-vanish}.

Then for any pointed hyperbolic plane $\Pp=(q,H)$, if $x_k$ in $\Sigma_k$ is the preimage of $q$ for the warped projection, then  the sequence $\{d_\GG(\T_{x_k}\Sigma_k,\T_qH)\}_{k\in\mathbb N}$ is bounded. \end{proposition}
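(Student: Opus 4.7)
The plan is a contradiction argument combining the main compactness Theorem \ref{t:CompactCompleteTheorem}, the proximal dynamics of divergent sequences in $\G$ (Lemmas \ref{l:DynamicOfGroupAction1}--\ref{l:DynamicOfGroupAction2}), and the convex-hull containment of Proposition \ref{p:MaximalSurfaceContainedInConvexHull}.

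\emph{Normalisation and limit surface.} Assume the estimate fails at some pointed hyperbolic plane $\Pp=(q,H)$: after extraction, $d_\GG\bigl((x_k,\T_{x_k}\Sigma_k),(q,\T_q H)\bigr)\to\infty$. Because $x_k$ lies in the compact timelike fibre of $\pi_\Pp$ over $q$, the sequence $x_k$ subconverges and the divergence must occur in the Grassmannian fibre direction. Pick $\phi_k\in\G$ sending $(q,\T_q H)$ to $(x_k,\T_{x_k}\Sigma_k)$; compactness of the $\GR{\Hn}$-stabiliser forces $\phi_k$ to be unbounded in $\G$. Set $\Sigma'_k\defeq\phi_k^{-1}\Sigma_k$, so that $q\in\Sigma'_k$ and $\T_q\Sigma'_k=\T_q H$. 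The boundary hypotheses from Theorem \ref{t:CompactFreeTheorem} or \ref{theo:bdry-vanish} (strong positivity, uniform unpinching, bounded geometry, osculating-plane bound) are $\G$-invariant and transfer to $\Sigma'_k$, while hypothesis (iv) of Theorem \ref{t:CompactCompleteTheorem} is realised by the fixed point $(q,\T_q H)\in\Gamma(\Sigma'_k)$. Applying that theorem yields a subsequential limit $\Sigma_\infty$, a complete acausal maximal surface through $q$ with $\T_q\Sigma_\infty=\T_q H$.

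\emph{Boundary localisation.} I would then apply the Cartan decomposition to $\phi_k^{-1}$ and invoke Lemmas \ref{l:DynamicOfGroupAction1}--\ref{l:DynamicOfGroupAction2}: up to further extraction there exist a projective hyperplane $\Proj(W)\subset\PE$ and an attractor $A\subset\bHn$, a single point in the regular case and contained in a photon in the singular case, such that $\phi_k^{-1}y\to A$ for every $y\notin\Proj(W)$. Since $\Lambda_0$ is semi-positive it carries a positive triple spanning a $(2,1)$-subspace, which cannot be contained in the degenerate signature-$(1,n)$ hyperplane $\Proj(W)$; hence $\Lambda_0\setminus\Proj(W)$ is open and dense in $\Lambda_0$. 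Combined with the convergence of the total boundary of $\Sigma_k$ to $\Lambda_0$ (granted by either Theorem \ref{t:CompactFreeTheorem} or Theorem \ref{theo:bdry-vanish}), this forces every accumulation point of $\phi_k^{-1}\partial\Sigma_k$ to lie in $A$, so that the total boundary of $\Sigma_\infty$ is contained in $A$.

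\emph{Contradiction.} For each $k$, Proposition \ref{p:MaximalSurfaceContainedInConvexHull} gives $\Sigma'_k\subset\CH(\phi_k^{-1}\partial\Sigma_k)$; taking the limit -- or alternatively applying the maximum-principle argument directly to $\Sigma_\infty$, via any linear functional $\varphi$ positive on a lift of $A$, the identity $\Delta\varphi=2\varphi$, and the Omori-type estimate of Section \ref{sec:Uniqueness} exploiting the completeness of $\Sigma_\infty$ -- yields $\Sigma_\infty\subset\CH(A)$. But a point or a photon equals its own convex hull in $\cHn$, so $\CH(A)\subset\bHn$ has dimension at most one, contradicting the fact that $\Sigma_\infty$ is a two-dimensional surface meeting $q\in\Hn$. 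The hardest step is the boundary-localisation one, because proximality only constrains orbits of generic points: it is essential to combine the semi-positivity of $\Lambda_0$ (which prevents it from being trapped in $\Proj(W)$) with the global behaviour of the total boundary of $\Sigma'_k$ in order to push the convex-hull inclusion to the limit.
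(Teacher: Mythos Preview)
Your overall shape is the paper's --- contradiction, normalise by $\G$, extract a limit surface via the compactness theorem, then invoke proximal dynamics together with convex-hull containment --- but you run the proximality argument in the wrong direction, and your \emph{boundary-localisation} step has a genuine gap.

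You apply Lemmas \ref{l:DynamicOfGroupAction1}--\ref{l:DynamicOfGroupAction2} to the general divergent sequence $\phi_k^{-1}$ and then assert that \emph{every} accumulation point of $\phi_k^{-1}\partial\Sigma_k$ lies in the attractor $A$. Two things break here. First, proximality only controls points outside the exceptional set $\Proj(W)$, and $\Lambda_0\cap\Proj(W)$ need not be empty or nowhere dense: for a merely semi-positive $\Lambda_0$ it can be an entire photon segment (e.g.\ if the repeller $q$ of Lemma \ref{l:DynamicOfGroupAction1} lies on a photon meeting $\Lambda_0$). For $y_k\in\partial\Sigma_k$ converging to such a segment you have no information on $\phi_k^{-1}y_k$, so your density claim and the inclusion of all accumulation points in $A$ are unjustified. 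Second, even granting that, identifying the total boundary of $\Sigma_\infty$ with the accumulation set of $\phi_k^{-1}\partial\Sigma_k$ is a statement about convergence at infinity that Theorem \ref{t:CompactCompleteTheorem} (convergence as a graph on bounded balls) does not supply. A minor slip as well: in the singular case $\Proj(W)=\Proj(\psi^\bot)$ has codimension two and is not a ``signature-$(1,n)$ hyperplane''.

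The paper sidesteps all of this by reversing the direction. After correcting $g_k^{-1}$ by a bounded element $\tau_k$ it obtains $h_k\defeq\tau_k^{-1}g_k^{-1}\in\operatorname{Stab}(x_0)$ for the fixed limit point $x_0\in\CH(\Lambda_0)$, and applies the tailored Proposition \ref{c:DynamicDivergingSequence}: the attractor then lies in $\P(x_0^\bot)$, which is disjoint from $\CH(\Lambda_0)$ by Proposition \ref{p:PropertiesConvexHull}. One then picks a \emph{single} point $z_0$ on the limit surface $\Sigma'_0$ outside the exceptional hyperplane (possible because that hyperplane contains no positive-definite $2$-plane), so $h_k\cdotp z_0$ escapes every neighbourhood of $\CH(\Lambda_0)$, while $g_k^{-1}(z_k)\in\Sigma_k\subset\CH(\partial\Sigma_k)$ remains in such a neighbourhood. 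No control on the boundary of the limit surface is needed; the contradiction hinges on one interior point and on the fact that the attractor is orthogonal to a point of $\CH(\Lambda_0)$ --- precisely the information you lose by using the raw proximality lemmas instead of Proposition \ref{c:DynamicDivergingSequence}.
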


\begin{proof}[Proof of proposition \ref{p:APrioriEstimates}]
Suppose the result is false. Then there exists a  pointed hyperbolic plane $\Pp=(q,H)$ so that $\{d_\GG(\T_qH,\T_{x_k}\Sigma_k)\}_{k\in\mathbb N}$ is unbounded where $\pi(x_k)=q$.

Since the time-like sphere 
$$\mathsf S\eqdef \pi_\Pp^{-1}(q),$$
is compact, the sequence $\seqk{x}$ subconverges to a point $x_0$. The point $x_0$  belongs to $\CH(\Lambda_0)$ since $x_k$ is in the convex hull of the boundary of $\Sigma_k$. 

Let $\seqk{\tau}$ be a sequence of elements of $\operatorname{Stab}(\mathsf{S})$ converging to the identity and  so that $\tau_k(x_k)=x_0$, and let us write $\Sigma_k''\eqdef \tau_k(\Sigma_k)$.

Let $H_0$ be a hyperbolic plane through $x_0$ and 
let $g_k$ in $\G$ be such that $g_k(\T_{x_k}\Sigma_k)=\T_{x_0}H_0$, and let $\Sigma'_k=g_k(\Sigma_k)$. Then by construction  $d_\GG(\T_{x_0}H_0,\T\Sigma'_k)=0$. Observe now 
\begin{itemize}
\item in the context of Theorem~\ref{t:CompactFreeTheorem} the boundary of  $\seqk{\Sigma'}$ is empty and all hypothesis of Theorem~\ref{t:CompactTheorem}   are {\it de facto} satisfied,
\item in the context of Theorem~\ref{theo:bdry-vanish}, the second  hypothesis of this theorem, implies  the first and third hypothesis of  Theorem~\ref{t:CompactTheorem}, moreover  Proposition~\ref{pro:bd-ang} and the third hypothesis of Theorem~\ref{theo:bdry-vanish} gives that the second hypothesis of Theorem~\ref{t:CompactTheorem} is satisfied. \end{itemize}
Thus, in both cases, we can apply  Theorem \ref{t:CompactTheorem}. Hence  the sequence $\seqk{\Sigma'}$ subconverges  on any compact to a maximal surface $\Sigma'_0$.
Here we invoke the hypotheses of Theorem~\ref{theo:bdry-vanish} and Proposition~\ref{pro:bd-ang} if the conditions on the boundary in Theorem \ref{t:CompactTheorem} are not vacuously satisfied. By construction observe that $g_k(x_k)=x_0$, hence that $\Sigma'_0$ passes through $x_0$.

 Let finally $h_k=\tau_k\cdot g_k^{-1}$. The sequence $\seqk{h}$  belongs to  $\operatorname{Stab}(x_0)$, is unbounded by hypothesis,  and 
$$
h_k(\Sigma'_k)=\Sigma''_k\ .
$$

By Proposition \ref{c:DynamicDivergingSequence}, there exists\begin{itemize}
	\item A hyperplane $H^+$ of degenerate signature $(1,n)$ passing through $x_0$  such that for any compact set $C$ in $\cHn$ but not intersecting $H^+$, the sequence $\{h_k(C)\}_{k\in\N}$ accumulates to a point in $\bHn\cap\P(x_0^\bot)$. 
	\item  A hyperplane $H^-$ of degenerate signature $(1,n)$ passing through $x_0$  such that for any compact set $C$ in $\cHn$ but not intersecting $H^-$, the sequence $\{h_k^{-1}(C)\}_{k\in\N}$ accumulates to a point in $\bHn\cap\P(x_0^\bot)$.
\end{itemize} 
We can now prove the proposition.
\begin{enumerate}
	\item Let us first treat the case of the boundary free Theorem~\ref{t:CompactFreeTheorem}. 
	
	By definition a semi-positive curve contains a positive triple. Thus   
$\Lambda_0$ is not included in $H^-$, there exists a sequence $\{y_k\}_{k\in\N}$ in $\partial_\infty\Sigma''_k$ converging to a point $y_\infty$ in $\Lambda_0\setminus H^-$. Then $z_k\eqdef h_k^{-1}(y_k)$ converges to a point $z_\infty$ which is orthogonal to $x_0$. On the other hand, since $y_k$ belongs to $\partial_\infty\Sigma''_k$, $z_k$ belongs to $\partial_\infty\Sigma'_k$. Thus $z_\infty$ belongs to $\Lambda'_0$. This contradicts \cite[Lemma 3.7 (c)]{CTT} which asserts that if $x$ is a point in a complete maximal surface $S$ and $y$ a point in the asymptotic boundary of $S$, then $x$ and $y$ are not orthogonal.
\item Let us now  treat the case of Theorem~\ref{theo:bdry-vanish}, where we assume the limit $\Lambda_0$ is a positive curve. 

Observe that $H^+$  does not contain any positive definite $2$-plane, and thus does not contain  $\Sigma'_0$. Thus, there exists $z_k$ in  $\Sigma'_k$ converging to $z_0$ in $\Sigma'_0$ such that if $y_k\defeq h_k(z_k)$, then  $\{y_k\}_{k\in\N}$ subconverges to a point $y_\infty$ in $\bHn\cap \P(x_0^\bot)$, which is in particular orthogonal to $x_0$.

Since $y_k$ belongs to $\Sigma_k''$, it follows  that $y_k$ converges to $y_\infty$ which belongs to $\Lambda_0$. We obtain a contradiction with item \ref{it:PropCH2bis} of Proposition \ref{p:PropertiesConvexHull}. 
\end{enumerate}
\end{proof}

\subsection{Boundary-free and boundary-vanishing case}\label{sec:bdry-free} \label{sec:bdry-vanish}

The first item of Theorem \ref{sec:bdry-free} is a direct consequence of 
 the Compactness Theorem \ref{t:CompactTheorem}. Indeed in that case, the finite boundary $\gamma_k$ of $\Sigma_k$ is empty and the first and third hypotheses of Theorem \ref{t:CompactTheorem} are fulfilled. The second hypothesis is a conequence of the hypothesis of this first item.

We now move to the proof of the second item of \ref{sec:bdry-free}
In order to apply the Compactness Theorem \ref{t:CompactTheorem}, we first need to find a pointed hyperbolic plane at finite distance from
$\T\Sigma_k$. This is achieved by Proposition \ref{p:APrioriEstimates}.  This completes the proof of Theorem \ref{t:CompactFreeTheorem}. 

The proof of Theorem~\ref{theo:bdry-vanish} is also immediate: we notice from
Proposition~\ref{pro:bd-ang} that for all $x$ in $\gamma_k$
$$
d_\GG(\T_x\Sigma_k,\T^{(2)}_x\gamma_k)\leq w(\gamma_k)\ .
$$
Thus the uniform bounds on the angular width for $\gamma_k$ guarantees that the second item in the hypothesis of Theorem \ref{t:CompactTheorem} is satisfied.

Finally, let us prove the third item in Theorem \ref{sec:bdry-free} using the notation therein. Let us realize the surfaces $\Sigma_k$ as graphs over a fixed pointed hyperbolic disk $\P$ and let $\pi$ be the warped projection. We separately treat two cases.
\begin{enumerate}
	\item In the case the sequence of points $\{\pi(y_k)\}_{k\in\mathbb N}$ stays bounded, then the sequence $\seqk{y}$ subconverges since $\seqk{\Sigma}$ subconverges as a graph over $\P$
	\item Otherwise we can assume that $\{\pi(y_k)\}_{k\in\mathbb N}$ converges to a point $z_\infty$ in the boundary at infinity in $\P$. Recall that $\Sigma_k$ is included in the convex hull $\Omega_k$ of $\Lambda_k$ and that $\Lambda_k$ is the graph of a 1-Lipschitz map $f_k$ over the boundary at infinity of $\P$. We may thus extract a subsequence so that $\seqk{f}$ converges to some 1-Lipschitz map $f_\infty$. It then follows that $\seqk{\Lambda}$ converges in the Hausdorff topology  to some  semipositive loop $\Lambda_\infty$ and  thus that $\seqk{\Omega}$ converges in the Hausdorff topology to the convex hull of $\Lambda_\infty$. Then $\seqk{y}$ converges  to the  element $f_\infty(z_\infty)$ in $\Lambda_\infty$. This concludes the proof.
\end{enumerate}

\subsection{Proof of the compact boundary case: Theorem \ref{t:CompactBoundaryTheorem}}\label{sec:bdry-cpct}

Let us prove that the hypotheses of Theorem \ref{t:CompactTheorem} are satisfied.

We first remark that the first and third boundary conditions of Theorem \ref{t:CompactTheorem} are satisfied by the compactness and strong positivity of $\gamma$. Finally the second condition is satisfied due to proposition \ref{pro:bd-ang} and the fact that a maximal surface lies in the convex hull of its boundary by Proposition \ref{p:MaximalSurfaceContainedInConvexHull}.

\section{Finite Plateau problem}\label{sec:Plateau}
In this section, we prove the following finite Plateau problem.

\begin{theorem}\label{t:FinitePlateauProblem}
If $\gamma$ is a deformable strongly positive closed curve in $\hHn$, there exists a unique complete acausal maximal spacelike surface $\Sigma$ whose total boundary is $\gamma$.
\end{theorem}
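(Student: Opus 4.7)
Uniqueness is immediate from Theorem~\ref{t:Uniqueness}. For existence I would use the continuity method along the family of strongly positive curves furnished by Lemma~\ref{l:DefStrongPos}. Fix an isotopy $\{\gamma_t\}_{t\in[0,1]}$ of strongly positive curves in $\hHn$ with $\gamma_1=\gamma$, with $\gamma_0$ lying inside a hyperbolic plane $H_0$, and with uniformly bounded angular width. Let
\[
T\defeq \big\{ t\in[0,1]\mid \gamma_t \text{ bounds a complete acausal maximal surface in }\hHn\big\}.
\]
The goal is to show $T=[0,1]$, from which $1\in T$ yields the theorem. Connectedness of $[0,1]$ will follow once we verify that $T$ is non-empty, open, and closed.

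For $t=0$: since $\gamma_0$ is a spacelike Jordan curve inside the totally geodesic hyperbolic plane $H_0$, it bounds a compact topological disk $\Sigma_0\subset H_0$. Being contained in a spacelike totally geodesic subspace, $\Sigma_0$ is acausal and has vanishing second fundamental form, hence is maximal, with total boundary $\gamma_0$. Therefore $0\in T$.

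For openness, fix $t_0\in T$ with maximal surface $\Sigma_{t_0}$ and parametrize nearby spacelike surfaces with boundary close to $\gamma_{t_0}$ as normal graphs $\xi\mapsto \exp(\xi)$ for sections $\xi$ of $\No\Sigma_{t_0}$ in appropriate H\"older spaces $C^{2,\alpha}$. The mean curvature $\mathsf{H}(\xi)$ is a quasi-linear elliptic operator whose linearization at $\xi=0$ is the Jacobi operator $L$ of the second variation of the area functional (Proposition~\ref{prop:SecondVariationAreaForm}). By Corollary~\ref{c:StabilityMaximalSurfaces} the second variation is strictly negative on compactly supported normal variations in $\Hn$, so $L$ is a negative definite elliptic operator which is invertible with Dirichlet boundary condition. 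The implicit function theorem then produces a unique maximal surface with prescribed boundary for every $C^{2,\alpha}$-small perturbation of $\gamma_{t_0}$; in particular it applies to $\gamma_t$ for $t$ close to $t_0$. These deformed surfaces remain acausal because they are $C^1$-close to the acausal $\Sigma_{t_0}$, so Proposition~\ref{p:acausalImpliesGraph} still applies.

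For closedness, take $t_k\to t_\infty$ in $T$ with associated maximal surfaces $\Sigma_{t_k}$ bounded by $\gamma_{t_k}$. The curves $\gamma_{t_k}$ are strongly positive circles converging smoothly to the strongly positive circle $\gamma_{t_\infty}$, so Theorem~\ref{t:CompactBoundaryTheorem} yields smooth subconvergence of $\Sigma_{t_k}$ to a compact maximal surface $\Sigma_{t_\infty}$ with boundary $\gamma_{t_\infty}$. Acausality passes to the limit via Proposition~\ref{p:BoundaryacausalSurface}, since the $C^1$-limit of graphs whose boundaries are strongly positive Jordan curves is itself a graph. Hence $t_\infty\in T$, so $T$ is closed and consequently $T=[0,1]$.

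The main obstacle I anticipate is the openness step, which requires setting up the correct Banach manifold structure for spacelike surfaces with prescribed boundary and checking that the Jacobi operator with Dirichlet boundary condition is genuinely an isomorphism between the appropriate H\"older spaces; the negative definiteness provided by stability (Corollary~\ref{c:StabilityMaximalSurfaces}) is precisely what makes the Fredholm operator invertible, but verifying that the perturbed graphs are still acausal (not merely spacelike) and have boundary exactly $\gamma_t$ must be done with care, using the warped-product graph description of Proposition~\ref{p:acausalImpliesGraph}.
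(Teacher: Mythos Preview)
Your proposal is correct and follows essentially the same strategy as the paper: uniqueness from Theorem~\ref{t:Uniqueness}, existence via the continuity method along the isotopy of Lemma~\ref{l:DefStrongPos}, closedness from Theorem~\ref{t:CompactBoundaryTheorem}, and openness from stability (Corollary~\ref{c:StabilityMaximalSurfaces}) plus the implicit function theorem.

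The only noteworthy difference lies in how the openness step is set up. You parametrize nearby surfaces as normal graphs over $\Sigma_{t_0}$ and invoke invertibility of the Jacobi operator with Dirichlet condition; the paper instead uses the warped projection to identify all competitors with maps $\Omega\to\S^n$ over a fixed planar domain, and studies the combined map $\Psi=(\partial,\mathsf H)$ sending a surface to its boundary values and mean curvature, showing $\mathrm D\Psi$ is Fredholm of index zero (citing White) with trivial kernel by stability. The paper's setup handles the moving boundary $\gamma_t$ more cleanly, since normal graphs over $\Sigma_{t_0}$ do not naturally have boundary on $\gamma_t$ for $t\neq t_0$; you correctly flag this in your final paragraph and point to the warped-product description as the remedy, which is exactly what the paper does.
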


Strongly positive curves are defined in Definition \ref{d:TypeOfCurves}, deformable ones in Definition \ref{defi:Deformable} and total boundary in Definition \ref{d:AsymptoticBoundary}. Observe also that by Proposition \ref{p:BoundaryacausalSurface}, such a maximal surface is a graph.

The uniqueness of $\Sigma$ has been proved in Theorem \ref{t:Uniqueness}, so it only remains to prove the existence.

\subsection{Existence by the continuity method}

Consider a deformation $\{\gamma_t\}_{t\in[0,1]}$ of $\gamma$ (see Definition \ref{defi:Deformable}). By Lemma \ref{l:DisjointBarycenter}, we can lift those curves continuously to $\Hn_+$.

Let $\mathcal M$ be the space of $C^{k,\alpha}$ complete compact spacelike surfaces whose boundary is both smooth and strongly positive, which are graphs over a disk. Define
  $$
  \mathcal{A}=
 \{t \in [0,1]\mid \hbox{ there is a complete maximal acausal surface $\Sigma \in \mathcal M$ with  } \partial \Sigma=\gamma_t\}\ .
 $$
 
 We already know that $\mathcal A$ is non empty since it contains $t=0$; we also know that  $\mathcal A$ is closed by Theorem \ref{t:CompactBoundaryTheorem}.
 
 Thus a connectedness argument shows that Theorem \ref{t:FinitePlateauProblem} follows from the following proposition.
 
 \begin{proposition}\label{p:Aopen}
 	The set $\mathcal A$ is open.
 \end{proposition}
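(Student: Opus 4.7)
Let $t_0\in\mathcal A$, and let $\Sigma_0\in\mathcal M$ be the maximal surface bounding $\gamma_{t_0}$. The plan is to invoke the implicit function theorem in H\"older spaces, the invertibility of the Jacobi operator under Dirichlet boundary conditions being provided by the stability of maximal surfaces (Corollary~\ref{c:StabilityMaximalSurfaces}).

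Using a tubular neighborhood parametrized by the normal bundle $\No\Sigma_0$ of rank $n$, write nearby surfaces as normal graphs $\Sigma_\xi\defeq\{\exp_x\xi(x):x\in\Sigma_0\}$, and let $\mathsf H(\xi)\in C^{k-2,\alpha}(\Sigma_0,\No\Sigma_0)$ be the mean curvature of $\Sigma_\xi$ pulled back to $\Sigma_0$. Then $\mathsf H$ is a quasilinear second order operator whose linearization $J\defeq D\mathsf H_0$ is the Jacobi operator, characterized via Proposition~\ref{prop:SecondVariationAreaForm} by $\ddot{\mathcal A}(\xi)=-2\int g_N(J\xi,\xi)\,\dvol$. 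The principal symbol of $J$ is the scalar Laplacian of $g_\I$ tensored with the identity of $\No\Sigma_0$, hence $J$ is strongly elliptic; by standard elliptic boundary theory it is Fredholm of index zero as a map $C^{k,\alpha}_0(\Sigma_0,\No\Sigma_0)\to C^{k-2,\alpha}(\Sigma_0,\No\Sigma_0)$ (subscript zero meaning Dirichlet). By Corollary~\ref{c:StabilityMaximalSurfaces}, any $\xi$ in the kernel must be zero, so $J$ is an isomorphism.

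The isotopy $\{\gamma_t\}$ constructed in Lemma~\ref{l:DefStrongPos} may be realized by a smooth $t$-dependent family of sections $\tilde\eta_t$ of $\No\Sigma_0$ with $\tilde\eta_{t_0}=0$ and $\exp\tilde\eta_t|_{\partial\Sigma_0}$ parametrizing $\gamma_t$. Finding a maximal surface with boundary $\gamma_t$ near $\Sigma_0$ amounts to solving $\mathsf H(\tilde\eta_t+\zeta)=0$ for $\zeta\in C^{k,\alpha}_0(\Sigma_0,\No\Sigma_0)$. Since the partial derivative in $\zeta$ at $(t_0,0)$ is the invertible operator $J$, the implicit function theorem produces a unique small solution $\zeta(t)$ for $t$ close to $t_0$, and hence a maximal surface $\Sigma_t$ with $\partial\Sigma_t=\gamma_t$. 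Membership in $\mathcal M$ is open: being a spacelike graph over $\Omega$ with smooth strongly positive boundary is a $C^1$-stable condition, and completeness is automatic since $\Sigma_t$ is compact.

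The main subtlety is matching the Banach-space framework with the geometric constraints: one must check that the family $\tilde\eta_t$ depends smoothly on $t$, that the above map is well defined on Banach submanifolds with the boundary constraint built in, and that stability under Dirichlet boundary conditions truly upgrades triviality of kernel into invertibility of $J$. All three are standard once the setup is in place; the decisive geometric input is the strict negativity in Corollary~\ref{c:StabilityMaximalSurfaces}, which here is available with no compactness-of-quotient assumption thanks to the fact that $\Sigma_0$ itself is compact with boundary.
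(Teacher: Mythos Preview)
Your strategy is the paper's: linearize the mean curvature, note strong ellipticity and Fredholm index zero under Dirichlet conditions, kill the kernel with Corollary~\ref{c:StabilityMaximalSurfaces}, and apply the implicit function theorem. The decisive input is correctly identified.

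The gap is in the parametrization. Normal graphs over the \emph{fixed} compact surface $\Sigma_0$ have boundaries confined to the hypersurface $\mathcal N=\bigcup_{x\in\partial\Sigma_0}\exp_x(\No_x\Sigma_0)$, whose tangent space at $x\in\partial\Sigma_0$ is $\T_x(\partial\Sigma_0)\oplus\No_x\Sigma_0$; the missing direction is the inward conormal to $\partial\Sigma_0$ inside $\Sigma_0$. Any deformation $\gamma_t$ whose velocity has a nonzero component in that direction leaves $\mathcal N$, and then no normal section $\tilde\eta_t$ over $\Sigma_0$ can satisfy $\exp\tilde\eta_t|_{\partial\Sigma_0}=\gamma_t$. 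There is no reason the isotopy of Lemma~\ref{l:DefStrongPos} avoids this: its velocity lies in the $\S^n$-fiber, which is not contained in $\No\Sigma_0$ unless $\Sigma_0$ happens to be horizontal at the point in question. Consequently your Dirichlet problem $\mathsf H(\tilde\eta_t+\zeta)=0$ with $\zeta|_{\partial\Sigma_0}=0$ cannot in general produce a surface bounding $\gamma_t$.

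The paper sidesteps this by parametrizing surfaces as warped-product graphs $\Omega\to\S^n$ rather than as normal graphs. The boundary-restriction map $C^{k,\alpha}(\Omega,\S^n)\to C^{k,\alpha}(\partial\Omega,\S^n)$ is then an unconstrained surjection, and because the isotopy of Lemma~\ref{l:DefStrongPos} moves only in the $\S^n$-fiber, every $\gamma_t$ is automatically a graph over $\partial\Omega$. The combined map $(\partial,\mathsf H)$ is Fredholm of index zero (via White~\cite{WhiteIndiana87}) with trivial kernel by stability, hence a local diffeomorphism. Your route can be repaired by extending $\Sigma_0$ past its boundary and letting the graphing domain vary, but that is exactly the bookkeeping you defer in your last paragraph without carrying out.
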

 
 The proof is a consequence of Corollary \ref{c:StabilityMaximalSurfaces} and standard techniques of which we now give some details.
 
We need some preliminaries. Let $H_0$ be the hyperbolic plane containing $\gamma_0$, fix a point $q_0$ in $H_0$ and consider the pointed hyperbolic plane $\Pp_0=(q_0,H_0)$ (see Paragraph \ref{sss:WarpedProduct}).

Fix $t_0$ in $\mathcal A$. The image of $\gamma_{t_0}$ by the warped projection on $\Pp_0$ is a smooth simple closed curve bounding a domain $\Omega_0$. Similarly, for any $t$ close to $t_0$, we denote by $\Omega_t$ the  domain in $H_0$ bounded by the warped projection of $\gamma_t$. Let $\Sigma_0$ be a maximal surface with boundary $\gamma_{t_0}$, we want to prove that for all $t$ close to $t_0$, the curve $\gamma_t$ bounds a maximal surface.

The next two lemmas describe coordinates on the space of (some) surfaces, and trivialisation of the associated normal bundles. These technicalities are made necessary by the non linear nature of $\Hn_+$.
 \begin{lemma}[\sc Charts]\label{l:ChartSurface}
 	There exists some $n$-dimensional vector spaces $E_1$, a neighbourhood $\mathcal U$ of $(t_0,0)$    in  $I\times C^{k,\alpha}(\Omega_0,E_1)$,  a map
 	$$
 	\phi: \mathcal U\times \Omega_0\longrightarrow\Hn_+\ ,
 	$$
 	such that for any $(t,G)$ in  neighbourhood of $(t_0,0)$ in $\mathcal U$ the map  $$
 	\phi_{(t,G)}\ : \ x\mapsto \phi(t,G,x)\ ,
 	$$ is a smooth parametrisation of a surface $S_{(t,G)}$ such that
 	\begin{enumerate}
 		\item $S_{(t,G)}$ is a graph over $\Omega_t$, 
 		\item $S_{(t_0,0)}=\Sigma_0$, and  
 		\item if $G$ vanishes on the boundary, then $	\partial S_{(t,G)}=\gamma_t $.\label{it:bdSg}
 	\end{enumerate}
 	\end{lemma}
 	
\begin{proof} After possibly taking a smaller $I$, let us choose an immersion 
$$
F: I\times \Omega_0\to \Hn_+=H_0\times \S^n,
$$ so that $\pi(F(t_0,x))=x$ and for every $t$, the map from $\partial\Omega_0$ to $\Hn$ given by $$x\mapsto F(t,x)\ ,$$ is a parametrisation of $\gamma_t$. Then for every $t$ small enough, the map $x\to F(t,x)$ is a parametrisation of a spacelike surface $S_t$ whose boundary is $\gamma_t$.

Let us now consider a trivialisation of the bundle $F^*(\T \S^n)$ as  $I\times \Omega_0\times E_1$, and finally consider the map 
$$
\phi(t,G,x)=\exp_{F(t,x)}(G(x))\ ,
$$
where we have considered $G(x)$ as a vector in $\T_{F(t,x)} \S^n$ using the above trivialisation. The result follows from this construction, writing $S_{(t,G)}$ as the image of $x\mapsto \phi(t,G,x)$. \end{proof}
 	
We have a similar result for the normal bundle 	
\begin{lemma}[\sc Trivialisation of the normal bundle]\label{l:Nbundle}
 Similarly, there exists an $n$-dimensional vector space $E_2$, and a smooth map
 $$
 \Phi: \mathcal U\times C^{k-2,\alpha}(\Omega_0, E_2)\times \Omega_0\to \T\Hn_+\ , 
 $$	
 above $\phi$, 
 such that fixing $(t,G)$ in $\mathcal U$, the map 
 $$
 N_{t,G}: C^{k-2,\alpha}(E_2)\times \Omega_0\to \T\Hn_+\
 $$
 is a linear  isomorphism with the space of $C^{k-2,\alpha}$-section of the normal bundle to $S_{(t,G)}$.
 \end{lemma}

\begin{proof} Let us consider the bundle 
$$
N\to \mathcal U\times \Omega_0\ , 
$$
whose fiber at the point $\phi(t,G,x)$  is the normal bundle of the surface $S_{(t,G)}$ at the point $\phi(t,G,x)$.

We now trivialize this bundle as  $$
 \mathcal U\times \Omega_0\times E_2\ , 
$$
and the result follows 
\end{proof} 
\begin{proof}[Proof of Proposition \ref{p:Aopen}]

Let $(t,G)$ in  $\mathcal U$. 
We denote by  $\Hh(t,G)$ the mean curvature (vector) of the surface  $\Sigma_{(t,G)}$. Using the identification in Lemma \ref{l:Nbundle}, we identify $\Hh(t,G)$ -- with the same notation -- to an element of $C^{k-2,\alpha}(\Omega_0,E_2)$.  Thus  we consider the map 
$$\Hh\ :\left\{\begin{array}{lll}
  \mathcal{U}  & \longrightarrow & C^{k-2,\alpha}(\Omega_0,E_2)\ , \\
(t,G) & \longmapsto & \ \Hh(t,G)\ .
\end{array}\right. 
$$

Let $\mathcal V$ a neigbourhood of $0$ in $C_0^{k,\alpha}(\Omega_0,E_1)$ so that $\{t_0\}\times \mathcal V$ is included in  $\mathcal U$. 
Consider the restricted map
$$\Hh^0:\left\{
\begin{array}{lll}
 \mathcal V  & \longrightarrow & C^{k-2,\alpha}(\Omega_0,E_2)\ , \\
 G & \longmapsto & \ \Hh(t_0,G)\ .
\end{array}\right.
$$ 

Similarly to  the setting of Plateau problems in Euclidean spaces, as shown in  White \cite[Proposition~1.4]{WhiteIndiana87} the map ${\rm D}_0\Hh^0$  is Fredholm of index zero (to adapt the proof of this fact to the present setting, we need only observe that the map ${\rm D}_{0}\Hh^0$\ is strongly elliptic at a spacelike surface). 

By Corollary \ref{c:StabilityMaximalSurfaces}, ${\rm D}_0\Hh^0$ has zero kernel and spectrum bounded away from $0$, so it is invertible with continuous inverse.

To conclude the proof, we will use an Implicit Function Theorem. Fix an extension operator $\epsilon: C^{k,\alpha}(\partial\Omega_0,E_1) \to C^{k,\alpha}(\Omega_0,E_1)$ such that $\left.\epsilon(\delta)\right\vert_ {\partial\Omega_0}= \delta$ for any $\delta$. This gives an isomorphism
\[
\iota:\left\{ \begin{array}{llll}
&  \left(I \times C^{k,\alpha}(\partial\Omega_0,E_1)\right) \times C^{k,\alpha}_0(\Omega_0,E_1) & \longrightarrow & I \times C^{k,\alpha}(\Omega_0,E_1)\ , \\
& (t,\delta,G) & \longmapsto & (t,\epsilon(\delta)+G) \ .\end{array}\right.\]
By restricting the domain of $\iota$ to an open set $\mathcal O$ containing $((t_0,0),0)$, we can further assume $\iota$ takes value in $\mathcal U$.

Finally define
\[\Psi:= \Hh\circ \iota : \  \mathcal O \longrightarrow C^{k-2,\alpha}(\Omega_0,E_2)~.\]
The differential of $\Psi$ at $((t_0,0),0)$ in the second factor --- namely $C^{k,\alpha}_0(\Omega_0,E_1)$ --- is just the operator $\D_0\Hh^0$ which, as we noted, is an isomorphism. By the Implicit Function Theorem, we obtain an application $\theta$ from a neighbourhood $W$ of $(t_0,0)$ in $I\times C^{k,\alpha}(\partial\Omega_0,E_1)$ to a neighborhood of $0$ in $C^{k,\alpha}_0(\Omega_0,E_1)$ such that for $G$ close to $0$ in $C^{k,\alpha}_0(\Omega_0,E_1)$ and $(t,f)$ in $W$,
\begin{eqnarray}
	\Psi((t,f), G)=0\text{ if and only if } G=\theta(t,f) \ . \label{eq:PSI}
\end{eqnarray}
Observe that $\iota((t,0),f)=(t,f)$ and 
let 
 $$
 \Sigma_t=S_{(t,\theta(t,0))}\ .$$
  By equation \eqref{eq:PSI}, $\Sigma_t$ is maximal. Observe that since $\theta(t,f)$ belongs  
to $C^{k,\alpha}_0(\Omega_0,E_1)$, by the item \ref{it:bdSg} of  lemma \ref{l:ChartSurface}, we have 
$$\partial \Sigma_t= \partial S_t = \gamma_t\ .$$
Since $(t,\theta(t,0))$ is well defined and in $\mathcal U$ for $t$ in some neighborhood of $t_0$, we get that $\mathcal A$ contains a neighbourhood of $t_0$. The uniqueness part comes from equation \eqref{eq:PSI} with $f=0$.
\end{proof}

\section{Asymptotic Plateau problem: Theorem \ref{t:MainTheo1}\label{s:ProofsMainTheorems}}

In this section, we prove Theorem \ref{t:MainTheo1}, namely, that any semi-positive loop in $\bHn$ is the total boundary of a unique complete maximal surface in $\Hn$.

In particular, in light of our previous results, what remains to prove is

\begin{theorem}\label{t:ExistenceSmoothBoundary}
Any (smooth) spacelike positive loop in $\bHn$ is the total boundary of a complete maximal surface in $\Hn$.
\end{theorem}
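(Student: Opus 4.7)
The plan is to prove Theorem~\ref{t:ExistenceSmoothBoundary} by approximating $\Lambda$ from within $\Hn$ by a sequence of smooth strongly positive closed curves $\gamma_k \subset \Hn$, solving the finite Plateau problem for each $\gamma_k$ via Theorem~\ref{t:FinitePlateauProblem}, and then passing to the limit using the vanishing-boundary compactness Theorem~\ref{theo:bdry-vanish}.

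By Proposition~\ref{p:PositiveCirclesAreGraph}, after fixing a pointed hyperbolic plane $\Pp = (q, H)$ and the associated splitting $\partial_\infty \hHn \cong \S^1 \times \S^n$, a lift of $\Lambda$ is the graph of a smooth strictly contracting map $f : \S^1 \to \S^n$. Using the warped diffeomorphism $\Psi$ of Proposition~\ref{pro:warped}, I would define the interior approximation $\gamma_k$ to be the projection to $\Hn$ of the curve
\[
\Psi\bigl(\{(r_k u, f(u)) : u \in \S^1\}\bigr) \subset \hHn,
\]
for some sequence $r_k \nearrow 1$. Each $\gamma_k$ is a smooth closed curve in $\Hn$ converging to $\Lambda$ as $k \to \infty$. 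The strict contraction of $f$ together with Lemma~\ref{l:PositiveTriplesAreGraph} shows that every triple on $\gamma_k$ is positive, and the osculating-plane condition in Definition~\ref{d:TypeOfCurves} follows from smoothness of $f$ by a direct computation; hence each $\gamma_k$ is strongly positive.

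Next I would verify the three hypotheses of Theorem~\ref{theo:bdry-vanish}. Convergence $\gamma_k \to \Lambda$ is immediate. For bounded geometry and the uniformly unpinched condition, I would parametrize $\gamma_k$ by hyperbolic arclength and control all derivatives in terms of the smooth data $f$ together with the explicit form~\eqref{e:WarpedProduct} of the warped metric, with the strict contraction of $f$ providing a quantitative gap between spatial and intrinsic distance along $\gamma_k$. For the uniform bound on the angular width $w(\gamma_k)$, I would exploit that in the warped coordinates the angular projections $\pi_{x_0}(\gamma_k)$ stay within a uniformly bounded region of $\H^n$, since the smoothness of $f$ prevents wild angular oscillation even as $r_k \to 1$. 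With these verifications in hand, Theorem~\ref{t:FinitePlateauProblem} yields for each $k$ a unique complete acausal maximal surface $\Sigma_k$ in $\Hn$ with $\partial \Sigma_k = \gamma_k$, and Theorem~\ref{theo:bdry-vanish} produces a subsequential limit, realized as a graph over every pointed hyperbolic plane, that is a complete maximal surface with asymptotic boundary $\Lambda$.

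The main obstacle is the verification of the uniform geometric bounds on the $\gamma_k$, and above all the uniform bound on the angular width, since the ambient warped metric degenerates as $r_k \nearrow 1$. The argument requires exploiting the explicit structure of the warped product together with the smoothness and strict contraction of $f$ to show that the metric blow-up in the normal directions is compensated by the smooth transverse behavior of $f$, keeping the angular projections uniformly bounded. A subsidiary subtlety is to ensure that the approximating scheme can be chosen so that the $\gamma_k$ remain embedded and that passing to the subsequential limit yields a surface whose asymptotic boundary is exactly $\Lambda$ rather than a proper subset; this is where Corollary~\ref{c:NegativeScalarProduct} and the convex hull control from Proposition~\ref{p:PropertiesConvexHull} come in, together with the third item of Theorem~\ref{t:CompactFreeTheorem}.
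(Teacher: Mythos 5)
Your proposal follows the same high-level strategy as the paper: approximate the positive loop $\Lambda$ by a sequence of compact strongly positive curves $\gamma_k \subset \Hn$ converging to $\Lambda$, solve the finite Plateau problem (Theorem~\ref{t:FinitePlateauProblem}) for each $\gamma_k$, and pass to the limit via the vanishing-boundary compactness Theorem~\ref{theo:bdry-vanish}. Where you differ is in the choice of approximation scheme, and this is precisely where the proposal has a gap.

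The paper (Proposition~\ref{l:ExhaustionCurves}) defines $\gamma_k$ as the intersection of the geodesic cone over $\Lambda$, emanating from a fixed point $p$ in the interior of $\CH(\Lambda)$, with the pseudosphere $\beta(p,\rho_k)$. This choice is not incidental: after the intrinsic rescaling by $\sinh(\rho_k)$, every $\gamma_k$ is the \emph{same} fixed smooth curve $\gamma_0$ on the unit pseudosphere $\T^1_p\Hn \cong \S^{1,n}$, and the pseudospheres $\beta(p,\rho_k)$ converge to horospheres. All four uniform hypotheses of Theorem~\ref{theo:bdry-vanish} — strong positivity, uniform unpinchedness, bounded geometry, and bounded angular width — are then extracted from the compactness of the fixed $\gamma_0$, classical hyperbolic trigonometry for the isosceles triangles $(p,x_k,y_k)$, and a proximality argument using the convergence to horocycles. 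In particular, the angular width estimate (item~\ref{it:exh4}) is a genuinely nontrivial argument: it uses the fact that under the group elements normalizing the tangent frame, the rescaled pseudospheres converge to a horosphere $\HH$, so that the whole curve accumulates on a single horocycle $\overline{\sigma}$, which has a one-point angular projection.

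Your construction instead takes $\gamma_k = \Psi\bigl(\{(r_k u, f(u)) : u \in \S^1\}\bigr)$, a level set in warped coordinates. This is a legitimate family of approximating curves (though one should note that acausality of $\gamma_k$ forces $r_k$ close to $1$, since the spacelike condition reads $\Vert df \Vert < 2r_k/(1+r_k^2)$, which only relaxes to $\Vert df\Vert < 1$ in the limit). However, this family does \emph{not} reduce, after rescaling, to a fixed curve on a fixed model space, and the pseudospheres-to-horospheres geometry that powers the paper's estimates is not available. As a result, the paper's proofs of items~\ref{it:exh2}–\ref{it:exh4} of Proposition~\ref{l:ExhaustionCurves} do not transfer, and you would have to produce substitutes. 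You explicitly acknowledge this — ``the main obstacle is the verification of the uniform geometric bounds on the $\gamma_k$, and above all the uniform bound on the angular width'' — but you do not carry out the verification; you only gesture at it (``I would exploit that the angular projections stay within a uniformly bounded region... since the smoothness of $f$ prevents wild angular oscillation''). This is precisely the technical heart of Section~\ref{s:ProofsMainTheorems}, and it is left unproved in your proposal. In short: the overall strategy is sound and matches the paper's, but the approximation scheme you chose forfeits the structural simplification that makes the uniform estimates accessible, and the proposal does not fill that hole.
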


Once this case of Theorem~\ref{t:MainTheo1} is established for smooth boundary loops $\Lambda$, the full theorem follows by approximation of continuous loops $\Lambda$ by smooth ones, using Corollary \ref{c:semi-approx} and Theorem \ref{t:CompactFreeTheorem}.

\subsection{Exhaustion} We construct here an exhaustion of a spacelike positive loop by using radial curves.

More precisely we prove

\begin{proposition}[\sc Exhaustion]\label{l:ExhaustionCurves}
Let $\Lambda$ be a spacelike positive loop in $\bHn$. Then, there exists a sequence $\seqk{\gamma}$ of closed curves in $\Hn$  converging to $\Lambda$ satisfying  the following:
\begin{enumerate}
    \item For $k$ large enough, the curve $\gamma_k$ is strongly positive (Definition \ref{d:TypeOfCurves}).\label{it:exh1}
    \item For $k$ large enough, the strongly positive curve $\gamma_k$ is deformable (Definition \ref{defi:Deformable}). \label{it:exh1bis}
    \item The sequence $\seqk{\gamma}$ is uniformly unpinched (Definition \ref{d:Unpinched}). \label{it:exh2}
    \item The sequence $\seqk{\gamma}$ has bounded geometry (Appendix \ref{app:bg}). \label{it:exh3}
    \item The angular width of $\gamma_k$ is uniformly bounded (Definition \ref{d:AngularWidth}). \label{it:exh4}
\end{enumerate}
\end{proposition} We first give the construction, then prove Proposition \ref{l:ExhaustionCurves} in paragraph \ref{sec:proof-exhaust}.
\subsubsection{Construction of an exhaustion}
Given a point $p$ in $\Hn$, recall from Subsection \ref{ss:pseudosphere} that the set
\[\T^1_p\Hn =\left\{ v\in \T_p\Hn,~\q(v)=1\right\}\]
is isometric to the signature $(1,n)$ pseudo-sphere $\S^{1,n}$. We denote by $\g_{\S^{1,n}}$ its metric. Fix a sequence $\seqk{\rho}$ of positive real numbers tending to infinity, and for any $k$, set
\[\phi_k:\left\{\begin{array}{llll}
 & \T^1_p\Hn & \longrightarrow & \Hn \ ,\\
 & v & \longmapsto & \exp(\rho_kv)\ .
\end{array}\right. 
 \]

The map $\phi_k$ is a diffeomorphism onto the pseudosphere $M_k\defeq\beta(p,\rho_k)=\{x\in\Hn,~\pd(p,x)=\rho_k\}$. We have
\begin{equation}\label{e:pullbackmetric}
\phi_k^*g_{M_k}=\sinh(\rho_k) \g_{\S^{1,n}}\ .
\end{equation}
Taking the limit as $k$ goes to infinity, we obtain a map
\[\phi_\infty: \T^1_p\Hn \longrightarrow \bHn\]
which is a  conformal diffeomorphism onto $\bHn\setminus\P(p^\bot)$.

\medskip

Let us return to our situation. If $\Lambda$ is a spacelike positive curve in $\bHn$, fix a point $p$ in the interior of the convex hull $\CH(\Lambda)$. By Proposition \ref{p:PropertiesConvexHull}, the set $\Lambda$ is disjoint from $p^\bot$. We can thus define the smooth closed curves $\gamma_0$ in $\T^1_p\Hn$  and $\gamma_k$ in $M_k$ respectively by
\[\gamma_0\defeq \phi_\infty^{-1}(\Lambda)~,~\gamma_k\defeq\phi_k(\gamma_0)\ .\]
In other words, $\gamma_k$ is the intersection of the cone over $\Lambda$ with the pseudosphere $M_k$.

\subsubsection{Proof of proposition \ref{l:ExhaustionCurves}}\label{sec:proof-exhaust} We prove that there is a subsequence of the previously constructed sequence $\seqk{\gamma}$ that satisfies the conditions of the proposition.

\

\vskip 0,2truecm

\noindent{\em Proof of \ref{it:exh1}.} Lift each $\gamma_k$ and $\Lambda$ to the hyperplane $\{ x\in E,~\langle p,x\rangle =-1\}$. We denote those lifts with the same notation.

Consider a smooth spacelike loop $\gamma$ in $E$.

We may now choose an auxiliary euclidean metric $h$ on $E$ and let us use the arc length parametrisation with respect to this metric.

Define as in paragraph \ref{sec:angwidth}, the map
\[\Phi_\gamma : \left\{ \begin{array}{lll} \gamma^{(3)} & \longrightarrow & \text{Gr}_3(E) \ , \\
 (x_1,x_2,x_3) & \longmapsto & x_1\oplus x_2\oplus x_3\ .	
 \end{array}\right.
  \]
  where $\text{Gr}_3(E)$ is the Grassmannian manifold of $3$-spaces in $E$.
  
Since $\gamma$ is smooth, using $\dot \gamma$ and $\ddot \gamma\defeq\nabla_{\dot\gamma}{\dot\gamma}$,  the map $\Phi_\gamma$ extends to a continuous map, also denoted   $\Phi_\gamma$ on $\gamma^3$. Observe that if $\q(\gamma)<0$,   then the projection of $\gamma$ in $\Hn$ is strongly positive if and only if $\Phi_\gamma$ takes values in  $\text{Gr}_{2,1}(E)$.

We also  observe that $\Phi_\gamma$ is Lipschitz  -- with respect to the induced metric by $h$ on $\text{Gr}_3(E)$ with a Lipschitz constant that depends on the third derivatives of $\gamma$.

Take an arclength parametrization  --- with respect to $h$ --- of $\Lambda$. Then $\{\Phi_{\gamma_k}\}_{k\in\mathbb N}$ converges uniformly to $\Phi_\Lambda$ by Arzela--Ascoli. 

To conclude the proof of this first item we just need to prove that $\Phi_\Lambda$ takes values in $\text{Gr}_{2,1}(E)$. Let us consider for three vectors $x_1$, $x_2$ and $x_3$  in $E$
\[\Delta (x_1,x_2,x_3)= \det(\langle x_i,x_j\rangle)_{i,j}\ .	 \]
  
Observe also that the map $\Phi_\Lambda$ does not depend on the choice of a parametrisation; we therefore choose a parametrisation of $\Lambda$ so that $\q(\Lambda(t))=1$. Observe then that
taking the derivatives of $\q(\Lambda(t))=0$ and $\q(\dot\Lambda(t))=1$ yield the following equations
\[\langle \dot\Lambda(t),\Lambda(t)\rangle=0~,~\langle \ddot\Lambda(t),\dot\Lambda(t)\rangle=0~,~\langle \ddot\Lambda(t),\Lambda(t)\rangle =-1~.\]
To simply the notation let us write 
$$
V(t,s,u)\defeq \Phi_\Lambda(\Lambda(t),\Lambda(t),\Lambda(s))\ .
$$
\begin{enumerate}
	\item Since $\Lambda$ is postive, $V(t,s,u)$ takes values in $\text{Gr}_{2,1}(E)$ for pairwise distinct $(s,t,u)$. 
	\item For distinct  $s$ and $t$, $V(t,t,s)$ is the space  $$\Lambda(t)\oplus \dot\Lambda(t)\oplus \Lambda(s)\ .$$ 
Thus,  or $s\neq t$, the above equations give, letting $a=\braket{\Lambda(t),\Lambda(s)}$ and $b=\braket{\dot\Lambda(t),\Lambda(s)}$
\[\Delta(\Lambda(s),\Lambda(t),\dot\Lambda(t)) = \det \left(\begin{array}{rrr} 0 & a & b \\ a & 0 & 0 \\ b & 0 & 1 \end{array}\right)=-a^2~<0,\]
Thus $V(t,t,s)$ is non degenerate, hence of type  $(2,1)$ by continuity.
\item Finally, $V(t,t,t)$ is the space  $$\Lambda(t)\oplus \dot\Lambda(t)\oplus \ddot\Lambda(t)\ .$$ Letting $c=\braket{\Lambda(t),\ddot\Lambda(t)}$ we have 
\[\Delta(\Lambda(t),\dot\Lambda(t),\ddot\Lambda(t)) = \det \left(\begin{array}{rrr} 0 & 0 & -1 \\ 0 & 1 & 0 \\ -1 & 0 & c \end{array}\right)=-1<0~.\] Hence $V(t,t,t)$ is non degenerate, hence of type $(2,1)$ by continuity. 
\end{enumerate}
We have proved that $\Phi_\Lambda$ takes values in $\text{Gr}_{2,1}(E)$ and so does $\Phi_{\gamma_k}$ for $k$ large enough by uniform convergence. Thus $\gamma_k$ is strongly positive for $k$ large enough. 

\vskip 0,2truecm
\noindent{\em Proof of \ref{it:exh1bis}}. Lift $p$ to $p_+$ in $\Hn_+$ and let $\Pp=(p_+,H)$ be a pointed hyperbolic plane containing $p_+$. This gives a decomposition $\bHn_+ = \S^1\times \S^n$ in which the boundary of $H$ has the form $\S^1\times \{v\}$. Since $\Lambda$ is disjoint from $p^\bot$, it admits a connected lift $\Lambda_+$ to $\bHn_+$ such that $\langle p_+,\Lambda_+\rangle<0$. This implies that in the splitting $\S^1\times \S^n$, the positive loop $\Lambda_+$ is the graph of a smooth contracting map $f: \S^1 \to \S^n$ (see Proposition \ref{p:PositiveCirclesAreGraph}) whose image is contained in the hemisphere $B=\{x\in \S^n~,~\langle x,v\rangle>0\}$.

Consider the family of maps $\{\phi_t\}_{t\in[0,1]}$ from  $B$ to itself given by 
$$
\phi_t(x)= \frac{tx +(1-t)v}{\Vert tx +(1-t)v\Vert }\ .
$$
Observe that each $\phi_t$ is smooth and contracting for $t<1$. Thus, the maps $f_t\defeq \phi_t\circ f$ are smooth and contracting and so the projection $\Lambda_t$ of their graph to $\bHn$ define an isotopy $\{\Lambda_t\}_{t\in[0,1]}$ such that $\Lambda_1=\Lambda$, the loop $\Lambda_0$ is the boundary of $H$ and for each $\Lambda_t$ is a smooth positive loop.

Because $p$ belongs to the convex hull of every $\Lambda_t$, we can apply the construction on the proof of item \ref{it:exh1}. This gives an isotopy $\{\gamma_k^t\}_{t\in [0,1]}$ for every $k$. By compactness of the isotopy $\{\Lambda_t\}_{t\in[0,1]}$, it follows that  for $k$ large enough, every $\gamma_k^t$ is strongly positive and so $\gamma_k$ is deformable.

\vskip 0,2truecm
\noindent{\em Proof of \ref{it:exh2}}. Denote respectively by $d_k$ and $d_0$ the length along $\gamma_k$ and $\gamma_0$. Given two points $x_k,y_k\in \gamma_k$, we denote by $x_k^o,y_k^o$ the points in $\gamma_0$ such that $x_k=\phi_k(x_k^o)$ and $y_k=\phi_k(y_k^o)$. We thus have by equation (\ref{e:pullbackmetric})
\begin{equation}\label{e:triangle1}
d_k(x_k,y_k) = \sinh(\rho_k) \cdot d_0(x_k^o,y_k^o)~.
\end{equation}

Since $p$ lies in the convex hull of $\Lambda$, Item \ref{it:PropCH4} of Proposition \ref{p:PropertiesConvexHull} implies that the geodesics between $p$ and $x_k$ and between $p$ and $y_k$ are spacelike: here we use that the points $x_k$ and $y_k$ are constructed to lie on geodesics connecting $p$ to $\Lambda$.  Since $\gamma_k$ is  strongly positive by the first item, the geodesic between $x_k$ and $y_k$ is also spacelike and so the triple $(p,x_k,y_k)$ is positive (unless $p$ belongs to the geodesic between $x_k$ and $y_k$ in which case the following still holds). Hence $p,x_k$ and $y_k$ are the vertices of an isosceles hyperbolic triangle $T_k$. Classical hyperbolic trigonometry implies that
\begin{equation}\label{e:triangle2}
\sinh\left(\frac{\eth(x_k,y_k)}{2}\right)=\sinh(\rho_k)\cdotp\sin\left(\frac{\alpha(x_k,y_k)}{2}\right)~,
\end{equation}
where $\alpha(x_k,y_k)$ is the angle at $p$ in the triangle $T_k$. In particular, $\alpha(x_k,y_k)$ is equal to the extrinsic distance between $x_k^o$ and $y_k^o$ in $\T^1_p\Hn$.

Consider a sequence $\{(x_k,y_k)\}_{k\in\N}$ where $x_k,y_k$ are distinct points of $\gamma_k$ such that $\eth(x_k,y_k)$ tends to 0, and let $\alpha_k\defeq \alpha(x_k,y_k)$.

By equation (\ref{e:triangle2}), $\underset{k\to \infty}{\lim}\frac{\eth(x_k,y_k)}{\alpha_k\sinh(\rho_k)}=1$. Together with equation (\ref{e:triangle1}), we obtain
\[\underset{k\to\infty}{\lim}\frac{\eth(x_k,y_k)}{d_k(x_k,y_k)} = \underset{k\to\infty}{\lim} \frac{\alpha_k}{d_0(x^o_k,y^o_k)} = 1~,\]
where for the last equality we used the fact that $\gamma_0$ is smooth, spacelike and that $\alpha_k$ is equal to the extrinsic distance between $x_k^o$ and $y_k^o$.

As a result, there exists a $\delta>0$ such that for any pair of distinct points $x_k,y_k$ on $\gamma_k$ with $\eth(x_k,y_k)\leq \delta$, then $\frac{\eth(x_k,y_k)}{d_k(x_k,y_k)}>\frac{1}{2}$. So the sequence $\seqk{\gamma}$ is uniformly unpinched.

\vskip 0,2truecm
\noindent{\em Proof of \ref{it:exh3}}. In this portion of the argument, we will use some definitions and results from Appendix~\ref{app:bg}. Now, the curve $\gamma_k\subset M_k$ is obtained from $\gamma_0\subset \T_p\Hn \cong \S^{1,n}$ by rescaling the metric by a factor $\sinh(\rho_k)$. In particular, the curvature of $M_k$ and $\gamma_k$ converges uniformly to $0$, and so for any sequence $\seqk{x}$ with $x_k\in\gamma_k$, the sequence $\{x_k,\gamma_k,M_k\}_{k\in\N}$ converges in the sense of Appendix \ref{app:bg} to $\{x,\Delta,\E^{1,n}\}_{k\in\N}$ where $\Delta$ is a spacelike line in the pseudo-Euclidean space $\E^{1,n}$ and $x\in\Delta$. In other words, the sequence $\{\gamma_k,M_k\}_{k\in\N}$ has bounded geometry.

On the other hand, the sequence $\{M_k,\H^{2,n}\}_{k\in\N}$ has bounded geometry since, up to shifting the center of the pseudosphere, $\seqk{M}$ converges to an horosphere (see Subsection \ref{ss:pseudosphere}). It follows from Lemma \ref{l:BoundedGeometry} that $\{\gamma^o_k,\Hn\}_{k\in\N}$ has bounded geometry when $\gamma_k$ is equipped (see Definition~\ref{def:framing})  with the normal framing $\gamma^o_k$ given by $P_k(x)\defeq\T_x\gamma_k \oplus \mathsf{N}_xM_k$, where $\mathsf{N}_x M_k$ is the normal to $M_k$ at $x$. The bounded geometry of $\{\gamma_k,\Hn\}_{k\in\N}$, when $\gamma_k$ is equipped with its canonical normal framing given by $\T^{(2)}\gamma_k$, will follow from a lemma.

\begin{lemma}
Using the same notation as above, the sequence $d_\GG\big(T^{(2)}\gamma_k,P_k\big)$ converges uniformly to $0$.
\end{lemma}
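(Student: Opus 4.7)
The plan is to apply the Gauss formula for the umbilical immersion $M_k\hookrightarrow \Hn$ to the acceleration of an arc-length parametrization of $\gamma_k$:
\[
\nabla_{\dot\gamma_k}\dot\gamma_k \;=\;\nabla^{M_k}_{\dot\gamma_k}\dot\gamma_k \;+\;\II_{M_k}(\dot\gamma_k,\dot\gamma_k).
\]
Since $M_k=\beta(p,\rho_k)$ is umbilical in $\Hn$ with shape operator $\coth(\rho_k)\,\mathrm{Id}$ (as follows from the warped product structure around $p$ together with the intrinsic sectional curvature $\sinh^{-2}(\rho_k)$ recorded in Paragraph~\ref{ss:pseudosphere}), the normal term equals $\coth(\rho_k)\,n_k$, where $n_k(x):=\mathsf{N}_xM_k$ denotes the unit spacelike outward normal to $M_k$. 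In particular its norm is of order one and converges uniformly to $1$.

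For the tangential term, I would use that $\gamma_k=\phi_k(\gamma_0)$ is the image of the fixed smooth closed curve $\gamma_0\subset \T^1_p\Hn\cong \S^{1,n}$ under $\phi_k$, and that $\phi_k^*g_{M_k}$ is a constant multiple of $\g_{\S^{1,n}}$ whose conformal factor is of order $\sinh^2(\rho_k)$. Because a constant conformal rescaling by $\lambda^2$ divides geodesic curvature by $\lambda$, the geodesic curvature of $\gamma_k$ in $M_k$ at the point $\phi_k(y)$ equals $\kappa_0(y)/\sinh(\rho_k)$, where $\kappa_0$ denotes the geodesic curvature of $\gamma_0$ in $\S^{1,n}$. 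Since $\gamma_0$ is smooth and compact, $\kappa_0$ is uniformly bounded, and hence $\bigl\|\nabla^{M_k}_{\dot\gamma_k}\dot\gamma_k\bigr\|\to 0$ uniformly on $\gamma_k$.

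Combining the two contributions, one writes $\nabla_{\dot\gamma_k}\dot\gamma_k=v_k+\coth(\rho_k)\,n_k$ with $v_k\in (\T\gamma_k)^{\perp}\cap\T M_k$ and $\sup_{\gamma_k}\|v_k\|\to 0$. For $k$ large enough the right-hand side is nonzero everywhere, so $\T^{(2)}_x\gamma_k$ is a well-defined $2$-plane spanned by $\dot\gamma_k$ together with the vector $\nabla_{\dot\gamma_k}\dot\gamma_k$ which, being automatically orthogonal to $\dot\gamma_k$, converges after normalization uniformly to $n_k$. Since $P_k(x)=\T_x\gamma_k\oplus \mathrm{span}(n_k)$, this gives $d_\GG\bigl(\T^{(2)}\gamma_k,P_k\bigr)\to 0$ uniformly on $\gamma_k$, which is the claim.

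The only nontrivial step is the uniform control of the intrinsic geodesic curvature of $\gamma_k$ in $M_k$; this is where the smoothness and compactness of the fixed curve $\gamma_0$, together with the constant-conformal scaling behaviour of $\phi_k^{*}g_{M_k}$, enter crucially. Once this is in place the remaining argument is a one-step Grassmannian computation, reflecting the geometric fact that the dominant \emph{radial} acceleration produced by the umbilicity of $M_k$ forces the osculating plane to align with $P_k$ as $M_k$ flattens out.
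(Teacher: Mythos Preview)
Your proof is correct and follows essentially the same route as the paper: decompose $\nabla_{\dot\gamma_k}\dot\gamma_k$ into its component normal to $M_k$ (which equals $\coth(\rho_k)\,n_k$ by umbilicity and has norm tending to $1$) and its component tangent to $M_k$ (the intrinsic geodesic curvature vector, whose norm tends to $0$ by the conformal blow-up $\phi_k^{*}g_{M_k}=\sinh^2(\rho_k)\,\g_{\S^{1,n}}$ applied to the fixed compact curve $\gamma_0$), and conclude that the line spanned by $\ddot\gamma_k$ in the signature $(1,n)$ space $(x\oplus\T_x\gamma_k)^\bot$ converges to the line spanned by $n_k$. The paper phrases the last step via the formula $\cosh d_{\H^n}([n_k],[\ddot\gamma_k])=\langle n_k,\ddot\gamma_k\rangle/\Vert\ddot\gamma_k\Vert$, but this is exactly your normalization argument.
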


\begin{proof}
Parametrize $\gamma_k$ by arc length and define $\ddot\gamma_k\defeq\nabla_{\dot\gamma_k}\dot\gamma_k$. Then by definition, $\T^{(2)}\gamma_k = \span\{\dot \gamma_k,\ddot\gamma_k\}$.

For a point $x$ in $\gamma_k$, let $n_k(x)$ be the unit vector normal to $M_k$ pointing outward, so $P_k=\span\{ \dot\gamma_k, n_k\}$. Since the planes $P_k(x)$ and $\T^{(2)}_x\gamma_k$ intersect along $\T_x \gamma_k$, we have
\[d_\GG(\T^{(2)}_x\gamma_k,P_k(x)) = d_{\H^n}(n_k(x),\ddot\gamma_k(x))~,\]
where $n_k(x)$ and $\ddot\gamma_k(x)$ are considered as elements in the space of positive definite lines in the signature $(1,n)$ space $(x\oplus \T_x\gamma_k)^\bot$, which is identified with $\H^n$. In particular, 
\[\cosh\left(d_\GG(\T^{(2)}_x\gamma_k,P_k(x))\right)= \frac{\langle n_k(x),\ddot\gamma_k(x)\rangle }{\Vert\ddot\gamma_k(x)\Vert} .\]
Let us write $\ddot\gamma_k\eqdef \mu_k+\nu_k$ where $\mu_k$ belongs to 
$P_k(x)$ and $\nu_k$ to $P_k^\bot(x)$. So  
$$\cosh\left(d_\GG(\T^{(2)}_x\gamma_k,P_k(x))\right)= \frac{\mu_k }{\sqrt{\mu_k^2+\nu_k^2}} .$$

By definition, $\kappa_k\defeq g_{M_k}(\nu_k,\nu_k)$ is the geodesic curvature of $\gamma_k$ seen as a curve in  $M_k$. Similarly,  $\mu_k n_k=\II_k(\dot\gamma_k,\dot\gamma_k)$ where $\II_k$ is the second fundamental form of $M_k$. Since $M_k$ is umbilical with induced curvature $\sinh^{-2}(\rho_k)$, we have
\[\II_k(X,Y)= \coth(\rho_k) g_{M_k}(X,Y)n_k.\]

Now, the curve $\phi_k^{-1}(\gamma_k) = \gamma_0$ is fixed independently of $k$. Moreover, by equation (\ref{e:pullbackmetric}), the induced arclength parametrization of this curve grows without bound in $k$, and so the geodesic curvature $\kappa_k$ of $\gamma_k$  in $M_k$ converges uniformly to zero. The result follows from the fact that $\seqk{\mu}$ converges to 1.
\end{proof}

\vskip 0,2truecm
\noindent{\em Proof of \ref{it:exh4}.} For each $k$, let $x_k$ be a point in $\gamma_k$ such that $w(\gamma_k)=\diam(\pi_{x_k}(\gamma_k))$ where $\pi_{x_k}: E \to (x_k\oplus\T_{x_k}\gamma_k)^\bot$ is the orthogonal projection (see Definition \ref{d:AngularWidth}). We want to prove that the limit of $\diam(\pi_{x_k}(\gamma_k))$, when $k$ tends to infinity, is finite. 

Denote by $H_k$ the hyperbolic plane containing $p$ and $\T_{x_k}\gamma_k$. Since there is a subsequence of $\seqk{x}$ that converges to a point $x_\infty\in \Lambda$, the sequence of pointed hyperbolic planes $\Pp_k=(p,H_k)$ subconverges to $\Pp_\infty =(p,H_\infty)$ where $H_\infty$ is the hyperbolic plane containing $p$ and $\T_{x_\infty}\Lambda$, here using the smoothness of $\Lambda$. 

Fix a spacelike line $L\subset \T_pH_\infty$ and take $g_k\in\G$ such that $g_k(\T_{x_k} \gamma_k)=L$, and $g_k(H_k)=H_\infty$. Denote by $\gamma_k'=g_k(\gamma_k)$, $M'_k=g(M_k)$ and $p'_k=g_k(p)$. From Subsection \ref{ss:pseudosphere}, the sequence $\seqk{M'}$ converges to the horosphere $\mathcal{H}$ tangent to $p_\infty \defeq \lim p'_k$ and passing through $p$. Denote by $\overline\sigma = \sigma \cup \{p_\infty\}$ the closure in $\cHn$ of the horocycle $\sigma=\mathcal{H}\cap H_\infty$.

\begin{lemma}
Using the same notation as above, for any sequence $\seqk{y}$ with $y_k\in \gamma_k$, there is a subsequence of $\{g_k(y_k)\}_{k\in\N}$ converging to a point in $\overline\sigma$.
\end{lemma}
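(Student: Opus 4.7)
The approach is a direct computation in $E$, using the explicit formula for points on the pseudosphere. Parametrize $y_k = \cosh(\rho_k)p + \sinh(\rho_k)v_k^y$ with $v_k^y \in \gamma_0 \subset \S^{1,n} = \T^1_p\hHn$, and likewise $x_k$ with $v_k^x$. The identity $g_k(x_k) = p$ combined with linearity of $g_k$ yields the key identity
\[
g_k(y_k) = p + \sinh(\rho_k)\,g_k(v_k^y - v_k^x),
\]
reducing the problem to the action of $g_k$ on $v_k^y - v_k^x \in \T_p\hHn$.

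Fix an orthogonal basis $(e_0, \ldots, e_{n+2})$ of $E$, with $e_0, e_3, \ldots, e_{n+2}$ negative and $e_1, e_2$ positive, such that $p = e_0$, $p_\infty = [e_0 + e_1]$, $L = \R e_2$, and $H_\infty = \operatorname{span}(e_0, e_1, e_2)$, so that $H_\infty^\perp = \operatorname{span}(e_3, \ldots, e_{n+2})$ is negative definite. The constraints $g_k(x_k) = p$, $g_k(H_k) = H_\infty$ and $g_k(\T_{x_k}\gamma_k) = L$ together with isometricity of $g_k$ pin down
\[
p_k' = \cosh(\rho_k)e_0 + \sinh(\rho_k)e_1, \qquad g_k(v_k^x) = -\sinh(\rho_k)e_0 - \cosh(\rho_k)e_1,
\]
and $g_k(\dot\gamma_0(s_k^x)) = e_2$ (the orthogonality $\langle g_k(v_k^x), g_k(\dot\gamma_0(s_k^x))\rangle = 0$ eliminates any $e_2$-component of $p_k'$); moreover $g_k$ carries the negative-definite subspace $T_k^- := (\R v_k^x \oplus \R\dot\gamma_0(s_k^x))^\perp \cap \T_p\hHn$ isometrically into $H_\infty^\perp$. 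Decompose $v_k^y - v_k^x = (\alpha_k - 1)v_k^x + \beta_k\dot\gamma_0(s_k^x) + u_k$ with $u_k \in T_k^-$, where $\alpha_k^2 + \beta_k^2 - |u_k|^2 = 1$ follows from $\q(v_k^y) = 1$.

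After extracting so that $v_k^y \to v_\infty^y \in \gamma_0$, two regimes arise. If $\alpha_\infty := \langle v_\infty^y, v_\infty^x\rangle \neq 1$, then the $e_0$-component $1 - (\alpha_k - 1)\sinh^2(\rho_k)$ and $e_1$-component $-(\alpha_k - 1)\sinh(\rho_k)\cosh(\rho_k)$ of $g_k(y_k)$ both grow like $\sinh^2(\rho_k)$ with ratio $\tanh(\rho_k) \to 1$, dominating the $e_2$- and $H_\infty^\perp$-components (at most of order $\sinh(\rho_k)$), so projectively $g_k(y_k) \to [e_0 + e_1] = p_\infty \in \overline\sigma$. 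If $\alpha_\infty = 1$, positivity of $\Lambda$ forces $v_\infty^y = v_\infty^x$: otherwise $\langle p + v_\infty^y, p + v_\infty^x\rangle = -1 + \alpha_\infty = 0$, placing the two distinct points $\phi_\infty(v_\infty^y), \phi_\infty(v_\infty^x) \in \Lambda$ on a common photon, contradicting Proposition~\ref{p:PositiveCirclesAreGraph}. Parametrize $\gamma_0$ by arclength, set $\epsilon_k = s_k^y - s_k^x \to 0$, and extract so $\sinh(\rho_k)\epsilon_k \to \ell \in [0, \infty]$. Using the Taylor expansion
\[
v_k^y - v_k^x = \epsilon_k\dot\gamma_0(s_k^x) + \tfrac{\epsilon_k^2}{2}\bigl(-v_k^x + k_g^\S(s_k^x)\bigr) + O(\epsilon_k^3),
\]
where $k_g^\S \in T_k^-$ is the geodesic curvature of $\gamma_0$ in $\S^{1,n}$, and collecting orders yields $g_k(y_k) \to (1 + \ell^2/2,\, \ell^2/2,\, \ell,\, 0,\ldots) = \sigma(\ell) \in \sigma$ for $\ell$ finite, and $g_k(y_k) \to p_\infty$ for $\ell = \infty$.

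The main obstacle is the bookkeeping of the multiple scales. Since $g_k$ diverges in $\G$ (because $g_k(p) \to \bHn$), bounded vectors in $\T_p\hHn$ may have $g_k$-images of norm growing like $\sinh(\rho_k)$ or $\cosh(\rho_k)$ (those with a $v_k^x$-component), while vectors in $T_k^-$ stay bounded; the three parts of the decomposition of $v_k^y - v_k^x$ therefore contribute at different orders to $g_k(y_k)$. The key nontrivial input is the positivity of $\Lambda$, which rules out the problematic intermediate regime $\alpha_\infty = 1$ with $v_\infty^y \neq v_\infty^x$ (in which the $e_2$- and $H_\infty^\perp$-components would dominate, producing a projective limit outside $\overline{\mathcal{H}}$) and so ensures that the surviving components always lie in $H_\infty$. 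The adapted basis then exhibits these surviving components as the canonical parametrization of the horocycle $\sigma$, placing the limit in $\overline{\mathcal{H}} \cap (H_\infty \cup \{p_\infty\}) = \overline\sigma$.
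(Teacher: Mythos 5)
Your proof is correct and takes a genuinely different route from the paper's. The paper splits into three cases according to the position of $\lim y_k$: when $\lim y_k = x_\infty$ it invokes convergence of the hyperbolic planes $V_k = \span\{p,x_k,y_k\}$ to $H_\infty$ together with $M'_k\to\mathcal{H}$, and when $\lim y_k\neq x_\infty$ it appeals to the proximality dynamics of Lemma~\ref{l:DynamicOfGroupAction1}, exploiting the fact that $\seqk{g}$ approximates a regular divergent sequence. You instead choose an orthogonal basis of $E$ adapted to $(p,p_\infty,L,H_\infty)$, exploit the constraints $g_k(x_k)=p$, $g_k(H_k)=H_\infty$, $g_k(\T_{x_k}\gamma_k)=L$ to pin down $g_k$ explicitly on $\span\{p,v_k^x,\dot\gamma_0(s_k^x)\}$ and its complement $T_k^-$, and read off the limit of $g_k(y_k)=p+\sinh(\rho_k)\,g_k(v_k^y-v_k^x)$ by tracking which component of the decomposition dominates. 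The dichotomy $\alpha_\infty\neq 1$ versus $\alpha_\infty=1$ exactly mirrors the paper's cases (iii) and (ii), with positivity of $\Lambda$ used, as in the paper, only to exclude the degenerate intermediate regime. Your computation is entirely elementary (no appeal to the dynamics of Section~\ref{ss:Proximality}) and has the added benefit of identifying the precise limit point $\sigma(\ell)$ parametrized by $\ell=\lim\sinh(\rho_k)(s_k^y-s_k^x)$ on the horocycle, at the cost of heavier asymptotic bookkeeping (the $H_\infty^\perp$-term is $\sinh(\rho_k)O(\epsilon_k^2)$, which one must check is subdominant in both the finite-$\ell$ and $\ell=\infty$ regimes). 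Both approaches are sound; yours trades conceptual machinery for explicit coordinates.
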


\begin{proof}
Let $\seqk{y}$ be such a sequence. Up to extracting a subsequence, we can assume one of the following:
\begin{enumerate}
	\item\label{it:convhorocycle1} For any $k$ we have $y_k=x_k$ ,
	\item\label{it:convhorocycle2} For any $k$, we have $y_k\neq x_k$ but $\lim y_k = x_\infty$ ,
	\item\label{it:convhorocycle3} The limit of $\seqk{y}$ is different from $x_\infty$ .	
\end{enumerate}
	We will prove that in any case, the sequence $\{g_k(y_k)\}_{k\in \N}$ subconverges to a point in $\overline\sigma$.

\medskip

Case \ref{it:convhorocycle1} is obvious since $g_k(y_k)=p\in\sigma$.

\medskip

For case \ref{it:convhorocycle2}, observe that the hyperbolic plane $V_k$ containing $p,x_k$ and $y_k$ converges to the hyperbolic plane $H_\infty$ containing $p$ and $\T_{x_\infty}\Lambda$. In fact, if $x_k^o$ and $y_k^o$ are the points in $\gamma_0\subset \T^1\Hn$ such that $x_k=\phi_k(x_k^o)$ and $y_k=\phi_k(y_k^o)$, then $V_k$ is also the hyperbolic plane such that $\T_p V_k$ contains $x_k^o$ and $y_k^o$. By the smoothness of $\gamma_0$, the sequence of lines through $x_k^o$ and $y_k^o$ converges to $\T_{x_\infty^o}\gamma_0$, where $\phi_\infty(x_\infty^0)=x_\infty$.

In particular, $g_k(y_k)$ belongs to $g_k(V_k)\cap g_k(M_k)$ and so accumulates to a point in the closure of the intersection $\mathcal{H}\cap H_\infty$, that is  in $\overline\sigma$. 
\medskip

For case \ref{it:convhorocycle3}, we will use a proximality argument in the spirit of Section \ref{ss:Proximality}. For each $k$, write $g_k=g'_k\cdot h_k$ where $h_k(p,H_k)=(p,H_\infty)$ is such that $h_k$ converges to the identity. Then the sequence $g'_k$ is a sequence in $\Stab(H_\infty)\cong \Isom(\H^2)$ which by construction satisfies:
\[  \underset{k\to \infty}{\lim} g'_k(p) = p_\infty ~,~\underset{k\to\infty}{\lim} (g'_k)^{-1}(p) = x_\infty~.\]
The sequence $\seqk{g}$, is such that it approximates the sequence $\seqk{g'}$ which have distinct fixed points on $H_\infty$, is thus $\Pp_1$-divergent and by Lemma \ref{l:DynamicOfGroupAction1} maps any point in $\P(E)$ that are not in $\P(x_\infty^\bot)$ to a sequence converging to $p_\infty$. The result then follows from the fact that $\Lambda$ is positive, so $\P(x_\infty^\bot)\cap\Lambda = x_\infty$.
\end{proof}

Now the proof of item \ref{it:exh4} follows. For each $k$, let $y_k$ and $z_k$ in $\gamma_k$ be such that $w(\gamma_k)=\diam(\pi_{x_k}(\gamma_k))=d_{\H^n}(\pi_{x_k}(y_k),\pi_{x_k}(z_k))$. Since the angular width is invariant under the action of $\G$, we have $w(\gamma_k)=\diam(\pi_{p}(\gamma'_k))$ where $\pi_p: E \to L^\bot$. By the previous lemma, the sequence $\{g_k(y_k)\}$ and $\{g_k(z_k)\}$ subconverge to a point in $\overline\sigma$. Since $\pi_p(\overline\sigma)$ is a point, the sequence $\{w(\gamma_k)\}_{k\in\N}$ subconverges to 0. \qed

\subsection{Proof of Theorem \ref{t:ExistenceSmoothBoundary}}

Consider $\seqk{\gamma}$ the sequence constructed in Proposition \ref{l:ExhaustionCurves}. By Theorem \ref{t:FinitePlateauProblem}, there exists a sequence $\seqk{\Sigma}$ of complete acausal maximal surfaces with $\partial \Sigma_k=\gamma_k$. 

Since $\seqk{\gamma}$ satisfies the hypothesis of Theorem \ref{theo:bdry-vanish}, we obtain a maximal surface whose asymptotic boundary is $\Lambda$.

\appendix
\section{Bounded geometry in the pseudo-Riemannian setting}
\label{app:bg}

\subsection{Convergence of Riemannian and pseudo-Riemannian manifolds}

\subsubsection{The Riemannian setting}  We say that a sequence $\{(x_k,M_k)\}_{k\in\N}$ of pointed Riemannian manifolds \emph{converges $C^n$} to a pointed Riemannian manifold $(x,M)$ if for every compact set $K$ in $M$,  there exists an integer $k_0$ such that, for any $k>k_0$ there is a an open set $U$ containing $K$ in $M$,  a diffeomorphism $\phi_k$ from  $U$ onto an open set in $M_k$, so that when $k$ tends to infinity, we have that $\{\phi_k^{-1}(x_k)\}_{k\in\mathbb N}$ converges to $x$ and the metric $\phi_k^* g_k$ converge $C^n$ to $g$ in $U$, where $g_k$ and $g$ are respectively the metrics on $M_k$ and $M$.

Similarly, given a sequence $\{(x_k,N_k,M_k)\}_{k\in \N}$ such that $N_k$ is a submanifold of $M_k$ and $x_k\in N_k$, we say that $\{(x_k,N_k,M_k)\}_{k\in \N}$ converges $C^n$ to $(x,N,M)$ if for every compact set $K$   there exists an integer $k_0$ and an open set $U$ containg $K$, such that for $k>k_0$,  there is  a diffeomorphism $\phi_k$ from the  $U$   to an open set $U_k$ in $M_k$ containing $x_k$, so that
\begin{enumerate}
	\item $\phi_k(N\cap U)=N_k\cap U_k$,
	\item $\{\phi_k^{-1}(x_k)\}_{k\in\mathbb N}$ converges to $x$,
	\item when $k$ tends to infinity, the sequence $\seqk{\phi_k^* g}$ of metrics converge $C^n$  to $g$ in $U$.
\end{enumerate}

We say an estimate {\em only depends on the local geometry} of a Riemannian manifold $(M,x)$ if such an estimate holds uniformly for any  sequence of pointed Riemaniann manifolds converging to $(M,x)$.

\subsubsection{The pseudo-Riemannian setting}\label{app:ConvPseudoRiem} The definitions in this setting require additional data.

Let $M$ be  a pseudo-Riemannian manifold of signature $(p,q)$, and $N$ a submanifold of non-degenerate signature $(p',q')$. 

\begin{definition}\label{def:framing}
A \emph{normal framing} of $N$ is a smooth choice, for every $x$ in $N$, of a positive definite $(p-p')$-plane of the normal space $(\T_xN)^\bot$. We will denote $N^o$ the submanifold $N$ together with a normal framing.
\end{definition}

Remark that if $N$ reduces to a point $x$, then a normal framing $x^o$ is the choice of a positive definite $p$-plane of $\T_xM$.

We denote by $\GG_p(M)$ the Grassmannian of positive definite $p$-dimensional subspaces of $M$. By definition, $\GG_p(M)$ is a bundle over $M$ whose fiber over $x$ is the Grassmannian 
$\operatorname{Gr}_{p,0}(\T_xM)$ 
of positive definite $p$-planes in $\T_xM$ (thus the fiber is identified the symmetric space of $\SO(p,q)$). The tangent space of $\GG_p(M)$ at $(x,P)$ splits as
\[\T_{(x,P)}\GG_p(M) = \T_x M \oplus \Hom(P,P^\bot) = P \oplus P^\bot \oplus \Hom(P,P^\bot)\ .\]
The  Riemannian metric $g$ on $\GG_p(M)$ is given  by
$$
g_{(x,P)} = {g_x}_{\vert P} \oplus (-{g_x}_{\vert P^\bot}) \oplus h_P\ ,
$$
where $h_P$ is the Killing metric on the symmetric space $\operatorname{Gr}_{p,0}(\T_xM)$ evaluated at $P$ and $g$ the metric on $M$.

A normal framing of a  submanifold $N$  gives an embedding of $\GG_{p'}(N)$ into $\GG_p(M)$ whose image we denote by $\GG_{p'}^o(N)$. Given a triple $(x,N,M)$ where $N$ is a submanifold of $M$ with non-degenerate induced metric and $x$ belongs to $N$, we say that the normal framing $(x^o,N^o,M)$ is \emph{compatible} if the normal framing of $x$ contains the normal framing of $N$ at $x$.

We can thus define the notion of convergence of framed pseudo-Riemannian submanifolds by using the Riemannian metric on the Grassmannian bundle:

\begin{definition}
We say that the sequence $\{(x^o_k,N^o_k,M_k)\}_{k\in \N}$, where $x^o_k$ is a compatibly normally framed point of $N_k$, and $N^o_k$ is a normally framed submanifold of (non-degenerate) signature $(p',q')$ of the pseudo-Riemanniann manifold $M_k$ of signature $(p,q)$ {\em converges $C^n$},  if the corresponding sequence of pointed Riemannian submanifolds  $\{x^o_p,\GG^o_{p'}(N_k), \GG_{p'}(M_k)\}_{k\in\mathbb N}$ converges $C^n$.
\end{definition}
Note that this notion of convergence highly depends on the choice of normal framing. This choice of normal framing of $N$ is either vacuous or natural in two important cases that are used in this paper: in the case that $N$ is a spacelike surface in a pseudo-Riemannian manifolds of signature $(2,n)$, then the framing is trivial, and it is similarly trivial for spacelike curves equipped with a (spacelike) osculating plane. Accordingly we will not explicitly describe the normal framing in our definitions below that describe the convergence in these cases.

Recall that strongly positive curves in $\Hn$ is  in particular  {\em $2$-spacelike}: that is  the osculating plane $\T^{(2)}\gamma=\span\{\dot\gamma,\nabla_{\dot\gamma}\dot\gamma\}$ is spacelike everywhere.

\begin{definition}[\sc Spacelike surfaces and strongly positive curves]\label{def:bdgeom2}
Let $\seqk{M}$ be a sequence of pseudo-Riemannian manifolds of signature $(2,n)$, 
	\begin{enumerate}
	\item Let $N_k$ be spacelike surfaces in  $M_k$. In that case, we say 
	that

\centerline{\em $\{x_k,N_k, M_k\}_{k\in\mathbb N}$ converges to $\{x_\infty,N_\infty,M_\infty\}$}
	 
	  if $\{\T_{x_k} N_k,\T  N_k, \GG(M_k)\}_{p\in\mathbb N}$ converges to $\{\T_{x_{\infty}} N_{\infty}, \T N_{\infty}, \GG(M_{\infty})\}$.
	  
    \item Let  $\gamma_k$ be a spacelike curve in $M_k$ whose osculating plane $\T^{(2)}\gamma_k$ is spacelike everywhere. In that case again we say that 

	\centerline{\em$\{x_k,\gamma_k, M_k\}_{k\in\mathbb N}$ converges to $\{x,\gamma,M\}$} 

if $\left\{\T^{(2)}_{x_k} \gamma_k,\T^{(2)} \gamma_k, \GG(M_k)\right\}_{k\in\mathbb N}$ converges to $\left\{\T^{(2)}_{x} \gamma, \T^{(2)} \gamma, \GG(M)\right\}$.   
\end{enumerate} 

\end{definition}
Observe that the limits are  always spacelike. More precisely in the first item, since by definition $\T_{x_\infty} N_\infty$ is in $\GG(M_\infty)$,  $\T_{x_\infty} N_\infty$ is spacelike. The same holds for the curves  in the second item:   by the same argument $\T^2_x\gamma$ is spacelike,  hence $\T\gamma$ is spacelike.

As for the Riemannian setting, we can use this notion of convergence to define the notion of bounded geometry.

\subsection{Bounded Geometry}

A sequence of Riemaniann manifolds  $\{M_k\}_{k\in\N}$  {\em has $C^n$ bounded geometry} if for every  sequence of points $\seqk{x}$ with $x_k\in M_k$, then every subsequence  of $\{(M_k,x_k)\}_{k\in\mathbb N}$ subconverges $C^n$ to a pointed Riemannian manifold.

A sequence $\{(N_k,M_k)\}_{k\in\N}$  where $M_k$ is a Riemannian manifold and $N_k$ a submanifold in $M_k$ {\em has $C^n$ bounded geometry} if for every  sequence of points $\seqk{x}$ with $x_k\in N_k$, every subsequence  of $\{(x_k,N_k,M_k)\}_{k\in\mathbb N}$ subconverges $C^n$.

In the pseudo-Riemannian setting, we say  a sequence $\seqk M$ of pseudo-Riemannian manifolds has $C^n$ bounded geometry if the sequence of Riemannian manifolds $\{\GG_p (M_k)\}_{k\in\N}$ has bounded geometry. 

For pseudo-Riemannian submanifolds, we use a normal framing:  a sequence $\{(N^o_k,M_k)\}_{k\in \N}$ where $N_k$ is a submanifold of $M_k$ with normal framing $N^o_k$, has $C^n$ bounded geometry if for any sequence $\seqk {x^o}$ where $x_k$ is a point of $N_k$ with compatible normal framing $x^o_k$, every subsequence of $\{x^o_k, N^o_k,M_k\}_{k\in \N}$ subconverges $C^n$. 

Accordingly, using Definition \ref{def:bdgeom2}, we extend this definition to say that $\{(N_k,M_k)\}_{k\in\mathbb N}$ has bounded geometry when $N_k$ is a spacelike surface , or a 2-spacelike curve, in a pseudo-Riemaniann submanifold $M_k$ of signature $(2,n)$.

The following is a direct consequence of the previous definitions:

\begin{lemma}\label{l:BoundedGeometry}
For each $k$, consider $N_k^o$ a normally framed submanifold of $M_k$, and let $\gamma_k^o$ be a normally framed submanfiold of $N_k$. Then if $\{\gamma_k^o,N_k\}_{k\in\N}$ and $\{N_k^o,M_k\}_{k\in\N}$ have bounded geometry, then so does $\{\gamma_k^{oo},M_k\}_{k\in\N}$, where $\gamma_k^{oo}$ is the submanifold $\gamma_k$ of $M_k$ equipped with the normal framing induced by $\gamma_k^o$ and $N_k^o$.
\end{lemma}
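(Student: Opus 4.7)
The plan is to extract subsequences in two stages, using each bounded geometry hypothesis in turn, and then combine the resulting local diffeomorphisms. Fix any sequence $\seqk{x^{oo}}$ with $x_k \in \gamma_k$ equipped with the compatible ambient framing $x_k^{oo}$. The compatibility built into the definition of $\gamma_k^{oo}$ means that $x_k^{oo}$ canonically determines a framed point $x_k^o$ of $N_k^o$ (by taking the subframing coming from $N_k^o$), and in turn $x_k^{oo}$ determines a framed point of $\gamma_k^o$ inside $N_k$ (the remaining part of the framing).

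First, by the $C^n$ bounded geometry of $\{(N_k^o, M_k)\}_{k\in\N}$, some subsequence of $\{(x_k^o, N_k^o, M_k)\}_{k\in\N}$ converges. Unpacking the definition, this produces, for each compact $K \subset M_\infty$, diffeomorphisms $\phi_k : U \to \phi_k(U) \subset M_k$ with $K \subset U$, sending $N_\infty \cap U$ to $N_k \cap \phi_k(U)$, sending (in the Grassmannian sense) the framing $N_\infty^o$ to $N_k^o$, with $\phi_k^{-1}(x_k) \to x_\infty$ and $\phi_k^* g_{M_k} \to g_{M_\infty}$ in $C^n$. Pulling back the pseudo-Riemannian structure on $N_k$ through $\phi_k$, we obtain a new sequence of framed submanifolds $\{(\phi_k^{-1}(x_k), \phi_k^{-1}(\gamma_k^o), N_\infty)\}_{k\in\N}$ of $N_\infty$ (where the ambient $N_\infty$ is equipped with the metrics $\phi_k^* g_{M_k}|_{N_\infty}$, which converge in $C^n$ to $g_{M_\infty}|_{N_\infty}$).

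Second, apply the $C^n$ bounded geometry of $\{(\gamma_k^o, N_k)\}_{k\in\N}$ to this pulled-back sequence: a further subsequence of $\{(\phi_k^{-1}(x_k), \phi_k^{-1}(\gamma_k^o), N_\infty)\}_{k\in\N}$ converges, giving diffeomorphisms $\psi_k$ of open sets in $N_\infty$ that send a limit framed curve $\gamma_\infty^o$ to $\phi_k^{-1}(\gamma_k^o)$. Extending $\psi_k$ to an open neighborhood in $M_\infty$ using the tubular structure determined by the ambient framing $N_\infty^o$ (i.e.\ via the normal exponential map of $N_\infty$ in $(M_\infty, \phi_k^* g_{M_k})$), the composition $\phi_k \circ \psi_k$ yields local diffeomorphisms of $M_\infty$ to $M_k$ sending $\gamma_\infty$ to $\gamma_k$ with the induced framings matched. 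Since both $\phi_k$ and $\psi_k$ carry their metrics to $C^n$-limits, so does the composition. Translating back via the Grassmannian interpretation of pseudo-Riemannian convergence in Definition~\ref{def:bdgeom2}, this exhibits $\{(x_k^{oo}, \gamma_k^{oo}, M_k)\}_{k\in\N}$ as subconvergent.

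The main (essentially bookkeeping) obstacle is to verify that the framing of the limit $\gamma_\infty$ in $M_\infty$ one reads off from this composition is exactly the one induced by the limit framings of $\gamma_\infty$ in $N_\infty$ and of $N_\infty$ in $M_\infty$; this amounts to checking that the fiberwise splitting of the Grassmannian $\GG_1(M_\infty)$ along $\GG_1^o(\gamma_\infty) \subset \GG_2^o(N_\infty) \subset \GG_p(M_\infty)$ passes to the limit, which follows from the $C^n$-convergence of the metrics together with the compatibility of the framings at each finite stage.
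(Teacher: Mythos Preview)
The paper does not actually give a proof of this lemma: it simply asserts that it is ``a direct consequence of the previous definitions'' and then states the result. Your two-stage extraction argument is exactly the natural way to unpack that claim, and it is correct in outline; this is presumably what the authors had in mind.

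One small point worth tightening: in your second step you say you ``apply the $C^n$ bounded geometry of $\{(\gamma_k^o, N_k)\}_{k\in\N}$ to this pulled-back sequence,'' but the hypothesis is stated for the original sequence, not the pulled-back one. This is harmless since the $\phi_k$ are near-isometries carrying one to the other, but it would be cleaner to first apply the hypothesis directly to $\{(x_k,\gamma_k^o,N_k)\}$ to obtain a limit $(y_\infty,\gamma_\infty,N_\infty')$ with diffeomorphisms $\psi_k$, and only then identify $N_\infty'$ with the $N_\infty$ from the first step (using that both arise as limits of the same $N_k$ along the same subsequence, together with the framed base points). The composition $\phi_k\circ\psi_k$ and the tubular extension then go through as you describe.
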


\section{A lemma in plane topology}

\begin{lemma}\label{lem:plane}
	Let $D_1$ and $D_0$ be two smoothly embedded closed disks in the plane so that $D_1$ is embedded in the interior of $D_0$. Let $x_1$ be a point in $D_1$. Let $\gamma:[0,1]\to D_0$ be an arc smoothly embedded in $D_0$ with $\gamma\{0,1\}\in \partial D_0$ and $\gamma$ transverse to $\partial D_1$. 
	Then there exists a disk $U$ embedded in the interior of  $D_0$, so that , 
	\begin{enumerate}
		\item $U$ contains the connected component of $x_1$ in $D_1\setminus \gamma$, 
		\item $\partial U=\gamma_1\cup \eta$, where $\gamma_1$ is a connected sub arc of $\gamma$, and $\eta$ a connected sub arc of $\partial D_1$
			\end{enumerate}
\end{lemma}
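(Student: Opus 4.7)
My plan is to reduce the problem to the combinatorics of the arcs that $\gamma$ cuts out of $D_1$, and then to construct $U$ explicitly as a sub-disk of $D_1$ bounded by one such arc together with an arc of $\partial D_1$.

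First, I would use the transversality of $\gamma$ to $\partial D_1$, together with compactness, to conclude that $\gamma\cap\partial D_1$ is a finite set of points. Since $\gamma(0),\gamma(1)\in\partial D_0$ lie outside $D_1$ (as $D_1\subset\mathrm{int}(D_0)$), the cardinality of this intersection is even; label the intersection points $q_1,\ldots,q_{2k}$ in the order they appear along $\gamma$, and write $\gamma(t_j)=q_j$. The sub-arcs $C_i\defeq\gamma|_{[t_{2i-1},t_{2i}]}$, $i=1,\ldots,k$, are properly embedded arcs in $D_1$ with interior in $\mathrm{int}(D_1)$ and endpoints on $\partial D_1$, and they are pairwise disjoint because $\gamma$ is itself embedded. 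These are the ``chords'' determined by $\gamma$; moreover $D_1\setminus\gamma=D_1\setminus\bigcup_i C_i$. The case $k=0$ (when $\gamma$ misses $D_1$) is incompatible with the required form of $\partial U$ and is implicitly excluded in the applications of the lemma (in Proposition \ref{pro:control-proj} we work with the component $c_R$ which by construction meets $U_{R/A}$); I assume $k\geq 1$.

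Next, I would invoke the standard Jordan--Schoenflies type statement for a proper arc in a disk: each chord $C_i$ separates $D_1$ into two closed topological sub-disks $A_i$ and $B_i$, each bounded by $C_i$ together with a connected sub-arc of $\partial D_1$. Choose, for each $i$, $A_i$ to be the side containing $x_1$ (after perturbing $x_1$ slightly within its component if necessary, which does not change the component of $D_1\setminus\gamma$ containing it). Let $V$ denote the connected component of $x_1$ in $D_1\setminus\gamma$. From the well-known tree structure of a family of pairwise disjoint properly embedded arcs in a disk, one shows that $\overline{V}=\bigcap_i A_i$ is a closed topological sub-disk of $D_1$ whose boundary Jordan curve is obtained by alternately concatenating full chords $C_i$ (those that lie on $\partial V$) with connected sub-arcs of $\partial D_1$ between successive chord endpoints. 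This is the step where the main topological bookkeeping lives.

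Finally, since $k\geq 1$ and $V\subsetneq D_1$, at least one chord must lie on $\partial V$; pick any such $C_{i_0}$ and set $U\defeq A_{i_0}$. Then $U$ is a closed topological disk contained in $D_1\subset\mathrm{int}(D_0)$, it satisfies $U\supseteq\overline{V}\supseteq V$ (so $U$ contains the connected component of $x_1$ in $D_1\setminus\gamma$), and $\partial U=C_{i_0}\cup\eta_1$, where $\gamma_1\defeq C_{i_0}$ is a connected sub-arc of $\gamma$ (a single chord, by construction) and $\eta_1\subset\partial D_1$ is the connected sub-arc of $\partial D_1$ joining the two endpoints of $C_{i_0}$ on the $x_1$-side. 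The main obstacle will be making the second paragraph precise, i.e.\ verifying carefully that the chord family organizes $D_1$ into a planar tree of sub-disks so that $\overline{V}$ is itself a closed disk bounded by full chords and arcs of $\partial D_1$; this is a standard planar-topology fact and can be proved by induction on $k$ or by a direct appeal to the Jordan curve theorem applied successively to the chords.
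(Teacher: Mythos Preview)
Your argument is correct and actually cleaner than you make it look: once you have produced the chords $C_1,\ldots,C_k$, you may pick \emph{any} one of them, say $C_1$, and set $U=A_1$ (the closed side of $C_1$ in $D_1$ containing $x_1$). Since $V$ is a connected subset of $D_1\setminus C_1$ containing $x_1$, automatically $V\subset A_1$, and $\partial A_1$ is the union of the chord $C_1$ and a single arc of $\partial D_1$. The tree-structure analysis of $\overline V=\bigcap_i A_i$ is therefore unnecessary, and you can drop the whole ``main obstacle'' paragraph. For the case $k=0$, note that the paper simply takes $U=D_1$ (so $\gamma_1$ is empty and $\eta_1=\partial D_1$); you should say this rather than declaring the case excluded.

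Your route is genuinely different from the paper's. The paper (which gives two variants) does not stay inside $D_1$: it selects a boundary arc $\eta\subset\partial D_1\cap\partial V$, takes as $\gamma_1$ the \emph{full} sub-arc of $\gamma$ between the two endpoints of $\eta$ (which may leave and re-enter $D_1$ several times), and lets $U$ be the Jordan domain bounded by $\gamma_1\cup\eta$; this $U$ sits in $\operatorname{int}(D_0)$ but need not sit in $D_1$. Your construction, by contrast, always produces $U\subset D_1$ with $\gamma_1$ a single chord of $\gamma\cap D_1$. Both satisfy the stated conclusions; yours is more economical, while the paper's yields a region that hugs $\gamma$ more closely, which is closer in spirit to the picture in the application (Proposition~\ref{pro:control-proj}) but is not logically required there.
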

The smoothness and transversality hypothesis are quite possibly not necessary but are enough for our purpose and simplify the argument.

\begin{figure}[!h] 
	\begin{center}
	\includegraphics[width=60mm]{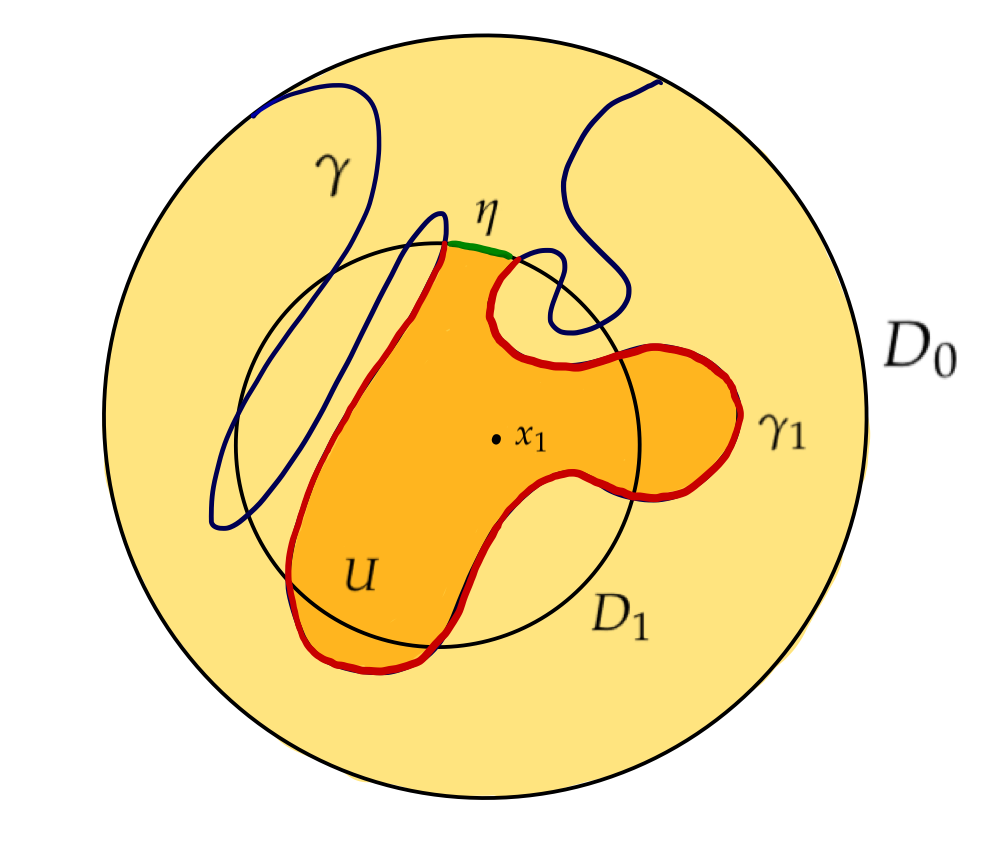}
	\caption{Lemma \ref{lem:plane}}
	\end{center}
\end{figure}

\begin{proof} Let $U_1$ be the connected component of $x_1$ in $D_1\setminus \gamma$. If $U_1=D_1$ then we take $U=U_1$ and the proof is over. Otherwise, let 
$$
t_0\defeq\inf\{t\mid\gamma(t)\in \overline U_1\}\ .
$$
 Let $y_0\defeq\gamma(t_0)$, then $D_0\setminus(\partial  D_1\cup \gamma)$ intersects a small enough disk neighbourhood of $y_0$, in four connected components. Only one of these connected  components, say $V_0$, is included in $U_1$. Let $\eta$ be the connected component of $\partial D_1\setminus\gamma$ so that a subarc of $\eta$ lies in the boundary of $V_0$. Let $y_1=\gamma(s_0)$ be the other extremity of $\eta$.

 By construction $\gamma[t_0,s_0]\cup \eta$ is an embedded arc, bounding a embedded disk $U_2$. Since $V_0\cap U_2$ is non-empty by construction, it follows that $U_1\subset U_2$. We can therefore take $U=U_2$, and this concludes the proof. \end{proof}

\section{Holomorphic curves and Gromov's Schwarz lemma}\label{app:phol}

We recall the basis of the theory of holomorphic curves and some results from \cite{Gromov} for the convergence portion and \cite{LabouriePseudoHolomorphic} for the part concerning the area. We give an improvement of these results as well as considerations of when a limit of immersions is an immersion.

\subsection{Preliminaries}

Recall that an {\em almost-complex structure} on a even dimensional manifold $M$ is a section $J$ of the bundle of endomorphisms of $\T M$ such that $J^2=-\Id$. When the real dimension of $M$ is equal to $2$, an almost complex structure is always integrable (that is, comes from an holomorphic atlas), and we call such a manifold $M$ a Riemann surface.

\begin{definition}
Given an almost complex manifold $(M,J)$, a {\em holomorphic curve} is a smooth map $f: (X,j) \to (M,J)$ where $(X,j)$ is a Riemann surface, and satisfying $\T f\circ j = J \circ \T f$.
\end{definition}

In this paper, we will be mostly interested in the case where $X=\D$ defined by
\[ \D = \{ z\in \C,~\vert z \vert < 1\}\ .\]
The {\it frontier} of $\D$ is
\[\Fr(\D) = \overline\D\setminus \D = \{ z\in \C,~\vert z\vert = 1\}\ .\]

\begin{definition}
A {\em totally real submanifold} of an almost-complex manifold $(M,J)$ (possibly with boundary) is a submanifold $W\subset M$ of half the dimension and such that for any $x$ in $W$, we have $\T_x M = \T_x W \oplus J(\T_x W)$.
\end{definition}

Our main focus is the {\em semi-disk $\S$} defined by
\[\S = \{z\in \D,~\Re(z)\geq 0\}\ .\]
We denote by
\[\partial \S = \S \setminus \text{int}(\S) = \{ z\in \D,~\Re(z)=0\}\ ,\]
\[\Fr(\S) = \overline{\S} \setminus \S = \{ z\in \Fr(\D),~\Re(z)\geq 0\}. \]
In this case, $\partial \S$ is a totally real submanifold of $\S$.

\begin{definition}
A {\em holomorphic curve with boundary} in an almost complex manifold $M$ with totally real submanifold $W$ is a holomorphic curve $f$ from $\S$ to $M$ mapping $\partial \S$ to $W$.
\end{definition}

\begin{definition}
Let $(M,J)$ be a almost complex manifold equipped with a Riemannian metric $\braket{.\mid .}$. An open set $U$ in $M$ is {\em $K$-calibrated} if there exists a 1-form $\beta$ so that 
\begin{eqnarray}
\forall u\in \T U, \ \ 
 \beta(u)^2\leq K^2\cdot \braket{u\mid u}\ , & &
	\braket{u\mid u}\leq K\cdot {\rm d}\beta(u,Ju)\ . \label{ineq:calib}
\end{eqnarray}
If $W$ is a totally real, totally geodesic submanifold, we furthermore assume that $\beta$ vanishes along $W\cap U$.
\end{definition}

Then we have the following lemma

\begin{lemma}[\sc Local calibration]\label{lem:calib}
There exists positive constants $\epsilon$ and  $K$, only depending on the geometry of $(M,x)$ so that  the ball $B$ of radius $\epsilon$ in $M$ of center $x$ is $K$-calibrated.
\end{lemma}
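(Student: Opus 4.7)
The strategy is to build $\beta$ explicitly in Darboux-type normal coordinates centered at $x$, then verify both calibration inequalities via Taylor expansion and continuity, with all error terms controlled by the local geometry of $(M,x)$.

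First, I would choose a basis $(e_1,\dots,e_n,Je_1,\dots,Je_n)$ of $\T_xM$ where $n=\tfrac{1}{2}\dim M$. In the presence of a totally real, totally geodesic $W$ through $x$, I additionally require that $e_1,\dots,e_n$ span $\T_xW$; this is possible precisely because totally realness gives $\T_xW\cap J\T_xW=\{0\}$. Using this frame, set up geodesic normal coordinates $(x_1,y_1,\dots,x_n,y_n)$ on a ball $B_\epsilon(x)$. At the origin, $J$ then coincides with the standard almost complex structure $J_0$, and by the totally geodesic assumption, $W$ is locally the affine slice $\{y_1=\dots=y_n=0\}$.

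Next, define
\[\beta := \tfrac{1}{2}\sum_{j=1}^n\bigl(x_j\,dy_j - y_j\,dx_j\bigr),\]
so that $d\beta=\omega_0 := \sum_{j=1}^n dx_j\wedge dy_j$. By inspection $\beta$ annihilates $\T W$, since each summand either carries the factor $y_j$ (which vanishes on $W$) or the covector $dy_j$ (which vanishes on $\T W$). For the first inequality, $|\beta_p(u)|^2\leq \tfrac{1}{4}|p|_{\mathrm{Euc}}^2\,|u|_{\mathrm{Euc}}^2\leq \tfrac{\epsilon^2}{4}|u|_{\mathrm{Euc}}^2$ on $B_\epsilon(x)$, and since the Euclidean norm and $\braket{\cdot \mid \cdot}$ are comparable on such a ball with constants controlled by the first-order behavior of $g$ at $x$, we obtain $\beta(u)^2\leq K^2\braket{u\mid u}$ for $\epsilon$ small and $K$ appropriate. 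For the second inequality, at the origin $d\beta(u,J_0u)=\omega_0(u,J_0u)=|u|_{\mathrm{Euc}}^2$, hence $\braket{u\mid u}\leq C_0\,d\beta(u,J_0u)$ at $x$ with $C_0$ depending only on $g_x$ in the chosen frame. Continuity of $g$, $J$, and $d\beta$ then propagates this to $\braket{u\mid u}\leq 2C_0\,d\beta(u,Ju)$ throughout $B_\epsilon(x)$, provided $\epsilon$ is small enough. Taking $K$ to be the maximum of the two constants yields the required calibration.

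The main obstacle is that the metric is not assumed to be compatible with $J$, so the positivity $d\beta(u,Ju)>0$ is not automatic from any Kähler structure; it must be verified at the origin where $J=J_0$ makes it transparent, and then transferred to a small ball by continuity. This is precisely what forces $\epsilon$ to depend on local bounds for the derivatives of $J$ and $g$ near $x$, i.e.\ on the geometry of $(M,x)$. A secondary subtlety in the boundary case is that adapting normal coordinates simultaneously to $J_0$ at the origin and to $\T_xW$ uses totally realness, while straightening $W$ to a linear slice where $\beta$ vanishes uses total geodesicity; both hypotheses are essential and are used together in the choice of the frame and chart above.
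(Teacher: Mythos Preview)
Your proof is correct and follows essentially the same route as the paper: both construct $\beta$ as the standard Liouville-type primitive $\lambda_u(v)=\braket{u\mid Jv}$ pulled back via the exponential map (your $\tfrac12\sum(x_j\,dy_j-y_j\,dx_j)$ is, up to a harmless sign and scale, exactly this form written in normal coordinates), use total geodesicity of $W$ to linearize it to $\T_xW$ where total realness forces $\beta|_{\T W}=0$, and then get the two calibration inequalities from the first-order behavior at the origin plus continuity. Your write-up is in fact more explicit than the paper's sketch, particularly in handling the case where $g$ and $J$ are not assumed compatible.
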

We sketch a proof since the extension incorporating the totally real submanifold $W$ is not in the original paper \cite{GSL}.
\begin{proof} Let  $\exp:\T_x M \to M$ be the exponential map. Observe that $\T_0\exp$ is  holomorphic. We choose $\beta=\exp_*\lambda$, where $\lambda\in\Omega^1(\T_x M)$ is defined by  $\lambda_u(v)=\braket{u\mid Jv}$. Since the preimage by $\exp$ of $W$ is  $\T_x W$ since $W$ is totally geodesic, and $\T_x W$ is a totally real  submanifold (actually linear) of $\T_x M$ since $W$ is, it follows that $\lambda$ vanishes on $\T_x W$ and thus $\beta$ vanishes on $W$. The result follows.
\end{proof}

\begin{rmk}
The notion of holomorphic curve can be extended to the following case. The (not necessarily even-dimensional) manifold $M$ carries a distribution $\mathcal{D}\subset \T M$ equipped with an almost complex structure $J: \mathcal{D} \to \mathcal{D}$. We then ask that a holomorphic curve is a map $f$ such that $\T f$ takes values in $\mathcal{D}$ and intertwines the almost complex structures.

In this framework, a totally real submanifold is a submanifold $W$ of $M$ of half the dimension of the distribution $\mathcal{D}$ and such that for any $x$ in $W$ we have $\mathcal{D}_x = \T_x W \oplus J(\T_x W)$.

All of the results described in the sequel canonically extend to this case.
\end{rmk}

\subsection{Schwarz lemmas and convergence of holomorphic curves} 
In this subsection, we state and sketch the proofs of two of our main goals for this appendix. After a few definitions, we state the results, then collect some preliminaries before concluding with a description of the arguments.

 To begin, let $f$ be a map from ${\bf D}$ or  ${\bf S}$ to $M$. If $Z$ is a subset of ${\Fr} ({\bf D})$ or respectively ${\Fr}({\bf S})$,  we denote by $f(Z)$ the accumulation set of sequences $\{f(y_k)\}_{k\in\N}$ where $\seqk{y}$ is a sequence converging to a point in $Z$.

Let $\seqk{M}$ be a sequence of complete almost complex Riemannian manifolds with bounded geometry, let $\seqk{x}$ be a sequence of points. We assume that $x_k\in M_k$ and that  $\{(x_k,M_k)\}_{k\in\mathbb N}$ converges $C^{p,\alpha}$ as almost complex Riemannian manifolds.

\begin{theorem}[\sc Free boundary]\label{theo:holoFB} Let $\seqk{U}$ be a sequence, where $U_k$ is an open set in $M_k$, uniformly calibrated and with bounded geometry. Let  $\seqk{f}$ be a sequence of holomorphic maps from ${\bf D}$ with values  in $U_k$.
Then $\seqk{f}$ subconverges  $C^{p,\alpha}$ on every compact set to $f_0$. Assume furthermore that
\begin{eqnarray}
\sup\left\{\area(f_k({\bf D})),~k\in\N\right\}< \infty\ .
\end{eqnarray} Then  for every non-empty open subset   $Z$ of ${\Fr}{(\bf D})$,
$$f_0({ Z})\subset\lim_{k\to\infty} f_k({Z})\ .$$
\end{theorem}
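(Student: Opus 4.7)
The proof will follow Gromov's compactness strategy for $J$-holomorphic curves, split into an interior convergence step (not needing the area bound) and a boundary refinement that crucially uses it.

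For the interior $C^{p,\alpha}$ subconvergence, the plan is to first produce a uniform $C^1$ bound on each $f_k$ on compact subsets of $\mathbf{D}$. The uniform calibration of $U_k$ gives the two inequalities (\ref{ineq:calib}); for a holomorphic map these yield the Stokes-type bound $\mathrm{area}(f_k(\Omega)) \leq K \int_{\partial\Omega} f_k^*\beta$ for every $\Omega \subset \mathbf{D}$, hence local area bounds on small disks from length bounds on their boundary images. Bounding $|df_k|^2$ by a multiple of the local area via its subharmonicity, then iterating on nested disks of shrinking radius, produces the required $C^1$ estimate depending only on $K$, on the distance to $\mathrm{Fr}(\mathbf{D})$, and on the bounded geometry of $U_k$; this is Gromov's Schwarz lemma in calibrated form. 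Once $\|\mathrm{T}f_k\|$ is uniformly bounded on compacta, the $J_k$-Cauchy--Riemann equation for $f_k$ in converging local coordinates on $M_k$ becomes a first-order elliptic system with coefficients converging in $C^{p-1,\alpha}$, so Schauder bootstrapping yields uniform $C^{p,\alpha}$ bounds, and Arzel\`a--Ascoli extracts a subsequence converging in $C^{p,\alpha-\varepsilon}$ to a $J_\infty$-holomorphic limit $f_0$.

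For the boundary inclusion under the area bound, I must show that every $x \in f_0(Z)$ lies in $\lim_{k\to\infty} f_k(Z)$. Pick $w_j \in \mathbf{D}$ with $w_j \to w_\infty \in Z$ and $f_0(w_j) \to x$; by interior convergence there exist $k_j$ with $f_{k_j}(w_j) \to x$, but a priori $w_j$ is interior, not on $\mathrm{Fr}(\mathbf{D})$. The strategy is to continue $f_k$ radially outward from $w_j$ within $\mathbf{D}$ until it either first reaches $\mathrm{Fr}(\mathbf{D})$ in $Z$ or first leaves a small ball $B(x,\rho)$. If always the latter occurred, one would obtain for each $j$ a proper sub-holomorphic piece of $f_k^{-1}(B(x,\rho))$ meeting both $x$ and $\partial B(x,\rho)$; Gromov's monotonicity inequality for calibrated holomorphic curves (obtained by integrating $\|u\|^2 \leq K\, d\beta(u,Ju)$ on this preimage and applying Stokes) then forces the area of each such piece to exceed a uniform $c\rho^2$. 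Choosing the $w_j$ clustering at infinitely many distinct points of $Z$ makes these pieces disjoint for large $k$, and the sum of their areas contradicts $\sup_k \mathrm{area}(f_k(\mathbf{D})) < \infty$. Hence the radial continuation must eventually reach $\mathrm{Fr}(\mathbf{D})$ inside $Z$, producing a point of $Z$ where $f_k$ is within $\rho$ of $x$; a diagonal extraction in $\rho \to 0$ concludes.

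The main obstacle will be the boundary step, and specifically making the disjointness argument rigorous: one must work in a uniformising chart that restores the monotonicity formula near $\mathrm{Fr}(\mathbf{D})$, where the hyperbolic geometry of $\mathbf{D}$ degenerates, and track that the distinct pieces of $f_k^{-1}(B(x,\rho))$ attached to different $w_j$ are genuinely separate components, not artefacts of a single component wandering in and out of $B(x,\rho)$. The interior step is essentially classical once the calibrated Schwarz lemma is in place; the only mild subtlety is that the targets $M_k$ vary, handled by the assumed $C^{p,\alpha}$-convergence of $\{(x_k, M_k)\}$ which delivers converging coefficients for the elliptic system.
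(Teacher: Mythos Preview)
Your interior convergence step is essentially the same as the paper's: calibration gives a linear isoperimetric inequality, Gromov's Schwarz lemma then yields a uniform $C^1$ bound, and elliptic bootstrapping upgrades this to $C^{p,\alpha}$. The paper adds an intermediate $C^2$ step via the $1$-jet map before bootstrapping, but this is a technical variation, not a different idea.

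Your boundary step, however, is a genuinely different strategy from the paper's, and as written it does not close. The paper does \emph{not} use monotonicity or any disjointness-of-components argument. Instead it picks $y_k\to y_0\in Z$, precomposes $f_{n_k}$ with a M\"obius automorphism $u_k$ sending $0$ to $y_k$, observes that since the total area is bounded and $y_k$ escapes to $\Fr(\mathbf D)$ the area of $f_{n_k}$ on the hyperbolic unit ball about $y_k$ tends to zero, so the recentered maps $g_k=f_{n_k}\circ u_k$ converge to a constant, and then an extremal-length estimate (a quadrangle argument, Lemma~\ref{lem:bdry}) bounds $d(g_k(0),g_k(Z_0))$ by something tending to zero. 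The area bound is used only to force the recentered limit to be constant; no monotonicity is invoked.

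In your argument the gap is exactly the one you flag, and it is fatal as stated. The holomorphic pieces of area $\geq c\rho^2$ that you produce are attached to different indices $k_j$, so they lie in different maps $f_{k_j}$; summing their areas gives no contradiction with a uniform bound on each $\area(f_k(\mathbf D))$. If instead you try to run the argument for a single $f_k$ (or for $f_0$), the components of $f^{-1}(B(x,\rho))$ containing the various $w_j$ can all coincide, so there is nothing to sum. And even if such a component reaches $\Fr(\mathbf D)$, nothing forces it to land in $Z$ rather than elsewhere on the circle. The paper's extremal-length device is precisely what replaces this missing control: it compares the length of any arc joining a small circle about $y_k$ to $Z$ against the total area, and needs no disjointness at all.
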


Assume now that $W_k$ is a totally real submanifold and totally geodesic submanifold of  $M_k$ containing $x_k$ and that $\{(x_k,W_k,M_k)\}_{k\in\N}$ converges   $C^{p,\alpha}$ to $\{(x_0,W_0,M_0)\}$, with $W_0$ totally real. In the boundary case, the following is an extension of \cite{LabourieGAFA}.

\begin{theorem}[\sc Boundary]\label{theo:holoBB} Let $\seqk{U}$ be a sequence of open sets in $M_k$, uniformly calibrated and with bounded geometry. Let  $\seqk{f}$ be a sequence of holomorphic maps from ${\bf S}$ with values  in $U_k$.
 Then $\seqk{f}$ subconverges $C^{p,\alpha}$ on every compact set to $f_0$. Assume furthermore that
\begin{eqnarray}
 \sup\left\{\area(f_k(\S)),~k\in\N\right\}< \infty\ .
\end{eqnarray} Then for every non-empty open subset  $Z$ of ${\Fr}({\bf S})$, we have
$$f_0(Z)\subset\lim_{k\to\infty} f_p(Z)\ .$$\end{theorem}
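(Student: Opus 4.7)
The proof parallels that of Theorem~\ref{theo:holoFB}, the free boundary case, with the essential new feature being the treatment of the totally real boundary arc $\partial\S$ constrained to lie in the submanifolds $W_k$. I would proceed in two stages: first obtaining the $C^{p,\alpha}$-subconvergence, then deducing the frontier inclusion from the area bound.

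For the subconvergence, I would begin with uniform gradient estimates on $f_k$ on compact subsets of $\overline\S\setminus\Fr(\S)$, including points on $\partial\S$, via a boundary version of Gromov's Schwarz lemma. The totally geodesic hypothesis on $W_k$ is essential here: as in the proof of Lemma~\ref{lem:calib}, it ensures that the locally defined calibration $1$-form $\beta_k$ vanishes identically on $W_k$, and this vanishing eliminates the otherwise problematic boundary contribution when Stokes' theorem is applied to $f_k^*\mathrm{d}\beta_k$ on half-discs meeting $\partial\S$. Boundary elliptic regularity for the nonlinear $\bar\partial$-equation with totally real boundary conditions (ellipticity is guaranteed precisely by the totally real hypothesis) then upgrades these $C^1$ bounds to $C^{p,\alpha}$ estimates on compact subsets. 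Arzel\`a--Ascoli and a diagonal extraction produce a subsequence converging in $C^{p,\alpha}$ on every compact set of $\S$ to a limit $f_0$; the holomorphicity $\T f_0\circ j=J_0\circ\T f_0$ and the boundary condition $f_0(\partial\S)\subset W_0$ pass to the limit under the $C^{p,\alpha}$-convergence $(J_k,W_k)\to(J_0,W_0)$.

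For the frontier inclusion, let $w\in f_0(Z)$, so by definition there exist $y_n\in\S$ with $y_n\to z_0\in Z$ and $f_0(y_n)\to w$. The $C^0$-convergence $f_k\to f_0$ on the compact singleton $\{y_n\}$ for each fixed $n$ gives $f_k(y_n)\to f_0(y_n)$ as $k\to\infty$; a diagonal extraction then yields indices $k_n\to\infty$ with $f_{k_n}(y_n)\to w$, placing $w$ in $\lim_k f_k(Z)$. As in the free boundary case of \cite{Gromov}, the bounded area hypothesis is essential to rule out bubbling at interior points and thereby guarantee that the subconvergent limit $f_0$ exists as a smooth map on all of $\S$, rather than only off a singular bubble set; this smoothness on all of $\S$ is precisely the foundation for the diagonal argument above.

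The principal technical obstacle is establishing the uniform boundary Schwarz estimate along the sequence. The single-map case for pseudo-holomorphic discs with totally real, totally geodesic boundary conditions is classical (cf.\ \cite{LabourieGAFA}), but uniformity across $\{f_k\}$ demands that all constants appearing in the boundary monotonicity formula depend only on the uniform calibration constants of Lemma~\ref{lem:calib}, the bounded geometry constants of $U_k$, and the $C^{p,\alpha}$-closeness of $(J_k,W_k,\beta_k)$ to $(J_0,W_0,\beta_0)$ for large $k$. Once this uniform boundary Schwarz estimate is in hand, the remainder of the argument is a routine adaptation of the free boundary techniques developed in \cite{Gromov,LabouriePseudoHolomorphic}.
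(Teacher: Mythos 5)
Your treatment of the first assertion (the $C^{p,\alpha}$-subconvergence) is essentially correct and matches the paper's strategy: the boundary Schwarz estimate comes from the totally geodesic hypothesis on $W_k$ (so that $\partial\S$ is geodesic for the pullback metric and one can double across it) together with the vanishing of the calibration form $\beta$ on $W_k$, which kills the boundary term in Stokes and gives the linear isoperimetric inequality; bootstrapping the $\bar\partial$-equation then lifts $C^1$ to $C^{p,\alpha}$.

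The second assertion is where there is a genuine gap, and it is not small. Recall that for $Z\subset\Fr(\S)$, the set $f(Z)$ is the \emph{accumulation set} of sequences $\{f(z_m)\}$ with $z_m\to Z$, not the image of $Z$. Your diagonal extraction produces indices $k_n$ with $f_{k_n}(y_n)\to w$, but each $f_{k_n}(y_n)$ is the value of $f_{k_n}$ at a single interior point; it is \emph{not} a point of $f_{k_n}(Z)$, which is the frontier trace of $f_{k_n}$. The argument as written is an iterated limit in the wrong order: you compute $\lim_n\lim_k f_k(y_n)$, whereas $\lim_k f_k(Z)$ requires control of $\lim_{z\to\Fr}f_k(z)$ uniformly in $k$. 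This exchange of limits is exactly where the area hypothesis enters, and your proposal does not use it here; instead you misattribute its role, claiming it is needed to rule out bubbling during the subconvergence step. But the subconvergence is stated before the area hypothesis is introduced and follows from the calibration alone (that is what the isoperimetric inequality in Gromov's Schwarz lemma is doing). The area bound is needed only for the frontier inclusion. The paper's actual argument reparametrizes by a conformal map $u_k$ sending $0\mapsto y_k$ so that a fixed quarter-circle $Z_0\subset\Fr(Q_0)$ maps into $Z$, notes that the rescaled maps $g_k=f_{n_k}\circ u_k$ subconverge to a constant because $\area(f_{n_k}(B(y_k,1)))\to 0$, and then invokes the extremal-length estimate of Lemma~\ref{lem:bdry}: a holomorphic map on the standard sector with bounded area and small derivative on an interior ball has frontier trace close to the center value. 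This gives $d(f_{n_k}(y_k), f_{n_k}(Z_k))\to 0$, which is precisely the uniform-near-the-frontier control that your diagonal argument lacks, and it combines with $f_{n_k}(y_k)\to f_0(y_k)\to w$ to finish. Without some quantitative length--area (or monotonicity) estimate near $\Fr(\S)$, the claimed inclusion $f_0(Z)\subset\lim_k f_k(Z)$ does not follow.
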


Both these theorems represent an improvement over the corresponding earlier results which only considered the case $Z={\Fr}({\bf D})$ or $Z=\partial {\bf S}$. 

\subsubsection{Quadrangles and extremal length}\label{sec:quadrangle} We prepare for the proof by recalling a classical construction.

A {\em quadrangle} $Q\defeq (U,x_1,x_2,x_2,x_3)$ in $\mathbb C$ equipped with the complex structure $J$ is a topological disk $U$  with four 
marked points $(x_1,x_2,x_3,x_4)$ in cyclic order in $\partial U$. The {\em $a$-rectangle} is the rectangle  $R_a\defeq(R,a,a+i,i,0)$ of vertices $(0,a,a+i,i)$. Two quadrangles $(U,x_1,x_2,x_2,x_3)$ and $(V,y_1,y_2,y_2,y_3)$ are conformally equivalent if we can find a conformal mapping  $\Phi$ sending $\overline U$ to $\overline V$ so that $\phi(x_i)=y_i$. 
 Every quadrangle is conformally equivalent to a unique $a$-rectangle.

Let $Q$ be a quadrangle and $\Gamma_Q$ be the set of arcs in $\overline U$ joining a point in the  interval between $x_1$ to $x_2$ on $\text{Fr}(U)$ to a point in the  interval between $x_3$ to $x_4$ on $\text{Fr}(U)$.
The {\em extremal length} of $Q$ is $\mathcal L(Q)$ where for a metric $g$
\begin{eqnarray}
	\mathcal L_g(Q)&=&\frac{\inf\left\{\operatorname{length}_g^2(\gamma)\mid \gamma\in\Gamma_Q\right\}}
	{\area_g(Q)}\\
	\mathcal L(Q)&=&\sup\left\{\mathcal L_g(Q)\mid g \hbox{ conformal to } J \right\}\ ,
\end{eqnarray}
where $\operatorname{length}_g$ and $\area_g$ denotes respectively the length and area with respect to $g$.
By construction $\mathcal L(Q)$ is a conformal invariant. A classical result asserts

\begin{proposition}
	We have $\mathcal L(R_a)=a$.
\end{proposition}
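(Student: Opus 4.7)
The plan is to prove the two inequalities $\mathcal{L}(R_a) \geq a$ and $\mathcal{L}(R_a) \leq a$ separately, the first by exhibiting the Euclidean metric as a witness, the second by a Cauchy--Schwarz argument over arbitrary conformal metrics.

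\emph{Lower bound.} First I would consider the standard Euclidean metric $g_0 = |dz|^2$ on $R_a = [0,a] \times [0,1]$. The curves in $\Gamma_{R_a}$ join the right side $\{a\} \times [0,1]$ to the left side $\{0\} \times [0,1]$, so any such curve has a horizontal projection spanning $[0,a]$ and hence Euclidean length at least $a$. As $\operatorname{area}_{g_0}(R_a) = a$, this gives $\mathcal{L}_{g_0}(R_a) = a^2/a = a$, and therefore $\mathcal{L}(R_a) \geq a$.

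\emph{Upper bound.} Let $g = \rho^2 |dz|^2$ be any metric conformal to the Euclidean one, with $\rho \geq 0$ measurable. For each $y \in [0,1]$, the horizontal segment $\gamma_y(t) = t + iy$, $t \in [0,a]$, belongs to $\Gamma_{R_a}$, so
\[
\inf_{\gamma \in \Gamma_{R_a}} \operatorname{length}_g(\gamma) \;\leq\; \int_0^a \rho(t + iy)\, dt \eqdef L(y).
\]
Integrating over $y \in [0,1]$ yields
\[
\inf_{\gamma \in \Gamma_{R_a}} \operatorname{length}_g(\gamma) \;\leq\; \int_0^1 L(y)\, dy \;=\; \int_{R_a} \rho\, dx\, dy.
\]
Applying the Cauchy--Schwarz inequality with respect to the Euclidean measure gives
\[
\left(\int_{R_a} \rho\, dx\, dy\right)^2 \;\leq\; \operatorname{area}_{g_0}(R_a) \cdot \int_{R_a} \rho^2\, dx\, dy \;=\; a \cdot \operatorname{area}_g(R_a).
\]
Combining the last two displayed inequalities and dividing by $\operatorname{area}_g(R_a)$ (assumed nonzero; otherwise $\mathcal{L}_g(R_a)$ is excluded from the supremum) yields $\mathcal{L}_g(R_a) \leq a$. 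Taking the supremum over conformal metrics $g$ gives $\mathcal{L}(R_a) \leq a$.

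Combining the two bounds yields $\mathcal{L}(R_a) = a$. There is no real obstacle here: the only subtle point is the minor bookkeeping of identifying the family $\Gamma_{R_a}$ correctly (so that horizontal segments are admissible test curves both for the lower bound and for the Cauchy--Schwarz estimate), which follows immediately from the cyclic ordering $(0, a, a+i, i)$ of the marked vertices.
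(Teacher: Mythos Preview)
Your proof is correct and is precisely the classical Gr\"otzsch length--area argument. The paper does not actually prove this proposition: it is stated there as ``a classical result'' without argument, so there is nothing to compare against beyond noting that what you wrote is the standard proof one would expect.
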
 

Let $Z_0$ be the {\em standard quarter} in $S^1$, that is the subarc of $S_1$ between $1$ and $i$. Let $Q_0$ be the standard sector
$$
Q_0\defeq\{z\in \mathbf D\mid \Re(z)\geq0\ ,\  \Im (z)\geq0\ , \vert z\vert <1\}\ ,
$$
so that $\Fr(Q_0)=Z_0$.

Let $\ell(R)$ be the Euclidean distance from $0$ to a point $x$ in $\bf D$ at hyperbolic distance $R$ from $0$. (Of course, it is classical that $\ell(R) = \tanh(R)$, but the precise formula is not important for our discussion.)
We define the {\em  $R$-corner quadrangle} to be the quadrangle $Q_R=(U,i\ell(R), \ell(R),1,i)$ where $U=\{z\in  \mathbb C\mid \ell(R)<\vert z\vert<1, \Re(z)>0, \Im(z)>0\}$. 

One then checks, for example by applying the conformal map $z \mapsto \log z$ to the domain $Q_R$, that

\begin{proposition} 
	The map $K:R\to\mathcal L(Q_R)$, is a decreasing  homeomorphism from $(0,\infty)$ to $(0,\infty)$.
\end{proposition}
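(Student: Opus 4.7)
The plan is to compute $\mathcal{L}(Q_R)$ explicitly by exhibiting a conformal equivalence between $Q_R$ and an $a$-rectangle, then to invoke the preceding proposition about $\mathcal{L}(R_a) = a$.

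First I would apply the principal branch of the logarithm $\Phi \colon z \mapsto \log z$ to the annular sector $U = \{z : \ell(R) < |z| < 1,\ \Re(z)>0,\ \Im(z)>0\}$. Under $\Phi$, the domain $U$ maps biholomorphically onto the Euclidean rectangle $V = (\log \ell(R), 0) \times (0, \pi/2) \subset \mathbb{C}$. Tracking the four marked points, in cyclic order $x_1 = i\ell(R),\ x_2 = \ell(R),\ x_3 = 1,\ x_4 = i$, they go respectively to $(\log \ell(R), \pi/2)$, $(\log \ell(R), 0)$, $(0, 0)$, $(0, \pi/2)$. Hence the subarc of $\mathrm{Fr}(U)$ from $x_1$ to $x_2$ (the inner circular arc) maps to the left vertical side of $V$, and the subarc from $x_3$ to $x_4$ (the outer circular arc) maps to the right vertical side. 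Consequently, curves in $\Gamma_{Q_R}$ correspond under $\Phi$ exactly to curves in $V$ joining the two vertical sides.

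Second, I compose $\Phi$ with the affine similarity $w \mapsto \tfrac{2}{\pi}\bigl(w - \log \ell(R)\bigr)$ to normalise the height of $V$ to $1$. The resulting rectangle has dimensions $a \times 1$, where
\[
a = -\frac{2}{\pi}\log \ell(R) = \frac{2}{\pi}\log\frac{1}{\ell(R)},
\]
and the images of $(x_1,x_2,x_3,x_4)$ agree (in cyclic order) with the marked points $(a, a+i, i, 0)$ of the $a$-rectangle $R_a$. Since extremal length is a conformal invariant, the proposition on $R_a$ yields
\[
\mathcal{L}(Q_R) = \frac{2}{\pi}\log\frac{1}{\ell(R)}.
\]

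Finally, the map $R \mapsto \ell(R)$ is a strictly increasing continuous homeomorphism from $(0,\infty)$ onto $(0,1)$ (being the Euclidean distance from the origin to a point at hyperbolic distance $R$ in $\mathbf{D}$), so $R \mapsto \log(1/\ell(R))$ is a strictly decreasing continuous bijection from $(0,\infty)$ onto $(0,\infty)$. This immediately gives the claimed properties of $K$. No genuine obstacle arises in the argument; the only point requiring care is the bookkeeping that ensures, after the successive conformal changes of coordinate, the marked points of $Q_R$ wind up in the cyclic order prescribed by $R_a$ so that the extremal-length problems coincide.
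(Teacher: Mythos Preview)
Your argument is correct and is exactly the approach the paper sketches: the paper simply says ``one then checks, for example by applying the conformal map $z\mapsto\log z$,'' and you have carried out precisely this computation to obtain $\mathcal L(Q_R)=\tfrac{2}{\pi}\log\tfrac{1}{\ell(R)}$, from which the claimed monotonicity and bijectivity follow at once.
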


The following lemma is a consequence of the previous discussion and is used in the sequel.

\begin{lemma}\label{lem:bdry} For any positive $A$ and $\epsilon$, there exists a positive constant $\rho$, with the following property.
 Assume   $f$ is  a holomorphic map from  $Q_0$   to an almost complex manifold $M$. Assume that $\area(f(Q_0))\leq A$ and 
  $\Vert \T f\Vert$ is bounded by $\frac{\epsilon}{\rho}$ on the ball of radius $\rho$ (in $Q_0$)  (with respect to the hyperbolic metric on  $\bf D$).
 Then
$
d(f(0),f(Z_0))\leq 2\epsilon
$.
\end{lemma}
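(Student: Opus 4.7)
The plan is to decompose a path from $0$ to $Z_0$ into a radial segment reaching the inner circular arc of the corner quadrangle $Q_R$, followed by an optimal arc of $Q_R$ connecting this inner arc to $Z_0$. The first segment will be controlled by the hypothesis $\Vert\T f\Vert \leq \epsilon/\rho$ on the hyperbolic ball, and the second by the extremal length identity $\mathcal L(Q_R) = K(R)$, with $R$ (and hence $\rho$) chosen in terms of $\epsilon$ and $A$.

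First, since $K:(0,\infty) \to (0,\infty)$ is a decreasing homeomorphism, I would set $R \defeq K^{-1}(\epsilon^2/A)$ and $\rho \defeq R$. The inner arc $\{|z| = \ell(R)\}$ of $Q_R$ sits at hyperbolic distance exactly $R$ from $0$, so the radial geodesic from $0$ to any point $z$ on this arc lies inside the hyperbolic ball of radius $\rho$ and has hyperbolic length $R$. The derivative bound then gives $d(f(0),f(z)) \leq R\cdot(\epsilon/\rho) = \epsilon$.

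Second, I would apply the extremal length bound to the conformal pullback pseudo-metric $f^{*}g$ on $Q_R$, where $g$ is the Riemannian metric on $M$. By conformal invariance one has $\mathcal L_{f^{*}g}(Q_R) \leq \mathcal L(Q_R) = K(R)$, and since $Q_R \subset Q_0$ the pullback area satisfies $\area_{f^{*}g}(Q_R) \leq \area(f(Q_0)) \leq A$. Combining,
\[
\inf_{\gamma \in \Gamma_{Q_R}} \operatorname{length}_{f^{*}g}(\gamma) \leq \sqrt{K(R)\cdot A} = \epsilon,
\]
so for every $\delta>0$ there is an arc $\gamma\in\Gamma_{Q_R}$ joining some $z_1$ on the inner arc to some $z_2\in Z_0$ with $\operatorname{length}_{f^{*}g}(\gamma) \leq \epsilon+\delta$, hence $d(f(z_1),f(z_2)) \leq \epsilon+\delta$. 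The triangle inequality yields $d(f(0),f(Z_0)) \leq 2\epsilon+\delta$, and letting $\delta\to 0$ finishes the proof.

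The delicate point is the simultaneous balancing of the two constraints on $\rho$. Making the extremal length contribution small forces $R$ large (since $K$ is decreasing), while the derivative estimate requires $\rho \geq R$ so that the radial segment stays in the controlled ball; the symmetric choice $\rho = R = K^{-1}(\epsilon^2/A)$ makes each of the two distance contributions exactly $\epsilon$. A minor subtlety worth noting is that $f^{*}g$ may degenerate at critical points of $f$, but since these are isolated for a holomorphic curve this does not affect the extremal-length inequality.
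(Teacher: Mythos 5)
Your proof is correct and essentially the same as the paper's: both arguments fix $\rho$ so that $A\cdot\mathcal{L}(Q_\rho)=\epsilon^2$, use the $\Vert\T f\Vert$ bound along the radial segment to control the distance from $f(0)$ to the image of the inner arc, and invoke conformal monotonicity of extremal length for the pullback metric on $Q_\rho$. The only cosmetic difference is that the paper phrases the extremal-length step as a lower bound valid for every curve in $\Gamma_{Q_\rho}$ rather than exhibiting a nearly-minimizing curve, but the two formulations are equivalent.
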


\begin{proof}
We choose $\rho$ so that 
$
	A\cdot {\mathcal L}(Q_{\rho})\leq \epsilon^2
$.
Let $g$ be the induced metric by $f$ from $M$. Observe that for any curve $\gamma$ in $\Gamma_{Q_\rho}$, 
\begin{eqnarray*}
	\operatorname{length}_g(\gamma)&\geq& d(f(\gamma(0),f(\gamma(1))\geq d(f(0),f(Z_0))-d(f(0),f(\gamma(0))\\ &\geq&d(f(0),f(Z_0))-\epsilon\ ,
\end{eqnarray*}
where the last inequality uses the fact that $\Vert\T f\Vert$ is bounded by $\frac{\epsilon}{\rho}$ on $B(0,\rho)$.
Thus
$$
\epsilon^2\geq {\mathcal L(Q_\rho)}\cdotp\area_g(Q_\rho)\geq (d(f(0),f(Z_0))-\epsilon)^2\ .
$$
The result follows.
\end{proof}

\subsubsection{Sketch of  the proof of  the first part of Theorem \ref{theo:holoFB}}

Without the hypothesis on the area, the subconvergence is consequence of the celebrated Gromov's Schwarz Lemma \cite{Gromov,GSL} which states that the derivatives of $f_k$ are uniformly bounded. We sketch the argument, since we are going to sketch a modification of it.
We need three preliminary lemmas. 
In the first two, $M$ is a manifold equipped with an almost complex structure $J$ and a compatible metric $\braket{.\mid.}$ (that is a metric for which $J$ is an isometry).
First we have  (see \cite{GSL})

\begin{lemma}[\sc Weingarten lemma]\label{lem:wl}
Let $\Sigma$ be a holomorphic curve in $M$, and let $x\in \Sigma$. Then there is a bound  $K$ only depending on the geometry of $(M,x)$ so that the curvature of $\Sigma$ at $x$ is less than $K$.
\end{lemma}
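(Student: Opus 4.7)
The bound on the (sectional) curvature of $\Sigma$ at $x$ reduces, via the Gauss equation, to a bound on the norm of the second fundamental form $\II$ of $\Sigma\subset M$ at $x$, since the ambient sectional curvature $K_M$ is controlled by the local geometry of $(M,x)$. I would therefore focus on bounding $|\II|(x)$, using elliptic regularity applied to the Cauchy--Riemann equation in appropriately chosen coordinates.

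Choose geodesic normal coordinates on $M$ centered at $x$, adapted so that $J_x = J_0$ is the standard complex structure on $\T_x M$; in such coordinates one has $g = g_0 + O(|y|^2)$, $J(y) = J_0 + O(|y|)$, and the Christoffel symbols vanish at the origin, with all implied constants depending only on the local geometry of $(M,x)$. Parametrize $\Sigma$ near $x$ by a conformal $J$-holomorphic immersion $f:(\D_r,0)\to(\Sigma,x)$, rescaling the source disk so that $|df(0)|=1$. The $J$-holomorphic equation then rewrites as the perturbed standard Cauchy--Riemann equation
\[\bar\partial_{J_0} f \;=\; (J_0 - J(f))\,\partial_t f,\]
whose right-hand side is pointwise bounded by $C\,|f|\cdot|df|$ with $C$ depending only on the local geometry.

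Gromov's Schwarz Lemma (applicable since Lemma~\ref{lem:calib} produces a local calibration of $M$ near $x$) furnishes a uniform bound $|df|\leq C_1$ on $\D_{r/2}$, again in terms of local geometry only. Plugging this into the perturbed Cauchy--Riemann equation, standard Calder\'on--Zygmund $L^p$-estimates (or Schauder estimates on a slightly smaller disk) upgrade the $L^\infty$ bound on $|df|$ to a $W^{2,p}$ (respectively $C^{1,\alpha}$) bound for $f$ on $\D_{r/4}$. In particular $|\partial^2_{ij}f(0)|\leq C_2$. Because the Christoffel symbols of $M$ vanish at the origin of our normal chart, the second fundamental form at $x$ is simply the normal projection of the flat Hessian, $\II(df(e_i),df(e_j)) = \big(\partial^2_{ij}f(0)\big)^\perp$, and is therefore bounded by $C_2$.

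Inserting this into the Gauss equation $K_\Sigma(x)=K_M(\T_x\Sigma)+Q(\II)$, with $Q$ a universal quadratic expression, yields the desired bound $K_\Sigma(x)\leq K$. The main technical point is the normalization of $f$ on a disk of a uniform size: before one can apply Gromov's Schwarz Lemma and the elliptic bootstrap with absolute constants, one must know that a conformal $J$-holomorphic immersion with $|df(0)|=1$ exists on some $\D_r$ with $r$ bounded below in terms of local geometry. This is precisely the content (or a direct consequence) of Gromov's Schwarz Lemma via the monotonicity formula for holomorphic curves; with that input in hand, the rest of the argument is a routine elliptic regularity bootstrap for a perturbed Cauchy--Riemann system.
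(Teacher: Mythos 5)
Your opening reduction is already incorrect: for $J$-holomorphic curves there is \emph{no} uniform bound on the norm of the second fundamental form, only a one-sided bound on the curvature. Consider $\C^2$ with the standard complex structure (so the hypotheses are trivially satisfied with constants independent of anything) and the curves $z\mapsto(z,Mz^2)$ for $M$ large: at the origin $|\T f|=1$, but $\|\II\|(0)\sim M$ is unbounded, while the Gaussian curvature $K_\Sigma(0)\sim -M^2$ is unbounded \emph{below} yet still $\leq 0 = K_M$. Thus the statement you try to reduce to is false, and any argument attempting to bound $\|\II\|$ cannot succeed. The content of the Weingarten lemma is precisely the one-sided estimate, and what makes it work is that $J$-holomorphic curves are \emph{almost minimal}: since $J$ preserves both $\T\Sigma$ and $N\Sigma$ and is an isometry, a pointwise computation with the $J$-holomorphy equation shows $\mathsf H = \II(e_1,e_1)+\II(Je_1,Je_1)$ is bounded by $\Vert\nabla J\Vert$, and then the Gauss equation
\[
K_\Sigma = K_M + \langle\II(e_1,e_1),\II(e_2,e_2)\rangle-|\II(e_1,e_2)|^2
= K_M + \langle\II(e_1,e_1),\mathsf H\rangle - |\II(e_1,e_1)|^2 - |\II(e_1,e_2)|^2
\]
gives $K_\Sigma\leq K_M+\tfrac12|\mathsf H|^2$ by Cauchy--Schwarz, with no control on $\|\II\|$ needed. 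This is the direct pointwise argument in the cited reference; no parametrization, normalization, or elliptic theory enters.

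Independently of the above, your argument is circular in the paper's logical ordering: you invoke the calibration of Lemma~\ref{lem:calib} and "Gromov's Schwarz Lemma" to bound $|df|$, but what produces the gradient estimate in the holomorphic setting is Lemma~\ref{lem:ghsl} (the Holomorphic Schwarz Lemma), whose proof in the paper uses the very Weingarten lemma you are trying to establish in order to show that the induced metric has curvature bounded above. You partially acknowledge the issue at the end ("one must know that a conformal $J$-holomorphic immersion with $|df(0)|=1$ exists on some $\D_r$ with $r$ bounded below"), but waving at monotonicity does not supply a uniform lower bound on the conformal radius nor break the circular dependence on the gradient bound.
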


Our second lemma from \cite{GSL} is
 
\begin{lemma}[\sc Gromov's Schwarz lemma]\label{lem:gsl}
Let $g$ be a conformal metric on the disk. Let $g_0$ be the hyperbolic metric and $h$ the conformal factor so that $g=h g_0$ Assume that the curvature of $g$ is bounded from above by $K$, and that $g$ satisfies a  linear isoperimetric inequality, that is for any disk $A$ embedded in $\D$, we have
\begin{eqnarray}
	\area_g(A)\leq \K \operatorname{length}_g(\Fr(A))\ . \label{ineq:gsl}
\end{eqnarray}
Then there exists a bound $K_0$ only depending on $K$ so that $h\leq K_0$.
\end{lemma}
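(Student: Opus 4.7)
The plan is to interpret this result as a quantitative, non-complete version of the Ahlfors--Schwarz lemma, with the linear isoperimetric inequality \eqref{ineq:gsl} playing the role of the usual completeness hypothesis. Setting $u \defeq \tfrac{1}{2}\log h$ so that $g = e^{2u} g_0$, the conformal change of Gauss curvature $K_g = e^{-2u}(-\Delta_{g_0} u - 1)$, combined with the assumption $K_g \leq K$, yields the pointwise differential inequality $\Delta_{g_0} u \geq -1 - K e^{2u}$. By the Möbius invariance of the hyperbolic metric on $\D$, it suffices to bound $h$ at a single point $z_0 \in \D$.

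The core heuristic is as follows. When $h \equiv M$ is constant, the metric $g = M g_0$ has linear isoperimetric constant exactly $\sqrt{M}$, since rescaling the metric multiplies areas by $M$ and lengths by $\sqrt{M}$ while the hyperbolic plane itself satisfies the isoperimetric inequality with constant $1$. Thus $\K \geq \sqrt{M}$ forces $M \leq \K^2$. To promote this to varying $h$, I would argue by contradiction: supposing $h(z_0)$ is very large, I would construct an embedded disk $D \subset \D$ near $z_0$ for which $\area_g(D) > \K \cdot \mathrm{length}_g(\partial D)$, contradicting the isoperimetric inequality. A natural candidate is the $g_0$-disk $D = B_{g_0}(z_0, \epsilon)$ with $\epsilon = c\K/\sqrt{h(z_0)}$ for an appropriate constant $c$; if $h$ is comparable to $h(z_0)$ on $D$, then a direct computation gives
\[
\frac{\area_g(D)}{\mathrm{length}_g(\partial D)} \;\geq\; \frac{(\min_D h)\,\pi\epsilon^2}{(\max_D \sqrt{h})\,2\pi\epsilon} \;\approx\; \frac{\sqrt{h(z_0)}\,\epsilon}{2}
\]
to leading order in the hyperbolic metric, which exceeds $\K$ for $c$ large enough.

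The main obstacle is ensuring that $h$ really is comparable to $h(z_0)$ on $D$, since \emph{a priori} $h$ may oscillate or decay rapidly away from $z_0$. I would handle this by combining two ingredients: first, a $\Lambda$-maximum rescaling trick (in the spirit of Lemma~\ref{lem:lml}) applied to $h$, replacing $z_0$ by a point at which $h$ is an approximate maximum on a scale adapted to the problem, yielding $h \leq (1+\delta)h(z_0)$ on a suitable surrounding $g_0$-disk; and second, the differential inequality $\Delta_{g_0} u \geq -1 - K e^{2u}$, which via a Harnack-type argument on a ball of $g_0$-radius $\asymp \K/\sqrt{h(z_0)}$ provides the matching lower bound $h \geq (1-\delta)h(z_0)$ on the same region. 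The delicate point is the joint choice of scale $\epsilon$ and quantitative constant $\delta$: the curvature oscillation control must be strong enough on the scale at which the heuristic isoperimetric computation takes place. Balancing these three constraints---the isoperimetric constant $\K$, the scale of the disk $\epsilon$, and the oscillation of $h$---yields the desired pointwise bound $h \leq K_0(K,\K)$.
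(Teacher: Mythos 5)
The paper does not actually prove Lemma~\ref{lem:gsl}; it is quoted from M\"uller's article \cite{GSL}, so there is no internal argument to compare against. Judged on its own terms, your outline gets the framework right: the reduction by M\"obius invariance, the conformal curvature identity $K_g = e^{-2u}(-1-\Delta_{g_0}u)$, the resulting inequality $\Delta_{g_0}u\geq -1-Ke^{2u}$, and the rigid heuristic for constant $h$ are all correct. But the contradiction scheme as written has a scale mismatch that is more than a bookkeeping issue. Your isoperimetric computation requires a $g_0$-disk of radius $\epsilon\sim \K/\sqrt{h(z_0)}$ in order for the ratio $\area_g/\operatorname{length}_g$ to exceed $\K$. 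The $\Lambda$-maximum Lemma~\ref{lem:lml} that you invoke, however, controls the oscillation of $h$ only on a ball of radius $\frac{1}{\Lambda\sqrt{h(x)}}$, with $\Lambda$ a constant that has nothing to do with $\K$. These two radii differ by the factor $\K$, which may be arbitrarily large; so for $\K$ large the disk on which you control $h$ is strictly smaller than the disk on which you need to run the estimate, and the argument does not close. Repairing this requires a more flexible point-selection lemma (for instance Hofer's lemma applied to $\sqrt h$ with $\epsilon_0$ tuned so that the controlled radius is $\geq c\K/\sqrt{h(x)}$, which is available precisely because one is assuming $h(x)$ exceeds a threshold of order $\K^2$). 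This is not a consequence of Lemma~\ref{lem:lml} as stated and is exactly the step you describe as \enquote{delicate} without carrying it out.

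Two smaller points. Your phrase \enquote{a Harnack-type argument \dots provides the matching lower bound $h\geq(1-\delta)h(z_0)$ on the same region} overstates what the PDE gives: the one-sided bound $\Delta_{g_0}u\geq -1-Ke^{2u}$ together with the upper bound on $u$ only yields, via the mean-value inequality for $v = \max u - u$, a lower bound on $u$ on a subset of $D$ of measure at least $\tfrac12|D|$, not a pointwise lower bound on all of $D$. This weaker conclusion is in fact sufficient, since only $\int_D h\,d\area_{g_0}$ needs to be bounded from below, but the wording should be adjusted. Finally, you correctly record the bound as $K_0(K,\K)$; the dependence on $\K$ is necessary (as your constant-$h$ heuristic shows), and the statement in the paper, which says \enquote{only depending on $K$,} omits it.
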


Combining these two lemmas gives the celebrated

\begin{lemma}[\sc Gromov's  Holomorphic Schwarz lemma]\label{lem:ghsl} Let $K_1$ be a positive constant.
There exists a  positive constant $K_0$, only depending on the local geometry 
of $(M,x)$ and $K_1$ so that if $\varphi$ is a holomorphic map from the hyperbolic disk  ${\bf D}$ to a $K_1$-calibrated open set with bounded curvature	then 
$$\Vert \T\varphi\Vert\leq K_0\ .$$
\end{lemma}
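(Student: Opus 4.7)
The plan is to combine the two preceding lemmas by producing, from a holomorphic $\varphi : \mathbf{D} \to U$, a conformal metric on $\mathbf{D}$ that satisfies the hypotheses of Lemma \ref{lem:gsl}. Because $\varphi$ is $J$-holomorphic, the pulled-back metric $g \defeq \varphi^*\braket{\cdot\mid\cdot}$ is conformal to the hyperbolic metric $g_0$ on $\mathbf{D}$ on the open set of immersed points, and one may write $g = h\,g_0$; the bound $\|\T\varphi\|\leq K_0$ is then equivalent to a pointwise upper bound on $h$. The task is thus to verify the two ingredients required by Lemma \ref{lem:gsl}: an upper curvature bound for $g$, and a linear isoperimetric inequality.

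For the curvature bound, I would apply the Weingarten Lemma \ref{lem:wl}. Wherever $\T\varphi \neq 0$, the image $\varphi(\mathbf{D})$ is locally a holomorphic curve in $M$, so its intrinsic Gauss curvature is bounded above by a constant $K$ depending only on the local geometry of $M$ in the calibrated region. At non-immersion points one can argue by continuity (or remove a discrete set, since zeros of $\T\varphi$ are isolated for nonconstant holomorphic maps).

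The heart of the argument is the linear isoperimetric inequality, which is where the calibration enters. Let $\beta$ be the 1-form provided by the $K_1$-calibration of $U$. The second calibration inequality $\braket{u\mid u}\leq K_1\,{\rm d}\beta(u,Ju)$, evaluated on a $J$-adapted orthonormal frame, means precisely that ${\rm d}\beta \geq K_1^{-1}\omega_M$ as 2-forms, where $\omega_M(u,v)\defeq\braket{Ju\mid v}$ is the Kähler 2-form (compatibility with $J$ is what makes this an inequality of 2-forms). Since $\varphi$ is holomorphic, $\varphi^*\omega_M$ coincides with the area form of $g$, so for any smooth disk $A\subset\mathbf{D}$, Stokes' theorem gives
\[
\area_g(A) \;=\; \int_A \varphi^*\omega_M \;\leq\; K_1\int_A {\rm d}(\varphi^*\beta) \;=\; K_1\int_{\partial A}\varphi^*\beta.
\]
The first calibration inequality forces $|\varphi^*\beta(v)|\leq K_1\|v\|_g$, whence $\area_g(A)\leq K_1^2\,\operatorname{length}_g(\partial A)$: this is the linear isoperimetric inequality of Lemma \ref{lem:gsl} with $\K = K_1^2$.

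Feeding the two ingredients into Lemma \ref{lem:gsl} produces an upper bound $h\leq K_0'$ depending only on $K_1$ and the local geometry of $M$, and taking $K_0 \defeq \sqrt{K_0'}$ yields the claim $\|\T\varphi\|\leq K_0$. The only real obstacle is translating the calibration inequalities into the pair (isoperimetric inequality, curvature bound) needed by the metric Schwarz lemma; once the calibration 2-form interpretation ${\rm d}\beta\geq K_1^{-1}\omega_M$ is in hand, the rest is a short application of Stokes and the two preceding lemmas.
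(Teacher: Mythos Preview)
Your proposal is correct and follows essentially the same route as the paper: pull back the metric, use the Weingarten Lemma for the curvature bound, use the calibration together with Stokes for the linear isoperimetric inequality, and conclude via Lemma~\ref{lem:gsl}. The one point where the paper is cleaner is the handling of non-immersed points: rather than removing a discrete set or arguing by continuity, the paper replaces $\varphi$ by the graph map $\varphi' = (\varphi,\Id): \mathbf{D}\to M\times\mathbf{D}$, which is automatically an immersion, so that $g$ is a genuine conformal metric on all of $\mathbf{D}$ and Lemma~\ref{lem:gsl} applies directly; this avoids any worry about the domain of $g$ or the behaviour of its curvature near branch points.
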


\begin{proof} By  replacing  $\varphi$ by the graph map $\varphi'=(\varphi,\Id):~{\bf D}\to M\times {\bf D}$ we may assume that $\varphi$ is an immersion.

We consider the induced metric $g$ by $\varphi$. By the Weingarten Lemma \ref{lem:wl} the curvature of $g$ is bounded from above. From the definition of calibration, the metric $g$ satisfies a linear isoperimetric inequality:
$$
\area(A)\leq K_1 \int_{A}{\rm d}\beta= K_1\int_{\partial A}\beta  \leq K_1^2 \ \ \operatorname{length}(\Fr(A))\ .
$$
Thus the result follows by Gromov's Schwarz lemma \ref{lem:gsl}.
\end{proof}

 The strengthening of the first conclusion of Theorem~\ref{theo:holoFB}  with the hypothesis on the area is an extension of \cite[Lemme 6.8]{LabouriePseudoHolomorphic}. 
 This will be proved in the last paragraph of this section.

\subsubsection{Sketch of  the proof of the first part of  Theorem \ref{theo:holoBB} }

We proceed as in \cite[Lemme 9.1]{LabourieGAFA}. 
 We have
\begin{proposition}
	Let $\varphi$ be a holomorphic immersion from ${\bf S}$ to a Riemannian almost complex manifold $M$  equipped with a compatible metric, so that $f(\partial {\bf S})$ lies in a totally real totally geodesic submanifold $W$. Then $\partial {\bf S}$ is totally geodesic for the metric induced by $\varphi$.
\end{proposition}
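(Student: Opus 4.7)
My approach would be to reduce the question to a normal-derivative computation, exploiting that pullback of a compatible Hermitian metric by a holomorphic map is automatically conformal. Write $\varphi_x=d\varphi(\partial_x)$ and $\varphi_y=d\varphi(\partial_y)=J\varphi_x$. Compatibility of $J$ with the metric makes $\omega(u,v)=g(u,Jv)$ antisymmetric, so $g(\varphi_x,\varphi_y)=g(\varphi_x,J\varphi_x)=0$ and $|\varphi_y|^2=|\varphi_x|^2$. Hence $\varphi^{*}g=h(x,y)(dx^{2}+dy^{2})$ with $h=|\varphi_x|^{2}>0$, since $\varphi$ is an immersion.

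The boundary $\partial\mathbf{S}=\{x=0\}$ is a straight line in the flat background metric, so a standard conformal-change computation for geodesic curvature gives that the geodesic curvature of $\partial\mathbf{S}$ in $\varphi^{*}g$ equals $-\frac{1}{2}h^{-3/2}\,\partial_x h|_{x=0}$. Thus the proposition reduces to showing $\partial_x h|_{x=0}=0$.

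To compute $\partial_x h=2g(D_x\varphi_x,\varphi_x)$ (with $D$ the pullback Levi-Civita connection), I would use the identity $\varphi_x=-J\varphi_y$ and the torsion-free relation $D_x\varphi_y=D_y\varphi_x$ to rewrite
\[D_x\varphi_x=-(D_xJ)\varphi_y-JD_y\varphi_x.\]
Pairing with $\varphi_x$ and using skew-symmetry of $g(J\cdot,\cdot)$, this splits as $-g((D_xJ)\varphi_y,\varphi_x)+g(D_y\varphi_x,\varphi_y)$. The first term will vanish for purely algebraic reasons: combining $Dg=0$ with the compatibility of $J$ forces $D_xJ$ to be $g$-skew-adjoint, while $J^{2}=-I$ yields $(D_xJ)J=-J(D_xJ)$; together these give $g((D_xJ)A,JA)=0$ for every $A$, applied to $A=\varphi_y$ and using $\varphi_x=-J\varphi_y$. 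For the second term, differentiating $g(\varphi_x,\varphi_y)=0$ in $y$ gives $g(D_y\varphi_x,\varphi_y)=-g(\varphi_x,D_y\varphi_y)$; at the boundary $\varphi_y\in TW$, and since $W$ is totally geodesic one has $D_y\varphi_y\in TW$, while $\varphi_x\in J(TW)$.

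The main obstacle is therefore the final step: concluding that $g(J(TW),TW)=0$ at boundary points. This is the Lagrangian (orthogonal direct sum) condition, which is slightly stronger than the weak totally real splitting defined in the appendix. I would handle this by verifying that the boundary conditions relevant here—namely the submanifolds $W(\gamma)$ from Definition~\ref{sss:BoundaryConditions}—satisfy $TW\perp J(TW)$; an explicit check using the splitting $\T_{(x,P)}W(\gamma)=T_x\gamma\oplus\{A\in\Hom(P,P^{\perp}):T_x\gamma\subset\ker A\}$ and the Killing form $-\tr(A^{*}B)$ on the Grassmannian shows both factors of $TW$ are orthogonal to their $J$-images. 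An alternative would be a Schwarz reflection argument: extending $\varphi$ across $\partial\mathbf{S}$ using the exponential involution $\exp_p(v+Jw)\mapsto\exp_p(v-Jw)$ on a neighborhood of $W$, which is an isometry precisely when $W$ is Lagrangian and whose fixed set is automatically totally geodesic; but the direct curvature computation seems cleaner and avoids regularity issues with the almost-complex extension.
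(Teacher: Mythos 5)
Your route is essentially the paper's: both reduce the geodesic curvature of $\varphi(\partial\mathbf{S})$ in the induced conformal metric to a pairing of a vector in $TW$ with a vector in $J(TW)$, then conclude by orthogonality. The paper's version is a one-liner: $\kappa=\braket{\nabla_{\dot\gamma}\dot\gamma\mid J\dot\gamma}$ (with $J\dot\gamma$ the unit normal to $\dot\gamma$ inside the holomorphic curve), and $\nabla_{\dot\gamma}\dot\gamma\in TW$ by total geodesy. You reconstruct the same pairing $g(\varphi_x,D_y\varphi_y)$ through the conformal factor $h$ and $\partial_x h$; your $D_xJ$ detour is in fact avoidable, since differentiating $g(\varphi_x,\varphi_x)=g(\varphi_y,\varphi_y)$ in $x$ and using torsion-freeness gives $g(D_x\varphi_x,\varphi_x)=g(D_y\varphi_x,\varphi_y)$ directly.

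The genuinely valuable part of your proposal is the final paragraph. You correctly identify that the required orthogonality $g(TW,J(TW))=0$ is the \emph{Lagrangian} condition, which is strictly stronger than the \enquote{totally real} of the appendix (a direct sum $TM=TW\oplus J(TW)$, not an orthogonal one). The paper's assertion that $\braket{u\mid Jv}=0$ for $u,v\in TW$ \enquote{since $W$ is totally real} is, as stated, a gap: the implication fails for a generic totally real $2$-plane in $\C^2$ with its standard compatible metric. Either the Lagrangian hypothesis should be added to the statement, or it should be verified for the boundary conditions $W(\gamma)$ actually used. Your explicit check — via the splitting $T_{(x,P)}W(\gamma)=T_x\gamma\oplus\{A\in\Hom(P,P^\perp)\mid T_x\gamma\subset\ker A\}$, the $\pi/2$-rotation $i$ on $P$, and the trace form $h_P$ — does go through: each summand is orthogonal to its $J$-image, and the cross terms vanish automatically because the metric on the holonomic distribution is a direct sum of the $P$-part and the $\Hom(P,P^\perp)$-part, each preserved by $J$. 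The same implicit Lagrangian hypothesis also appears in the proof of the local calibration lemma, where $\lambda_u(v)=\braket{u\mid Jv}$ is asserted to vanish on $T_xW$.
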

\begin{proof}  Let $J$ be the complex structure of $M$ and  $\braket{.\mid.}$ the compatible metric. Let $\gamma$ be an arc length parametrisation of $f(\partial {\bf S})$.  The geodesic curvature of  $\gamma$  is $\braket{\nabla_{\dot\gamma}\dot\gamma\mid J\dot\gamma}$. Since $\gamma=f(\partial {\bf S})$ is embedded in a totally geodesic submanifold $W$, $\nabla_{\dot\gamma}{\dot\gamma}$ and ${\dot\gamma}$ lie in $\T W$. Since  $W$ is totally real, for all $u$ and $v$ in $\T W$, we have $\braket{u\mid Jv}=0$, and the result now follows.
\end{proof}

Combining this lemma with the previous arguments, we obtain

\begin{lemma}[\sc Holomorphic Schwarz lemma with boundary]\label{lem:bsl}
There exists positive constants $\epsilon$ and  $K_0$, only depending on the geometry of $(M,x)$ so that if $\varphi$ is a holomorphic map from the  semi-disk  ${\bf S}$ into the ball of radius $\epsilon$ centred at $x$,	then 
$$\Vert \T\varphi\Vert\leq K_0\ .$$
\end{lemma}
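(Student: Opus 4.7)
The plan is to mimic the proof of the interior Holomorphic Schwarz Lemma (Lemma \ref{lem:ghsl}), using the totally geodesic property of $\partial\mathbf{S}$ in the induced metric to reduce to the closed disk case by a doubling argument. As in the proof of Lemma \ref{lem:ghsl}, we first replace $\varphi$ by its graph $\varphi'=(\varphi,\mathrm{Id}):\mathbf{S}\to M\times\mathbf{S}$ so as to assume that $\varphi$ is a holomorphic immersion. After shrinking $\epsilon$ if necessary, Lemma \ref{lem:calib} provides a $K$-calibration $\beta$ of the ball $B$ of radius $\epsilon$ around $x$; crucially, by the last clause of that lemma, $\beta$ vanishes on $W\cap B$.

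Next I would set $g\defeq \varphi^*\braket{.\mid.}$ on $\mathbf{S}$. By the previous proposition (the totally geodesic boundary statement), $\partial\mathbf{S}$ is totally geodesic for $g$. This lets me double: let $\mathbf{D}=\mathbf{S}\cup_{\partial\mathbf{S}}\overline{\mathbf{S}}$ be the Schwarz double obtained by gluing $\mathbf{S}$ to its complex conjugate along the imaginary axis, and define a continuous metric $\tilde g$ on $\mathbf{D}$ by reflection of $g$. Since $\partial\mathbf{S}$ is totally geodesic for $g$, the metric $\tilde g$ is $C^{1,1}$ along the seam and smooth elsewhere, and its Gaussian curvature is uniformly bounded from above by the Weingarten Lemma \ref{lem:wl} applied on each half (the bound depends only on the geometry of $(M,x)$). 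The doubled map $\tilde\varphi: \mathbf{D}\to B$ is holomorphic on each half and matches continuously, hence is holomorphic on $\mathbf{D}$; equivalently, one can simply note that $g$ is conformal to the standard structure on $\mathbf{S}$, and $\tilde g$ inherits such a conformal identification with the hyperbolic metric on $\mathbf{D}$.

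The key point will be the linear isoperimetric inequality on $\tilde g$. Because $\beta$ vanishes on $W$, its pullback $\varphi^*\beta$ vanishes on $\partial\mathbf{S}$, so the reflected primitive $\tilde\beta$ on the double is well-defined and continuous, and still satisfies the calibration inequalities \eqref{ineq:calib} with the same constant $K$ (with $\mathrm{d}\tilde\beta$ interpreted distributionally along the seam, where the boundary term vanishes). Thus for any embedded disk $A\subset\mathbf{D}$,
\[
\area_{\tilde g}(A)\leq K\int_A \mathrm{d}\tilde\beta = K\int_{\partial A}\tilde\beta \leq K^2\cdotp\operatorname{length}_{\tilde g}(\partial A),
\]
which is precisely the hypothesis \eqref{ineq:gsl} of Gromov's Schwarz Lemma \ref{lem:gsl}. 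Together with the upper curvature bound, this lemma yields a uniform constant $K_0$, depending only on the local geometry of $(M,x)$, such that the conformal factor of $\tilde g$ with respect to the hyperbolic metric on $\mathbf{D}$ is bounded by $K_0$. Restricting to $\mathbf{S}$ and unwinding the identification, we get $\Vert\T\varphi\Vert\leq K_0$.

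The main obstacle will be the regularity of $\tilde g$ along the seam $\partial\mathbf{S}$: the metric is only $C^{1,1}$ a priori, so some care is needed when invoking Lemma \ref{lem:gsl} (which, as in \cite{GSL}, needs only an upper curvature bound in the sense of Alexandrov and a linear isoperimetric inequality). Alternatively, one can bypass doubling by working directly on $\mathbf{S}$ equipped with a Riemannian cover by hyperbolic half-disks, using the fact that a hyperbolic half-disk with totally geodesic boundary still admits the same Schwarz-type estimates. Either route produces the claimed universal bound.
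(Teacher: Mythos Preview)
Your argument follows the paper's proof almost exactly: pass to the graph to get an immersion, double the induced metric across the totally geodesic boundary $\partial\mathbf{S}$, and use the calibrating form $\beta$ (vanishing on $W$) to obtain the linear isoperimetric inequality required by Gromov's Schwarz Lemma \ref{lem:gsl}. One correction: the doubled map $\tilde\varphi:\mathbf{D}\to B$ is \emph{not} holomorphic---the reflection $\sigma$ is anti-holomorphic on the source and there is no matching involution on the almost-complex target---but as you immediately note this is irrelevant, since only the doubled metric $\tilde g$ enters the Schwarz lemma. For the isoperimetric step the paper avoids reflecting $\beta$ altogether: it splits any embedded disk $A\subset\mathbf{D}$ as $A_0\cup A_1$ along the seam and applies Stokes on each half, where the seam contribution vanishes because $\varphi^*\beta$ pulls back to zero on $\partial\mathbf{S}$; this sidesteps the sign and continuity bookkeeping needed for a global $\tilde\beta$.
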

\begin{proof}
After replacing $\varphi$ by  the graph map $\varphi'=(\varphi,\Id)$ from ${\bf S}$ to $M\times {\bf D}$ and $W$ by $W'=W\times\partial \S$, we may assume that $\varphi$ is an immersion.

Let $g_0$ be the hyperbolic metric on ${\bf S}$ and $g=hg_0$ the metric induced by $\varphi$. By the Weingarten Lemma, the curvature of $g$ is bounded from above. Since $\partial \S$ is totally geodesic for $g$, we can double $g$ to obtain a $C^0$ metric $g_2$ on ${\bf D}$. By the doubling argument and since $\partial \S$ is totally geodesic, the curvature of $g_2$ is also bounded from below.

To conclude the proof using Gromov's Schwarz Lemma one needs to show that $g_2$ satisfies a linear isoperimetric inequality. Let us use the form $\beta$ obtained from the Local Calibration Lemma \ref{lem:calib}. Let $A$ be a disk in ${\bf D}$, write $A=A_0\cup A_1$, where $A_1=A\cap {\bf S}$. Then by Stokes formula
$$
\area(A_1)\leq K \int_{A_1}{\rm d}\beta=K\int_{\Fr(A_1)}\beta=K\int_{\Fr(A)\cap \overline{A_1}}\beta \leq K^2 \operatorname{length}(\Fr(A))\ ,
$$
where the first equality follows from the fact that $\beta=0$ on $W$, hence on $\partial \S$.  Repeating the argument for $A_0$ leads the desired linear isoperimetric inequality for $g_2$.
\end{proof}

\subsubsection{Improving regularity}

Gromov's Schwarz Lemma gives uniform $C^1$-bounds on the sequence $\seqk{f}$. We need to improve this and proves the $C^{p,\alpha}$ convergence to obtain the first part of Theorems \ref{theo:holoFB} and Theorem \ref{theo:holoBB}.

 This is done in two steps. As a preliminary,  we choose  $C^{p,\alpha}$ local coordinates on $M_k$ so that $M_k$ is identified with $\mathbb C^n$ and $W_k$ with $\mathbb R^n$. Thanks to our $C^1$ bounds, we reduce to the case (by possibly shrinking the source) to  bounded maps $f_k$ with values in $\mathbb C^n$. The holomorphic curve condition then reduces to the equation
\begin{eqnarray}
	\partial_y f_k= J_k(f_k) \partial_x f_k\ , \label{eq:delbar}
\end{eqnarray}
where $x$ and $y$ are the coordinates on $\bf D$ or $\bf S$ and  $\seqk{J}$ converges in $C^{p-1,\alpha}$. When present, the boundary condition is 
$$
f_k(\partial S)\subset \mathbb  R^n\ .
$$ 
The two steps of our regularity improvement are as follows.
\vskip 0.2truecm 
\noindent{\em Uniform $C^2$-bounds:} we consider the 1-jet map $g_k=(f_k,  \partial_y f_k)$ with values in $\mathbb C^{2n}$ satisfying the boundary condition 
$$
g_k(\partial S)\subset \mathbb  R^{2n}\ .
$$ 
A derivation of \eqref{eq:delbar} gives that $g_k$ is holomorphic for a certain complex structure $J'_k$ on $\mathbb C^{2n}$. 
Indeed, erasing for a moment the index $k$ to have readable formulas, we claim:
\begin{eqnarray*}
	\partial_y g= J'(g) \partial_x g\ , 
\end{eqnarray*}
where for $(s,t)$ in $\mathbb C^n\times\mathbb C^n$,
$$
J'(s,t)(u,v)=\left(J(s)u,J(s)v + A(s,t)(u)\right)\ , 
$$
where $A(s,t)\defeq({\rm D}_{s}J)(t)$ is the derivative of $J$ at $s$ in the direction of $t$. Since $A(s,t)$ anticommutes with $J(s)$, we see that 
$$
J'^2(u,v)=(-u, -v + JA (u) + AJ(u))=-(u,v)\ .
$$
Gromov's holomorphic Schwarz Lemma then gives a uniform $C^1$-bound on $g_k$, hence the desired $C^2$-bound on $f_k$.

\vskip 0.2truecm 
\noindent{\em  $C^{p,\alpha}$-convergence:} Now that we have $C^2$ bounds we can return to the equation \eqref{eq:delbar} with the information that $f_k$ is in $C^2$ and in particular in $C^{1,\alpha}$, knowing that $J_k$ converges in $C^{p-1,\alpha}$. 

The proposition is then 
\begin{proposition}\label{pro:1->2}
Assume that $u$ is in $C^{1,\alpha}$ and satisfies the  equation 
\begin{eqnarray}
	\partial_y u= J(u) \partial_x u\ , \label{eq:delbar00}
\end{eqnarray}
	with possibly the boundary condition $u(\partial \bf S)\subset \mathbb R^n$, where $J$ is in $C^{1,\alpha}$. Then $u$ is in $C^{2,\alpha}$. More precisely, for every positive constant $A$, there exist positive constants $C$ and $\epsilon$ so that if the $C^{1,\alpha}$ norm of $u$ is less than $A$, then the $C^{2,\alpha}$ norm of $u$ is less than $C$ on a ball of radius $\epsilon$.
\end{proposition}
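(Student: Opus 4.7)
The plan is to differentiate equation \eqref{eq:delbar00} to obtain a linear first-order elliptic system for the derivatives of $u$ whose coefficients and right-hand side lie in $C^{0,\alpha}$, and then apply interior and boundary Schauder (equivalently $\bar\partial$ Calder\'on--Zygmund) estimates to conclude $C^{1,\alpha}$ regularity of the derivatives---equivalently $C^{2,\alpha}$ regularity of $u$.

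More concretely, set $v = \partial_x u$ and differentiate $\partial_y u = J(u)\partial_x u$ in $x$ to obtain
\[
\partial_y v - J(u)\,\partial_x v \;=\; \bigl((D_u J)(v)\bigr)\cdot v,
\]
a perturbed $J(u)$-Cauchy--Riemann system for $v$. By hypothesis $u\in C^{1,\alpha}$ and $J\in C^{1,\alpha}$, so the coefficient $J(u)$ lies in $C^{1,\alpha}$ and the right-hand side lies in $C^{0,\alpha}$. After restricting to a small ball, a linear change of target coordinates making $J(u(0))=J_0$ (the standard complex structure), and absorbing the resulting small perturbation $J(u)-J_0$ into the right-hand side, the system becomes a $C^{0,\alpha}$-small perturbation of the standard $\bar\partial$ operator. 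Interior Schauder estimates for $\bar\partial$ then give
\[
\|v\|_{C^{1,\alpha}(B_{\epsilon/2})} \;\leq\; C\Bigl(\|v\|_{C^{0,\alpha}(B_\epsilon)} + \bigl\|(D_uJ)(v)\cdot v\bigr\|_{C^{0,\alpha}(B_\epsilon)}\Bigr),
\]
with $C$ and $\epsilon$ depending only on $A$ (via the $C^{1,\alpha}$ norms of $u$ and of $J$ composed with values of $u$). The same argument applied to $\partial_y u$ yields its $C^{1,\alpha}$ bound, and hence $u\in C^{2,\alpha}$ with the quantitative estimate asserted.

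For the boundary case, tangential differentiation of $u(\partial\mathbf{S})\subset\mathbb{R}^n$ along $\partial\mathbf{S}=\{x=0\}$ gives $\partial_y u(0,y)\in\mathbb{R}^n$, and combined with the equation this forces $v(0,y)\in J(u(0,y))^{-1}(\mathbb{R}^n)$---a linear totally real (Lopatinski) boundary condition for the derivative system. The cleanest way to conclude is a reflection argument: set
\[
\tilde u(x+iy) \;=\; \begin{cases} u(x+iy) & \text{if } x\geq 0,\\ \overline{u(-x+iy)} & \text{if } x\leq 0,\end{cases}
\]
which is continuous across $\{x=0\}$ precisely because $u(0,y)\in\mathbb{R}^n$, and define $\tilde J(w)$ on the reflected half by $\tilde J(w) = -\,\mathrm{conj}\circ J(\bar w)\circ\mathrm{conj}$. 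Then $\tilde u$ satisfies a $\tilde J$-holomorphic equation on the full disk, and the matching condition at $\{x=0\}$ between $J$ and $\tilde J$ reduces exactly to the totally real condition that $J(\mathbb{R}^n)\cap\mathbb{R}^n=\{0\}$, so $\tilde J$ is $C^{0,\alpha}$ across the seam (and $C^{1,\alpha}$ on each side). This is enough regularity for the interior bootstrap above to apply to $\tilde u$, yielding the $C^{2,\alpha}$ bound on a smaller ball.

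The main obstacle is the boundary step: one must check carefully that the doubled almost complex structure $\tilde J$ has just enough regularity (namely $C^{0,\alpha}$ coefficients and $C^{0,\alpha}$ source term after differentiation) for the Schauder/Calder\'on--Zygmund estimates to still apply, or alternatively establish boundary Schauder estimates directly for first-order elliptic systems with totally real boundary data. Either route is standard in the pseudo-holomorphic literature, and the quantitative dependence of $C$ and $\epsilon$ on $A$ alone follows because every ingredient---the coordinate change to normalize $J(u(0))$, the smallness of the perturbation $J(u)-J(u(0))$, and the Schauder constants---is controlled solely by the $C^{1,\alpha}$ bound on $u$ together with the fixed $C^{1,\alpha}$ regularity of $J$.
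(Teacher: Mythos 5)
Your overall strategy---differentiate the Cauchy--Riemann equation to get a first-order elliptic system for the derivatives and then apply Schauder theory---is the right idea and the same one the paper takes. But as written there is a genuine gap at the very first step: the ``differentiate in $x$'' move requires $u\in C^2$ to make classical sense, and at the given regularity $\partial_x v$ and $\partial_y v$ are not functions, so the claimed system $\partial_y v - J(u)\partial_x v = (D_uJ)(v)\cdot v$ is only a formal identity. The Schauder estimate you then invoke is an \emph{a priori} estimate, and applying it requires already knowing that $v$ is a $C^{1,\alpha}$ (weak) solution of this system---which is circular. The paper's proof closes exactly this gap by working with difference quotients $u^h(x,y)=\tfrac{1}{h}(u(x,y+h)-u(x,y))$: these are always classically defined, they satisfy a well-posed perturbed $\bar\partial$-equation \eqref{eq:delbar3} that follows rigorously from the original equation, they satisfy the boundary condition automatically (a tangential difference of a real-valued boundary function is still real), and they yield a uniform $C^{1,\alpha}$ bound; one then passes to the limit $h\to 0$ carefully (first in $C^{0,\eta}$ for $\eta<\alpha$, then letting $\eta\to\alpha$). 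Some such device---difference quotients or a mollification argument---is essential, not just a technicality.

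Your boundary treatment also has a gap beyond the one you flag. After reflecting, $\tilde J$ is indeed $C^{1,\alpha}$ on each closed half-disk but only $C^{0,\alpha}$ across the seam; your interior bootstrap, however, needs $\tilde J\in C^{1,\alpha}$, since the right-hand side $(D_{\tilde u}\tilde J)(v)\cdot v$ uses a derivative of $\tilde J$. So ``$C^{0,\alpha}$ across the seam is enough regularity for the interior bootstrap'' is not correct: you would at best recover $C^{1,\alpha'}$ for some $\alpha'<\alpha$, not $C^{2,\alpha}$, and even that requires care. The paper sidesteps this entirely by taking differences in the tangential $y$-direction and observing that the boundary condition $u^h(\partial\mathbf S)\subset\mathbb R^n$ is preserved, so the boundary and interior cases are handled by one argument. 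If you want to keep the reflection route, you would first have to normalize $J$ and $W$ (by a coordinate change adapted to the totally real submanifold) so that the reflected structure is genuinely $C^{1,\alpha}$, which is a nontrivial extra step.
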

\begin{proof} 
	We reproduce and  adapt the proof of Theorem A.2.1 in \cite{Abbas:2019} in two ways, first by using $C^{1,\alpha}$ bounds rather than $W^{k,p}$ Sobolev norms, and second in assuming lower regularity of $J$.

 In this proof, the quantities  $C_i$ will be positive constants.
We may as well assume that $u(0)=0$ and $J(0)=i$ and restate equation \eqref{eq:delbar00} as
\begin{eqnarray}
	2i\bar\partial u= -i(i-J(u)) \partial_x u\ , \label{eq:delbar1}
\end{eqnarray}
	
We use the difference quotient technique and introduce for small $h$
\begin{eqnarray}
	u^h(x,y)\defeq \frac{1}{h}(u(x,y+h)-u(x,y))=\int_0^1\partial_yu(x,y+th)\ {\rm d}t\ .\label{eq:uh}
\end{eqnarray}
Moreover
\begin{eqnarray}
	0&=&\partial_y u^h - (J(u) \partial_x u)^h\cr
&=&\partial_y u^h - J(u)\partial_x u^h -J(u)^h \partial_x u(x,y+h)\cr
&=&2i\overline\partial u^h + i(J(u)-i)\partial_y u^h -J(u)^h \partial_x u(x,y+h)\ .
\label{eq:delbar3}
\end{eqnarray}
Let also $\beta$ be a bell function in $\mathbb R$
with $\beta[0,1/2]=1$,  $\beta[1,+\infty[=0$, $\beta'(s)\leq 0$. We define
$$
\beta_\epsilon(x,y)=\beta\left(\frac{x^2+y^2}{\epsilon}\right)\ .
$$
We now obtain, from \eqref{eq:delbar3} and the Leibniz rule $\beta_\epsilon \partial_y(u^h) =  \partial_y(\beta_\epsilon u^h)- \partial_y(\beta_\epsilon) u^h$,
\begin{eqnarray}
2i\overline\partial (\beta_\epsilon u^h) &=&  -i(J(u)-i)\partial_y (\beta_\epsilon u^h) +i(J(u)-i)(\partial_y \beta_\epsilon) u^h \cr &+& \beta_\epsilon J(u)^h \partial_y u(x,y+h) + (\overline\partial \beta_\epsilon) u^h)\ .\label{eq:beta-u}
\end{eqnarray}
Let us denote by $\Vert v\Vert_{p,\alpha,\epsilon}$ the $C^{p,\alpha}$ norm on the ball of radius $\epsilon$ while using the shorthand $\Vert v\Vert_{p,\alpha}=\Vert v\Vert_{p,\alpha,1}$. 

Let us makes a series of estimates
\begin{enumerate}
	\item In the equation above,  let us consider the term 
$$
B= 2i(\overline\partial \beta_\epsilon) u^h +i(J(u)-i)(\partial_y \beta_\epsilon) u^h.
$$
We have a constant $C_\epsilon$ depending only on $\epsilon$ and $\beta$,  so that 
\begin{eqnarray}
\Vert B\Vert_{0,\alpha}&\leq& C_\epsilon  \Vert u^h\Vert _{0,\alpha,\epsilon}\ . \label{eq:delbar9}
\end{eqnarray}
\item Restricting $\epsilon$ so that $\epsilon+h<1$, we have
\begin{eqnarray}
		\Vert (i-J(u))( \partial_y \beta_\epsilon u^h)\Vert _{0,\alpha,\epsilon}  &\leq & C_3 \Vert u\Vert_{0,\alpha,\epsilon} \cdot \Vert\beta_\epsilon u^h\Vert_{1,\alpha}\ .\label{eq:delbar5}
\end{eqnarray}
\item Since $J$ is in $C^{1,\alpha}$ and in particular uniformly Lipschitz, we have a uniform constant $C_0$ so that
$$
\vert J(u)^h\vert \leq C_0\vert u^h\vert\ .
$$
Thus 
\begin{eqnarray}
\Vert\beta_\epsilon J(u)^h \partial_yu(x+h,y) \Vert_{0,\alpha,\epsilon}&\leq& C_1 \Vert u\Vert _{1,\alpha,\epsilon} \cdot  \Vert  u^h\Vert _{0,\alpha,\epsilon}\ . \label{eq:delbar6}
\end{eqnarray}

\end{enumerate}

 Combining the estimates \eqref{eq:delbar9}, \eqref{eq:delbar5} and  \eqref{eq:delbar6} with our original equation \eqref{eq:beta-u}, we obtain that 
\begin{eqnarray}
	\Vert\overline\partial (\beta_\epsilon u^h)\Vert_{0,\alpha,\epsilon}&\leq& C_\epsilon  \Vert u^h\Vert _{0,\alpha} +C_1 \Vert u\Vert _{1,\alpha,\epsilon}  \Vert u^h\Vert _{0,\alpha,\epsilon}\cr & &+ 
	C_3 \Vert u\Vert_{0,\alpha,\epsilon} \cdot \Vert u^h\Vert_{1,\alpha}+ C_1 \Vert u\Vert _{1,\alpha,\epsilon} \cdot  \Vert \beta_\epsilon u^h\Vert _{0,\alpha}\ . \label{eq:delbar7}
\end{eqnarray}

Recall also that from the Cauchy--Pompeiu formula  (see \cite[Theorem 4.7.1]{AIM09} for a model) we have the estimates
\begin{eqnarray*}
	 \Vert \beta_\epsilon u^h \Vert_{1,\alpha}  &\leq &  C_4 \left\Vert  \bar\partial  (\beta_\epsilon u^h)\right\Vert_{0,\alpha}\ . \label{eq:delbar8}
\end{eqnarray*}
We now fix $\epsilon$, so that $C_3 C_4 \Vert u\Vert_{0,\alpha,\epsilon}\leq \frac{1}{2}$. Then for some constant $D=D_{\epsilon}$ depending on $u$  and independent of $h$, we find
\begin{eqnarray*}
	 \frac{1}{2}\Vert u^h\Vert_{1,\alpha,\frac{\epsilon }{2}}\leq  \frac{1}{2}\Vert \beta_\epsilon u^h \Vert_{1,\alpha}  &\leq &  D   \Vert   u^h\Vert_{0,\alpha}\ . \label{eq:delbar11}
\end{eqnarray*}
Observe that the same methods also yield that for all $\eta<\alpha$ we have 
\begin{eqnarray*}
	 \frac{1}{2}\Vert u^h\Vert_{1,\eta,\frac{\epsilon }{2}} &\leq &  D   \Vert   u^h\Vert_{0,\eta}\ . \label{eq:delbar111}
\end{eqnarray*}
Let us write $\psi^h\defeq u^h-\partial_y u$ and $M=\Vert u\Vert_{1,\alpha}$.  Using the integral form for $u^h$ in equation \eqref{eq:uh}, we observe that $\psi^h$ satisfies $\Vert \psi^h \Vert_{0,\alpha}\leq 2M$ and  
 $\vert \psi^h(x)\vert\leq  2M h^\alpha$. 

 This implies that for all $\eta<\alpha$, we have 
 $$
 \vert\psi^h(z)-\psi^h(w)\vert\leq2M\cdotp\min\{ h^\alpha, \vert z-w\vert^\alpha\}\leq 4M\cdot h^{\alpha-\eta}\vert z-w\vert^\eta\ .
 $$
 Thus
 $\Vert\psi^h\Vert_{0,\eta}\leq 4Mh^{\alpha-\eta}$. Thus 
$u^h$ converges to $\partial_y u$ in $C^{0,\eta}$ for all $\eta<\alpha$, and
 then
\begin{eqnarray*}
	 \frac{1}{2}\Vert \partial_y u\Vert_{1,\eta,\frac{\epsilon }{2}} &\leq &  D   \Vert   \partial_y u\Vert_{0,\eta}\ . \label{eq:delbar111}
\end{eqnarray*}
Then taking the limit when $\eta$ goes to $\alpha$, and using that $\partial_y u \in C^{0,\alpha}$, we find 
\begin{eqnarray*}
	 \frac{1}{2}\Vert \partial_y u\Vert_{1,\alpha,\frac{\epsilon }{2}} &\leq &  D   \Vert   \partial_y u\Vert_{0,\alpha}\ . \label{eq:delbar112}
\end{eqnarray*} 
Thus $u$ is in $C^{2,\alpha}$ in the ball of radius $\epsilon/2$.\end{proof}
\vskip 0.1 truecm 
\noindent{\em Bootstrap and regularity:} We can now conclude the argument by showing  that if $\seqk{u}$ is a sequence of solutions in $C^{p+1,\alpha}$ satisfying 
$$
\partial_y u_k=J_k(u_k)\partial_x(u_k)\ ,
$$
where 
\begin{itemize}
	\item $\seqk{J}$ converges in $C^{p,\alpha}$ to $J_0$ for which $\mathbb R^n$ is totally real,
	\item $\seqk{u}$ has uniform $C^1$-bounds and converges $C^0$ to $u_0$,
\end{itemize}
then $\seqk{u}$ converges in $C^{p+1,\alpha}$.

This is obtained via the bootstrap described in the first step, or equivalently as in  \cite[Theorem A.2.1]{Abbas:2019} which immediately leads, using proposition \ref{pro:1->2} to 
\begin{proposition}
Assume that $u$ is in $C^{k,\alpha}$ satisfies the equation \eqref{eq:delbar}
\begin{eqnarray*}
	\partial_y u= J(u) \partial_x u\ , 
\end{eqnarray*}
	with possibly the boundary condition $u(\partial \bf S)\subset \mathbb R^n$, where $J$ is in $C^{p,\alpha}$, then $u$ is in $C^{p+1,\alpha}$. More precisely, for every positive constant $A$, there exist positive constants $C$ and $\epsilon$ so that if the $C^{p,\alpha}$ norm of $u$ is less than $A$, then the $C^{k+1,\alpha}$ norm of $u$ is less than $C$ on a ball of radius $\epsilon$.
\end{proposition}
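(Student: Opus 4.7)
The plan is to proceed by induction on $p$, using Proposition \ref{pro:1->2} as the base case $p=1$. Assume the conclusion has been established for all orders $\leq p-1$, and suppose $u \in C^{p,\alpha}$ satisfies the equation with $J \in C^{p,\alpha}$.

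The key step will be to iterate the prolongation trick already deployed in the ``Uniform $C^2$-bounds'' step just above. Setting $w = (u, \partial_y u)$, differentiation of the original equation in $y$ shows that $w$ satisfies an equation of the same form $\partial_y w = J'(w)\, \partial_x w$ on $\mathbb{C}^{2n}$, where the prolonged almost complex structure
\[
J'(s,t)(u', v') = \bigl(J(s) u',\ J(s) v' + (D_s J)(t)(u')\bigr)
\]
has already been checked in the main text to satisfy $J'^2 = -I$. Since $J'$ involves one derivative of $J$, we have $J' \in C^{p-1,\alpha}$; and since $\partial_y u \in C^{p-1,\alpha}$, also $w \in C^{p-1,\alpha}$. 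The inductive hypothesis applied to the pair $(w, J')$ will then yield $w \in C^{p,\alpha}$, and in particular $\partial_y u \in C^{p,\alpha}$. From the equation and $J^2 = -I$, we obtain $\partial_x u = -J(u)\, \partial_y u \in C^{p,\alpha}$, whence $u \in C^{p+1,\alpha}$. The quantitative bound on the $C^{p+1,\alpha}$ norm on a (possibly smaller) ball will follow by tracking the constants through the finitely many iterations.

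For the boundary case, the condition $u(\partial \mathbf{S}) \subset \mathbb{R}^n$ is preserved under prolongation: along $\partial \mathbf{S} = \{x=0\}$, the derivative $\partial_y u$ is tangential to the boundary and hence takes values in $T\mathbb{R}^n = \mathbb{R}^n$, so $w(\partial \mathbf{S}) \subset \mathbb{R}^{2n}$. A short calculation using the block-triangular form of $J'$, together with the fact that $J(s)\mathbb{R}^n$ is transverse to $\mathbb{R}^n$ for $s \in \mathbb{R}^n$, shows that $\mathbb{R}^{2n}$ remains totally real for $J'$ at points of $\mathbb{R}^n \times \mathbb{R}^n$, so the inductive hypothesis applies verbatim in the boundary case.

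The main obstacle I anticipate is the loss-of-regularity bookkeeping inherent in the prolongation: at each step the effective complex structure loses one derivative, so the argument requires $J$ to have enough regularity from the outset. Because only finitely many iterations are needed to pass from $C^{p,\alpha}$ to $C^{p+1,\alpha}$, this is not a genuine obstruction — but it explains precisely why the conclusion upgrades the regularity of $u$ by exactly one beyond that of $J$. The shrinkage of the ball at each iteration and the corresponding dependence of the constants on $A$ can be controlled by induction on $p$.
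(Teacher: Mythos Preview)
Your proposal is correct and follows essentially the same approach as the paper, which only sketches the argument as ``the bootstrap described in the first step\ldots using Proposition~\ref{pro:1->2}''. You have spelled out precisely this bootstrap: the prolongation $w=(u,\partial_y u)$ satisfying $\partial_y w = J'(w)\partial_x w$ with $J'\in C^{p-1,\alpha}$, the preservation of the boundary condition $w(\partial\mathbf S)\subset\mathbb R^{2n}$ with $\mathbb R^{2n}$ totally real for $J'$, and the inductive application of the lower-order case.
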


\subsubsection{Using the hypothesis on the area}
We now show the second part of Theorem \ref{theo:holoFB} and Theorem \ref{theo:holoBB}.

\begin{proof} Using the Schwarz Lemma, we can extract in both cases  a  subsequence so that $\seqk{f}$ subconverges to $f_0$. 

Observe that we have a constant $A$ so that for all subsets $U$ of $\bf D$, or $\bf S$, then
\begin{eqnarray}
\area(f_0(U))\leq A\ .\label{def:Aaire}	
\end{eqnarray} 

Let $\seqk{y}$ be a sequence in $\bf D$ or $\bf S$ converging to a point $y_0$ in an interval $Z$ in ${\Fr}(\bf D)$.

We want to show that there exists a sequence  $\{f_{n_k}\}_{k\in\mathbb N}$ for which we have   $$\lim_{k\to\infty}(d(f_0(y_k), f_{n_k}(Z))=0\ .$$ 

From the bound on the area of $f_0({\bf D})$, we have that for all $R$
 $$
 \lim_{k\to\infty}(\area(f_{0}(B(y_k,R)))=0\ .
 $$
 where $B(y_k,R)$ is the ball of radius $R$ in the hyperbolic metric.

 Using the fact that $\seqk{f}$ converges on every compact to $f_0$, we can choose a subsequence $\{f_{n_k}\}_{k\in\mathbb N}$, so that
  \begin{eqnarray}
  	\area (f_{n_k}(B(y_k,1))\leq  \frac{1}{k}\label{ineq:areaphl}\ &,&
  	d(f_0(y_k),f_{n_k}(y_k))\leq \frac{1}{k} \ .
  \end{eqnarray}  
  
  Let $u_k$ be a conformal mapping of $\bf D$ that sends $0$ to $y_k$.
  
  Since $\{y_k\}_{k\in\mathbb N}$ converges to an interior point of $Z$, the sequence $\{u_k^{-1}(Z)\}_{k\in\mathbb N}$ converges to the full boundary of ${\bf D}$. We can thus  choose for each $k$,  a subinterval  $Z_k$ in $Z$ so that the preimage of $Z_k$ by $u_k$ is a quarter of circle $Z_0$.
  
  After precomposing $u_k$ with a rotation, we may furthermore assume that the preimage of $Z_k$ is the 
  standard quarter of circle.
  
  Let then  $Q_{0}$ (as defined in the beginning of paragraph \ref{sec:quadrangle}) be the standard sector. We furthermore choose $Z_k$ so that  $Q_0$ is a subset of $u_k^{-1}({\bf S})$ in the boundary case.   

To conclude the theorem it will be enough to prove
\begin{eqnarray}
	\lim_{k\to\infty}d(f_{n_k}(y_k),f_{n_k}(Z_k))=0. \label{ass:111}
\end{eqnarray}
Let then $g_k=f_{n_k}\circ u_k$.  
Assertion \eqref{ass:111} is now restated as\begin{eqnarray}
	\lim_{k\to\infty}d(g_k(0),g_k(Z_0))=0\ .\label{asser:phol2}
\end{eqnarray}
Applying Schwarz Lemma again,  $\seqk{g}$ subconverges to some $g_0$.  By inequality \eqref{ineq:areaphl}, the area of $g_0(B(0,1))$ is equal to $0$, thus $g_0$ is constant.

Let us choose a positive $\epsilon$. Let $A$ be  the bound of the area defined in inequality \eqref{def:Aaire} and $\rho$ as in Lemma \ref{lem:bdry}. Since $\seqk{g}$ converges uniformly on every compact to the constant map $g_0$, it follows that for $k$ large enough $\Vert\T g_k\Vert$ is bounded above by $\frac{\epsilon}{\rho}$ on $B(0,\rho)$. We can thus conclude from  Lemma \ref{lem:bdry} that for $k$ large enough
$$
d(g_k(0),g_k(Z_0))\leq 2\epsilon\ .
$$
The assertion \eqref{asser:phol2} follows, hence the theorem. \end{proof}

\subsection{Immersions}
A non-constant limit of holomorphic immersions may not be immersed. We describe here certain situations in which a limit of immersions is an immersion. This result is a generalization of the case when the target is $\mathbb C$: roughly speaking the role of the 2-dimensional target is played by a complex line bundle $L_\mathbb C$, together with a never vanishing 1-form $\alpha$ with values in $L$. Our "immersion in $\mathbb C$" is now replaced by a holomorphic map $f$ so that $f^*\alpha$ is uniformly non-vanishing.

Let us  be more precise  about our hypothesis:
let $M$ be an almost complex manifold, $L$ be a real line bundle over $M$, $L_\mathbb C$ the complexification of $L$ and $\alpha\in \Omega^1(M,L_\mathbb C)$ a never vanishing 1-form with values in $L_\mathbb C$. 

We also choose a Hermitian metric $h$ on $L_\C$ as well as a unitary connection $\nabla^0$ on $L_\mathbb C$ for which $L$ is parallel.

When we have a boundary problem defined by a totally real submanifold $W$ we furthermore assume that $\alpha(\T W)= L$.
The result is the following version of a claim that a sequence of maps that are strongly immersed, in terms of the existence of one-form $\alpha$ which they all pull back in a non-degenerate way, limit on a map with the same immersivity property. 

\begin{theorem}\label{theo:imm-limi}\label{theo:imm-limi-bd}
Let $\seqk{f}$ be either ({\em free boundary case}) a  sequence of holomorphic maps from $\bf D$, or ({\em boundary case})  a sequence of holomorphic maps from $\S$ to $M$ so that $f_k(\partial \S)$ is included in $W$. Equip $(M,W)$ with a real line bundle $L$ and a one-form $\alpha$ with values in its complexification $L_\mathbb C$ as above, together with the chosen Hermitian metric $h$ and parallel unitary connection $\nabla^0$.

Assume that
\begin{enumerate}
	\item the sequence $\seqk{f}$ converges to $f_0$, 
	\item  for all $k$, $f_k^*\alpha$ never vanishes,
	\item we have a constant $K_0$ so that for all $k$ and 
\begin{eqnarray}
		\Vert f_k^*\de^{\nabla^0}\alpha\Vert&\leq& K_0 \Vert f_k^*\alpha\Vert^2\ ,\label{hyp:boundTf}
\end{eqnarray}
using the metric $h$ and some auxiliary metric on $M$.
\item assume  finally that $f_0^*\alpha$ is not identically zero.
\end{enumerate} Then $f_0^*\alpha$ never vanishes.
\end{theorem}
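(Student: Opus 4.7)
The plan is to reduce the problem, after trivializing the pulled-back line bundles, to a linear Bers--Vekua equation for a section of a complex line bundle over $\mathbf D$ (respectively $\mathbf S$), apply the Carleman similarity principle to control the zeros of the limit, and conclude by a winding number argument that no such zeros can exist.

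First, since $\alpha$ is complex linear ($\alpha\circ J = i\alpha$, as in the application in Lemma \ref{lem:limigraph}) and each $f_k$ is $J$-holomorphic, I would verify that $f_k^*\alpha$ is a $(1,0)$-form: writing $f_k^*\alpha = u_k\,dz$, the never-vanishing hypothesis becomes $u_k(z)\neq 0$ for all $z$, where $u_k$ is a section of the (trivializable) complex line bundle $f_k^*L_\mathbb C$. The $C^{p,\alpha}$ convergence $f_k\to f_0$ gives unitary trivializations of $f_k^*L_\mathbb C$ converging to a trivialization of $f_0^*L_\mathbb C$ in which $u_k$ becomes a $\mathbb C$-valued function converging to $u_0$, and the pulled-back connection $f_k^*\nabla^0$ is encoded by a connection 1-form $A_k\to A_0$. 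A direct computation of $d^{\nabla^0}\alpha$ in this trivialization rewrites the hypothesis \eqref{hyp:boundTf} as the pointwise inequality
\[
|\bar\partial u_k + A_k^{0,1}\,u_k|\ \leq\ C\,K_0\,|u_k|^2,
\]
which, by the uniform convergence ensured by the previous regularity improvement, passes to the limit to give the same bound for $u_0$.

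Next, I would linearize. Define
\[
b_0(z)\ :=\ \begin{cases}(\bar\partial u_0 + A_0^{0,1}u_0)(z)/u_0(z), & u_0(z)\neq 0,\\ 0, & u_0(z)=0,\end{cases}
\]
which is bounded and measurable with $|b_0|\leq C\,K_0\,\|u_0\|_{L^\infty}$ on any relatively compact subset. Then $u_0$ solves the Bers--Vekua linear equation $\bar\partial u_0 = (b_0 - A_0^{0,1})\,u_0$. The classical Carleman similarity principle gives a local factorization $u_0 = e^{h}\phi$ with $h$ bounded continuous and $\phi$ genuinely holomorphic; since $f_0^*\alpha\not\equiv 0$, the function $\phi$ is not identically zero, so the zeros of $u_0$ are isolated and each has a positive integer order. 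Assume for contradiction that $u_0(z_0)=0$ at some interior point, and pick a small loop $\gamma$ around $z_0$ on which $u_0$ is bounded away from $0$. The winding number of $u_0|_\gamma$ around the origin equals the order of the zero at $z_0$, hence is positive. But the uniform convergence $u_k\to u_0$ on $\gamma$ forces these winding numbers to agree for $k$ large, while $u_k$ is nowhere zero in the disk bounded by $\gamma$, so its winding number is zero -- a contradiction.

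For the boundary case, the compatibility $\alpha(\T W)\subset L$, together with the totally real, totally geodesic hypothesis on $W$, forces $u_0|_{\partial\mathbf S}$ to be real-valued in a frame adapted to $W$. Schwarz reflection $z\mapsto -\bar z$ then extends $u_0$ to a function $\tilde u_0$ on the full disk, solving a Bers--Vekua equation $\bar\partial\tilde u_0=\tilde c\,\tilde u_0$ with bounded coefficient obtained by conjugating and reflecting $(b_0-A_0^{0,1})$; likewise the sequence $u_k$ extends to $\tilde u_k$ uniformly converging to $\tilde u_0$ on compact subsets of $\mathbf D$ and still never vanishing. Applying the interior argument to $\tilde u_0,\tilde u_k$ handles both interior and boundary zeros at once. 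The main obstacle in implementing this plan is the verification that the reflected coefficient is genuinely bounded and measurable and that the reflected equation retains Bers--Vekua form; this requires care in choosing the unitary trivialization so that $A_0$ has the correct reality properties on $\partial\mathbf S$, but is otherwise a standard reflection argument.
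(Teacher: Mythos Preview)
Your argument is correct and takes a genuinely different route from the paper's. The paper works directly with the conformal factor $\lambda_k$ defined by $h(f_k^*\alpha,f_k^*\alpha)=\lambda_k^2\,g_0$: it bounds the Levi--Civita connection form of $g_k=\lambda_k^2 g_0$ in terms of $g_k$ itself (this is where hypothesis \eqref{hyp:boundTf} enters), deduces that the circular averages $G_k(R)=\int_{\gamma(R)}\log\lambda_k\,\omega_\theta$ have uniformly bounded derivative in $R$, and then uses the similarity principle only to know that the zeros of $\lambda_\infty$ are isolated, so that one can pick a radius $R_0$ on which $G_k(R_0)$ is controlled and hence $\lambda_k(0)$ is bounded below. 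The boundary case is handled by replacing circles with half-circles and observing that both connection forms vanish along $\partial\mathbf S$ because $W$ is totally real and totally geodesic.

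Your approach instead packages \eqref{hyp:boundTf} as a Bers--Vekua inequality for $u_k$, invokes the full Carleman factorization $u_0=e^h\phi$ to get positive-order zeros, and then rules them out by the topological degree of $u_k$ on a small loop. This is shorter and closer to the standard toolkit for pseudoholomorphic curves; the price is that the reflection step in the boundary case needs the bookkeeping you flag (choosing the unitary frame so that $L\leftrightarrow\mathbb R$ on $\partial\mathbf S$, checking the reflected equation is still Bers--Vekua with $L^\infty$ coefficient, and that the reflected $\tilde u_k$ is $W^{1,p}$ across the axis so the similarity principle applies). The paper's half-circle argument avoids reflection entirely. Conversely, the paper's approach yields an explicit quantitative lower bound on $\lambda_k(0)$, which your winding-number argument does not directly give. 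Both proofs tacitly use that $\alpha$ is $J$-complex-linear so that $f_k^*\alpha$ is of type $(1,0)$; this holds in the paper's application (Lemma \ref{lem:limigraph}).
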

We remark

\begin{proposition}\label{pro:hypboundTf} Condition \eqref{hyp:boundTf} is satisfied  in the following two cases
\begin{eqnarray*}
	\de^{\nabla^0}\alpha=0\ ,  &\text{  or}&
	\Vert\T f_k\Vert\leq K_1 \Vert f_k^*\alpha\Vert\ .
	\end{eqnarray*}
\end{proposition}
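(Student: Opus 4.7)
The plan is straightforward: both cases follow from elementary manipulations of the defining formulas, with no real obstacle beyond keeping track of the geometry.

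For the first case, if $\de^{\nabla^0}\alpha = 0$ identically on $M$, then $f_k^*\de^{\nabla^0}\alpha = 0$ for every $k$, so the required inequality
\[
\Vert f_k^*\de^{\nabla^0}\alpha\Vert\leq K_0 \Vert f_k^*\alpha\Vert^2
\]
holds trivially (for instance with $K_0=0$).

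For the second case, the strategy is to bound $f_k^*\de^{\nabla^0}\alpha$ pointwise using $\Vert \T f_k\Vert^2$ and then to substitute the hypothesis $\Vert \T f_k\Vert\leq K_1\Vert f_k^*\alpha\Vert$. Concretely, $\de^{\nabla^0}\alpha$ is a smooth $L_\C$-valued 2-form on $M$, so for every $z$ in the source and every pair of tangent vectors $u,v$,
\[
\bigl\vert (f_k^*\de^{\nabla^0}\alpha)_z(u,v)\bigr\vert
= \bigl\vert (\de^{\nabla^0}\alpha)_{f_k(z)}(\T_z f_k\, u,\T_z f_k\, v)\bigr\vert
\leq \Vert \de^{\nabla^0}\alpha\Vert_{f_k(z)}\,\Vert \T_z f_k\Vert^2\,\vert u\wedge v\vert.
\]
Hence pointwise
\[
\Vert f_k^*\de^{\nabla^0}\alpha\Vert_z \leq \Vert \de^{\nabla^0}\alpha\Vert_{f_k(z)}\cdot \Vert \T_z f_k\Vert^2.
\]
Inserting the hypothesis $\Vert \T f_k\Vert\leq K_1\Vert f_k^*\alpha\Vert$ gives
\[
\Vert f_k^*\de^{\nabla^0}\alpha\Vert_z \leq K_1^2\,\Vert \de^{\nabla^0}\alpha\Vert_{f_k(z)}\cdot \Vert f_k^*\alpha\Vert_z^2 .
\]
Since in the setting of Theorem~\ref{theo:imm-limi-bd} the maps $f_k$ converge to $f_0$, their images all lie in a fixed compact subset of $M$; thus $C\defeq \sup\Vert \de^{\nabla^0}\alpha\Vert$ over that compact set is finite, and setting $K_0\defeq C K_1^2$ finishes the proof.

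The only thing worth checking carefully is that the pointwise norm estimate yields the asserted global inequality with a single constant $K_0$ independent of $k$; this is where the hypothesis that the images of the $f_k$ remain in a compact part of $M$ (which is automatic in the situation where the proposition is applied) is used. There is no genuine analytic obstacle.
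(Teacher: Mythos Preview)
Your proof is correct and follows the same approach as the paper, which simply records the general inequality $\Vert g^*\beta\Vert\leq \Vert \T g\Vert^k\cdot\Vert\beta\Vert$ for a $k$-form $\beta$ and leaves the rest implicit. You have been somewhat more careful than the paper in noting that uniformity of $K_0$ requires $\Vert\de^{\nabla^0}\alpha\Vert$ to be bounded along the images of the $f_k$, which indeed holds in the intended application.
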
 

\begin{proof} The first case in Proposition~\ref{pro:hypboundTf} is obvious. The second case follows form the remark that if $\beta\in \Omega^k(V)$, then
 $
 \Vert g^*\beta\Vert\leq \Vert \T g\Vert^k\cdotp\Vert \beta\Vert$\ .
\end{proof}
The first case is satisfied when $\alpha=\de\pi$, where $\pi$ is a submersion in $\mathbb C$ which maps $W$ to a line, in which case the setting reduces to simply maps from $\bf D$ (or $\S)$ to $\mathbb C$.

\subsubsection{Preliminary controls} 
Let $f$ be a holomorphic map from $\bf D$ to $M$ satisfying inequality (\ref{hyp:boundTf}), and $g$ the quadratic form defined by
$$
g(X,Y)\defeq h(\alpha(\T f(X)),\alpha(\T f(Y)))\ ,
$$
Let $O$ be the open set in $\bf D$ on which $g$ is a metric, and let $u$ be  the   continuous vector fields  of norm 1 (defined up to sign)  on $O$ so that $\alpha(\T f(u))\in L$.

\begin{lemma}\label{lem:bdcon}
	Let  $\beta$ be the connection form for $u$ defined by 
	$$
	\beta(X)=g(\nabla_Xu,Ju)\ ,
	$$
	where $\nabla$ is the Levi-Civita connection of $g$.
	Then
	$$
	\Vert\beta(X)\Vert^2\leq K_1 g(X,X).
	$$
	where $K_1$ only depends on $K_0$, $\alpha$ and $\nabla^0$.
\end{lemma}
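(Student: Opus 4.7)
The proof is a direct local computation, exploiting the conformal structure of $g$ that arises because $f$ is holomorphic and $\alpha$ is $\C$-linear on the image of $\T f$.

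First I would fix a point of $O$, a holomorphic coordinate $z = x + iy$ on a neighbourhood in $\D$, and a $\nabla^0$-parallel local frame of $f^*L_\C$. In this frame $f^*\alpha$ is a $(1,0)$-form and so writes as $f^*\alpha = \sigma\,dz$ for a nowhere-vanishing complex function $\sigma$ on $O$. A direct calculation yields $g = |\sigma|^2(dx^2 + dy^2)$, so $g$ is conformal to the flat metric with conformal factor $r \defeq |\sigma|$. Writing $\sigma = re^{i\theta}$ locally, the unit field $u$ with $\alpha(\T f\,u)\in L$ is obtained by rotating the natural $g$-orthonormal frame $\tilde u = r^{-1}\partial_x$ by the angle $-\theta$: explicitly, $u = \cos\theta\,\tilde u - \sin\theta\, j\tilde u$.

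The key step is to identify the connection form $\beta$. For a conformal metric $e^{2\phi} g_0$ in dimension two, the Levi-Civita connection 1-form of the natural frame $(\tilde u, j\tilde u)$ is $\tilde\omega^2_1 = -j^*d\phi$. Combining this with the frame rotation by $-\theta$, a short computation gives the crucial structural identity
$$\beta = \tilde\omega^2_1 - d\theta = -j^*d\log r - d\theta.$$
The remarkable feature is that $\beta \equiv 0$ precisely when $\log\sigma = \log r + i\theta$ satisfies the Cauchy--Riemann equations, that is, when $\bar\partial\sigma = 0$. Thus $\beta$ exactly measures the failure of $\sigma$ to be $\bar\partial$-closed, and this cancellation is what will make the estimate go through.

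With this identity the estimate is immediate. Expanding $\beta(X)$ for $X = a\partial_x + b\partial_y$ and applying Cauchy--Schwarz gives
$$|\beta(X)|^2 \leq \left[ ((\log r)_y + \theta_x)^2 + ((\log r)_x - \theta_y)^2 \right] |X|^2_{g_0} = \frac{|\sigma_x + i\sigma_y|^2}{|\sigma|^2}\,|X|^2_{g_0}.$$
Setting $\tau \defeq \tfrac{1}{2}(\sigma_x + i\sigma_y)$, which is the $d\bar z$-coefficient of $\bar\partial\sigma$, a direct computation gives $d^{f^*\nabla^0}(f^*\alpha) = \bar\partial\sigma\wedge dz = 2i\tau\,dx\wedge dy$. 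The hypothesis \eqref{hyp:boundTf} therefore yields $|\tau| \leq C K_0 |\sigma|^2$ for a constant $C$ coming only from the norm conventions on $dz$ and volume forms. Using $g(X,X) = |\sigma|^2|X|^2_{g_0}$, one deduces
$$|\beta(X)|^2 \leq \frac{4|\tau|^2}{|\sigma|^4}\, g(X,X) \leq K_1\, g(X,X),$$
with $K_1 = 4C^2 K_0^2$ depending only on $K_0$, $\alpha$, $\nabla^0$ and $h$.

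The hard part will be establishing the key identity $\beta = -j^*d\log r - d\theta$ with correct signs: one must carefully track both the sign in the classical formula $\tilde\omega^2_1 = -j^*d\phi$ for the connection 1-form of a conformal metric and the additional $-d\theta$ term coming from rotating the natural frame by the argument of $\sigma$. This cancellation is essential, because a naive bound of the form $|\beta|^2 \leq C'|\nabla^0\sigma|^2/|\sigma|^2$ would not suffice: the hypothesis only controls the $(0,1)$-part $\bar\partial\sigma$, not all of $\nabla^0\sigma$, so the leading conformal-factor and rotation contributions must be seen to cancel before applying Cauchy--Schwarz.
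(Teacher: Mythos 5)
Your proof is correct, and it takes a genuinely different route from the paper's. The paper argues coordinate-free: it writes $g(\cdot,\cdot)=h_0(\zeta(\cdot),\zeta(\cdot))$ with $\zeta=f^*\alpha$, $D=f^*\nabla^0$, expands $2g(\nabla_X u,Ju)$ by the Koszul formula, and regroups the six resulting terms as $2h_0\bigl(D_X(\zeta(u)),\zeta(Ju)\bigr)$ plus three terms in $d^D\zeta$; the first term vanishes because $L$ is $D$-parallel with $\zeta(u)\in L$ and $\zeta(Ju)\in iL$, and the $d^D\zeta$ terms are bounded by hypothesis \eqref{hyp:boundTf} together with Cauchy--Schwarz. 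You instead fix the holomorphic coordinate $z$ on $\D$, a $\nabla^0$-parallel section of $f^*L$ (which exists locally because the induced metric connection on the real line bundle $L$ has structure group $\{\pm1\}$ and is therefore flat, so that $\nabla^0$ becomes the trivial connection in the associated complex frame — a point you assert but should justify), write $\zeta=\sigma\,dz$, $g=|\sigma|^2g_0$, and identify $\beta=-j^*d\log|\sigma|-d(\arg\sigma)$ explicitly as the obstruction to $\sigma$ satisfying the Cauchy--Riemann equations, bounded by $|\bar\partial\sigma|/|\sigma|$. This is the same cancellation the paper exploits — the vanishing of $h_0(D_X\zeta(u),\zeta(Ju))$ is exactly the coordinate-free statement that the $\partial$-part of $\sigma$ drops out — but your presentation makes the mechanism transparent: $\beta$ is intrinsically a $\bar\partial$-type quantity, and the hypothesis controls only $\bar\partial\sigma$. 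The cost is that you must carefully verify the sign in the conformal connection-form formula $\tilde\omega^2_1=-j^*d\phi$ and the extra rotation term $-d\theta$; the paper instead pays with a more delicate regrouping of the Koszul expansion. Both proofs rest on precisely the same two inputs ($L$ being $\nabla^0$-parallel, and the bound on $f^*d^{\nabla^0}\alpha$) and yield the same estimate, up to the unimportant constant depending on normalisation conventions.
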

\begin{proof} Let us consider the induced bundle $L_0\defeq f^* L$, as well as the pull-back metric $h_0$, induced forms $\zeta=f^*\alpha\in\Omega^1({\bf D},L_0)$, and induced connection $D=f^*\nabla^0$. In the proof $k_i$ will be constants only depending on $\alpha$ and $\nabla^0$.
	The classical formula for the Levi Civita connection tells us that
	\begin{eqnarray*}
	& & 2 g(\nabla_Xu,Ju)\\&=&	u\cdot g(X,Ju)-Ju\cdot g(X,u)
	+X\cdot g(u,Ju)\\ & & -g(u,[X,Ju])-g(X,[u,Ju])+g(Ju,[X,u])\\
&=&u\cdot h_0\left(\zeta (X)),\zeta(Ju)\right)
	-Ju\cdot h_0\left(\zeta(X)),\zeta (u)\right)	+X\cdot h_0(\zeta(u),\zeta(Ju))
	\\ 
	& &-h_0\left(\zeta(u)),\zeta([X,Ju])\right)
	-h_0\left(\zeta(X),\zeta([u,Ju])\right)+
h_0\left(\zeta( Ju)),\zeta([X,u])\right)\\
&=&
2h_0(D_X(\zeta(u)), \zeta(Ju))\\
& &+h_0(\zeta(u),{\rm d}^{D}\zeta(X,Ju))-h_0(\zeta(Ju),{\rm d}^{D}\zeta(X,u))
+h_0(\zeta(X),{\rm d}^{D}\zeta(u,Ju)) \ .
\end{eqnarray*}

Here of course ${\rm d}^{D}\zeta(X,Y) = (D_X\zeta)(Y)- (D_Y\zeta)(X) - \zeta([X,Y])$.

Since $L$ is parallel for $\nabla^0$, it follows that $h_0(D_X(\zeta(u)), \zeta(Ju))=0$.
Observe now that the hypothesis \eqref{hyp:boundTf} and the Cauchy--Schwarz inequality  implies that 
$$
\vert h_0(\zeta(X),\de^D\zeta(Y,Z))\vert^2 \leq K_0^2 g(X,X)\cdot g(Y,Y) \cdot g(Z,Z)\ . 
$$ 
Thus, from the above inequality applied to the final three terms of the computation above of $2\beta(X) = 2g(\nabla_Xu,Ju)$ and using that $g(u,u) = g(Ju, Ju) = 1$, we see that 
$$
\Vert\beta(X)\Vert \leq \frac{3}{2} K_0 \sqrt{g(X,X)}\ .
$$
This concludes the proof. \end{proof}

\begin{lemma}\label{lem:dlam}
Let $g=\lambda^2 g_0$, and let $\beta_0$ be the connection form of the vector $u_0$ proportional to $u$ and of norm 1 for $g_0$. Then
$$
-({\rm d} \log \lambda) \circ J = \beta-\beta_0,
$$
\end{lemma}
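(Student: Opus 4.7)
The identity $-(\mathrm{d} \log \lambda) \circ J = \beta - \beta_0$ is essentially the standard computation for how a connection $1$-form on an orthonormal frame transforms under a conformal change of metric on a surface. My plan is to carry out that computation directly, keeping careful track of the fact that the comparison is made between two different unit vectors $u$ and $u_0 = \lambda u$ living in the same line field.

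The starting point is the conformal change formula for the Levi-Civita connection. Writing $g = e^{2\phi} g_0$ with $\phi = \log \lambda$, one has the classical identity
\[
\nabla_X Y = \nabla^0_X Y + X(\phi) Y + Y(\phi) X - g_0(X,Y)\, \nabla^0 \phi,
\]
where $\nabla^0$ is the Levi-Civita connection of $g_0$. I would apply this with $Y = u$ and then pair with $Ju$ using $g = \lambda^2 g_0$, noting that $J$ is orthogonal for both metrics, that $g_0(u,Ju) = 0$, and that $g_0(\nabla^0 \phi, V) = V(\phi)$ for any $V$. This yields
\[
g_0(\nabla_X u, Ju) = g_0(\nabla^0_X u, Ju) + u(\phi)\, g_0(X, Ju) - Ju(\phi)\, g_0(X, u).
\]

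The next step is to replace $\nabla^0_X u$ by $\nabla^0_X u_0$ via the Leibniz rule applied to $u_0 = \lambda u$; the $u$-component drops out again because $g_0(u,Ju) = 0$, leaving $g_0(\nabla^0_X u, Ju) = \lambda^{-2}\, \beta_0(X)$. Multiplying through by $\lambda^2$ to convert $g_0$ back into $g$ gives
\[
\beta(X) = \beta_0(X) + u(\phi)\, g(X, Ju) - Ju(\phi)\, g(X, u).
\]

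To finish, I would recognize the bracketed correction as $-(\mathrm{d}\phi \circ J)(X)$. Since $(u, Ju)$ is $g$-orthonormal, decomposing $X = g(X,u)\,u + g(X,Ju)\,Ju$ yields $JX = g(X,u)\, Ju - g(X,Ju)\, u$, so
\[
-(\mathrm{d}\phi \circ J)(X) = -\mathrm{d}\phi(JX) = u(\phi)\, g(X, Ju) - Ju(\phi)\, g(X, u),
\]
which is exactly the correction term above. Substituting $\phi = \log \lambda$ gives the claimed formula. The calculation is entirely routine; the only mild subtlety worth flagging is keeping straight that $u_0$ and $u$ are different vectors (related by the factor $\lambda$) while $Ju$ and $Ju_0$ differ by the same factor, so that the pointwise identity $\beta_0(X) = \lambda^2 g_0(\nabla^0_X u, Ju)$ holds with no extra $\lambda$.
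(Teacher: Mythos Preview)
Your proof is correct and follows essentially the same route as the paper's: both use the conformal change formula for the Levi--Civita connection, relate $u_0 = \lambda u$ to pass between $\beta$ and $\beta_0$, and identify the discrepancy as $-(\mathrm{d}\log\lambda)\circ J$. The only cosmetic difference is that the paper writes the conformal change directly in its two-dimensional form $\nabla = D + (\mathrm{d}\log\lambda)\otimes\Id - (\mathrm{d}\log\lambda\circ J)\otimes J$, so the $J$-term appears immediately, whereas you start from the general-dimensional formula and recognize the correction as $-(\mathrm{d}\phi\circ J)(X)$ only at the end via the orthonormal decomposition of $JX$.
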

\begin{proof}
The connection of $g=\lambda^2 g_0$ is given by 
$$
\nabla= D + ({\rm d}\log \lambda) \otimes \Id -({\rm d}\log \lambda\circ J) \otimes J \ ,
$$
where $D$ is the connection of $g_0$.  Thus, if $u$ is the vector field of norm 1 for $g$, then
$$
\beta(X)=g(\nabla_Xu,Ju) = g(D_X u, Ju) -{\rm d}\log \lambda (J X)
$$
Observe now that $\lambda v=v_0$ where $v_0$ has norm 1 for $g_0$, thus
$$
g(D_X v, Jv)=\frac{1}{\lambda^2} g(D_X v_0,Jv_0)=g_0(D_X v_0, Jv_0)=\beta_0(X)\ .
$$
The result follows.
	
\end{proof}

\begin{corollary}\label{coro:lambimm} 	Assuming $f$ is an immersion,
let $\gamma$ be  either
\begin{enumerate}
\item (Free boundary case) an embedded circle $\gamma$ in $\bf D$, or

	\item (Boundary case) or an embedded half circle  so that $\partial \gamma\subset \partial {\bf S}$. 
	\end{enumerate}
	Then
		$$
	\left\vert\int_\gamma ({\rm d} \log \lambda) \circ J\ \right\vert \leq K_2\int_\gamma \lambda {\rm d}s\ ,	$$ 
	where ${\rm d}s$ is the arc length of $\gamma$ with respect to $g_0$.
\end{corollary}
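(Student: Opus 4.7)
The plan is to split the integral using Lemma~\ref{lem:dlam}, which gives
$$
\int_\gamma (\de\log \lambda)\circ J \;=\; \int_\gamma \beta_0 \;-\; \int_\gamma \beta\, ,
$$
and then bound each piece separately. A key setup choice is to take $g_0$ to be the flat Euclidean metric on ${\bf D}$ (viewed as a subset of $\C$ with its standard complex structure $J$); this is permissible since Lemma~\ref{lem:dlam} only requires the conformal relation $g=\lambda^2 g_0$, and the assumed immersivity of $f$ ensures that $g$ is a genuine metric everywhere (so $\lambda>0$ throughout).

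For the $\beta$ term, Lemma~\ref{lem:bdcon} gives $\Vert\beta(X)\Vert^2\le K_1\, g(X,X)$; taking $X=T$ to be a $g_0$-unit tangent to $\gamma$ we have $g(T,T)=\lambda^2$, so $|\beta(T)|\le\sqrt{K_1}\,\lambda$. Integrating against the $g_0$-arc length gives
$$
\left|\int_\gamma \beta\right|\le\sqrt{K_1}\int_\gamma \lambda\,\de s\, ,
$$
which already has exactly the form we want.

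For the $\beta_0$ term, the flatness of $g_0$ makes $\de\beta_0=0$, since $\beta_0$ is the connection form of a unit vector field for a flat metric. In the free boundary case, $\gamma$ bounds a disk $\Omega\subset{\bf D}$ on which $u_0$ is well-defined up to sign (and $\beta_0$ is sign-invariant, hence a well-defined smooth $1$-form), so Stokes' theorem gives $\int_\gamma \beta_0=\int_\Omega \de\beta_0=0$. In the boundary case, I would close $\gamma$ with an arc $\gamma'\subset\partial{\bf S}$ so that $\gamma\cup\gamma'=\partial\Omega$ for some region $\Omega\subset{\bf S}$. The crucial observation is that on $\partial{\bf S}$ the vector $u$ is forced to be tangent to $\partial{\bf S}$: indeed $f(\partial{\bf S})\subset W$ together with the totally-real hypothesis $\alpha(\T W)=L$ forces $\alpha\circ \T f(\T_z \partial {\bf S})\subset L$, and since $f$ is an immersion the map $\alpha\circ \T f:\T_z{\bf S}\to L_\C$ is an $\R$-linear isomorphism, so the preimage of $L$ is exactly the $1$-dimensional subspace $\T_z\partial{\bf S}$. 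Hence $u$, and therefore $u_0$, points along $\partial{\bf S}$; since $\partial{\bf S}$ is a Euclidean straight line, $u_0$ is $g_0$-parallel along $\gamma'$, so $\beta_0(T_{\gamma'})=0$. Stokes then yields $\int_\gamma \beta_0=-\int_{\gamma'}\beta_0=0$.

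Combining gives $\left|\int_\gamma (\de\log\lambda)\circ J\right|\le\sqrt{K_1}\int_\gamma \lambda\,\de s$, so one may take $K_2=\sqrt{K_1}$. The step I expect to require the most care is the boundary analysis: one must exploit both the totally-real condition $\alpha(\T W)=L$ and the $\R$-linear injectivity of $\alpha\circ \T f$ (a consequence of the positive-definiteness of $g$) to pin down the direction of $u_0$ along $\partial{\bf S}$, which is precisely what forces the Stokes boundary contribution to vanish.
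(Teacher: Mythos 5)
Your proposal is correct and follows essentially the same route as the paper: split via Lemma~\ref{lem:dlam} into a $\beta$-term controlled by Lemma~\ref{lem:bdcon} and a $\beta_0$-term killed by Stokes using $\de\beta_0=0$ (flatness of $g_0$) together with the vanishing of $\beta_0$ along $\partial\S$ in the boundary case. If anything, you are more explicit than the paper in justifying why $u$ (hence $u_0$) is tangent to $\partial\S$ — deducing it from $\alpha(\T W)=L$ and the fact that $\alpha\circ\T f$ is a real-linear isomorphism when $f$ is an immersion — where the paper simply asserts the tangency.
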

	
\begin{proof}
Let us consider first the free boundary case: from Lemma \ref{lem:dlam},
$$
	\int_\gamma ({\rm d} \log \lambda) \circ J=\int_\gamma\beta -\int_\gamma\beta_0=\int_\gamma\beta -\int_U {\rm d}\beta_0=\int_\gamma\beta\ ,$$
	where $U$ is the disk of boundary $\gamma$ in the boundary free case, and boundary $\gamma\sqcup I$, where $I\subset \partial {\bf S}$ in the boundary case. Observe that we have used here that 
	$$
	\int_I\beta_0=0\ ,
	$$
	which follows from the fact that $u_0$ is tangent to $\partial \S$, so that its covariant derivative in the tangential direction is also tangential and hence orthogonal to $Ju_0$. 
	Thus the inequality follows from the bounds in Lemma \ref{lem:bdcon}.
	For the  boundary case, we first have to remark that if $X\in \T\partial \S$, then obviously $\beta_0(X)=0$. Moreover, $f(\partial S)$ is a curve in a totally real and totally geodesic manifold $W$. Thus $\nabla X u$ belongs to $\T W$ and $Ju$ is orthogonal to $\T W$. Thus $$
	\beta(X)=g(\nabla_X u, Ju)=0\ .$$
	The fact that $\beta$ and $\beta_0$ are zero when restricted to $\partial S$ allows us to conclude the argument.
\end{proof}

\subsubsection{Proof of Theorem \ref{theo:imm-limi}}
The proof in both cases follow the same scheme. We will point out where the difference occurs.
It is enough to prove that $f_0^*\alpha$ does not vanish at $0$.
Let $g_k$ be the conformal metric on $\bf D$ given by
$$
g_k(u,v)=h\left(\alpha(\T f_k(u)),\alpha(\T f_k(u))\right)\ ,
$$
and $\lambda_k$ be the function on $\bf D$ so that $g_k=\lambda_k^2 g_0$ where $g_0$ is the Euclidean metric on $\bf D$. 

To prove the theorem, it is enough  is to find a positive $\rho$ so that, for all $k\in \N$
$$
\lambda_k(0)\geq \rho \ .
$$ 

Let $D(R)$ be the disk of radius $R$ centered at $0$ with respect to $g_0$ and $\gamma(R)$ its boundary. 
In the boundary case, we let $D(R)$ be the half disk centered at $0$ and denote by $\gamma(R)$ the half circle which is part of $\Fr(D(R))$. We denote by $(r,\theta)$ the polar coordinates on $\mathbb C\setminus{0}$. Let $\omega_\theta$ is the closed form on $\mathbb C\setminus\{0\}$ given by 
$$
\omega_\theta=\frac{x{\rm d} y-y{\rm d}x}{x^2+y^2}\ .
$$
(Of course, $\omega_{\theta}$ is usually denoted by ${\rm d\theta}$ but the notation ${\rm d\theta}$ --disliked by the first author -- suggests that $\omega_\theta$ is exact)  
Observe that for any 1-form $\alpha$

\begin{eqnarray*}
\alpha\wedge\omega&=&\alpha(\partial_r)\ {\rm d}r\wedge \omega_\theta ,\\
\int_{S(r)} \alpha\circ J &=& \int_{S(r)} \alpha (J\partial_\theta)\ \omega_\theta= -r\int_{S(r)}\alpha(\partial_r)\  \omega_\theta\ .
\end{eqnarray*}

For $k\in\N$, let $G_k$ be the function $\mathbb R_{>0}$ given by 
$$
G_k(R)\defeq\int_{\partial D(R)}\log(\lambda_k)\cdot \omega_\theta.$$
Observe that, because $\lambda_k$ does not vanish, we map apply Stokes theorem to the annulus $\{\eta < r , R\}$, and let $\eta \to 0$ to obtain
\begin{eqnarray*}
G_k(R)=\int_{D(R)} {\rm d}\log(\lambda_k)\wedge\omega_\theta + G_k(0)=
\int_0^R\left(\int_{\gamma(r)} {\rm d}\log(\lambda_k)(\partial_r)\cdot \omega_\theta\right){\rm d}r + G_k(0) .
\end{eqnarray*}
After taking the derivatives with respect to $R$, we get
\begin{eqnarray*}
\dot{G}_k(R)=	\int_{\gamma(R)} {\rm d}\log(\lambda_k)(\partial_r)\cdot \omega_\theta=-\frac{1}{R}\int_{\gamma(R)} {\rm d}\log(\lambda_k)\circ J \ .
\end{eqnarray*}
Since $f_k$ is an immersion, it follows from Corollary \ref{coro:lambimm} that
\begin{eqnarray*}
\vert\dot{G}_k(R)\vert \leq \frac{K_0}{R} \int_{\gamma(R)}\lambda_k {\de}s \ ,\end{eqnarray*}
where $\de s$ is the length with respect to $g_0$. By the Schwarz lemma, we see that $\lambda_k$ is uniformly bounded from above and thus
$ \vert\dot{G}_k(R)\vert \leq  C_2$.
It follows that for all $R_0$
$$
\epsilon\pi\vert \log \lambda_k(0)\vert =\vert G_k(0)\vert \leq \vert G_k(R_0)\vert +C_2 R_0 \ ,
$$
where $\epsilon=2$ in the free boundary case, and $\epsilon=1$ in the boundary case.
Thus
$$
\lambda_k(0)\geq \exp\left(-\frac{1}{\epsilon\pi}\left( \vert G_k(R_0)\vert +C_0 R_0\right)\right).
$$
Then, since $\lambda_\infty$ has isolated zeroes (See the similarity principle Theorem A.5.2 and Proposition A.5.3 in \cite{Abbas:2019}),  there exists some $R_0$ so that $\gamma(R_0)$ does not contain any zeroes of $\lambda_\infty$, thus there exists a positive $\alpha$ so that

$$
\int_{\partial D(R_0)}\log(\lambda_{\infty})\cdot \omega_\theta > -\alpha>-\infty\ .
$$
Thus for $k$ large enough,
$
G_k(R_0)\geq -2\alpha$.
In particular, for $k$ large enough
$$
\lambda_k(0)\geq \mu\defeq\exp\left(-\frac{1}{\epsilon\pi}(2\alpha+C_0R_0)\right)>0\ .
$$
The result follows.

\bibliographystyle{amsplain}
\providecommand{\bysame}{\leavevmode\hbox to3em{\hrulefill}\thinspace}
\providecommand{\MR}{\relax\ifhmode\unskip\space\fi MR }
\providecommand{\MRhref}[2]{%
  \href{http://www.ams.org/mathscinet-getitem?mr=#1}{#2}
}
\providecommand{\href}[2]{#2}

\end{document}